\setlist{nolistsep}
\newtheorem{theorem}{Theorem}
\newtheorem{lemma}[theorem]{Lemma}
\newtheorem{corollary}[theorem]{Corollary}
\newtheorem{proposition}[theorem]{Proposition}
\theoremstyle{remark}
\newtheorem{remark}{Remark}
\numberwithin{theorem}{section} \numberwithin{equation}{section}
\newcommand{\chr}[4]
{\begin{bmatrix}
 #1 & #2\\[0.2em]
 #3 & #4
\end{bmatrix}}
\newcommand{\emptyproof}{\vspace*{-0.3cm}\hfill $\square$\smallskip}
\begin{document}
\title[$(1,2)$-polarized Kummer surfaces]{On the geometry of $(1,2)$-polarized Kummer surfaces}
\author{Adrian Clingher and Andreas Malmendier}

\address{Dept.~\!of Mathematics and Computer Science, University of Missouri -- St.~\!Louis, St.~\!Louis, MO 63121}
\email{clinghera@umsl.edu}

\address{Dept.~\!of Mathematics and Statistics, Utah State University, Logan, UT 84322}
\email{andreas.malmendier@usu.edu}

\begin{abstract}
We discuss several geometric features of a Kummer surface associated with a $(1,2)$-polarized abelian 
surface defined over the field of complex numbers. 
In particular, we show that any such Kummer surface can be modeled as the double cover of the projective plane 
branched along six lines, three of which meet a common point.
The proof uses certain explicit pencils of plane quartic bielliptic genus-three curves
whose associated Prym varieties are naturally $(1,2)$-polarized abelian surfaces.
\end{abstract}

\subjclass[2010]{14J28, 14H40}

\maketitle

\section{Introduction and Summary or results}
A rich source of examples for the theory of complex algebraic K3 surfaces is given by the so-called double sextic surfaces, 
i.e. surfaces obtained after taking double covers of the projective plane branched over a projective sextic curve. A specially 
interesting class within this realm corresponds to the case when the sextic curve in question is reducible and consists of six 
distinct lines. This case, which generally leads to K3 surfaces whose generic Picard rank is $16$, has two particular limit situations. 
One limit is given by the case when the six lines are simultaneously tangent to a common conic. The other corresponds 
to the situation when three of the six lines meet at a common point.   

The first limit is rather famous, as the double sextic construction leads in this situation to Kummer surfaces associated to 
Jacobians of genus-two curves, classical objects in algebraic geometry whose properties have been analyzed in numerous works 
over the last more than a hundred years. The second limit case above has received considerably less scrutiny and is the main 
focus of the present paper. We show that the K3 surfaces obtained here are also of Kummer type and are associated to abelian surfaces 
with a special non-principal polarization.   

Kummer surfaces have been traditionally studied in connection with abelian surfaces with principal polarizations. 
In this case, the rational map associated with the twice theta-divisor linear system provides a map to complex projective 
three-space that factors via an embedding of the Kummer surface. This symmetry allows one to obtain a Kummer surface 
associated with a principally polarized
abelian surface as the minimal resolution of a singular quartic surface, or Kummer quartic, 
with the maximal number of simple nodes which -- for a quartic in $\mathbb{P}^3$ -- is sixteen. 
By a suitable projection from a node of the Kummer quartic, one then obtains the double sextic realization of the Kummer surface 
from the first limit above, i.e. as a double cover of the projective plane branched along the union of six lines 
all tangent to a conic. These geometric features traditionally allow one to establish a normal form for such Kummer quartics
with coefficients and coordinates given in terms of modular and Jacobi forms of genus two.

The aim of this paper is to study certain features of $(1,2)$-polarized abelian surfaces and
their associated Kummer surfaces. We shall give explicit descriptions -- analogous to descriptions of the Kummer surfaces
as branched double cover, quartic surface in three-space, and Jacobian variety of a genus-two curve in the case of a principal polarization --
for the $(1,2)$-polarized Kummer surface.  

In Theorem~\ref{prop1}, we shall prove that a $(1,2)$-polarized Kummer surface is birationally
equivalent to the quotient of the symmetric square of a bielliptic and hyperelliptic genus-three curve by the action of the bielliptic involution on each factor.
This result, in turn, allows us to derive a normal form for a $(1,2)$-polarized Kummer surface as 
an otic surface in weighted projective three-space. The proposition is based on a particular feature of genus-three curves
(see for instance \cite{MR1816214}): a genus-three curve that is both hyperelliptic and bielliptic is always the double
cover of a genus-two curve. This double cover then lifts to a rational degree-two map from the Kummer surface with $(1,2)$-polarization
onto a principally polarized Kummer surface and determines the coefficients defining the Kummer surface with $(1,2)$-polarization
in terms of modular forms of genus two and level two.

On the other hand, it is known (see  \cite{MR946234}) that an embedding of a smooth bielliptic genus-three curve into an abelian 
surface is uniquely determined (up to a two-torsion translation) 
by the requirement that the abelian involution restricts to the bielliptic involution.
In Theorem~\ref{cor:pencil} we shall determine an explicit pencil of quartic bielliptic genus-three curves whose associated 
Prym variety is naturally isomorphic to the $(1,2)$-polarized abelian surface. 

The kernel of multiplication by two on a principally polarized abelian surface is a group of order 16.
But only certain subgroups of order four appear as kernels of a two-isogeny with another principally polarized
abelian surface, the so-called maximal isotropic subspace of the two-torsion points.
Taking the quotient of a principally polarized abelian surface by a maximal isotropic subspace is again a principally polarized abelian surface,
the $(2,2)$-isogenous abelian surface. Conversely, a second maximal isotropic subspace on the $(2,2)$-isogenous abelian surface allows one to construct a 
dual $(2,2)$-isogeny that maps back to the original principally polarized abelian surface. When the principally polarized abelian surfaces are Jacobians of 
generic genus-two curves with full level-two structure, their moduli can be described explicitly in terms of genus-two theta-functions.
Equations~(\ref{relations_RosRoots}) relate the moduli of a generic genus-two curve and one of its $(2,2)$-isogenous curve.

By taking the above arguments to the level of Kummer surfaces, one sees that the $(2,2)$-isogenies on principally polarized abelian surfaces 
induce algebraic maps between the corresponding Kummer surfaces. In Theorems~\ref{cor:psi} and~\ref{cor:hat_psi} we shall prove that 
the morphisms between Kummer surfaces can be factored into a sequence of two rational double covers that act either
by base transformations or by fiberwise two-isogenies relative to three particular Jacobian elliptic fibrations.
In fact, the translations by generators of the maximal isotropic subgroups descend to projective automorphisms of the corresponding Kummer
surfaces and can be understood as Van Geemen-Sarti involutions and Nikulin involutions covering a degree-two base transformation.
The $(1,2)$-polarized Kummer surfaces appear then as the intermediate spaces in this factorization.
In Theorem~\ref{cor:equiv} we shall prove that there is a natural isomorphism between the constructed $(1,2)$-polarized Kummer surface
and its $(2,2)$-isogenous Kummer surface. 

The above special Jacobian elliptic fibrations on the Kummer surface associated with the Jacobian of a generic genus-two curve
give rise to Jacobian elliptic fibrations on the associated $(1,2)$-polarized Kummer surface. One of them
is particularly simple: it is the quadratic twist of a rational elliptic surface. This leads, in Theorem~\ref{prop:double_cover}, to a simple geometric 
description of a $(1,2)$-polarized Kummer surface: the $(1,2)$-polarized Kummer surface
is the double cover of the projective plane branched along a six lines, exactly three of which have a common point.

This article is structured as follows: in Section~\ref{sec:background} we discuss some of the background
and needed related work, most importantly the notions of rational double cover of a Kummer surface and an even eight.
In Section~\ref{genus-two}, we provide the necessary background to describe the moduli and $(2,2)$-isogenies
of principally polarized abelian surfaces that are Jacobians of generic genus-two curves with full level-two structure. 
In Section~\ref{sec:kummers} we prove the main results of the article.
A short discussion on the non-generic Picard rank $18$ case is also included.
Section~\ref{sec:summary} gives an overview of all constructed fibrations, morphisms, and involutions.

It should be noted that the elliptic fibration used in Theorem~\ref{prop:double_cover} has an interpretation in the context of string theory: 
by Sen's construction, it is the elliptic fibration that provides -- in the presence of non-zero masses -- an embedding of a  
supersymmetric Yang-Mills theory into F-theory. This makes the moduli in Equations~(\ref{relations_RosRoots})
observables of both an F-theoretic description of a type II-B string background and  the massive Seiberg-Witten curve for a $\mathcal{N}=2$ 
supersymmetric $SU(2)$ Yang-Mills theory with $N_F=4$ flavor hypermultipletts
embedded into this string theory. As we will describe in forthcoming work, this picture can be generalized and has important applications in string theory.

\section*{Acknowledgments}
\setcounter{section}{1}
The first author acknowledges support from the Simons Foundation through grant no.~208258.

\section{Background}
\label{sec:background}
Let $\mathbf{A}$ denote an abelian surface and let $-\mathbb{I}: a \mapsto -a$ be the involution automorphism.
The quotient, $\mathbf{A}/\langle -\mathbb{I} \rangle$, is a singular surface with sixteen ordinary double points. 
Its minimum resolution, $\operatorname{Kum}(\mathbf{A})$, is a special K3 surface called the Kummer surface associated to $\mathbf{A}$.

\subsection{Polarizations of abelian surfaces}
For an abelian surface $\mathbf{A}\cong \mathbb{C}^2/\Lambda$, a polarization is 
a positive definite hermitian form $H$ on $\mathbb{C}^2$,  satisfying $E = \operatorname{Im} \, H(\Lambda,\Lambda) \subset \mathbb{Z}$. 
Such a positive definite hermitian form corresponds to the class of an ample line bundle in the N\'eron-Severi group $\operatorname{NS}(\mathbf{A})$.
It is known that there always exists a basis $\langle e_1, e_2, f_1 f_2\rangle$ of $\Lambda$ such that $E$ is given 
by the matrix $\bigl(\begin{smallmatrix}
0&D\\ -D&0
\end{smallmatrix} \bigr)$
with $D=\bigl(\begin{smallmatrix}
d_1&0\\ 0&d_2
\end{smallmatrix} \bigr)$ where $d_1, d_2 \ge 0 $ and $d_1$ divides $d_2$, i.e., $d_1 \mid d_2$. Therefore, polarizations are labelled by such pairs 
of integers $(d_1, d_2)$.

The following facts are well known: If $\mathbf{A}=\operatorname{Jac}(\mathcal{C})$ is the Jacobian of a smooth curve $\mathcal{C}$ of genus two, then the hermitian 
form associated to the divisor class $[\mathcal{C}]$ is a polarization of type $(1, 1)$, also called a principal polarization. A curve $\mathcal{C}$ of genus-two will be called 
generic if $\operatorname{NS}(\operatorname{Jac} \mathcal{C})=\mathbb{Z}[\mathcal{C}]$. Conversely, over the complex numbers a principally polarized abelian surface is 
either the Jacobi variety of a smooth curve of genus two with the theta-divisor or the product of two complex elliptic curves with the product polarization. 
Moreover, every polarization is induced by a principal polarization via an isogeny of abelian surfaces~\cite[Sec.~4]{MR2062673}.
For an abelian surface $\mathbf{A}$ with polarization $\mathcal{L}$, a well known result of Lefschetz states that when $d_1 \ge 3$ then the theta-functions, 
which are sections of the associated rational map $\Phi_\mathcal{L}: \mathbf{A} \to \mathbb{P}^N$ provide an embedding of the abelian surface into projective space.
Moreover, theorems in \cite[Sec.~4]{MR2062673}  give a complete answer to the question whether $\Phi_\mathcal{L}$ is an embedding if $d_1=2$.
If $2 \mid d_1$ and $d_1 \ge 4$ or $3 \mid d_1$, Riemann's theta relations or cubic theta relations, respectively, describe the image
of  $\Phi_\mathcal{L}$ in $\mathbb{P}^N$ ~\cite[Sec.~7]{MR2062673}. If $d_1=1$ and $d_2 \ge 5$, Reider's theorem provides an explicit criterion for $\Phi_\mathcal{L}$
to be an embedding \cite[Sec.~10]{MR2062673}. The image of $\Phi_\mathcal{L}$ in the case of polarizations of type $(1,4)$ and $(2,2)$ was also described in \cite[Sec.~10]{MR2062673}. In \cite{MR1602020,MR1827859}, the equations of projectively embedded abelian surfaces with polarizations of type $(1,d)$ for $d\ge 5$ were given.

\subsubsection{$(1,2)$-polarizations}
The case of $(1,2)$-polarized abelian surfaces has been studied to a lesser extent -- to the authors' knowledge only in \cite{MR946234}, \cite[Sec.~10]{MR2062673}, \cite[Sec.~2.4]{MR2363136orig}, and \cite[Sec.~6]{MR2888217orig}.  Let us review some of the basic facts regarding $(1,2)$-polarized abelian surfaces:
if $\mathbf{B}$ is an abelian surface with a line bundle $\mathcal{L}$ of self-intersection $\mathcal{L}\cdot \mathcal{L}=4$,
the Riemann-Roch theorem then states that $\chi(\mathcal{L})=\mathcal{L}\cdot \mathcal{L}/2=2$. Moreover, the line bundle $\mathcal{L}$
is ample if and only if the Hodge numbers satisfy $h^i(\mathcal{L})=0$ for $i=1,2$ (and $\mathcal{L}\cdot \mathcal{L}=2>0$). On the other hand,
if the ample line bundle $\mathcal{L}$ is of type $(d_1,d_2)$ we have $h^0(\mathcal{L})=d_1d_2$ and the associated rational
map is denoted by $\Phi_{\mathcal{L}}: \mathbf{B} \to \mathbb{P}^{d_1d_2-1}$. Therefore,
it follows that an ample line bundle with self-intersection $\mathcal{L}\cdot \mathcal{L}=4$ necessarily defines a polarization
of type $(1,2)$ on $\mathbf{B}$ and the associated rational map is $\Phi_{\mathcal{L}}: \mathbf{B} \to \mathbb{P}^1$.
However, since $h^0(\mathcal{L})=\mathcal{L}\cdot \mathcal{L}/2=2>0$ the map $\Phi_{\mathcal{L}}$ is not a fibration
and the linear system $|\mathcal{L}|$ induces a linear pencil on $\mathbf{B}$.

Effective divisors on $\mathbf{B}$ are curves, and for any curve $\mathcal{B}$ on $\mathbf{B}$
the arithmetic genus is given by $p_a(\mathcal{B})=1-\chi(\mathcal{O}_{\mathcal{B}})$, and is by the adjunction formula
related to the self-intersection number by $2 p_a(\mathcal{B}) -2 =\mathcal{B}\cdot \mathcal{B}$.
Therefore, any curve $\mathcal{B}$ in the
linear system $|\mathcal{L}|$ of the ample line bundle $\mathcal{L}$ of type $(1,2)$ has $\mathcal{B}\cdot \mathcal{B}=4$ and 
$p_a(\mathcal{B})=3$.

Barth \cite[Sec.~1.1]{MR946234} gave an effective criterion for the existence of an irreducible curve $\mathcal{B} \in |\mathcal{L}|$ and proves
that if there exists such an irreducible element in $|\mathcal{L}|$, then the general member in $|\mathcal{L}|$ is irreducible and smooth, and the linear 
pencil has exactly four distinct base points $\{q_0, q_1, q_2, q_3\}$. 
We assume hereafter that any element of $|\mathcal{L}|$ is irreducible. 
In particular, since $p_a(\mathcal{B})=3$  a curve $\mathcal{B}\in |\mathcal{L}|$ is either smooth of genus three or an irreducible curve of geometric genus two 
with a double point. For the natural isogeny $T(\mathcal{L}): \mathbf{B} \to \operatorname{Pic}^0( \mathbf{B} )$ given by $t \mapsto \mathcal{L}^{-1}
\otimes t^*\mathcal{L}$, Barth proves that $\{q_0, q_1, q_2, q_3\} \cong  \ker T(\mathcal{L}) \cong (\mathbb{Z}/2)^2$.
Hence, the points $\{q_0, q_1, q_2, q_3\}$ are of order-two, i.e., fixed points by the $(-\mathbb{I})$-involution on $\mathbf{B}$,
and no curves $\mathcal{B} \in |\mathcal{L}|$ are singular at any of the base points.
Moreover, any curve $\mathcal{B} \in |\mathcal{L}|$ passing through the 12 remaining order-two points $\{q_4, \dots, q_{15}\}$ is singular, and
therefore must be an irreducible curve of geometric genus two with one node.

Another important result of \cite[Sec.~1.1]{MR946234} is that the line bundle $\mathcal{L}$ is symmetric and the $(-\mathbb{I})$-involution on $\mathbf{B}$
restricts to an involution $\imath^{\mathcal{B}}$ on each curve $\mathcal{B} \in |\mathcal{L}|$. If $\mathcal{B} \in |\mathcal{L}|$ is smooth, then 
by the Riemann-Hurwitz formula the quotient $\mathcal{F}=\mathcal{B}/\langle \imath^{\mathcal{B}} \rangle$ is an elliptic curve. 
The curve $\mathcal{B}$ is called a \emph{bielliptic curve}
since it admits a so-called elliptic involution $\imath^{\mathcal{B}}$ covering a degree-two morphism 
$\phi^{\mathcal{B}}_{\mathcal{F}}$ onto the elliptic curve $\mathcal{F}$. The four branch points of $\phi^{\mathcal{B}}_{\mathcal{F}}$ 
are exactly $\{q_0, q_1, q_2, q_3\}$.  Conversely, it was proved that for any smooth genus-three curve $\mathcal{B}$ 
the property of admitting a bielliptic involution is equivalent to admitting an embedding
into an abelian surface $\mathbf{B}$ \cite[Prop.~1.8]{MR946234}. By the universality of Jacobian varieties, 
it then follows that the embedding $\mathcal{B} \hookrightarrow\mathbf{B}$
is up to a two-torsion translation uniquely determined by the property that the abelian involution restricts to the
bielliptic involution, i.e., $\imath^{\mathcal{B}} = -\mathbb{I} \mid_{\mathcal{B}}$ \cite[Prop.~1.10]{MR946234}.

To any double cover of a curve one can associate a \emph{Prym variety}. 
In our situation, choosing a branch point $q_0 \in \mathcal{B}$ of $\phi^{\mathcal{B}}_{\mathcal{F}}: \mathcal{B} \to \mathcal{F}$
one obtains an embedding $\mathcal{B} \hookrightarrow \operatorname{Pic}^0( \mathcal{B} )$ via $q \mapsto \mathcal{O}_{\mathcal{B}}(q-q_0)$.
Then, there is a norm morphism $\operatorname{Nm}: \operatorname{Pic}^0( \mathcal{B} ) \to \mathcal{F}$ defined on the level of divisors
by applying $\phi^{\mathcal{B}}_{\mathcal{F}}$ to each point of a divisor class of degree zero.
The kernel of this map $\ker{(\operatorname{Nm})}$ is called the Prym variety $\operatorname{Prym}(\mathcal{B}/\mathcal{F})$.
The Prym variety $\operatorname{Prym}(\mathcal{B}/\mathcal{F})$ can also be identified 
with the quotient $\operatorname{Pic}^0( \mathcal{B} )/\phi^{\mathcal{B}\, *}_{\mathcal{F}}(\mathcal{F})$
where $\phi^{\mathcal{B}\, *}_{\mathcal{F}}(\mathcal{F})$ is the pullback of $\mathcal{F}\cong \operatorname{Pic}^0( \mathcal{F} )$ on the level of Jacobians
\cite[Lem.~6.2.1]{MR2888217orig}.

The \emph{duality theorem} \cite[Thm.~1.12]{MR946234} can then be stated as follows:
\begin{theorem}[duality theorem]
\label{thm:duality}
\begin{enumerate}
\item[]
\item For any smooth genus-three curve $\mathcal{B}$ admitting a bielliptic involution $\imath^{\mathcal{B}}$ 
covering a degree-two morphism  $\phi^{\mathcal{B}}_{\mathcal{F}}:  \mathcal{B} \to \mathcal{F}=\mathcal{F}=\mathcal{B}/\langle \imath^{\mathcal{B}} \rangle$,
it follows that $\mathcal{B}$ is embedded into $\operatorname{Prym}(\mathcal{B}/\mathcal{F})$ with self-intersection $4$ and 
$\operatorname{Prym}(\mathcal{B}/\mathcal{F})$ carries a natural $(1,2)$-polarization.

\item For any abelian surface $\mathbf{B}$ with a line bundle $\mathcal{L}$ of self-intersection $\mathcal{L}\cdot \mathcal{L}=4$
defining a $(1,2)$-polarization such that any element of $|\mathcal{L}|$ is irreducible, it follows that the general member $\mathcal{B} \in |\mathcal{L}|$
is a bielliptic curve of genus-three such that $\imath^{\mathcal{B}} = -\mathbb{I} \mid_{\mathcal{B}}$ and 
$\operatorname{Prym}(\mathcal{B}/\mathcal{F}) \cong \mathbf{B}$.
\end{enumerate}
\end{theorem}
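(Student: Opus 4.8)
The plan is to treat both parts through the Jacobian $\operatorname{Jac}(\mathcal{B})=\operatorname{Pic}^0(\mathcal{B})$ together with the involution $\sigma$ it inherits from the bielliptic (equivalently, abelian) involution $\imath^{\mathcal{B}}$. Write $\phi:=\phi^{\mathcal{B}}_{\mathcal{F}}$ for the degree-two map. Since $g(\mathcal{B})=3$ and $g(\mathcal{F})=1$, Riemann--Hurwitz gives exactly four branch points, and the Prym variety $P:=\operatorname{Prym}(\mathcal{B}/\mathcal{F})$ has dimension $g(\mathcal{B})-g(\mathcal{F})=2$, so it is an abelian surface. The endomorphism $\sigma$ induced by $\imath^{\mathcal{B}}$ on $\operatorname{Jac}(\mathcal{B})$ is an involution whose invariant part is the connected component of $\phi^{*}\mathcal{F}$, an elliptic curve, and whose anti-invariant part is $P$; in particular $\operatorname{Jac}(\mathcal{B})$ is isogenous to $\phi^{*}\mathcal{F}\times P$, and $P=\operatorname{Jac}(\mathcal{B})/\phi^{*}\mathcal{F}$, matching the two descriptions of the Prym recalled above.

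For part (1), I would fix a branch point $q_{0}$ and compose the Abel--Jacobi embedding $q\mapsto\mathcal{O}_{\mathcal{B}}(q-q_{0})$ with the projection $\operatorname{Jac}(\mathcal{B})\to P$ to obtain the Abel--Prym map $\mathcal{B}\to P$. Its injectivity is the first point to check: if $q_{1},q_{2}$ had the same image, then $\mathcal{O}_{\mathcal{B}}(q_{1}-q_{2})\in\phi^{*}\mathcal{F}$, which forces this class to be simultaneously $\imath^{\mathcal{B}}$-invariant and anti-invariant, hence two-torsion; since $\mathcal{B}$ carries no $g^{1}_{2}$ in the generic non-hyperelliptic situation, this cannot occur for $q_{1}\neq q_{2}$, so the map is a closed embedding with image a smooth genus-three curve $\mathcal{B}'\subset P$. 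Because the canonical class of the abelian surface $P$ is trivial, adjunction gives $\mathcal{B}'\cdot\mathcal{B}'=2\,g(\mathcal{B}')-2=4$. As $\mathcal{B}'$ generates $P$, the line bundle $\mathcal{O}_{P}(\mathcal{B}')$ is ample, and from $\mathcal{L}\cdot\mathcal{L}=2\,d_{1}d_{2}$ the value $4$ forces $d_{1}d_{2}=2$, i.e. the induced polarization is exactly of type $(1,2)$, proving part (1).

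For part (2), the smoothness and genus of the general member $\mathcal{B}\in|\mathcal{L}|$, the symmetry of $\mathcal{L}$, and the identity $\imath^{\mathcal{B}}=-\mathbb{I}\mid_{\mathcal{B}}$ are already established in the discussion preceding the theorem, so the only new content is the isomorphism $P\cong\mathbf{B}$. Normalizing the inclusion $j\colon\mathcal{B}\hookrightarrow\mathbf{B}$ so that $j(q_{0})=0$ at a branch point, the universal property of the Jacobian factors $j$ through a surjective homomorphism $\psi\colon\operatorname{Jac}(\mathcal{B})\to\mathbf{B}$ whose composition with the Abel--Jacobi embedding recovers $j$. The relation $-\mathbb{I}\circ j=j\circ\imath^{\mathcal{B}}$ then descends to $\psi\circ\sigma=-\psi$; hence on the $\sigma$-invariant part $\phi^{*}\mathcal{F}$ one has $2\psi=0$, and since $\phi^{*}\mathcal{F}$ is connected this gives $\phi^{*}\mathcal{F}\subseteq\ker\psi$. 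Comparing dimensions ($\dim\ker\psi=1=\dim\phi^{*}\mathcal{F}$), $\phi^{*}\mathcal{F}$ is the identity component of $\ker\psi$, so $\psi$ descends to an isogeny $\bar{\psi}\colon P\to\mathbf{B}$. By construction $\bar{\psi}$ carries the Abel--Prym image $\mathcal{B}'$ isomorphically onto $j(\mathcal{B})$, so $\bar{\psi}^{*}\mathcal{L}$ is numerically equivalent to $\mathcal{O}_{P}(\mathcal{B}')$; taking self-intersections and using $(\bar{\psi}^{*}\mathcal{L})\cdot(\bar{\psi}^{*}\mathcal{L})=\deg(\bar{\psi})\,(\mathcal{L}\cdot\mathcal{L})$ yields $4\deg(\bar{\psi})=4$, so $\deg(\bar{\psi})=1$ and $\bar{\psi}$ is an isomorphism $\operatorname{Prym}(\mathcal{B}/\mathcal{F})\cong\mathbf{B}$.

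I expect the main obstacles to lie exactly where curve geometry meets polarization data: first, verifying injectivity of the Abel--Prym map, which in the hyperelliptic-and-bielliptic case relevant to the sequel requires handling the $g^{1}_{2}$ with care and may force one to argue on the general member or to invoke Barth's explicit base-point analysis; and second, in part (2), confirming that $\bar{\psi}$ pulls the $(1,2)$-polarization back to the \emph{minimal} Prym polarization rather than to a multiple of it. It is this numerical matching of polarizing classes, rather than the construction of $\bar{\psi}$ itself, that upgrades an a priori isogeny to the asserted isomorphism.
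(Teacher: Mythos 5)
The paper offers no proof of this statement to compare against: it is quoted as Barth's duality theorem \cite[Thm.~1.12]{MR946234}, and your outline is the standard Prym-theoretic route, essentially Barth's own. Two of your steps, however, do not go through as written. First, you establish injectivity of the Abel--Prym map only for non-hyperelliptic $\mathcal{B}$, while the statement covers all smooth bielliptic genus-three curves, including the hyperelliptic ones such as the curve $\mathcal{D}$ that the paper is built around; and in the hyperelliptic case your chain of implications breaks in a way that is not merely a technicality. Writing $\imath_h$ for the hyperelliptic involution, the relation $q+\imath^{\mathcal{B}}(q')\sim q'+\imath^{\mathcal{B}}(q)$ genuinely holds for $q'=\imath^{\mathcal{B}}\imath_h(q)\neq q$, since both divisors lie in the unique $g^1_2$ and that linear system is preserved by every automorphism; hence $[q-q']$ is $\sigma$-invariant. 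The embedding survives only because, for a ramified double cover, $\ker(1-\sigma)$ strictly contains its identity component $\phi^{\mathcal{B}\,*}_{\mathcal{F}}(\operatorname{Pic}^0\mathcal{F})$, and one must check that $[q-\imath^{\mathcal{B}}\imath_h(q)]$ lands in a nontrivial two-torsion coset of that component. Your argument identifies ``$\sigma$-invariant'' with ``lies in $\phi^{\mathcal{B}\,*}_{\mathcal{F}}(\operatorname{Pic}^0\mathcal{F})$'' and so cannot see this distinction; taken at face value it would predict that the Abel--Prym map factors through the genus-two quotient $\mathcal{B}/\langle\imath^{\mathcal{B}}\imath_h\rangle$, contradicting the very theorem you are proving.

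Second, the degree count in part (2) is circular. The claim that $\bar{\psi}^{*}\mathcal{L}$ is numerically equivalent to $\mathcal{O}_{P}(\mathcal{B}')$ is exactly what fails when $d:=\deg\bar{\psi}>1$: the pullback divisor is $\sum_{k\in\ker\bar{\psi}}(\mathcal{B}'+k)$, whose self-intersection is $4d^{2}$ by translation-invariance of intersection numbers, not $4$. Assuming the equivalence is assuming $d=1$. The repair is immediate — compare $\bigl(\bar{\psi}^{*}\mathcal{L}\bigr)^{2}=4d^{2}$ with the projection formula $\bigl(\bar{\psi}^{*}\mathcal{L}\bigr)^{2}=d\,(\mathcal{L}\cdot\mathcal{L})=4d$ to get $d=1$ — but as written the step proves nothing. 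With these two points fixed (and the hyperelliptic injectivity handled via the two-torsion coset analysis, or by reducing to Barth's base-point description as you suggest), the remainder of your argument, including the construction of $\bar{\psi}$ from the universal property of the Jacobian and the identification of the polarization type from $d_1d_2=2$, is sound.
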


\begin{remark}
One can generalize the notion of Prym variety to include nodes such that
the identification in Theorem~\ref{thm:duality} remains valid for the singular fibers \cite[Lem.~6.2.2]{MR2888217orig}.
\end{remark}

\subsection{Double covers of Kummers}
\label{ssec:doubles}
The Kummer surface $\operatorname{Kum}(\mathbf{A})$ of an abelian surface is closely related to another K3 surface, called K3 surface with Shioda-Inose structure.
A K3 surface $X$ has a Shioda-Inose structure if it admits a rational map  of degree two
 $\phi: X \dashrightarrow \operatorname{Kum}(\mathbf{A})$ 
to a Kummer surface which induces a Hodge isometry between the transcendental lattices $\operatorname{T}(X)(2)$ and 
$\operatorname{T}(\operatorname{Kum} \mathbf{A})$. 
The transcendental lattice is $\operatorname{T}(X) = \operatorname{NS}(X)^{\perp} \in H^2(X,\mathbb{Z})\cap H^{1,1}(X)$, and
the notation $\operatorname{T}(X)(2)$ indicates that the bilinear pairing on the transcendental lattice $\operatorname{T}(X)$ 
is multiplied by $2$. Morrison proved that a K3 surface $X$ admits a Shioda-Inose structure if and only if there exists a Hodge isometry
between the transcendental lattices  $\operatorname{T}(X) \cong \operatorname{T}(\mathbf{A})$. 

 In general, a rational map of degree two between a K3 surface and a Kummer surface, does not need to induce a Hodge isometry of their transcendental lattices.
  Mehran proved  in \cite{MR2306633}  that for a K3 surface $X$ whose transcendental lattice admits an embedding $\operatorname{T}(X) \hookrightarrow \operatorname{T}(\mathbf{A})$ into 
 the period lattice of an abelian surface $\mathbf{A}$ with the property $\operatorname{T}(\mathbf{A})/\operatorname{T}(X) \cong (\mathbb{Z}/2)^\alpha$ for $0 \le \alpha \le 4$, there exist a rational map of degree two $\phi: X \dashrightarrow \operatorname{Kum}(\mathbf{A})$. Mehran also proved that there exist only finitely many isomorphism classes of such K3 surfaces 
 admitting a rational map of degree two 
 onto $\operatorname{Kum}(\mathbf{A})$ by giving a complete list of all possible choices for the transcendental lattices $\operatorname{T}(X)$
 and the number of isomorphism classes for each choice
 (cf.~\cite[Cor.~3.3]{MR2306633}). 
 
 A subset of this list is comprised by K3 surfaces $X$ that admit an involution $\theta: X \to X$ such that the induced map on
 the N\'eron-Severi lattice $\operatorname{NS}(X)$ and the transcendental lattice $\operatorname{T}(X)$ acts by $\mathbb{I}$ and $-\mathbb{I}$, respectively.
Mehran proved in \cite[Cor.~3.3]{MR2306633} that the transcendental lattice $\operatorname{T}(X)$ of the K3 surface $X$ with Picard number seventeen 
has to be one of the following three cases: 
 \begin{enumerate*}
 \item the transcendental lattice is $H^2 \oplus \langle -2 \rangle$ in which case there is only one isomorphism class; this case recovers the case of K3 surfaces with a Shioda-Inose structure by Morrison's criterion;
\item the transcendental lattice is $H(2) \oplus H \oplus \langle -2 \rangle$ in which case there are 
15 isomorphism classes;
\item the transcendental lattice is  $H(2)^2\oplus \langle -2 \rangle$ in which case there are 15 
isomorphism classes. 
\end{enumerate*}
Here,  $H$ is the hyperbolic rank-two lattice $\mathbb{Z}^2$ with the quadratic form $2 xy$.
 This article is dedicated to a comprehensive investigation of the third case.

\subsection{Even eights on K3 and Kummer surfaces}
On an arbitrary K3 surface $Y$, an even eight is a set of eight disjoint smooth rational curves $\Delta=\lbrace {C}_1, \dots ,{C}_8\rbrace$ 
such that ${C}_1 + \dots + {C}_8 \in 2 \, \operatorname{NS}(Y)$.
For any such even eight, there is a double cover $\phi: Z \dashrightarrow Y$ branched on ${C}_1 + \dots + {C}_8$. 
If ${E}_i$ denotes the inverse image of ${C}_i$ for $1\le i \le 8$, then $\phi^*({C}_i) = 2 \, {E}_i$ and ${E}_i^2 =-1$. 
Hence, one can blow down the ${E}_i$'s to obtain a new surface $X=X(Y,\Delta)$. It turns out that this surface $X$ is again a K3 surface and the covering involution is symplectic and fixes the eight special points obtained in the blow-down, also called a Nikulin involution.

On the other hand, on any Kummer surface there are sixteen smooth disjoint rational curves -- obtained from the resolution of the ordinary double points --
that we can select even eights from. It follows from the work by Nikulin that there exist exactly 30 even eights made up from these exceptional divisors. Mehran~\cite{MR2804549} proved that by applying 
the above construction to any of these even eights on $Y=\operatorname{Kum}(\mathbf{A})$, one obtains again Kummer surfaces: in fact, if $\phi:X \dashrightarrow \operatorname{Kum}(\mathbf{A})$ is 
the double cover associated to an even eight  $\Delta$ of nodes on $Y=\operatorname{Kum}(\mathbf{A})$, then the K3 surface $X$  will contain again sixteen disjoint smooth rational curves. The reason is that  the eight curves complimentary to the even eight on a Kummer surface do not intersect the branch locus of the  double cover $\phi: Z \dashrightarrow Y$. Thus, they split under the map $\phi$ and define sixteen disjoint smooth rational curves on $Z$ that are mapped by the blow-down to sixteen curves on $X$. Thus, $X$ contains sixteen disjoint smooth rational curves and hence is a Kummer surface
$X=\operatorname{Kum}(\mathbf{B})$ for some abelian surface $\mathbf{B}$. Two Kummer surfaces $X(Y,\Delta_1)$ and  $X(Y,\Delta_2)$ constructed from two even eights $\Delta_1$ 
and $\Delta_2$ are isomorphic if and only if the Nikulin lattices generated by $\Delta_1$ and  $\Delta_2$ are related by an automorphism on $Y$. 
Moreover, one obtains the exact same Kummer surface  whether one takes the double cover branched along an even eight or its complement. 

For a Kummer surface $Y=\operatorname{Kum}(\operatorname{Jac} C)$ of the Jacobian of a generic curve $\mathcal{C}$ 
of genus two, Mehran labels the fifteen different even eights (up to taking complements) by $\Delta_{ij}$ with $1 \le i < j \le 6$, and proves in \cite[Prop.~4.2]{MR2804549} 
that Kummer surfaces $X(Y,\Delta_{ij})$ and $X(Y,\Delta_{i'j'})$ are isomorphic if and only if $i=i'$ and $j=j'$. 
Moreover, she proved that there is an abelian surface $\mathbf{B}_{ij}$ associated with $X(Y,\Delta_{ij})$  such that 
$X=\operatorname{Kum}(\mathbf{B}_{ij})$ and the rational map $\phi_{ij}: X(Y,\Delta_{ij}) \dashrightarrow \operatorname{Kum}(\mathbf{A})$ is induced by an isogeny $\varphi_{ij}: \mathbf{B}_{ij} \to \mathbf{A}$ 
of abelian surfaces of degree two~\cite{MR2804549}.  The 
Kummer surfaces $\operatorname{Kum}(\mathbf{B}_{ij})=X(\operatorname{Kum}(\operatorname{Jac} \mathcal{C}),\Delta_{ij})$ realize the 15 isomorphism classes of 
rational degree-two covers of $\operatorname{Kum}(\operatorname{Jac} \mathcal{C})$ with transcendental lattice $\operatorname{T}(X) \cong H(2)\oplus H(2) \oplus \langle -2 \rangle$ from Mehran's classification list mentioned above. 

\subsection{Elliptic fibrations on Kummer surfaces}
Any K3 surface with Picard number $X$ bigger than 4 is guaranteed to admit an elliptic fibration, i.e., a regular map into $\mathbb{P}^1$
where the general fiber is a smooth connected curve of genus one. Moreover, Sterk showed in \cite{MR786280} that any K3 surface $X$ does only have finitely many elliptic fibrations up to $\operatorname{Aut}(X)$. 

On the Kummer surface $\operatorname{Kum}(\mathcal{E}_1 \times \mathcal{E}_2)$ for the product  of two non-isogenous generic complex 
elliptic curves, the associated Kummer surface has Picard number eighteen, or even nineteen if the two elliptic curves are 
mutually isogenous. Oguiso classified all eleven inequivalent elliptic fibrations on the Kummer surface for the product of two non-isogenous 
generic elliptic curves in \cite{MR1013073}. 
The simplest of such elliptic  fibrations are the ones induced by the projection of $\mathcal{E}_1 \times \mathcal{E}_2$ onto its first or second factor and are called the 
first or second Kummer pencil.  Kuwata and Shioda determined in \cite{MR2409557} explicitly elliptic parameters and Weierstrass equations for all eleven different fibrations that appear.

On the Kummer surface $\operatorname{Kum}(\operatorname{Jac} \mathcal{C})$ for a generic curve  $\mathcal{C}$ of genus two, there are always two sets of 
sixteen $(-2)$-curves, called nodes and tropes, which are either the exceptional divisors corresponding to blow-up of the 16 two-torsion points or they arise from the
embedding of the polarization divisor as symmetric theta divisors. These two sets of smooth rational curves have a rich symmetry, the so-called $16_6$-configuration 
where each node intersects exactly six tropes and vice versa \cite{MR1097176}.  Using curves in the $16_6$-configuration, 
one can find all elliptic fibrations since all irreducible components of a reducible fiber in an elliptic fibration are $(-2)$-curves \cite{MR0184257} and
the rank of the N\'eron-Severi group is seventeen. All inequivalent elliptic fibrations where determined explicitly by Kumar in \cite{MR3263663}. In particular, Kumar computed elliptic parameters and 
Weierstrass equations for all twenty five different fibrations that appear, and analyzed the reducible fibers and  Mordell-Weil lattices.

The Shioda-Inose partner of $\operatorname{Kum}(\operatorname{Jac} C)$ admits exactly two elliptic fibrations realizing the two inequivalent orthogonal 
complements of $H \oplus H \oplus \langle -2 \rangle$ in the K3 lattice $H^3 \oplus E_8(-1)^2$. Here, $E_8(-1)$ is the negative definite lattice associated with the exceptional root systems of $E_8$.
These two elliptic fibrations were described in \cite{MR2427457, MR2935386} and applied to F-theory and gauge theory in \cite{MR3366121,MR2854198,Malmendier:2016aa}. 
Mehran proved in \cite{MR2804549} that the Kummer surface $\operatorname{Kum}(\mathbf{B}_{12})$ admits an elliptic fibration with exactly twelve singular fibers 
of Kodaira-type $I_2$. The existence of this particular Jacobian elliptic fibration will play a key role in the proof of Theorem~\ref{cor:pencil}.

  \subsection{Jacobian elliptic fibrations}
A surface is called a Jacobian elliptic fibration if it is a (relatively) minimal elliptic surface $\pi: \mathcal{X} \to \mathbb{P}^1$ over $\mathbb{P}^1$
with a distinguished section $\mathsf{S}_0$. The complete list of possible singular fibers has been determined by Kodaira~\cite{MR0184257}. 
It encompasses two infinite families $(I_n, I_n^*, n \ge0)$ and six exceptional cases $(II, III, IV, II^*, III^*, IV^*)$.
To each Jacobian elliptic fibration $\pi: \mathcal{X} \to \mathbb{P}^1$ there is an associated Weierstrass model $\bar{\mathcal{X}\,}\to \mathbb{P}^1$ 
obtained by contracting all components of fibers not meeting $\mathsf{S}_0$. 
The fibers of $\bar{\mathcal{X}\,}$ are all irreducible, the singularities are all rational double points, and $\mathcal{X}$ is the minimal desingularization.
If we choose $t \in \mathbb{C}$ as a local affine coordinate on $\mathbb{P}^1$, the Weierstrass normal form is given by
\begin{equation}
\label{Eq:Weierstrass}
 Y^2 = 4 \, X^3 - g_2(t) \, X - g_3(t) \;,
\end{equation}
where $g_2$ and $g_3$ are polynomials in $t$ of degree four and six, respectively, because $\mathcal{X}$ is a K3 surface. It is of course well known
how the type of singular fibers is read off from the orders of vanishing of the functions $g_2$, $g_3$ and the discriminant $\Delta= g_2^3 - 27 \, g_3^2$ 
at the various singular base values. Note that the vanishing degrees of $g_2$ and $g_3$ are always less or equal three and five, respectively,
as otherwise the singularity of $\bar{\mathcal{X}\,}$ is not a rational double point.

For a family of Jacobian elliptic surfaces $\pi: \mathcal{X} \to \mathbb{P}^1$, the two classes in N\'eron-Severi lattice $\operatorname{NS}(\mathcal{X})$ associated 
with the elliptic fiber and section  span a sub-lattice $\mathcal{H}$ isometric to the standard hyperbolic lattice $H$, and we have the following decomposition 
as a direct orthogonal sum
\begin{equation*}
 \operatorname{NS}(\mathcal{X}) = \mathcal{H} \oplus \mathcal{W} \;.
\end{equation*}
There is a sub-lattice $\mathcal{W}^{\mathrm{root}} \subset \mathcal{W}$ spanned by the roots, i.e., the algebraic classes of
self-intersection $-2$ inside $\mathcal{W}$. The singular fibers of the Weierstrass model $\bar{\mathcal{X}\,} \to \mathbb{P}^1$ 
determine $\mathcal{W}^{\mathrm{root}}$ uniquely up to permutation. Moreover, by Nishiyama's lemma 
there exists a canonical group isomorphism identifying $\mathcal{W}/\mathcal{W}^{\mathrm{root}}$ with the Mordell-Weil 
group of sections, i.e.,
\begin{equation}
 \mathcal{W}/\mathcal{W}^{\mathrm{root}} \stackrel{\cong} \longrightarrow \operatorname{MW}({\pi}, \mathsf{S}_0) \;.
\end{equation}

 An important tool in the investigation of fibrations is the so-called 
 discriminant form $(D_\mathrm{L},q_\mathrm{L})$ of the transcendental lattice $\mathrm{L}$.   
For a non-degenerate even lattice $\mathrm{L}$ over $\mathbb{Z}$ with a quadratic form $Q$ and dual lattice $\mathrm{L}^\vee$, 
the discriminant group is defined by $D_\mathrm{L}=\mathrm{L}^\vee/\mathrm{L}$. The natural projection is denoted by 
$\Psi: \mathrm{L}^{\vee} \to D_\mathrm{L}$. A non-degenerate quadratic form $q_{\mathrm{L}}$ on $D_\mathrm{L}$ with 
values in $\mathbb{Z}/2 \mod{2}$ is given by $q_{\mathrm{L}}(x) = Q(x') \!\!\mod{2}$ where $x' \in \mathrm{L}^\vee$ such that $\Psi(x')=x$.
There is a bijection between the set of isotopic subgroups of the discriminant group and the set
of over-lattices of $\mathrm{L}$, i.e., the integral sub-lattices of $\mathrm{L}^\vee$ containing $\mathrm{L}$:
\begin{theorem}[Nikulin \cite{MR525944}]
\label{thm:Nikulin}
If $\operatorname{V} \subset D_\mathrm{L}$ is a subgroup isotopic with respect
to $q_{\mathrm{L}}$, then $\mathrm{M}=\Psi^{-1}(\operatorname{V})$ is an even over-lattice of $\mathrm{L}$, and the discriminant form
of $\mathrm{M}$ is isomorphic to
\begin{equation}
  \Big( D_\mathrm{M}, q_{\mathrm{M}} \Big) =\Big( D_\mathrm{L}, q_{\mathrm{L}} \Big)\Big|_{\operatorname{V}^\perp/\operatorname{V}} \;.
\end{equation}
The correspondence $\operatorname{V} \to \mathrm{M}$ is a bijection.
\end{theorem}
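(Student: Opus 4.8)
The plan is to carry out the entire argument inside the rational quadratic space $\mathrm{L}\otimes\mathbb{Q}$, in which $\mathrm{L}$, its dual $\mathrm{L}^\vee$, and every candidate over-lattice live as full-rank sublattices. I would first record the elementary but crucial constraint that any even over-lattice $\mathrm{M}$ of $\mathrm{L}$ is automatically sandwiched as $\mathrm{L}\subseteq\mathrm{M}\subseteq\mathrm{L}^\vee$: integrality of $\mathrm{M}$ gives $\mathrm{M}\subseteq\mathrm{M}^\vee$, and $\mathrm{L}\subseteq\mathrm{M}$ dualizes to $\mathrm{M}^\vee\subseteq\mathrm{L}^\vee$. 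This is precisely why the definition of over-lattice as a sublattice of $\mathrm{L}^\vee$ loses nothing, and why the projection $\Psi\colon\mathrm{L}^\vee\to D_\mathrm{L}$ is the right bookkeeping device: over-lattices must correspond to subgroups of $D_\mathrm{L}$, and the whole theorem amounts to pinning down exactly which subgroups occur. Throughout I would write $\langle\,,\,\rangle$ for the bilinear form on $\mathrm{L}\otimes\mathbb{Q}$ and $b_\mathrm{L}$ for the induced $\mathbb{Q}/\mathbb{Z}$-valued bilinear form on $D_\mathrm{L}$.

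The first step is to check that $\mathrm{M}=\Psi^{-1}(\operatorname{V})$ is an even over-lattice. For $x,y\in\mathrm{M}$ their images lie in $\operatorname{V}$; isotropy forces $b_\mathrm{L}$ to vanish on $\operatorname{V}\times\operatorname{V}$, so $\langle x,y\rangle\in\mathbb{Z}$ and the form on $\mathrm{M}$ is integral, while $q_\mathrm{L}(\Psi(x))=0$ gives $\langle x,x\rangle\in 2\mathbb{Z}$, so $\mathrm{M}$ is even. Note that the vanishing of $b_\mathrm{L}$ on $\operatorname{V}$ is not an extra hypothesis: it follows from $q_\mathrm{L}|_{\operatorname{V}}=0$ via the identity $q_\mathrm{L}(v+w)=q_\mathrm{L}(v)+q_\mathrm{L}(w)+2\,b_\mathrm{L}(v,w)$. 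Finiteness of the index is automatic since $D_\mathrm{L}$ is finite.

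The heart of the proof---and the step I expect to require the most care---is identifying the discriminant form of $\mathrm{M}$. Here I would establish the two identities $\mathrm{M}/\mathrm{L}=\operatorname{V}$ (immediate) and $\mathrm{M}^\vee/\mathrm{L}=\operatorname{V}^\perp$, the latter by unwinding definitions: for $z\in\mathrm{L}^\vee$ one has $z\in\mathrm{M}^\vee$ iff $\langle z,m\rangle\in\mathbb{Z}$ for all $m\in\mathrm{M}$, i.e. iff $b_\mathrm{L}(\Psi(z),\bar v)=0$ for all $\bar v\in\operatorname{V}$, which says exactly $\Psi(z)\in\operatorname{V}^\perp$, so that $\mathrm{M}^\vee=\Psi^{-1}(\operatorname{V}^\perp)$. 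The third isomorphism theorem then yields $D_\mathrm{M}=\mathrm{M}^\vee/\mathrm{M}=\operatorname{V}^\perp/\operatorname{V}$, which makes sense precisely because isotropy gives $\operatorname{V}\subseteq\operatorname{V}^\perp$ (as a cardinality check, non-degeneracy of $b_\mathrm{L}$ gives $|\operatorname{V}^\perp/\operatorname{V}|=|D_\mathrm{L}|/|\operatorname{V}|^2$, matching $|\!\det\mathrm{M}|=|\!\det\mathrm{L}|/[\mathrm{M}:\mathrm{L}]^2$). Finally, since any class in $\operatorname{V}^\perp/\operatorname{V}$ is represented by some $z\in\mathrm{M}^\vee\subseteq\mathrm{L}^\vee$, the definition $q_\mathrm{M}([z])=Q(z)\bmod 2=q_\mathrm{L}(\Psi(z))$ exhibits $q_\mathrm{M}$ as the form induced by the restriction of $q_\mathrm{L}$ to $\operatorname{V}^\perp$; I would confirm it descends to the quotient by checking that $q_\mathrm{L}$ is constant on $\operatorname{V}$-cosets inside $\operatorname{V}^\perp$, again using isotropy together with $\operatorname{V}\subseteq\operatorname{V}^\perp$. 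This is precisely the claimed $(D_\mathrm{L},q_\mathrm{L})|_{\operatorname{V}^\perp/\operatorname{V}}$.

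It remains to prove bijectivity. Injectivity is immediate, since $\operatorname{V}=\mathrm{M}/\mathrm{L}$ can be recovered from $\mathrm{M}$. For surjectivity I would run the construction backwards: given any even over-lattice $\mathrm{M}$, the chain $\mathrm{L}\subseteq\mathrm{M}\subseteq\mathrm{L}^\vee$ from the first paragraph lets me set $\operatorname{V}=\mathrm{M}/\mathrm{L}\subseteq D_\mathrm{L}$, and evenness of $\mathrm{M}$ makes $q_\mathrm{L}$ vanish on $\operatorname{V}$, so $\operatorname{V}$ is isotropic with $\Psi^{-1}(\operatorname{V})=\mathrm{M}$. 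Thus the correspondence is a bijection. The only genuinely delicate point is the bookkeeping in the discriminant-form computation, where one must keep straight which cosets are taken modulo $\mathrm{L}$ versus modulo $\mathrm{M}$; once the identity $\mathrm{M}^\vee=\Psi^{-1}(\operatorname{V}^\perp)$ is in hand, everything else is formal.
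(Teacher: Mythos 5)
Your argument is correct and complete: the sandwich $\mathrm{L}\subseteq\mathrm{M}\subseteq\mathrm{M}^\vee\subseteq\mathrm{L}^\vee$, the identification $\mathrm{M}^\vee=\Psi^{-1}(\operatorname{V}^\perp)$, the third-isomorphism-theorem step $D_\mathrm{M}\cong\operatorname{V}^\perp/\operatorname{V}$, and the inverse construction $\operatorname{V}=\mathrm{M}/\mathrm{L}$ are exactly the standard proof of Nikulin's Proposition 1.4.1. The paper itself gives no proof of this statement --- it is quoted as a background result with attribution to Nikulin --- so there is nothing to compare against beyond noting that your route is essentially the one in the cited source, including the correct observation that isotropy of $q_\mathrm{L}$ on $\operatorname{V}$ already forces the bilinear form $b_\mathrm{L}$ to vanish there via $q_\mathrm{L}(v+w)=q_\mathrm{L}(v)+q_\mathrm{L}(w)+2\,b_\mathrm{L}(v,w)$.
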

 It follows that the discriminant group and discriminant form of the transcendental lattice $\operatorname{T}(\mathcal{X})$
 can be computed from the lattice of roots using
\begin{equation}
\label{eqn:transL}
 \Big( D_{ \operatorname{T}(\mathcal{X})}, q_{\, \operatorname{T}(\mathcal{X})} \Big) \cong \Big( D_\mathcal{W}, -q_{\mathcal{W}} \Big) \;.
\end{equation}
The discriminant group of the transcendental lattice of a K3 surface is an essential part in Mehran's proof:
 It follows from the work of Nikulin \cite[Prop.~4.1]{MR2306633} that
 an involution $\theta: X \to X$ -- such that the induced map on the N\'eron-Severi lattice $\operatorname{NS}(X)$ 
 and the transcendental lattice $\operatorname{T}(X)$ acts by $\mathbb{I}$ and $-\mathbb{I}$, respectively -- 
 exists if and only if the discriminant group $D_{\operatorname{T}(X)}$ of the transcendental lattice is two-elementary, i.e., isomorphic 
 to $(\mathbb{Z}/2)^\alpha$ for some $\alpha$. For a K3 surface with Shioda-Inose structure we have
$D_{\operatorname{T}(X)}\cong \mathbb{Z}/2$, whereas for the transcendental lattice $\operatorname{T}(X)=H(2)^2\oplus \langle -2 \rangle$
we find the discriminant group $D_{\operatorname{T}(X)}\cong (\mathbb{Z}/2)^5$. 
   
 \section{Genus-two curves, Jacobians, and two-isogenies}
\label{genus-two}

\subsection{The Siegel three-fold and theta-functions}
In this section we give a brief review of genus-two theta-function as they relate to our work. Most of this material
can be found in any standard reference on theta-functions, in particular \cite{MR0141643, MR0168805}.

The Siegel three-fold is a quasi-projective variety of dimension $3$ obtained from the Siegel upper 
half-plane of degree two which by definition is the set of two-by-two symmetric matrices over $\mathbb{C}$ whose imaginary part is positive definite, i.e.,
\begingroup\makeatletter\def\f@size{10}
 \begin{align}
\label{Siegel_tau}
 \mathbb{H}_2 = \left. \left\lbrace \underline{\tau} = \left( \begin{array}{cc} \tau_{11} & \tau_{12}  \\ \tau_{12}  & \tau_{22} \end{array} \right) \right|
 \tau_{11}, \tau_{22}, \tau_{12} \in \mathbb{C},\, \operatorname{Im}{(\tau_{11})} \,  \operatorname{Im}{(\tau_{22}}) > \operatorname{Im}{(\tau_{21})}^2, \, \operatorname{Im}{(\tau_{22})} > 0 \right\rbrace ,
 \end{align}
\endgroup
quotiented out by the action of the modular transformations $\Gamma_2:=
\operatorname{Sp}_4(\mathbb{Z})$, i.e., 
\begin{equation}
 \mathcal{A}_2 =  \mathbb{H}_2 / \Gamma_2 \;.
\end{equation} 
Each $\underline{\tau} \in \mathbb{H}_2$ determines a principally polarized complex abelian surface $\mathbf{A}_{\underline{\,\tau}} = \mathbb{C}^2 / \langle \mathbb{Z}^2 \oplus \underline{\tau} \,
\mathbb{Z}^2\rangle$ with period matrix $(\mathbb{I}_2,\underline{\tau}) \in \operatorname{Mat}(2, 4;\mathbb{C})$.
The principal polarization  $\mathcal{L}$ associated to  $\underline{\tau}$ is given by the Riemann form 
$E( x_1 +  \underline{\tau}x_2, y_1  +  \underline{\tau}y_2)=x_1^t\cdot y_2 - y_1^t\cdot x_2$ on  $\mathbb{Z}^2 \oplus \underline{\tau} \, \mathbb{Z}^2$.
Two abelian surfaces $\mathbf{A}_{\underline{\,\tau}}$ 
and $\mathbf{A}_{\underline{\,\tau}'}$ are isomorphic if and only if there is a symplectic matrix 
\begin{equation}
M= \left(\begin{array}{cc} A & B \\ C & D \end{array} \right) \in \Gamma_2
\end{equation}
such that $\underline{\tau}' = M (\underline{\tau}):=(A\underline{\tau}+B)(C\underline{\tau}+D)^{-1}$.
Since $M$ preserves the Riemann form, it follows that the Siegel three-fold $\mathcal{A}_2$ is the set of isomorphism classes of principally polarized abelian surfaces.

For a elliptic variable $\underline{z} \in \mathbb{C}^2$ and modular variable $\underline{\tau}\in \mathbb{H}_2$, Riemann's theta-function is defined by setting
\[ \theta \big( \underline{z} , \underline{\tau} \big) = \sum_{\underline{u}\in \mathbb{Z}^2} e^{\pi i\, ( \underline{u}^t \cdot \underline{\tau} \cdot  \underline{u} + 2 \underline{u}^t \cdot \underline{z} )  } . \]
The theta-function is holomorphic on $\mathbb{C}^2\times
\mathbb{H}_2$. For $\underline{a}, \underline{b} \in \mathbb{Q}^2$, the theta-functions with rational characteristics are defined by setting
\[ 
\theta   \begin{bmatrix} \underline{a}^t \\ \underline{b}^t \end{bmatrix}( \underline{z} , \underline{\tau}) 
=  \sum_{\underline{u}\in \mathbb{Z}^2} e^{\pi i \, \big((\underline{u}+\underline{a})^t \cdot \underline{\tau} 
\cdot  (\underline{u}+\underline{a}) + 2 \, (\underline{u}+\underline{a})^t \cdot (\underline{z}+\underline{b})\big)  }. \]
If the coefficients of the vectors $\underline{a}$ and $\underline{b}$ are in $\{ 0,\frac{1}{2}\}$, the
characteristics are called half-integer characteristics. There are 16 half-integer characteristics, 10 are even and 6 are odd
according to whether the associated theta-characteristics is even or odd, i.e.,
\begin{equation}
\theta   \begin{bmatrix} \underline{a}^t \\ \underline{b}^t \end{bmatrix}( -\underline{z} , \underline{\tau})   
= (-1)^{4 \, \underline{a}^t\cdot \underline{b}} \; \theta   \begin{bmatrix} \underline{a}^t \\ \underline{b}^t \end{bmatrix}(\underline{z} , \underline{\tau}) \;.
\end{equation}
A scalar obtained by evaluating a theta-characteristic at $\underline{z}=0$ is called a theta-constant. 
We denote the even half-integer characteristics by 
\begin{small}
\[
\begin{split}
\theta_1 = \theta\chr {0}{0}{0}{0} ,  \,  
\theta_2 =  \theta\chr {0}{0}{\frac{1}{2}} {\frac{1}{2}} ,  \,
\theta_3 &=  \theta\chr {0}{0}{\frac{1}{2}}{0} , \, 
\theta_4 =  \theta\chr {0}{0}{0}{\frac{1}{2}} , \,   \;
\theta_5 =  \theta\chr{\frac{1}{2}}{0}{0}{0} ,\\
  \theta_6  =  \theta\chr {\frac{1}{2}}{0}{0}{\frac{1}{2}} , \, 
  \theta_7 =  \theta\chr{0}{\frac{1}{2}} {0}{0} , \, 
  \theta_8 &=  \theta\chr{\frac{1}{2}}{\frac{1}{2}} {0}{0} , \, 
  \theta_9 =  \theta\chr{0}{\frac{1}{2}} {\frac{1}{2}}{0} , \, 
  \theta_{10} =  \theta\chr{\frac{1}{2}}{\frac{1}{2}} {\frac{1}{2}}{\frac{1}{2}} ,
\end{split}
\]
\end{small}
and write
\begin{equation}
\label{Eqn:theta_short}
 \theta_i(\underline{z}) \quad \text{instead of} \quad 
 \theta   \begin{bmatrix} \underline{a}^{(i)\, t} \\ \underline{b}^{(i)\, t} \end{bmatrix}  (\underline{z} , \underline{\tau}) 
  \quad \text{where $i=1,\dots ,10$,}
\end{equation}
and $\theta_i =\theta_i(0)$. Under duplication of the modular variable by isogeny,
we want the theta-functions $\theta_1, \theta_5, \theta_7, \theta_8$ to play a role dual to $\theta_1, \theta_2, \theta_3, \theta_4$.
We renumber the former by interchanging the roles of $\underline{a}^{(i)}$ and $\underline{b}^{(i)}$ in
Equation~(\ref{Eqn:theta_short}) and use the symbol $\Theta$ to mark the fact that they are evaluated at the isogenous abelian variety.
Therefore, we will denote the theta-characteristics with doubled modular variable by 
\begin{equation}
\label{Eqn:Theta_short}
 \Theta_i(\underline{z})  \quad \text{instead of} \quad \theta   \begin{bmatrix} \underline{b}^{(i)\, t} \\ \underline{a}^{(i)\, t} \end{bmatrix} (\underline{z} , \, 2 \underline{\tau}) \quad \text{where $i=1,\dots ,10$.}
\end{equation}

\subsection{Abelian surfaces from genus-two curves}
Let $\mathcal{C}$ be an irreducible, smooth, projective curve of genus two, defined over $\mathbb{C}$, 
and let $\mathcal{M}_2$, be the coarse moduli space of smooth curves of genus two. We denote by $[\mathcal{C}]$ the isomorphism class of $\mathcal{C}$, i.e., 
the corresponding point in $\mathcal{M}_2$. For a genus-two curve $\mathcal{C}$ with $[\mathcal{C}] \in \mathcal{M}_2$ given as sextic $Y^2 = f_6(X,Z)$
in weighted projective space $\mathbb{WP}(1,3,1)$, we send three roots $\lambda_4, \lambda_5, \lambda_6$ to $1, 0, \infty$ to get an isomorphic curve
in Rosenhain normal form, i.e., 
\begin{equation}
\label{Eq:Rosenhain}
 \mathcal{C}: \quad Y^2 = X\,Z \, \big(X-Z\big) \, \big( X- \lambda_1 Z\big) \,  \big( X- \lambda_2 Z\big) \,  \big( X- \lambda_3 Z\big) \;.
\end{equation} 
The ordered tuple $(\lambda_1, \lambda_2, \lambda_3)$ where the $\lambda_i$ are all distinct and different from $0, 1, \infty$
determines a point in $\mathcal{M}_2(2)$, the moduli space of genus-two curves together with a level-two structure.
As functions on $\mathcal{M}_2(2)$, the Rosenhain invariants generate its coordinate ring $\mathbb{C}(\lambda_1, \lambda_2, \lambda_3)$.

 Torelli's theorem states that the map sending a curve $\mathcal{C}$ to its Jacobian variety 
$\operatorname{Jac}(\mathcal{C}) = \operatorname{Pic}^0(\mathcal{C})$, i.e.,  the connected component of 
the identity in the Picard group of $\mathcal{C}$, is injective and defines a birational map $\mathcal{M}_2 \dashrightarrow \mathcal{A}_2$.
Moreover, the three $\lambda$-parameters in the Rosenhain normal form of a genus-two curve $\mathcal{C}$ in Equation~(\ref{Eq:Rosenhain}) 
determine a level-two structure on the corresponding Jacobian variety and generate the function field of $\mathcal{A}_2(2)$, 
that is the three-dimensional moduli space of principally polarized abelian surfaces with level-two structure.

The three $\lambda$-parameters in the Rosenhain normal form of a genus-two curve $\mathcal{C}$ in Equation~(\ref{Eq:Rosenhain}) 
can be expressed as ratios of even theta-constants. There are 720 choices for such expressions since
the forgetful map $\mathcal{M}_2(2) \to \mathcal{M}_2$ is a Galois covering of degree $720 = |S_6|$ where $S_6$ acts on the 
roots of $\mathcal{C}$ by permutations. Any of the 720 choices may be used, we picked the one from \cite{MR2367218}.
We therefore have the following lemma:
\begin{lemma} \label{ThomaeLemma}
Given a genus-two curve $\mathcal{C}$,
the Rosenhain roots $\lambda_1, \lambda_2, \lambda_3$  can be written as follows:
\begin{equation}\label{Picard}
\lambda_1 = \frac{\theta_1^2\theta_3^2}{\theta_2^2\theta_4^2} \,, \quad \lambda_2 = \frac{\theta_3^2\theta_8^2}{\theta_4^2\theta_{10}^2}\,, \quad \lambda_3 =
\frac{\theta_1^2\theta_8^2}{\theta_2^2\theta_{10}^2}\,.
\end{equation}
Conversely, given three distinct complex numbers $(\lambda_1, \lambda_2, \lambda_3)$ different from $0, 1, \infty$ there is 
complex abelian surface $\mathbf{A}_{\underline{\,\tau}}$ with period matrix $(\mathbb{I}_2,\underline{\tau})$ and $\underline{\tau} \in\mathcal{A}_2$
such that $\mathbf{A}_{\underline{\,\tau}}=\operatorname{Jac} \, \mathcal{C}$.
\end{lemma}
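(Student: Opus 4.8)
The plan is to treat the two assertions separately: the explicit formula~(\ref{Picard}) is an instance of \emph{Thomae's formula} for the hyperelliptic curve $\mathcal{C}$, while the converse is the surjectivity part of the level-two refined Torelli correspondence, i.e.\ the statement that the Jacobian of the Rosenhain curve built from $(\lambda_1,\lambda_2,\lambda_3)$ admits a normalized period matrix $(\mathbb{I}_2,\underline{\tau})$ with $\underline{\tau}\in\mathbb{H}_2$.

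First, for the forward direction I would recall that the six branch points of $\mathcal{C}$ in Equation~(\ref{Eq:Rosenhain}) are $\{0,1,\lambda_1,\lambda_2,\lambda_3,\infty\}$, and that under the Abel--Jacobi map (with a fixed Weierstrass base point) the $2^{4}$ half-integer characteristics are in the standard bijection with subsets of the branch index set $\{1,\dots,6\}$ taken modulo the fundamental complementary pair; in particular the $10$ even characteristics $\theta_1,\dots,\theta_{10}$ correspond exactly to the $10$ unordered partitions of $\{1,\dots,6\}$ into two triples. Thomae's formula then asserts that for the even characteristic $[\eta]$ attached to the partition $\{i,j,k\}\mid\{l,m,n\}$ one has
\[
\theta[\eta]^4 \;=\; c\cdot (e_i-e_j)(e_j-e_k)(e_k-e_i)\,(e_l-e_m)(e_m-e_n)(e_n-e_l),
\]
with a prefactor $c$ (a power of $\det$ of the $A$-period matrix) that is \emph{independent of the partition}. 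I would then form the ratio $\theta_1^4\theta_3^4/(\theta_2^4\theta_4^4)$; because two even characteristics appear in the numerator and two in the denominator, the prefactor $c$ cancels, leaving a ratio of products of branch-point differences, and I would check that this ratio equals $\lambda_1^2$, so that (after fixing the correct square-root branch, determined by the very choice of the characteristics) $\theta_1^2\theta_3^2/(\theta_2^2\theta_4^2)=\lambda_1$; the analogous computations yield $\lambda_2$ and $\lambda_3$. Since $\infty$ occurs as one of the six branch points, I would first write Thomae's formula for six finite points $e_1,\dots,e_6$ and then pass to the limit $e_6\to\infty$ (equivalently, use the projective/cross-ratio form), so that each surviving factor is one of $0,1,\lambda_i$; this is consistent with the normalization that sent the last three roots to $1,0,\infty$.

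For the converse, I would start from three distinct values $(\lambda_1,\lambda_2,\lambda_3)$ not in $\{0,1,\infty\}$, so that the sextic on the right-hand side of Equation~(\ref{Eq:Rosenhain}) has six distinct roots and $\mathcal{C}$ is a smooth projective curve of genus two. Its Jacobian $\operatorname{Jac}\mathcal{C}=\operatorname{Pic}^0(\mathcal{C})$ is then a principally polarized abelian surface, the polarization being given by the theta divisor $[\mathcal{C}]$. Choosing a symplectic basis of $H_1(\mathcal{C},\mathbb{Z})$ for the intersection pairing together with a dual basis of holomorphic one-forms normalized so that the $A$-periods form $\mathbb{I}_2$, the Riemann bilinear relations guarantee that the matrix $\underline{\tau}$ of $B$-periods is symmetric with positive-definite imaginary part, i.e.\ $\underline{\tau}\in\mathbb{H}_2$. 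Hence $\operatorname{Jac}\mathcal{C}\cong\mathbb{C}^2/\langle\mathbb{Z}^2\oplus\underline{\tau}\,\mathbb{Z}^2\rangle=\mathbf{A}_{\underline{\tau}}$, and by the discussion following Equation~(\ref{Siegel_tau}) the class $[\underline{\tau}]\in\mathcal{A}_2$ is well defined independently of the chosen symplectic basis, which is exactly the asserted statement.

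The main obstacle lies entirely in the forward direction and is combinatorial and analytic rather than conceptual: one must pin down, compatibly with a single global choice among the $720=|S_6|$ sheets of the covering $\mathcal{M}_2(2)\to\mathcal{M}_2$, the precise dictionary assigning to each numbered even characteristic $\theta_1,\dots,\theta_{10}$ (with the explicit half-integer values listed above) its triple of branch points, and then track the signs and the $e_6\to\infty$ limit so that the three Thomae ratios reproduce \emph{exactly} the expressions in~(\ref{Picard}) rather than some $S_6$-translate of them. Rather than re-derive the labeling from scratch, I would fix it by adopting the convention of \cite{MR2367218} and verify the three identities directly, which reduces the proof to the cancellation of the partition-independent prefactor together with the elementary check that the resulting cross-ratios of $\{0,1,\lambda_1,\lambda_2,\lambda_3,\infty\}$ are $\lambda_1,\lambda_2,\lambda_3$.
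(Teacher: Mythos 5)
Your proposal is correct and follows essentially the same route as the paper, which gives no separate proof but derives~(\ref{Picard}) as an instance of Thomae's formula with the particular labeling of even characteristics fixed by the convention of \cite{MR2367218}, and treats the converse as the standard period-matrix/Riemann-bilinear-relations statement. Your additional care about the $e_6\to\infty$ limit and the cancellation of the partition-independent prefactor is exactly the verification the paper delegates to the cited reference.
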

We have the following lemma:
\begin{lemma}
\label{rem:l_var}
The Rosenhain roots $\lambda_1, \lambda_2, \lambda_3$ are expressible as rational functions in the variables 
$\lbrace \Theta_1^2 , \Theta_2^2, \Theta_3^2, \Theta_4^2\rbrace$, and the rational function
\begin{equation}
\label{def_l}
 l = \dfrac{(\Theta_1 \Theta_2 - \Theta_3 \Theta_4) (\Theta_1^2 + \Theta_2^2 + \Theta_3^2 + \Theta_4^2)(\Theta_1^2 - \Theta_2^2 - \Theta_3^2 + \Theta_4^2)}
 {(\Theta_1 \Theta_2 + \Theta_2 \Theta_4) (\Theta_1^2 - \Theta_2^2 + \Theta_3^2 - \Theta_4^2)(\Theta_1^2 + \Theta_2^2 - \Theta_3^2 - \Theta_4^2)}
\end{equation}
satisfies $l^2= \lambda_1 \lambda_2 \lambda_3$.
\end{lemma}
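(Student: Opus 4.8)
The plan is to derive both assertions from the single classical input linking the two systems of theta constants in the statement: the genus-two \emph{duplication} (isogeny) formula, which writes the squares of the constants $\theta_i$ at $\underline{\tau}$ as quadratic expressions in the second-order theta constants at $2\underline{\tau}$. For each even characteristic $\begin{bmatrix}\underline{a}^{(i)t}\\ \underline{b}^{(i)t}\end{bmatrix}$ this reads
\begin{equation*}
 \theta_i^2 \;=\; \sum_{\underline{c}\in \frac12\mathbb{Z}^2/\mathbb{Z}^2}
 (-1)^{4\,\underline{b}^{(i)t}\cdot \underline{c}}\;
 \theta\begin{bmatrix}(\underline{a}^{(i)}+\underline{c})^t\\ \underline{0}^t\end{bmatrix}(0,2\underline{\tau})\;
 \theta\begin{bmatrix}\underline{c}^t\\ \underline{0}^t\end{bmatrix}(0,2\underline{\tau}) \;,
\end{equation*}
and, as $\underline{c}$ runs through $\{(0,0),(\tfrac12,0),(0,\tfrac12),(\tfrac12,\tfrac12)\}$, the four factors $\theta\begin{bmatrix}\underline{c}^t\\ \underline{0}^t\end{bmatrix}(0,2\underline{\tau})$ are precisely $\Theta_1,\Theta_3,\Theta_4,\Theta_2$ in the notation of Equation~(\ref{Eqn:Theta_short}). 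I would first record this formula (it is the $\underline{z}=\underline{z}'=0$ specialization of Riemann's addition theorem) and check the correspondence of characteristics against the labelling fixed before Equation~(\ref{Eqn:theta_short}).

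Next I would evaluate it for the six characteristics entering Lemma~\ref{ThomaeLemma}. For $\theta_1,\theta_2,\theta_3,\theta_4$, all having $\underline{a}^{(i)}=\underline{0}$, the two factors coincide and the cross terms disappear, giving the four diagonal identities
\begin{equation*}
 \theta_1^2 = \Theta_1^2+\Theta_2^2+\Theta_3^2+\Theta_4^2,\quad
 \theta_2^2 = \Theta_1^2+\Theta_2^2-\Theta_3^2-\Theta_4^2,\quad
 \theta_3^2 = \Theta_1^2-\Theta_2^2-\Theta_3^2+\Theta_4^2,\quad
 \theta_4^2 = \Theta_1^2-\Theta_2^2+\Theta_3^2-\Theta_4^2.
\end{equation*}
For $\theta_8,\theta_{10}$, both having $\underline{a}^{(i)}=(\tfrac12,\tfrac12)$, one gets instead the off-diagonal identities $\theta_8^2=2(\Theta_1\Theta_2+\Theta_3\Theta_4)$ and $\theta_{10}^2=2(\Theta_1\Theta_2-\Theta_3\Theta_4)$. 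Substituting all six into the Thomae formulas of Lemma~\ref{ThomaeLemma} exhibits each $\lambda_i$ as a rational expression in $\Theta_1,\Theta_2,\Theta_3,\Theta_4$; since $\lambda_1=\theta_1^2\theta_3^2/(\theta_2^2\theta_4^2)$ involves only the diagonal identities, it is in fact a rational function of the squares $\Theta_1^2,\dots,\Theta_4^2$, and the same holds for every ratio built from $\theta_1^2,\dots,\theta_4^2$.

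For the second assertion I would organize the computation around the product rather than the individual roots. Multiplying the three Thomae expressions gives
\begin{equation*}
 \lambda_1\lambda_2\lambda_3 \;=\; \frac{\theta_1^4\,\theta_3^4\,\theta_8^4}{\theta_2^4\,\theta_4^4\,\theta_{10}^4}
 \;=\; \left(\frac{\theta_1^2\,\theta_3^2\,\theta_8^2}{\theta_2^2\,\theta_4^2\,\theta_{10}^2}\right)^2 \;,
\end{equation*}
so the product is automatically a perfect square whose natural root is the displayed ratio. Replacing every factor by its image under the duplication identities, $\theta_1^2,\theta_3^2$ become $\Theta_1^2+\Theta_2^2+\Theta_3^2+\Theta_4^2$ and $\Theta_1^2-\Theta_2^2-\Theta_3^2+\Theta_4^2$, the factors $\theta_2^2,\theta_4^2$ become the remaining two diagonal forms, and $\theta_8^2/\theta_{10}^2=(\Theta_1\Theta_2+\Theta_3\Theta_4)/(\Theta_1\Theta_2-\Theta_3\Theta_4)$ with the factors of $2$ cancelling. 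This reassembles exactly the quadratic and quartic factors in the definition~(\ref{def_l}) of $l$, yielding $l^2=\lambda_1\lambda_2\lambda_3$.

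The genuinely delicate point — the step I expect to demand the most care — is the sign and normalization bookkeeping rather than the algebra. One must fix the constants and the signs $(-1)^{4\underline{b}\cdot\underline{c}}$ in the duplication formula, including the phases incurred when reducing the characteristics $\underline{a}^{(i)}+\underline{c}$ modulo $\mathbb{Z}^2$, and then select the branch of the square root so that $l$ equals $+\,\theta_1^2\theta_3^2\theta_8^2/(\theta_2^2\theta_4^2\theta_{10}^2)$: a flipped off-diagonal sign would interchange the factors $\Theta_1\Theta_2\pm\Theta_3\Theta_4$, replacing $l$ by $1/l$ and $\lambda_1\lambda_2\lambda_3$ by its inverse. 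I would pin this down by tracking the correspondence $\underline{c}\mapsto\Theta_j$ explicitly, noting along the way that the off-diagonal forms $\theta_8^2,\theta_{10}^2$ are only \emph{quadratic} (not diagonal) in the $\Theta_j$, so that the expressions for $\lambda_2,\lambda_3$ pass through the products $\Theta_i\Theta_j$ and the reconciliation with the stated form rests precisely on the perfect-square structure above. As an internal consistency check I would verify a genus-two relation that is itself a consequence of the same six identities, for instance $\theta_1^2\theta_2^2=\theta_3^2\theta_4^2+\theta_8^2\theta_{10}^2$, which follows immediately from the diagonal and off-diagonal forms.
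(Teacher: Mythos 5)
Your proof is correct and matches the paper's approach: the paper's own proof is literally ``By direct computation,'' and the duplication identities you derive from Riemann's addition theorem are exactly the \emph{principal transformations of degree two} the paper records in Equations~(\ref{Eq:degree2doubling}) and~(\ref{Eq:degree2doublingR}), which, substituted into Lemma~\ref{ThomaeLemma}, give both assertions. Your caution about the off-diagonal sign is well placed: direct substitution of $\theta_8^2=2(\Theta_1\Theta_2+\Theta_3\Theta_4)$ and $\theta_{10}^2=2(\Theta_1\Theta_2-\Theta_3\Theta_4)$ into $\sqrt{\lambda_1\lambda_2\lambda_3}=\theta_1^2\theta_3^2\theta_8^2/(\theta_2^2\theta_4^2\theta_{10}^2)$ puts $(\Theta_1\Theta_2+\Theta_3\Theta_4)$ in the numerator and $(\Theta_1\Theta_2-\Theta_3\Theta_4)$ in the denominator, so the displayed formula~(\ref{def_l}) appears to carry a typo ($\Theta_2\Theta_4$ for $\Theta_3\Theta_4$) and to have these two bilinear factors interchanged relative to what the stated conventions produce.
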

\begin{proof}By direct computation.
\end{proof}

\subsection{Two isogenies of abelian surfaces}
\label{ssec:2isog}
For a principally polarized abelian surface $\mathbf{A}$, the polarization of $\mathbf{A}$ induces a symplectic form on $\mathbf{A}[2]$, 
the space of two-torsion points of $\mathbf{A}$. Therefore,  $\mathbf{A}[2]\cong ( \mathbb{Z}/2)^4$ is a symplectic vector space of 
dimension four over the finite field $\mathbb{F}_2$. For any isotropic two-dimensional subspace $\mathsf{k}$ of $\mathbf{A}[2]$, that is, the symplectic form vanishes on $\mathsf{k}$, 
it follows from CM-theory that the quotient $\hat{\mathbf{A}}=\mathbf{A}/\mathsf{k}$ is again a principally polarized abelian surface.  Moreover, a maximal 
isotropic subgroup $\mathsf{k}$ of the two-torsion is always 
isomorphic to $( \mathbb{Z}/2)^2$. One defines a $(2,2)$-isogeny to be an 
isogeny\footnote{An isogeny is a surjective homomorphism with finite kernel.} $\psi_{\mathsf{k}}: \mathbf{A} \to \hat{\mathbf{A}}$ between principally
polarized abelian surfaces such that the kernel is a two-dimensional isotropic subspace $\mathsf{k}$ of $\mathbf{A}[2]$. 
Concretely, given any choice of such isotropic subspace $\mathsf{k}$ the $(2,2)$-isogeny between complex tori is given by
\begin{equation}
\begin{split}
\psi_{\mathsf{k}}: \; \mathbf{A} = \mathbb{C}^2 / \langle \mathbb{Z}^2 \oplus \underline{\tau} \,\mathbb{Z}^2\rangle & \to  \hat{\mathbf{A}} =\mathbb{C}^2 / \langle \mathbb{Z}^2 \oplus 2\underline{\tau} \,\mathbb{Z}^2\rangle \\
\underline{z} & \mapsto 2 \, \underline{z} \;.
\end{split}
\end{equation}
  The group of linear transformations of $\mathbf{A}[2]$ preserving the symplectic form can be identified 
with the permutation group of the set of 6 Weierstrass points on the curve $\mathcal{C}$. This classical way to describe the 15 inequivalent $(2,2)$-isogenies 
on the Jacobian $\mathbf{A}=\operatorname{Jac}(\mathcal{C})$  of a generic curve  $\mathcal{C}$ of genus-two is called \emph{Richelot isogeny}.   
If we choose for $\mathcal{C}$ a sextic equation $Y^2 = f_6(X,Z)$, then any factorization $f_6 = A\cdot B\cdot C$ into 
three degree-two polynomials $A, B, C$ defines a genus-two 
curve $\hat{\mathcal{C}\,}\!$ given by
\begin{equation}
\label{Richelot}
 \Delta \cdot Y^2 = [A,B] \, [A,C] \, [A,C] 
\end{equation}
where we have set $[A,B] = A'B - AB'$ with $A'$ denoting the derivative of $A$ with respect to $X$, and $\Delta$ 
is the determinant of $A, B, C$ with respect to the basis $X^2, XZ, Z^2$.
It was proved in~\cite{MR970659} that $\operatorname{Jac}(\mathcal{C})$ and $\operatorname{Jac}(\hat{\mathcal{C}\,}\!)$ are $(2,2)$-isogenous, and that there are exactly 
$15$ different curves $\hat{\mathcal{C}\,}\!$ that are obtained this way. It follows that this construction yields all principally polarized abelian surfaces
$(2,2)$-isogenous to $\mathbf{A} = \operatorname{Jac}(\mathcal{C})$ as Jacobians
$\hat{\mathbf{A}} = \operatorname{Jac} \hat{\mathcal{C}\,}\!$. 

\subsubsection{$16_6$-configuration}
 On $\mathbf{A}=\operatorname{Jac}(\mathcal{C})$ the 16 theta-divisors together
with the 16 order-two points in $\mathbf{A}[2]$ form a $16_6$-configuration.
For a generic genus-two curve  $\mathcal{C}$ given by Equation~(\ref{Eq:Rosenhain}), we denote the Weierstrass points by $\mathfrak{p}_i: [X:Y:Z]=[1:0:\lambda_i]$ for $1 \le i \le 5$ and $\mathfrak{p}_6:  [X:Y:Z]=[0:0:1]$. The 16 order-two points of $\mathbf{A}[2]$ are obtained using the embedding of the curve into  the connected component of the identity in the Picard group, i.e., $\mathcal{C} \hookrightarrow \mathbf{A} \cong \operatorname{Pic}^0(\mathcal{C})$ with $\mathfrak{p} \mapsto [\mathfrak{p} -\mathfrak{p}_6]$. In particular, we obtain all elements of $\mathbf{A}[2]$ as
 \begin{equation}
 \label{oder2points}
  \mathfrak{p}_{i6} = [ \mathfrak{p}_i - \mathfrak{p}_6] \; \text{for $1 \le i \le 6$}, \qquad 
  \mathfrak{p}_{ij}=[ \mathfrak{p}_i + \mathfrak{p}_j - 2 \, \mathfrak{p}_6]  \; \text{for $1 \le i < j \le 5$}, 
 \end{equation}
and we set $\mathfrak{p}_{0} =\mathfrak{p}_{66}$.
For $\{i,j,k,l,m,n\}=\{1, \dots6\}$  the group law on $\mathbf{A}[2]$ is given by the relations
 \begin{equation}
 \label{group_law}
    \mathfrak{p}_0 +  \mathfrak{p}_{ij} =  \mathfrak{p}_{ij}, \quad  \mathfrak{p}_{ij} +  \mathfrak{p}_{ij} =  \mathfrak{p}_{0}, \quad 
    \mathfrak{p}_{ij} +  \mathfrak{p}_{kl} =  \mathfrak{p}_{mn}, \quad  \mathfrak{p}_{ij} +
      \mathfrak{p}_{jk} =  \mathfrak{p}_{ik}.
 \end{equation}
 Moreover, we will denote the theta-divisors on $\mathbf{A}$ by $\mathfrak{t}$ to avoid any confusion with the theta-functions.
 The standard theta-divisor $\mathfrak{t} = \mathfrak{t}_6 = \{ [\mathfrak{p} - \mathfrak{p}_0] \, \mid \mathfrak{p} \in \mathcal{C}\}$
 contains the six order-two points $\mathfrak{p}_0, \mathfrak{p}_{i6}$ for $1 \le i \le 5$. 
 Likewise for $1 \le i \le 5$, the five translates $\mathfrak{t}_i = \mathfrak{t} + \mathfrak{p}_{i6}$ contain $\mathfrak{p}_0, \mathfrak{p}_{i6}, \mathfrak{p}_{ij}$ with $j \not = i, 6$,
 and the ten translates $\mathfrak{t}_{ij6} = \mathfrak{t}+ \mathfrak{p}_{ij}$ for $1 \le i < j \le 5$
 contain $\mathfrak{p}_{ij}, \mathfrak{p}_{i6}, \mathfrak{p}_{j6}, \mathfrak{p}_{kl}$ with $k,l \not = i,j,6$ and $k<l$.
 Conversely, each point lies on exactly six of the divisors, namely
 \begin{align}
   \mathfrak{p}_{0} & \in \mathfrak{t}_i \phantom{ , \mathfrak{t}_6, \mathfrak{t}_{ij6} } \quad  \text{for $i=1, \dots, 6$,}\\
   \mathfrak{p}_{i6} & \in \mathfrak{t}_i, \mathfrak{t}_6, \mathfrak{t}_{ij6} \quad  \text{for $i=1, \dots, 5$ with $j \not = i,6$,}\\
   \mathfrak{p}_{ij} & \in \mathfrak{t}_i, \mathfrak{t}_j, \mathfrak{t}_{kl6} \quad  \text{for $1 \le i < j \le 5$ with $k,l \not = i,j,6$ and $k<l$.}
 \end{align}
 The automorphism group of the $16_6$-configuration on $\mathbf{A}$ is $\mathbb{F}_2^4 \rtimes \operatorname{Sp}(4, \mathbb{F}_2)$, i.e., given by
 translations by order-two points and rotations preserving the symplectic form on $\mathbf{A}[2]$ because they induce an action of 
 the permutation group of the set of 6 theta-divisors containing a fixed two-torsion point. 
 
\subsubsection{Computation of dual $(2,2)$-isogenies}
\label{computation}
We will label the 15 inequivalent maximal isotropic subgroups of $\mathbf{A}[2]\cong ( \mathbb{Z}/2)^4$ and quotients by $\mathsf{k}_{ij}$ and 
$\hat{\mathbf{A}}_{ij}=\mathbf{A}/\mathsf{k}_{ij}$ with $1 \le i < j \le 6$. 
Similarly, we will denote the Richelot isogenous sextic curves by $\hat{\mathcal{C}\,}\!_{ij}$ such that $\hat{\mathbf{A}}_{ij} =\operatorname{Jac}(\hat{\mathcal{C}\,}\!_{ij})$.
 We have the following lemma:
 \begin{lemma}
 \label{lem:decomp}
 Let $\mathsf{k}$ and $\!\hat{\,\mathsf{k}}$ be two maximal isotropic subgroup of $\mathbf{A}[2]$ such that 
 $\!\hat{\,\mathsf{k}}+\mathsf{k}=\mathbf{A}[2]$, $\!\hat{\,\mathsf{k}} \cap \mathsf{k}=\{ \mathfrak{p}_0\}$.
 Set $\hat{\mathbf{A}}=\mathbf{A}/\mathsf{k}$, and denote the image of $\!\hat{\,\mathsf{k}}$ in $\hat{\mathbf{A}}$
 by $\mathsf{K}$. Then it follows that $\hat{\mathbf{A}} /\mathsf{K} \cong \mathbf{A}$,
 and the composition of $(2,2)$-isogenies $\hat{\psi}_\mathsf{K}\circ\psi_{\mathsf{k}}$ is multiplication by two on $\mathbf{A}$, i.e.,
$(\underline{z}, \underline{\tau}) \mapsto (2  \underline{z}, \underline{\tau})$. 
  \end{lemma}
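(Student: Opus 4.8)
The plan is to run the argument entirely on period lattices and to reduce the whole statement to the observation that the composite isogeny has kernel all of $\mathbf{A}[2]$. Write $\mathbf{A} = \mathbb{C}^2/\Lambda$ with $\Lambda = \mathbb{Z}^2\oplus\underline{\tau}\,\mathbb{Z}^2$ and $\mathbf{A}[2] = \tfrac12\Lambda/\Lambda$, and note first that $g := \hat{\psi}_{\mathsf{K}}\circ\psi_{\mathsf{k}}$ has kernel $\psi_{\mathsf{k}}^{-1}(\mathsf{K}) = \psi_{\mathsf{k}}^{-1}\big(\psi_{\mathsf{k}}(\hat{\mathsf{k}})\big) = \mathsf{k}+\hat{\mathsf{k}} = \mathbf{A}[2]$, which uses both hypotheses on $\mathsf{k}$ and $\hat{\mathsf{k}}$. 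Since multiplication by two is an isogeny $\mathbf{A}\to\mathbf{A}$ whose kernel is exactly $\mathbf{A}[2]$, any isogeny with that kernel — in particular $g$ — factors as $g = \alpha\circ[2]$ for a unique isomorphism $\alpha\colon\mathbf{A}\to\hat{\mathbf{A}}/\mathsf{K}$. Establishing $\hat{\mathbf{A}}/\mathsf{K}\cong\mathbf{A}$ is then the first assertion of the lemma, and under this identification it remains only to check that $\alpha$ is the identity, which yields $g=[2]$.

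To make $\alpha$ explicit I would first pass to an adapted symplectic basis. Because the reduction $\operatorname{Sp}(4,\mathbb{Z})\to\operatorname{Sp}(4,\mathbb{F}_2)$ is surjective and the symplectic group acts transitively on transverse pairs of Lagrangian subspaces, one may choose a symplectic basis $(e_1,e_2,f_1,f_2)$ of $\Lambda$ for the Riemann form $E$ with $\mathsf{k} = \langle\tfrac12 e_1,\tfrac12 e_2\rangle$ and $\hat{\mathsf{k}} = \langle\tfrac12 f_1,\tfrac12 f_2\rangle$ modulo $\Lambda$; that is, $\mathsf{k}$ and $\hat{\mathsf{k}}$ become the ``horizontal'' and ``vertical'' two-torsion of $\mathbf{A} = \mathbb{C}^2/\langle\mathbb{Z}^2\oplus\underline{\tau}\,\mathbb{Z}^2\rangle$. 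This replaces $\underline{\tau}$ by a $\Gamma_2$-equivalent matrix, hence alters $\mathbf{A}$ only within its isomorphism class, which is harmless since multiplication by two is intrinsic.

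With this normalization the computation of Section~\ref{ssec:2isog} applies verbatim: $\Lambda_{\mathsf{k}} := \Lambda + \mathbb{Z}\tfrac12 e_1 + \mathbb{Z}\tfrac12 e_2 = \tfrac12\mathbb{Z}^2\oplus\underline{\tau}\,\mathbb{Z}^2$, so that, after rescaling to standard form, $\hat{\mathbf{A}} = \mathbf{A}/\mathsf{k} = \mathbb{C}^2/\langle\mathbb{Z}^2\oplus 2\underline{\tau}\,\mathbb{Z}^2\rangle$ and $\psi_{\mathsf{k}}$ is the map $\underline{z}\mapsto 2\underline{z}$ of that section. Tracking $\hat{\mathsf{k}}$ through $\psi_{\mathsf{k}}$ shows that $\mathsf{K} = \psi_{\mathsf{k}}(\hat{\mathsf{k}})$ is the vertical two-torsion of $\hat{\mathbf{A}}$, and the hypothesis $\mathsf{k}+\hat{\mathsf{k}} = \mathbf{A}[2]$ gives
\begin{equation*}
 \hat{\mathbf{A}}/\mathsf{K} = \mathbb{C}^2/\big(\mathbb{Z}^2\oplus 2\underline{\tau}\,\mathbb{Z}^2 + \mathbb{Z}f_1+\mathbb{Z}f_2\big) = \mathbb{C}^2/\langle\mathbb{Z}^2\oplus\underline{\tau}\,\mathbb{Z}^2\rangle = \mathbf{A},
\end{equation*}
already in standard form, with $\hat{\psi}_{\mathsf{K}}$ the bare quotient $\underline{w}\mapsto\underline{w}$. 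Composing, $\hat{\psi}_{\mathsf{K}}\circ\psi_{\mathsf{k}}$ sends $\underline{z}\mapsto 2\underline{z}$, so $\alpha=\mathrm{id}$ and the composite is multiplication by two, $(\underline{z},\underline{\tau})\mapsto(2\underline{z},\underline{\tau})$, as claimed.

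The step demanding the most care is the polarization bookkeeping concealed in the words ``standard form'': I would need to verify that $E$ descends to a principal polarization at each stage (on $\Lambda_{\mathsf{k}}$ the descended form is $2E$, and on $\hat{\mathbf{A}}/\mathsf{K}$ it returns to $E$), so that $\psi_{\mathsf{k}}$ and $\hat{\psi}_{\mathsf{K}}$ are genuine $(2,2)$-isogenies of principally polarized surfaces and the identification $\hat{\mathbf{A}}/\mathsf{K}\cong\mathbf{A}$ is polarization-preserving. Only once this is confirmed is one entitled to conclude that $\alpha$ is the identity of $\mathbf{A}$ rather than a nontrivial automorphism, and hence that the composite is precisely $[2]$. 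Securing the adapted symplectic basis of the second step — in particular that a single element of $\operatorname{Sp}(4,\mathbb{Z})$ simultaneously normalizes both $\mathsf{k}$ and $\hat{\mathsf{k}}$ — is the other point requiring a (routine) justification.
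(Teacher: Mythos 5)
Your proof is correct and follows the same route as the paper's: both arguments rest on the single observation that the composite isogeny has kernel $\psi_{\mathsf{k}}^{-1}(\mathsf{K})=\hat{\mathsf{k}}+\mathsf{k}=\mathbf{A}[2]$, which forces it to agree with multiplication by two and yields $\hat{\mathbf{A}}/\mathsf{K}\cong\mathbf{A}$. The paper stops essentially at that point, leaving the identification of the quotient with $\mathbf{A}$ implicit, whereas you additionally pin down the isomorphism $\alpha$ by the explicit lattice computation in an adapted symplectic basis and flag the (routine) polarization bookkeeping --- a harmless refinement of the same argument rather than a different one.
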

 \begin{proof}
 By construction $\mathsf{k}$ is a finite subgroup of $\mathbf{A}$, $\hat{\mathbf{A}}=\mathbf{A}/\mathsf{k}$ a complex torus,
 and the natural projection $\psi: \mathbf{A} \to \hat{\mathbf{A}}\cong \mathbf{A}/\mathsf{k}$ and isogeny. The order of the kernel is two, hence it is a degree-four  isogeny.
 The same applies to the map $\hat{\psi}: \hat{\mathbf{A}} \to \hat{\mathbf{A}}/\mathsf{K}$.
 Therefore, the composition $\hat{\psi}\circ \psi$ is an isogeny with kernel $\!\hat{\,\mathsf{k}}+\mathsf{k}=\mathbf{A}[2]$.
 Thus, $\hat{\mathbf{A}}/\mathsf{K}\cong \mathbf{A}$ and the map $\hat{\psi}\circ \psi$
 is the group homomorphism $\underline{z} \mapsto 2 \underline{z}$ whose kernel are the two-torsion points.
 \end{proof} 
 
We will use theta-functions to determine explicit formulas relating the Rosenhain roots for $\mathcal{C}$ to the roots for a particular 
choice of $(2,2)$-isogenous curve, say $\hat{\mathcal{C}\,}\!_{12}$, as follows: let the dual abelian surface $\hat{\mathbf{A}}_{12} = \operatorname{Jac}(\hat{\mathcal{C}\,}\!_{12})$
be given by the sextic curve 
\begin{equation}
\label{Eq:Rosenhain2}
 \hat{\mathcal{C}\,}\!_{12}: \quad y^2 = x\,z\,\big(x-z\big) \, \big( x- \Lambda_1z\big) \,  \big( x- \Lambda_2 z\big) \,  \big( x- \Lambda_3 z\big) \;,
\end{equation} 
with Rosenhain roots
\begin{equation}\label{Picard_sq}
\Lambda_1 = \frac{\Theta_1^2\Theta_3^2}{\Theta_2^2\Theta_4^2} \,, \quad \Lambda_2 = \frac{\Theta_3^2\Theta_8^2}{\Theta_4^2\Theta_{10}^2}\,, \quad \Lambda_3 =
\frac{\Theta_1^2\Theta_8^2}{\Theta_2^2\Theta_{10}^2}\,.
\end{equation}
Thus, the $(2,2)$-isogeny $\psi_{12}: \mathbf{A}_{12}=\operatorname{Jac}(\mathcal{C}) \to \hat{\mathbf{A}}_{12}=\operatorname{Jac}(\hat{\mathcal{C}\,}\!_{12})$ 
is realized by the change of moduli in Equations~(\ref{Picard_sq_b}) and defines a maximal isotropic subgroup $\mathsf{k}_{12}$ 
such that $\hat{\mathbf{A}}_{12}=\mathbf{A}/\mathsf{k}_{12}$. We have the following:
\begin{lemma}
\label{lem:R2isog}
Taking the quotient by the maximal isotropic subgroup $\mathsf{k}_{12} \subset \mathbf{A}[2]$ 
corresponds to the Richelot isogeny obtained from pairing the linear factors according to
$(1,\lambda_1)$, $(\lambda_2,\lambda_3)$, $(0,\infty)$ for the genus-two curve $\mathcal{C}$ in Equation~(\ref{Eq:Rosenhain})
\end{lemma}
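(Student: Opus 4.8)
The plan is to pin down the kernel of $\psi_{12}$ analytically and recognize it as the G\"opel group attached to the stated factorization, and then to confirm the identification by an explicit comparison of Rosenhain invariants. Since $\mathbf A=\mathbb C^2/\langle\mathbb Z^2\oplus\underline\tau\,\mathbb Z^2\rangle$ and $\hat{\mathbf A}_{12}=\mathbb C^2/\langle\mathbb Z^2\oplus 2\underline\tau\,\mathbb Z^2\rangle$, the map $\underline z\mapsto 2\underline z$ has kernel $\{\underline z:2\underline z\in\mathbb Z^2\oplus 2\underline\tau\,\mathbb Z^2\}/\langle\mathbb Z^2\oplus\underline\tau\,\mathbb Z^2\rangle$, which is the group of half-periods coming from the first lattice factor, namely $\{\underline 0,(\tfrac12,0),(0,\tfrac12),(\tfrac12,\tfrac12)\}$. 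In terms of half-integer characteristics $[\underline a^t;\underline b^t]$ these are exactly the four characteristics with $\underline a=\underline 0$, i.e. the ones labeled $\theta_1,\theta_2,\theta_3,\theta_4$; since they all have vanishing first characteristic they are automatically isotropic for the Weil pairing, confirming that $\mathsf k_{12}$ is maximal isotropic, and I note that these are precisely the theta constants entering $\lambda_1$ in Lemma~\ref{ThomaeLemma}.

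Next I would translate this subgroup into the combinatorics of the six Weierstrass points. Using Thomae's formula in the normalization fixed by Lemma~\ref{ThomaeLemma}, the three nonzero elements of $\mathsf k_{12}$ are order-two points of the $16_6$-configuration, each corresponding to a pair of branch values, and I would determine that together they cut the branch locus $\{0,1,\infty,\lambda_1,\lambda_2,\lambda_3\}$ into the pairs $\{1,\lambda_1\}$, $\{\lambda_2,\lambda_3\}$, $\{0,\infty\}$; the closure relation $\mathfrak p_{ij}+\mathfrak p_{kl}=\mathfrak p_{mn}$ of (\ref{group_law}) then exhibits $\mathsf k_{12}=\{\mathfrak p_0,\mathfrak p_{ij},\mathfrak p_{kl},\mathfrak p_{mn}\}$ as a G\"opel group. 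Since the Richelot isogeny attached to a factorization $f_6=A\cdot B\cdot C$ has kernel equal to the G\"opel group of the corresponding pairing of roots, this identifies $\psi_{12}$ with the Richelot isogeny for $A=(X-Z)(X-\lambda_1 Z)$, $B=(X-\lambda_2 Z)(X-\lambda_3 Z)$, $C=XZ$.

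For an explicit and self-contained confirmation I would run the Richelot construction (\ref{Richelot}) on this factorization: compute $[A,B]$, $[B,C]$, $[C,A]$ and the determinant $\Delta$, form the sextic $\Delta\,Y^2=[A,B]\,[B,C]\,[C,A]$, and bring it to Rosenhain form, reading off its three roots as rational functions of $\lambda_1,\lambda_2,\lambda_3$. In parallel, the duplication (Landen-type) theta identities recorded in (\ref{Picard_sq_b}) rewrite the roots $\Lambda_1,\Lambda_2,\Lambda_3$ of (\ref{Picard_sq}) as rational functions of $\lambda_1,\lambda_2,\lambda_3$ as well; exhibiting a single M\"obius change of the coordinate $X/Z$ that carries one root set to the other then shows that $\hat{\mathcal C}_{12}$ is isomorphic to the Richelot curve of this pairing.

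The main obstacle is exactly this matching. Both the passage from the symmetric Richelot sextic to a normalized Rosenhain triple and the theta expressions for the $\Lambda_i$ are canonical only up to the $S_6$-action underlying the $720$ admissible choices of level-two structure, so the crux is to track the one M\"obius transformation and permutation that align the two normalizations — equivalently, to fix the dictionary between the characteristics $\theta_1,\dots,\theta_4$ and the root pairs $\{1,\lambda_1\},\{\lambda_2,\lambda_3\},\{0,\infty\}$ compatibly with Lemma~\ref{ThomaeLemma}. Once this bookkeeping is settled, the kernel computation and the explicit curve comparison agree, and the lemma follows.
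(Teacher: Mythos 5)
Your opening analytic computation is correct and is a nice conceptual supplement that the paper does not spell out: the kernel of $\underline z\mapsto 2\underline z$ from $\mathbb C^2/\langle\mathbb Z^2\oplus\underline\tau\,\mathbb Z^2\rangle$ to $\mathbb C^2/\langle\mathbb Z^2\oplus 2\underline\tau\,\mathbb Z^2\rangle$ is indeed the group of half-periods $\underline b/2$ with $\underline a=\underline 0$, and such a subgroup is automatically isotropic for the Weil pairing, hence a G\"opel group. But this alone does not prove the lemma, and your second step is where the argument stops being a proof: the claim that the three nonzero elements of this group correspond, under the Thomae dictionary fixed by Lemma~\ref{ThomaeLemma}, to the pairs $\{1,\lambda_1\},\{\lambda_2,\lambda_3\},\{0,\infty\}$ is exactly the content of the lemma, and you state it as something you ``would determine'' rather than deriving it. Likewise the final ``explicit confirmation'' --- running the Richelot construction and exhibiting a M\"obius transformation matching its Rosenhain roots to those of Equations~(\ref{Picard_sq_b}) --- is left unexecuted, and you yourself flag the alignment of the two normalizations (one M\"obius transformation plus a permutation inside the $S_6$-orbit of $720$ level-two structures) as ``the main obstacle.'' As written, the crux is named but not crossed.

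It is worth noting how the paper crosses it: instead of normalizing both curves to Rosenhain form and hunting for the M\"obius transformation, the authors compute the \emph{Igusa invariants} of the Richelot curve for the stated pairing and of a quadratic twist $\hat{\mathcal C}_{12}^{(\mu)}$ of the curve with roots (\ref{Picard_sq_b}), both of which turn out to be rational functions of $[\theta_1:\theta_2:\theta_3:\theta_4]$ alone, and exhibit an explicit $\mu$ for which the two sets of invariants coincide. Since Igusa invariants classify genus-two curves up to isomorphism and are insensitive to the choice of coordinate on $\mathbb P^1$ and of ordering of the roots, this comparison dissolves precisely the normalization bookkeeping you identify as the obstacle; and because the fifteen Richelot-isogenous curves of a generic $\mathcal C$ are pairwise non-isomorphic, matching the target curve pins down the pairing. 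If you want to complete your version of the argument, either carry out the Rosenhain-root comparison and produce the M\"obius transformation explicitly, or replace that comparison by the Igusa-invariant computation --- the latter is the shorter road.
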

\begin{proof}
We compute the Richelot-isogeny in Equation~(\ref{Richelot}) obtained from pairing the roots according to
$(\lambda_1,\lambda_4=1)$, $(\lambda_2,\lambda_3)$, $(\lambda_5=0,\lambda_6=\infty)$. For this new
curve we compute its Igusa-invariants which are in fact rational functions only of the theta-functions
appearing in $[\theta_1: \theta_2: \theta_3: \theta_4]$. We then compute the Igusa invariants for the quadratic twist $\hat{\mathcal{C}\,}\!_{12}^{(\mu)}$ 
of the curve in Equation~(\ref{Eq:Rosenhain2}). They again are  rational functions of only the theta-functions
appearing in $[\theta_1: \theta_2: \theta_3: \theta_4]$ since the Rosenhain roots of $\hat{\mathcal{C}\,}\!_{12}$ are determined by the theta 
nulls $[\theta_1: \theta_2: \theta_3: \theta_4]$ using the equations
\begin{equation}\label{Picard_sq_b}
\begin{split}
\Lambda_1 & = \frac{(\theta_1^2 + \theta_2^2 + \theta_3^2 + \theta_4^2)(\theta_1^2 - \theta_2^2 - \theta_3^2 + \theta_4^2)}{(\theta_1^2 + \theta_2^2 - \theta_3^2 - \theta_4^2)(\theta_1^2 - \theta_2^2 + \theta_3^2 - \theta_4^2)}, \\
\Lambda_2 & =  \frac{(\theta_1^2 - \theta_2^2 - \theta_3^2 + \theta_4^2)(\theta_1^2\theta_2^2+\theta_3^2\theta_4^2+2 \theta_1\theta_2\theta_3\theta_4)}{(\theta_1^2 - \theta_2^2 + \theta_3^2 - \theta_4^2)(\theta_1^2\theta_2^2-\theta_3^2\theta_4^2)} ,\\
\Lambda_3 & =  \frac{(\theta_1^2 + \theta_2^2 + \theta_3^2 + \theta_4^2)(\theta_1^2\theta_2^2+\theta_3^2\theta_4^2+2 \theta_1\theta_2\theta_3\theta_4)}{(\theta_1^2 + \theta_2^2 - \theta_3^2 - \theta_4^2)(\theta_1^2\theta_2^2-\theta_3^2\theta_4^2)} \;.
\end{split}
\end{equation}
The two sets of Igusa invariants are identical for 
\[
\mu = \frac{(\theta_1\theta_2-\theta_3\theta_4)^2(\theta_1^2 + \theta_2^2 - \theta_3^2 - \theta_4^2)(\theta_1^2 - \theta_2^2 + \theta_3^2 - \theta_4^2)}{4 \, \theta_1\theta_2\theta_3\theta_4(\theta_1^2 + \theta_2^2 + \theta_3^2 + \theta_4^2)(\theta_1^2 - \theta_2^2 - \theta_3^2 + \theta_4^2)} \;.
\]
\end{proof}

To see the relation between the moduli of the isogenous curves $\mathcal{C}$ and $\hat{\mathcal{C}\,}\!_{12}$ directly, we introduce the new moduli
$\lambda_1' = ( \lambda_1 + \lambda_2 \lambda_3)/l$,  $\lambda_2' = (\lambda_2 + \lambda_1 \lambda_3)/l$, 
and  $\lambda_3' = (\lambda_3 + \lambda_1 \lambda_2)/l$, and similarly $\Lambda_1', \Lambda_2', \Lambda_3'$
with $l^2 = \lambda_1 \lambda_2 \lambda_3$ and $L^2=  a_1 \Lambda_2 \Lambda_3$.
One checks by explicit computation the following lemma:
\begin{lemma}
The sets of parameters $\{\lambda_i'\}$ and $\{\Lambda_i'\}$ can each be expressed as rational functions 
entirely in terms of the squares of theta-constants contained in $\lbrace \theta_1^2 , \theta_2^2, \theta_3^2, \theta_4^2\rbrace$ 
or $\lbrace \Theta_1^2 , \Theta_2^2, \Theta_3^2, \Theta_4^2\rbrace$.
\end{lemma}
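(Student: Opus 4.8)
The plan is to rewrite each of the six primed moduli in the symmetric form $\lambda_i' = \lambda_i/l + l/\lambda_i$. Since $l^2 = \lambda_1\lambda_2\lambda_3$ we have $\lambda_j\lambda_k = l^2/\lambda_i$, so that
\[
 \lambda_i' = \frac{\lambda_i + \lambda_j\lambda_k}{l} = \frac{\lambda_i}{l} + \frac{l}{\lambda_i} = w_i + \frac{1}{w_i}, \qquad w_i := \frac{\lambda_i}{l},
\]
and likewise $\Lambda_i' = W_i + W_i^{-1}$ with $W_i = \Lambda_i/L$ on the isogenous curve. This rewriting makes the primed moduli manifestly insensitive to the sign of the square root $l$, and reduces everything to controlling the three ratios $w_i$ (resp.\ $W_i$).

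First I would compute the $w_i$ from the Thomae-type formulas of Lemma~\ref{ThomaeLemma}. Substituting~(\ref{Picard}) together with $l = \theta_1^2\theta_3^2\theta_8^2/(\theta_2^2\theta_4^2\theta_{10}^2)$ (one of the two square roots of $\lambda_1\lambda_2\lambda_3$) and simplifying yields the strikingly simple values
\[
 w_1 = \frac{\theta_{10}^2}{\theta_8^2}, \qquad w_2 = \frac{\theta_2^2}{\theta_1^2}, \qquad w_3 = \frac{\theta_4^2}{\theta_3^2}.
\]
The ratios $w_2$ and $w_3$ already lie in $\mathbb{C}(\theta_1^2,\theta_2^2,\theta_3^2,\theta_4^2)$, so $\lambda_2' = w_2 + w_2^{-1}$ and $\lambda_3' = w_3 + w_3^{-1}$ are rational in the four theta-squares with no further input; the dependence on the remaining constants $\theta_8,\theta_{10}$ has cancelled entirely. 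Running the identical computation for $\hat{\mathcal{C}}_{12}$ from~(\ref{Picard_sq}) gives $W_2 = \Theta_2^2/\Theta_1^2$ and $W_3 = \Theta_4^2/\Theta_3^2$, so that $\Lambda_2'$ and $\Lambda_3'$ are rational in $\{\Theta_1^2,\Theta_2^2,\Theta_3^2,\Theta_4^2\}$.

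The one component that genuinely retains the extra constants is $\lambda_1' = w_1 + w_1^{-1} = (\theta_8^4 + \theta_{10}^4)/(\theta_8^2\theta_{10}^2)$, and establishing its rationality in the four theta-squares is the main obstacle. Here I would invoke a classical genus-two theta relation of Riemann type (cf.~\cite{MR0141643,MR0168805}): in the labelling fixed in~(\ref{Eqn:theta_short}), the second-order identities expressing $\theta_8^2$ and $\theta_{10}^2$ as $\theta_1\theta_2 \pm \theta_3\theta_4$. Although these involve the non-square monomials $\theta_1\theta_2$ and $\theta_3\theta_4$, only the symmetric functions $\theta_8^2\theta_{10}^2 = \theta_1^2\theta_2^2 - \theta_3^2\theta_4^2$ and $\theta_8^4 + \theta_{10}^4 = 2(\theta_1^2\theta_2^2 + \theta_3^2\theta_4^2)$ enter $\lambda_1'$, and both are rational in the four theta-squares; hence $\lambda_1' = 2(\theta_1^2\theta_2^2 + \theta_3^2\theta_4^2)/(\theta_1^2\theta_2^2 - \theta_3^2\theta_4^2)$. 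Applying the same relation at the doubled period $2\underline{\tau}$ disposes of $\Lambda_1'$ in terms of $\{\Theta_1^2,\Theta_2^2,\Theta_3^2,\Theta_4^2\}$, which completes the claim.

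As a cross-check furnishing the complementary expressions, I would substitute the formulas~(\ref{Picard_sq_b}) for the Rosenhain roots of $\hat{\mathcal{C}}_{12}$ directly into $\Lambda_i'$. Each $\Lambda_i$ and the induced $L = \sqrt{\Lambda_1\Lambda_2\Lambda_3}$ carries the irrational monomial $\theta_1\theta_2\theta_3\theta_4$; the point to verify — again the only real subtlety — is that it cancels in the symmetric combination $\Lambda_i' = \Lambda_i/L + L/\Lambda_i$, so that $\{\Lambda_i'\}$ is in fact rational in $\{\theta_1^2,\theta_2^2,\theta_3^2,\theta_4^2\}$ as well, the $i=1$ case reducing once more to $2(\theta_1^2\theta_2^2+\theta_3^2\theta_4^2)/(\theta_1^2\theta_2^2-\theta_3^2\theta_4^2)$.
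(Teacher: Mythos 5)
Your reduction $\lambda_i'=w_i+w_i^{-1}$ with $w_i=\lambda_i/l$, and the evaluation $w_1=\theta_{10}^2/\theta_8^2$, $w_2=\theta_2^2/\theta_1^2$, $w_3=\theta_4^2/\theta_3^2$ from Lemma~\ref{ThomaeLemma}, are correct and immediately settle $\lambda_2'$ and $\lambda_3'$ (your expressions agree with Equation~(\ref{moduli})). The gap is in the step for $\lambda_1'$. There is no same-period identity expressing $\theta_8^2$ and $\theta_{10}^2$ as $\theta_1\theta_2\pm\theta_3\theta_4$; what the paper records in Equation~(\ref{Eq:degree2doublingR}) is $\theta_8^2=2(\Theta_1\Theta_2+\Theta_3\Theta_4)$ and $\theta_{10}^2=2(\Theta_1\Theta_2-\Theta_3\Theta_4)$, with the right-hand sides built from the theta constants at the \emph{doubled} period, not from $\theta_1,\dots,\theta_4$. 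Your product formula $\theta_8^2\theta_{10}^2=\theta_1^2\theta_2^2-\theta_3^2\theta_4^2$ does happen to be a genuine quartic relation (substituting Equations~(\ref{Eq:degree2doubling})--(\ref{Eq:degree2doublingR}), both sides equal $4(\Theta_1^2\Theta_2^2-\Theta_3^2\Theta_4^2)$), but the sum formula $\theta_8^4+\theta_{10}^4=2(\theta_1^2\theta_2^2+\theta_3^2\theta_4^2)$ is false: under the same substitution the left side is $8(\Theta_1^2\Theta_2^2+\Theta_3^2\Theta_4^2)$ while the right side is $4(\Theta_1^4+\Theta_2^4-\Theta_3^4-\Theta_4^4)$. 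The correct relation is $\theta_8^4+\theta_{10}^4=\theta_1^4+\theta_2^4-\theta_3^4-\theta_4^4$, which yields $\lambda_1'=(\theta_1^4+\theta_2^4-\theta_3^4-\theta_4^4)/(\theta_1^2\theta_2^2-\theta_3^2\theta_4^2)$ as in Equation~(\ref{moduli}); your formula contradicts that equation, and the same error propagates to your $\Theta$-expression for $\Lambda_1'$.

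What you actually computed is $\Lambda_1'$ in disguise: substituting $\Theta_1^2=\tfrac14(\theta_1^2+\theta_2^2+\theta_3^2+\theta_4^2)$, etc., into Equation~(\ref{moduli-b}) gives exactly $2(\theta_1^2\theta_2^2+\theta_3^2\theta_4^2)/(\theta_1^2\theta_2^2-\theta_3^2\theta_4^2)$, so the final sentence of your cross-check is right while your $\lambda_1'$ is the same quantity displaced by one isogeny step. The root cause is treating the duplication formulas as identities at a single period; since the entire point of the lemma is to keep the $\underline{\tau}$-level and $2\underline{\tau}$-level data separate, this conflation is fatal as written. The strategy itself is sound and is repaired by using the correct quartic relation for the numerator, or more systematically by eliminating $\theta_8,\theta_{10}$ (and $\Theta_8,\Theta_{10}$) through Equations~(\ref{Eq:degree2doubling})--(\ref{Eq:degree2doublingR}) throughout, which is presumably the ``direct computation'' the paper has in mind.
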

\begin{proof}
By direct computation.
\end{proof}

Moreover, we have the following relations:
\begin{proposition}
\label{lem:2isog_curve}
The moduli of the genus-two curve $\mathcal{C}$ in Equation~(\ref{Eq:Rosenhain}) and the $(2,2)$-isogenous genus-two curve $\hat{\mathcal{C}\,}\!_{12}$ in Equation~(\ref{Eq:Rosenhain2})
are related by
\begin{equation}
\label{relations_RosRoots}
 \begin{split}
 \begin{array}{rl}
   \Lambda_1' & = 2 \, \frac{2 \lambda_1' - \lambda_2'-\lambda_3'}{\lambda_2'-\lambda_3'} \,,\\[0.6em]
   \Lambda_2' - \Lambda_1' & = - \frac{4(\lambda_1'-\lambda_2')(\lambda_1'-\lambda_3')}{(\lambda_1'+2)(\lambda_2'-\lambda_3')} \,,\\[0.6em]
   \Lambda_3' - \Lambda_1' & = - \frac{4(\lambda_1'-\lambda_2')(\lambda_1'-\lambda_3')}{(\lambda_1'-2)(\lambda_2'-\lambda_3')} \,,
  \end{array}
  & \qquad
 \begin{array}{rl}
   \lambda_1' & = 2 \, \frac{2 \Lambda_1' - \Lambda_2'-\Lambda_3'}{\Lambda_2'-\Lambda_3'} \,,\\[0.6em]
   \lambda_2' - \lambda_1' & = - \frac{4(\Lambda_1'-\Lambda_2')(\Lambda_1'-\Lambda_3')}{(\Lambda_1'+2)(\Lambda_2'-\Lambda_3')} \,,\\[0.6em]
   \lambda_3' - \lambda_1' & = - \frac{4(\Lambda_1'-\Lambda_2')(\Lambda_1'-\Lambda_3')}{(\Lambda_1'-2)(\Lambda_2'-\Lambda_3')} \,.
  \end{array}   
  \end{split}
\end{equation}
\end{proposition}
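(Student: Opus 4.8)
The plan is to reduce the six identities in~(\ref{relations_RosRoots}) to a single elimination computation carried out in the coordinate ring generated by the four even theta-squares $\theta_1^2,\theta_2^2,\theta_3^2,\theta_4^2$. The key structural fact is that, after applying the classical isogeny relations that express the doubled theta-squares in terms of the signed Hadamard sums $\theta_1^2\pm\theta_2^2\pm\theta_3^2\pm\theta_4^2$ (these are, up to a common scalar, precisely the combinations $\Theta_1^2,\Theta_2^2,\Theta_3^2,\Theta_4^2$ read off by comparing~(\ref{Picard_sq}) with~(\ref{Picard_sq_b}), and the corresponding change of variables is projectively an involution), \emph{every} quantity in sight becomes a rational function of the single projective point $[\theta_1^2:\theta_2^2:\theta_3^2:\theta_4^2]$. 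Indeed, Equation~(\ref{Picard_sq_b}) already gives $\Lambda_1,\Lambda_2,\Lambda_3$ in these four coordinates, and the lemma immediately preceding the proposition guarantees that the symmetrized moduli $\lambda_1',\lambda_2',\lambda_3'$ and $\Lambda_1',\Lambda_2',\Lambda_3'$ are likewise rational in the same four coordinates. The decisive point here is that dividing the symmetric combinations $\lambda_i+\lambda_j\lambda_k$ by $l$ clears the square root: since $l^2=\lambda_1\lambda_2\lambda_3$ and $l$ is itself a rational function of the theta-squares by Lemma~\ref{rem:l_var}, the primed moduli $\lambda_i'$ are genuinely rational, not merely algebraic, in $[\theta_1^2:\theta_2^2:\theta_3^2:\theta_4^2]$.

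Once both triples $\{\lambda_i'\}$ and $\{\Lambda_i'\}$ are written as rational functions of the same three projective coordinates, I would carry out the elimination directly. There are six functions of three independent ratios, so generically there must be three relations among them; these are exactly the three equations of the left column of~(\ref{relations_RosRoots}). Concretely, I would invert the map $[\theta_1^2:\theta_2^2:\theta_3^2:\theta_4^2]\mapsto(\lambda_1',\lambda_2',\lambda_3')$ to solve for the theta-ratios as rational functions of $\lambda_1',\lambda_2',\lambda_3'$, substitute these into the expressions for $\Lambda_1',\Lambda_2',\Lambda_3'$, and simplify. The expected output is the left column of~(\ref{relations_RosRoots}); the appearance of the factors $\lambda_1'+2$ and $\lambda_1'-2$ in the denominators is exactly what one anticipates from the first relation $\Lambda_1'=2(2\lambda_1'-\lambda_2'-\lambda_3')/(\lambda_2'-\lambda_3')$, which identifies $\lambda_1'$ with a cross-ratio whose degenerations sit at $\pm 2$.

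The right column then requires no new computation. By Lemma~\ref{lem:decomp}, the dual isogeny composed with $\psi_{\mathsf{k}_{12}}$ is multiplication by two, so the construction is symmetric under interchanging the roles of $\mathcal{C}$ and $\hat{\mathcal{C}\,}\!_{12}$, that is, under the substitution $\theta_i\leftrightarrow\Theta_i$. Because the Hadamard transformation relating the two sets of theta-squares is projectively its own inverse, this symmetry descends to the substitution $\lambda_i'\leftrightarrow\Lambda_i'$, and the right column is obtained from the left simply by swapping primed $\lambda$'s and $\Lambda$'s. I would phrase this as a formal consequence of the involutivity already recorded in Lemmas~\ref{lem:decomp} and~\ref{lem:R2isog}, rather than re-deriving it.

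The main obstacle I anticipate is not conceptual but the bulk of the algebra: the honest verification that the substitution of the inverted theta-ratios collapses to the compact rational expressions displayed in~(\ref{relations_RosRoots}). The whole point of introducing the primed moduli $\lambda_i'=(\lambda_i+\lambda_j\lambda_k)/l$ is to arrange that these collapses occur, so the real work is to confirm that these particular symmetric combinations—and no naive choice of coordinates on $\mathcal{M}_2(2)$—produce rational, rather than merely algebraic, relations, and that the resulting numerators and denominators factor as the products of differences $(\lambda_1'-\lambda_2')(\lambda_1'-\lambda_3')$ shown. In practice this elimination and factorization are cleanest to perform with a computer algebra system, checking the result against the explicit theta-expressions of Lemma~\ref{ThomaeLemma} and Equation~(\ref{Picard_sq_b}) at the level of the $I_p$-generators of the relevant function fields.
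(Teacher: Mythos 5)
Your proposal is correct and follows essentially the same route as the paper: the paper's proof is precisely a direct computation using the principal transformations of degree two (Equations~(\ref{Eq:degree2doubling}) and~(\ref{Eq:degree2doublingR})), i.e.\ the Hadamard-type relation between $\{\theta_i^2\}$ and $\{\Theta_i^2\}$ that you invoke, combined with the rationality of the primed moduli in the squared theta-constants. Your additional observations --- deriving the right column from the projective involutivity of that transformation rather than by a second computation --- are consistent with Corollary~\ref{cor:symmetric} and do not change the substance of the argument.
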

\begin{proof}
The proof follows by direct computation using the so-called \emph{principal transformations of degree two}~\cite{MR0141643, MR0168805}
that relate the theta-constants of $\{\theta_1, \theta_2, \theta_3, \theta_4\}$ 
and $\{\Theta_1, \Theta_2, \Theta_3, \Theta_4\}$ by the equations
\begin{equation}
\label{Eq:degree2doubling}
\begin{array}{lllclll}
 \theta_1^2 & = & \Theta_1^2 + \Theta_2^2 + \Theta_3^2 + \Theta_4^2 \,, &\qquad
 \theta_2^2 & =&  \Theta_1^2 + \Theta_2^2 - \Theta_3^2 - \Theta_4^2 \,, \\[0.2em]
 \theta_3^2 & = &  \Theta_1^2 - \Theta_2^2 - \Theta_3^2 + \Theta_4^2 \,, &\qquad
 \theta_4^2 &= &  \Theta_1^2 - \Theta_2^2 + \Theta_3^2 - \Theta_4^2 \,,
\end{array}
\end{equation}
and
\begin{equation}
\label{Eq:degree2doublingR}
\begin{array}{lllclll}
 \theta_5^2 & = & 2 \, \big( \Theta_1 \Theta_3 + \Theta_2  \Theta_4 \big) \,, &\qquad
 \theta_6^2 & =&  2 \, \big( \Theta_1 \Theta_3 - \Theta_2  \Theta_4 \big)\,, \\[0.4em]
 \theta_7^2 & = & 2 \, \big( \Theta_1 \Theta_4 + \Theta_2  \Theta_3 \big) \,, &\qquad
 \theta_8^2 &= &  2 \, \big( \Theta_1 \Theta_2 + \Theta_3  \Theta_4 \big)\,, \\[0.4em]
 \theta_9^2 & = & 2 \, \big( \Theta_1 \Theta_4 - \Theta_2  \Theta_3 \big)\,, &\qquad
 \theta_{10}^2 &= & 2 \, \big( \Theta_1 \Theta_2 - \Theta_3  \Theta_4 \big) \,.
\end{array}
\end{equation}
\end{proof}

Similarly, we can express the Rosenhain roots in Equation~(\ref{Picard}) in terms of the theta nulls $[\Theta_1: \Theta_2: \Theta_3: \Theta_4]$.
This defines a maximal isotropic subgroup $\!\hat{\,\mathsf{k}}_{12}$ and its image
$\mathsf{K}_{12}$. We have the following corollary:
\begin{corollary}
\label{cor:symmetric}
The quotient of the abelian surface $\hat{\mathbf{A}}_{12}$ by the maximal isotropic subgroup $\mathsf{K}_{12}$
equals $\hat{\mathbf{A}}_{12}/\mathsf{K}_{12}=\mathbf{A}$.
Moreover, using the maximal isotropic subgroups $\mathsf{k}_{12}$ and $\mathsf{K}_{12}$ the relation between $\mathbf{A}=\mathcal{C}$ and $\hat{\mathbf{A}}_{12}=
\hat{\mathcal{C}\,}\!_{12}$ and the relation between the $(2,2)$-isogenies $\psi_{12}: \operatorname{Jac}(\mathcal{C}) \to \operatorname{Jac}(\hat{\mathcal{C}\,}\!_{12})$ and 
$\hat{\psi}_{12}: \operatorname{Jac}(\hat{\mathcal{C}\,}\!_{12}) \to \operatorname{Jac}(\mathcal{C})$
are symmetric. 
\end{corollary}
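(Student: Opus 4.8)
The plan is to treat the two assertions separately. The equality $\hat{\mathbf{A}}_{12}/\mathsf{K}_{12}=\mathbf{A}$ will follow directly from Lemma~\ref{lem:decomp} once the relevant isotropic subgroups are made explicit, while the symmetry of the two isogenies will be read off from the symmetric form of the moduli relations established in Proposition~\ref{lem:2isog_curve}.

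First I would record the isotropic subgroups in the period-lattice picture. Since $\psi_{12}$ is the isogeny $\underline{z}\mapsto 2\underline{z}$ from $\mathbf{A}=\mathbb{C}^2/\langle \mathbb{Z}^2\oplus\underline{\tau}\,\mathbb{Z}^2\rangle$ onto $\hat{\mathbf{A}}_{12}=\mathbb{C}^2/\langle \mathbb{Z}^2\oplus 2\underline{\tau}\,\mathbb{Z}^2\rangle$, its kernel $\mathsf{k}_{12}$ consists of the four half-periods in the $\mathbb{Z}^2$-direction, a maximal isotropic subgroup of $\mathbf{A}[2]$ for the Riemann form. The four half-periods in the $\underline{\tau}\,\mathbb{Z}^2$-direction form a complementary maximal isotropic subgroup $\!\hat{\,\mathsf{k}}_{12}$, so that $\!\hat{\,\mathsf{k}}_{12}+\mathsf{k}_{12}=\mathbf{A}[2]$ and $\!\hat{\,\mathsf{k}}_{12}\cap\mathsf{k}_{12}=\{\mathfrak{p}_0\}$; pushing forward gives $\mathsf{K}_{12}=\psi_{12}(\!\hat{\,\mathsf{k}}_{12})$ as the half-periods in the $2\underline{\tau}\,\mathbb{Z}^2$-direction of $\hat{\mathbf{A}}_{12}$. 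With these identifications the hypotheses of Lemma~\ref{lem:decomp} are met, so $\hat{\mathbf{A}}_{12}/\mathsf{K}_{12}\cong\mathbf{A}$ and $\hat{\psi}_{12}\circ\psi_{12}$ is multiplication by two; indeed, quotienting $\hat{\mathbf{A}}_{12}$ by $\mathsf{K}_{12}$ restores the lattice to $\mathbb{Z}^2\oplus\underline{\tau}\,\mathbb{Z}^2$, yielding the equality $\hat{\mathbf{A}}_{12}/\mathsf{K}_{12}=\mathbf{A}$ with $\hat{\psi}_{12}$ the descent of the identity on $\mathbb{C}^2$.

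For the symmetry I would observe that $\hat{\psi}_{12}$ is built from $\psi_{12}$ by interchanging the two lattice factors; at the level of theta-functions this is exactly the interchange $\underline{a}^{(i)}\leftrightarrow\underline{b}^{(i)}$ defining the $\Theta_i$ in Equation~(\ref{Eqn:Theta_short}), which is why the roots $\Lambda_i$ of $\hat{\mathcal{C}\,}\!_{12}$ in Equation~(\ref{Picard_sq}) are the \emph{same} rational functions of the theta-constants $\{\Theta_1,\dots,\Theta_4\}$ that the roots $\lambda_i$ of $\mathcal{C}$ in Equation~(\ref{Picard}) are of $\{\theta_1,\dots,\theta_4\}$. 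The isogeny $\psi_{12}$ was realized by the principal transformation of degree two, Equations~(\ref{Eq:degree2doubling})--(\ref{Eq:degree2doublingR}); expressing in turn the roots $\lambda_i$ of $\mathcal{C}$ through the $\Theta_j$ defines $\!\hat{\,\mathsf{k}}_{12}$, its image $\mathsf{K}_{12}$, and hence $\hat{\psi}_{12}$. By Proposition~\ref{lem:2isog_curve} the two columns of Equation~(\ref{relations_RosRoots}) are carried into one another by the substitution $\lambda_i'\leftrightarrow\Lambda_i'$; thus the passage $(\mathbf{A},\mathsf{k}_{12})\xrightarrow{\psi_{12}}\hat{\mathbf{A}}_{12}$ and the passage $(\hat{\mathbf{A}}_{12},\mathsf{K}_{12})\xrightarrow{\hat{\psi}_{12}}\mathbf{A}$ are exchanged under the simultaneous swap $(\lambda_i',\theta_j,\mathsf{k}_{12})\leftrightarrow(\Lambda_i',\Theta_j,\mathsf{K}_{12})$, which is exactly the asserted symmetry.

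The main obstacle is the bookkeeping behind this symmetry. Because the analytic doubling $\underline{\tau}\mapsto 2\underline{\tau}$ applied twice lands at $4\underline{\tau}$ rather than back at $\underline{\tau}$, one must verify that the reduction to Rosenhain normal form --- normalizing three branch points to $0,1,\infty$ --- absorbs a modular transformation that turns the moduli map $\lambda_i'\mapsto\Lambda_i'$ into an involution. That involutivity is precisely what the symmetric shape of Equation~(\ref{relations_RosRoots}) encodes, and tracking which half-period direction constitutes the kernel of $\psi_{12}$ versus $\hat{\psi}_{12}$ is the only delicate point; both are settled by Lemma~\ref{lem:decomp} together with the explicit theta-formulas behind Proposition~\ref{lem:2isog_curve}.
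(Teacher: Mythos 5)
Your proposal is correct and follows essentially the same route as the paper: the quotient statement $\hat{\mathbf{A}}_{12}/\mathsf{K}_{12}\cong\mathbf{A}$ comes from Lemma~\ref{lem:decomp} applied to the complementary maximal isotropic subgroups, and the symmetry is read off from the degree-two principal transformations of theta-constants. The only (harmless) difference is that you make the lattice description of $\mathsf{k}_{12}$ and $\mathsf{K}_{12}$ explicit and invoke the already-proven symmetric form of Equations~(\ref{relations_RosRoots}), whereas the paper directly re-expresses Equations~(\ref{Picard}) in terms of $\lbrace\Theta_1,\dots,\Theta_4\rbrace$ via Equations~(\ref{Eq:degree2doubling})--(\ref{Eq:degree2doublingR}) and observes that the result is Equations~(\ref{Picard_sq_b}) with $\lambda_i\leftrightarrow\Lambda_i$ and $\theta_i\leftrightarrow\Theta_i$ interchanged.
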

\begin{proof}
By construction the maximal isotropic subgroups $\mathsf{k}_{12}$ and 
$\mathsf{K}_{12}$ give a decomposition described in Lemma~\ref{lem:decomp}.
We use Equations~(\ref{Eq:degree2doubling}) and  Equations~(\ref{Eq:degree2doublingR}) to express Equations~(\ref{Picard}) in terms of $\lbrace \Theta_1, \Theta_2, \Theta_3, \Theta_4\rbrace$.
The resulting equations are identical with Equations~(\ref{Picard_sq_b}) with 
$\Lambda_i \leftrightarrow \lambda_i$ and $\theta_i \leftrightarrow \Theta_i$ interchanged.
\end{proof}

\section{Normal form of $(1,2)$-polarized Kummer surface}
\label{sec:kummers}
\subsection{Bielliptic and hyperelliptic genus-three curves}
Let $\mathcal{D}$ be an irreducible, smooth, projective curve of genus three, defined over $\mathbb{C}$.
Within the (coarse) moduli space of curves of genus three $\mathcal{M}_3$ we denote the hyperelliptic locus by $\mathcal{M}_3^h$. 
It is known that  $\mathcal{M}_3^h$ is an irreducible five-dimensional sub-variety\footnote{
The hyperelliptic involution on an irreducible, smooth, projective curve of genus $g$ is unique if $g \ge 2$.} of $\mathcal{M}_3$.
We also define the bielliptic locus
\[
 \mathcal{M}_3^{b} = \left\lbrace [\mathcal{D}] \in \mathcal{M}_3 \Big| \; \mathcal{D} \; \text{is bielliptic} \right\rbrace \;,
 \]
 where bielliptic means that $\mathcal{D}$ admits a degree-two morphism $\pi^{\mathcal{D}}_b: \mathcal{D} \to \mathcal{E}$ onto an elliptic curve,
 and we denote by $[\mathcal{D}]\in \mathcal{M}_3$ the isomorphism class of $\mathcal{D}$. 
 We denote by $\imath^{\mathcal{D}}_b$ the involution, i.e., the element of $\operatorname{Aut}(\mathcal{D})$ which interchanges the sheets of $\pi^{\mathcal{D}}_b$, so that  $\mathcal{E} \cong 
 \mathcal{D} /\langle\imath^{\mathcal{D}}_b \rangle$.  We recall from \cite{MR932781} that $\mathcal{M}_3^{b}$ is an irreducible four-dimensional sub-variety of  $\mathcal{M}_3$, and 
 it is the unique component of maximal dimension of the singular locus\footnote{By the Castelnuovo-Severi inequality it follows
 that curves of genus $g \ge 6$ admit precisely one bielliptic structure and that bielliptic curves of genus $g \ge 4$ cannot be hyperelliptic.} of $\mathcal{M}_3$.
 The following theorem was proved in \cite{MR932781}:
 \begin{theorem}
  The generic bielliptic curve of genus 3 carries exactly one bielliptic structure.
 \end{theorem}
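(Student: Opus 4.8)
The plan is to reduce the statement to a computation of the automorphism group of a generic member of $\mathcal{M}_3^{b}$. A bielliptic structure on $\mathcal{D}$ is exactly the datum of a bielliptic involution $\imath^{\mathcal{D}}_{b}\in\operatorname{Aut}(\mathcal{D})$ whose quotient $\mathcal{E}=\mathcal{D}/\langle\imath^{\mathcal{D}}_{b}\rangle$ is an elliptic curve; by Riemann--Hurwitz any such involution has exactly four fixed points. Hence the number of bielliptic structures on $\mathcal{D}$ equals the number of involutions in $\operatorname{Aut}(\mathcal{D})$ with four fixed points, and it would suffice to prove that for generic $[\mathcal{D}]\in\mathcal{M}_3^{b}$ the only nontrivial automorphism is $\imath^{\mathcal{D}}_{b}$ itself, i.e.\ that $\operatorname{Aut}(\mathcal{D})\cong\mathbb{Z}/2\mathbb{Z}$.

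First I would exploit the fact, recorded above, that $\mathcal{M}_3^{b}$ is \emph{irreducible} of dimension four, together with the upper semicontinuity of the order of the automorphism group in a family of smooth curves: the locus $\{[\mathcal{D}]\in\mathcal{M}_3^{b}:|\operatorname{Aut}(\mathcal{D})|\ge 3\}$ is Zariski closed, while $|\operatorname{Aut}(\mathcal{D})|\ge 2$ holds on all of $\mathcal{M}_3^{b}$ because the bielliptic involution is always present. Consequently, to conclude that $\operatorname{Aut}(\mathcal{D})\cong\mathbb{Z}/2\mathbb{Z}$ on a dense open subset it is enough to exhibit a single bielliptic genus-three curve whose automorphism group is exactly $\mathbb{Z}/2\mathbb{Z}$; then the bigger-automorphism locus is a proper closed subset of the irreducible fourfold $\mathcal{M}_3^{b}$, hence of dimension $<4$, and its complement is the sought dense open set on which the bielliptic structure is unique.

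For the required example I would pass to the plane model. Since the locus of curves that are simultaneously hyperelliptic and bielliptic is only three-dimensional (such a curve is an unramified double cover of a genus-two curve, so its moduli are bounded by $\dim\mathcal{M}_2=3$), the generic bielliptic curve is non-hyperelliptic and is canonically a smooth plane quartic. A bielliptic involution then appears as a projective harmonic homology, which after a change of coordinates is $\sigma_0:[x:y:z]\mapsto[x:y:-z]$, and the invariant quartics are exactly $F=a_0(x,y)+a_2(x,y)\,z^2+a_4\,z^4$; the four fixed points lie on $\{z=0\}$ and the isolated fixed point $[0:0:1]$ is off the curve. Counting the nine coefficients of $F$ modulo the centralizer of $\sigma_0$ in $\operatorname{PGL}_3$ recovers the expected dimension four, and the main technical step is to show that for generic coefficients $F$ admits no further linear automorphism. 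I expect this to be the principal obstacle: it can be settled either by appealing to the classification of automorphism groups of smooth plane quartics (which are finite in number, so only finitely many proper strata must be avoided) or by a direct computation verifying that one explicit choice of $(a_0,a_2,a_4)$ yields trivial extra symmetry.

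As an independent check --- and the route most in keeping with the Prym-theoretic viewpoint of this paper --- I would argue that two distinct bielliptic involutions $\imath_1,\imath_2$ with elliptic quotients $\mathcal{E}_1,\mathcal{E}_2$ force $\mathcal{E}_1$ and $\mathcal{E}_2$ to be isogenous. Indeed the product map $(\pi_1,\pi_2):\mathcal{D}\to\mathcal{E}_1\times\mathcal{E}_2$ has both projections of degree two; if it were birational onto its image on an abelian surface with $\operatorname{NS}$ of rank two the image would be a curve of class $2f_1+2f_2$, hence of arithmetic genus $1+\tfrac12(2f_1+2f_2)^2=5$, contradicting $g(\mathcal{D})=3$. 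The only escapes are that the two involutions coincide or that $\operatorname{NS}(\mathcal{E}_1\times\mathcal{E}_2)$ has rank greater than two, i.e.\ $\mathcal{E}_1\sim\mathcal{E}_2$. Since the elliptic quotient of a generic bielliptic curve carries no such extra isogeny (the Prym factor of $\operatorname{Jac}(\mathcal{D})$ being a generic, hence simple, abelian surface not isogenous to $\mathcal{E}_1$), the generic curve cannot carry a second bielliptic structure, in agreement with the conclusion of the semicontinuity argument.
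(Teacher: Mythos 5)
The paper does not prove this statement itself: it is quoted verbatim from the reference \cite{MR932781} (Bardelli--Del Centina), so there is no in-paper argument to compare yours against; I can only assess your proposal on its own terms.

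Your main line of attack is the right one and is essentially how such statements are established: reduce to showing $\operatorname{Aut}(\mathcal{D})\cong\mathbb{Z}/2\mathbb{Z}$ generically, use irreducibility of the four-dimensional locus $\mathcal{M}_3^{b}$ together with the closedness of the jump locus $\{|\operatorname{Aut}|\ge 3\}$, and then either exhibit one bielliptic quartic with no extra symmetry or invoke the classical stratification of $\mathcal{M}_3$ by automorphism groups (Henn, Komiya--Kuribayashi), in which every stratum whose group properly contains a harmonic homology has dimension at most three. The normal form $a_0(x,y)+a_2(x,y)z^2+a_4z^4$ and the dimension count are correct. But you have explicitly deferred the decisive step (``I expect this to be the principal obstacle''), so as written the argument is a correct strategy rather than a proof; the missing step is genuinely the whole content of the theorem.

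The ``independent check'' contains an actual error. From $C\equiv 2f_1+2f_2$ you get $p_a(C)=5$, but this does \emph{not} contradict $g(\mathcal{D})=3$: the image of a birational map may be singular, and a curve of arithmetic genus $5$ with two nodes has geometric genus $3$, so no contradiction arises and your trichotomy (``the two involutions coincide or $\mathcal{E}_1\sim\mathcal{E}_2$'') does not follow. If you want a Prym-theoretic route, the correct mechanism is the isogeny decomposition $\operatorname{Jac}(\mathcal{D})\sim\mathcal{E}_i\times\operatorname{Prym}(\mathcal{D}/\mathcal{E}_i)$: two bielliptic involutions with non-isogenous quotients force $\operatorname{Jac}(\mathcal{D})$ to be isogenous to a product of three elliptic curves, hence force the Prym to be non-simple, which fails generically. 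Even then you must separately rule out a second involution whose quotient \emph{is} isogenous to $\mathcal{E}_1$ (this happens, e.g., for the Fermat and Klein quartics), which again sends you back to the automorphism-group stratification of your first argument.
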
 
 \label{thm1}
 Moreover, the following theorem was proved in \cite{MR1816214}:
 \begin{theorem}
 \label{thm:genus3-hyperelliptc-bielliptic}
 \begin{enumerate}
 \item[]
 \item $[ \mathcal{D}] \in \mathcal{M}_3^{b} \cap \mathcal{M}_3^h$ iff $\mathcal{D}$ is a double cover of a genus-two curve $\mathcal{C}$.
 \item $\mathcal{M}_3^{b} \cap \mathcal{M}_3^h$ is an irreducible, 3-dimensional, rational sub-variety of $\mathcal{M}_3^{b}$.
 \end{enumerate}
  \end{theorem}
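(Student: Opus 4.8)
The plan is to base everything on the fact, already used in the paper, that on a smooth genus-three curve the hyperelliptic involution is unique and central in $\operatorname{Aut}(\mathcal{D})$; hence it commutes with any bielliptic involution, and the two together generate a Klein four-group whose intermediate quotients I can control by a genus relation. For the forward implication of part (1), suppose $[\mathcal{D}]\in\mathcal{M}_3^{b}\cap\mathcal{M}_3^h$ and write $\imath_h$ for the hyperelliptic involution and $\imath^{\mathcal{D}}_b$ for a bielliptic involution. Since $\imath_h$ is central the two commute, and they are distinct (their quotients have genera $0$ and $1$), so $G=\langle \imath_h,\imath^{\mathcal{D}}_b\rangle\cong(\mathbb{Z}/2)^2$ and $\imath_c:=\imath_h\circ\imath^{\mathcal{D}}_b$ is a third involution. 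I would then invoke the genus relation for a Klein four-group action,
\[
 g(\mathcal{D}) + 2\,g(\mathcal{D}/G) = g(\mathcal{D}/\imath_h) + g(\mathcal{D}/\imath^{\mathcal{D}}_b) + g(\mathcal{D}/\imath_c),
\]
which follows in one line from decomposing $H^0(\mathcal{D},\Omega^1)$ into its four $G$-isotypic pieces. As $\mathcal{D}/G$ is a quotient of $\mathcal{D}/\imath_h=\mathbb{P}^1$ we have $g(\mathcal{D}/G)=0$, and with $g(\mathcal{D})=3$, $g(\mathcal{D}/\imath_h)=0$, $g(\mathcal{D}/\imath^{\mathcal{D}}_b)=1$ the relation forces $g(\mathcal{D}/\imath_c)=2$. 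Thus $\mathcal{D}\to\mathcal{C}:=\mathcal{D}/\langle\imath_c\rangle$ exhibits $\mathcal{D}$ as a double cover of a genus-two curve.

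For the converse, let $\pi\colon\mathcal{D}\to\mathcal{C}$ be a degree-two map with $g(\mathcal{C})=2$. Riemann--Hurwitz gives $4=2(2\cdot 2-2)+r$, so $r=0$ and $\pi$ is étale; since $\mathcal{D}$ is irreducible, $\pi$ corresponds to a \emph{nonzero} two-torsion point of $\operatorname{Jac}(\mathcal{C})$. Conceptually this cover is preserved by the hyperelliptic involution of $\mathcal{C}$, which acts as $-\mathbb{I}$ on $\operatorname{Jac}(\mathcal{C})$ and hence fixes every two-torsion class, so it lifts to $\mathcal{D}$; but it is cleaner to make the cover explicit. Each nonzero two-torsion point corresponds to a partition of the six Weierstrass points into a pair and its complementary quadruple (in the paper's notation, the classes $\mathfrak{p}_{ij}$), and after sending the distinguished pair to $u=0,\infty$ I may write $\mathcal{C}\colon v^2=u\,g_4(u)$ with $\deg g_4=4$ and $g_4(0)\ne 0$, the roots of $g_4$ being the remaining four Weierstrass points. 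Then $\mathcal{D}\colon y^2=g_4(x^2)$ with $(x,y)\mapsto(x^2,xy)$ realizes the associated étale double cover: $g_4(x^2)$ has eight distinct roots so $\mathcal{D}$ has genus three, it is hyperelliptic via $y\mapsto -y$, and bielliptic via $x\mapsto -x$ with elliptic quotient $\mathcal{E}\colon y^2=g_4(u)$. This settles part (1) and, as a byproduct, produces the simultaneous model of all three involutions.

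For part (2), the explicit model identifies $\mathcal{M}_3^{b}\cap\mathcal{M}_3^h$ with the moduli of six points on $\mathbb{P}^1$ equipped with a choice of unordered pair among them, taken modulo $\operatorname{PGL}_2(\mathbb{C})$. Equivalently, the forgetful morphism $[\mathcal{D}]\mapsto[\mathcal{C}]$ to $\mathcal{M}_2$ is dominant with finite fibres: generically the fibre is the set of $\binom{6}{2}=15$ nonzero two-torsion points, on which $\operatorname{Aut}(\mathcal{C})=\langle\pm\mathbb{I}\rangle$ acts trivially. Hence $\dim\bigl(\mathcal{M}_3^{b}\cap\mathcal{M}_3^h\bigr)=\dim\mathcal{M}_2=3$. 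Irreducibility I would obtain from monodromy: the universal family realizes $\operatorname{Sp}(4,\mathbb{F}_2)\cong S_6$ acting on the fifteen two-torsion points as $S_6$ on unordered pairs of Weierstrass points, which is transitive, so the degree-fifteen cover of the irreducible base $\mathcal{M}_2$ is connected.

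I expect the rationality to be the main obstacle, since rationality is not preserved under arbitrary finite covers. My plan is to read it off the configuration model directly: after sending the marked pair to $\{0,\infty\}$, the residual freedom is the one-dimensional group $u\mapsto cu$ together with $u\mapsto c/u$ acting on the remaining four points, and the quotient of the configuration space of four points by this group is rational by classical invariant theory. Concretely, suitable cross-ratios of the six marked points are $\operatorname{PGL}_2$-invariant and generate a purely transcendental subfield of transcendence degree three of the function field, which yields the rationality of $\mathcal{M}_3^{b}\cap\mathcal{M}_3^h$ as a three-dimensional subvariety of $\mathcal{M}_3^{b}$.
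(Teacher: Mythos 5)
The paper does not prove this statement at all: it is quoted verbatim from \cite{MR1816214} and used as a black box, so there is no internal proof to compare against. Your proposal supplies an actual argument, and most of it is sound. The forward direction of (1) via the centrality of the hyperelliptic involution and the isotypic (Kani--Rosen) genus relation $g(\mathcal{D})+2g(\mathcal{D}/G)=\sum g(\mathcal{D}/\sigma_i)$ is correct and forces $g(\mathcal{D}/\imath_c)=2$. The converse via Riemann--Hurwitz (the cover is \'etale), the bijection between irreducible \'etale double covers and nonzero two-torsion classes, and the explicit model $y^2=g_4(x^2)\to v^2=ug_4(u)$ is exactly the normal form the paper writes down later in Section 4 (Equations (4.2)--(4.5) and the map $\pi^{\mathcal{D}}_{\mathcal{C}}$), so your construction is consistent with the paper's setup. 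The dimension count and the irreducibility of the $15{:}1$ cover of $\mathcal{M}_2$ via transitivity of the $\operatorname{Sp}(4,\mathbb{F}_2)\cong S_6$ monodromy on pairs of Weierstrass points are also fine. One point you should make explicit: identifying $\mathcal{M}_3^b\cap\mathcal{M}_3^h$ \emph{birationally} with the space of pairs $(\mathcal{C},\epsilon)$ requires that the generic such $\mathcal{D}$ has a \emph{unique} bielliptic involution (so that $\imath_c$, hence $\mathcal{C}$ and $\epsilon$, are recovered from $\mathcal{D}$). The paper's preceding theorem on uniqueness of the bielliptic structure is stated only for the generic point of $\mathcal{M}_3^b$, and your locus has codimension one there, so you need the easy direct check that for generic $\{a_i\}$ the only involution of $\mathbb{P}^1$ preserving $\{\pm\sqrt{a_i}\}$ is $x\mapsto -x$.

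The one genuine gap is the rationality argument, and you correctly anticipated that this is where the difficulty lies. Your concluding sentence --- that invariant cross-ratios ``generate a purely transcendental subfield of transcendence degree three of the function field, which yields the rationality'' --- is logically backwards: exhibiting a rational subfield of full transcendence degree only shows the function field is a \emph{finite extension} of a rational field (equivalently, the variety dominates $\mathbb{P}^3$ generically finitely), which is far from rationality; a unirational threefold need not be rational. What you must show is that the invariant field is \emph{equal to} a purely transcendental field. This can in fact be done with your configuration model: the function field is
\[
\mathbb{C}(e_1,e_2,e_3,e_4)^{\,\mathbb{G}_m\rtimes\mathbb{Z}/2},
\]
where $e_k$ are the elementary symmetric functions of the four residual points, $\mathbb{G}_m$ acts by $e_k\mapsto t^ke_k$, and the involution $u\mapsto 1/u$ sends $(e_1,e_2,e_3,e_4)\mapsto(e_3/e_4,\,e_2/e_4,\,e_1/e_4,\,1/e_4)$. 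The $\mathbb{G}_m$-invariant field is freely generated by $A=e_1e_3/e_2^2$, $B=e_2^2/e_4$, $D=e_1^2/e_2$ (these monomials form a basis of the weight-zero sublattice), the involution fixes $A$ and $B$ and sends $D\mapsto A^2B/D$, and the invariants of such an action are $\mathbb{C}\bigl(A,\,B,\,D+A^2B/D\bigr)$, which is purely transcendental of degree three. With that computation inserted, your proof is complete; alternatively one can cite the known rationality of the Prym moduli space $\mathcal{R}_2$, which is the same space.
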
 
  Every genus-two curve $\mathcal{C} \in \mathcal{M}_2$ has a canonical hyperelliptic structure, and hence it  can be described
  as double cover of $\mathbb{P}^1$ branched at $6$ points. We chose $4$ out 
  of these six points to be the image of 4 pairs of points on $\mathcal{D}\in \mathcal{M}_3^{b} \cap \mathcal{M}_3^h$ such that all eight points in the preimage 
  are fixed under the hyperelliptic involution and each pair is 
  kept fixed by the bielliptic involution. It follows that there are exactly $15=\binom{6}{4}$ inequivalent hyperelliptic and bielliptic genus-three curves
  $\mathcal{D}_{ij}$ for $1 \le i < j \le 6$ that double cover the genus-two curve $\mathcal{C}$.  In fact, it is obvious that the choice of a double cover $\mathcal{D}$ 
   is determined by the choice of Richelot isogeny in Equation~(\ref{Richelot}) and vice versa.
  
  We now determine a normal form for a hyperelliptic and bielliptic genus-three curve $\mathcal{D}$ 
  that is the double cover of the genus-two curve $\mathcal{C}$. 
  We think of this particular genus-three curve
  as a section of $\mathcal{M}_3^{b} \cap \mathcal{M}_3^h \to \mathcal{M}_2$. To do so, we consider the pull-back of the identification $\mathcal{M}_3^{b} \cap \mathcal{M}_3^h \to \mathcal{M}_2$
  along the Galois covering $\mathcal{M}_2(2) \to \mathcal{M}_2$ where $\mathcal{M}_2(2)$ is the moduli space of genus-two curves with level-two structure.
  For a genus-two curve $\mathcal{C}$ given as sextic $Y^2 = f_6(X,Z)$, a class in $\mathcal{M}_2(2)$ is given by the ordered tuple $(\lambda_1, \lambda_2, \lambda_3)$
  after we sent the three remaining roots $\lambda_4, \lambda_5, \lambda_6$ to $1, 0, \infty$. We then choose the points $(\lambda_1, \lambda_2, \lambda_3, 1)$ to
  be the images of the eight ramification points of $\mathcal{D}$.
 
 \begin{remark} 
  Following \cite{MR2166928} we will describe all curves (and fibrations) in the form $Y^2=F_{2n}(X,Z)$ where $F$ is a homogeneous polynomial having $2n$ distinct roots. 
  This defines a hypersurface in $\mathbb{WP}(1,n,1)$ which is the general hyperelliptic curve of genus $g=n-1$. Because of the monomial $Y^2$ 
  the hypersurface does not pass through the cone point $[0:1:0]$. The union of two affine charts given by
  $X=1$ and $Z=1$ glued together is the double cover with $2n$ branch points.
 \end{remark}
  
  \subsubsection{A normal form}
  We assume that the genus-two hyperelliptic curve $\mathcal{C}$ in Theorem~\ref{thm:genus3-hyperelliptc-bielliptic} is in Rosenhain normal form, 
 i.e., for $[X:Y:Z] \in \mathbb{WP}(1,3,1)$ the genus-two curve $\mathcal{C}$ is given by
\begin{equation}
\label{Eq:Rosenhain_g2}
  Y^2 = X\,Z \, \prod_{i=1}^4 \big(X-\lambda_i \, Z\big)  \;,
\end{equation} 
with the hyperelliptic involution given by $\imath^{\mathcal{C}}: [X:Y:Z] \mapsto [X:-Y:Z]$ and the corresponding hyperelliptic map $\pi^{\mathcal{C}}: 
\mathcal{C} \to \mathbb{P}^1$ given by $[X:Y:Z]  \mapsto [X:Z]$. The hyperelliptic involution on $\mathcal{C}$ has the 6 fixed points 
$[\lambda_i:0:1]$ for $i=1, \dots,5$ with $\lambda_4=1$, $\lambda_5=0$, and $[1:0:0]$. We also choose $l$ with $l^2=\lambda_1 \lambda_2\lambda_3$.

By Theorem~\ref{thm1}, generically a 
hyperelliptic and bielliptic genus-three curve $\mathcal{D}$ in the preimage of $\mathcal{M}_3^{b} \cap \mathcal{M}_3^h \to \mathcal{M}_2$ is given by
\begin{equation}
\label{Eq:Rosenhain_g3}
 y^2 = \big(x^2-z^2\big) \, \big( x^2- \lambda_1\, z^2\big) \,  \big( x^2- \lambda_2 \, z^2\big) \,  \big( x^2- \lambda_3 \, z^2\big)  
\end{equation} 
for $[x:y:z] \in \mathbb{WP}(1,4,1)$. The hyperelliptic involution $\imath^{\mathcal{D}}_{h}: [x:y:z] \mapsto [x:-y:z]$, 
and the corresponding hyperelliptic map $\pi^{\mathcal{D}}: \mathcal{D} \to \mathbb{P}^1$ is
$[x:y:z]  \mapsto [x:z]$. The hyperelliptic involution on $\mathcal{D}$ has the 8 fixed points $[\pm \sqrt{\lambda_i}:0:1]$ for $i=1, \dots,4$. A bielliptic involution on $\mathcal{D}$ 
is $\imath^{\mathcal{D}}_b: [x:y:z] \mapsto [-x:y:z] = [x:y:-z]$ and has the 4 fixed points given by $[0:\pm l:1]$ and $[1:\pm1:0]$.

The composition $\imath^{\mathcal{D}}_b \circ \imath^{\mathcal{D}}_h$ is fixed-point-free. An \'etale double cover $\pi^\mathcal{D}_\mathcal{C}: \mathcal{D}  \to \mathcal{C}$ is then given by
\begin{equation}
\label{mapQC}
\begin{split}
  \pi^\mathcal{D}_\mathcal{C}: \quad [x:y:z] & \mapsto  [X:Y:Z]=[x^2:xyz:z^2] \;.
 \end{split} 
 \end{equation}
 The images of the 4 pairs of hyperelliptic fixed points $[\pm \sqrt{\lambda_i}:0:1]$ and the two pairs of bielliptic fixed points $[0:\pm l:1]$ and $[1:\pm1:0]$
 under $\pi^\mathcal{D}_\mathcal{C}$ are exactly the six hyperelliptic fixed points of the genus-two curve $\mathcal{C}$.
 
The quotient curve $\mathcal{E} \cong  \mathcal{D} /\langle\imath^{\mathcal{D}}_b \rangle$ obtained from the bielliptic involution is given by 
\begin{equation}
\label{Eq:Rosenhain_g1}
  Y^2 = \big(X-Z\big) \, \big( X- \lambda_1\, Z\big) \,  \big( X- \lambda_2 \, Z\big) \,  \big( X- \lambda_3 \, Z\big) 
\end{equation} 
with $[X:Y:Z] \in \mathbb{WP}(1,2,1)$, and the double cover $\pi^{\mathcal{D}}_{\mathcal{E}}: \mathcal{D} \to \mathcal{E}$ is given by
\[
 \pi^{\mathcal{D}}_{\mathcal{E}}: \quad [ x:y: \pm z] = [- x:y: \mp z] \mapsto  [X:Y:Z]=[x^2:y:z^2] \;.
 \]
The four branch points of $ \pi^{\mathcal{D}}_{\mathcal{E}}$ are the bielliptic fixed points $[0:\pm l:1]$ and $[1:\pm1:0]$.
The hyperelliptic involution on $\mathcal{E}$ is $\imath^{\mathcal{E}}_{h}: [X:Y:Z] \mapsto [X:-Y:Z]$, 
and the corresponding hyperelliptic map $\pi^{\mathcal{E}}: \mathcal{E} \to \mathbb{P}^1$ is given by  $[X:Y:Z]  \mapsto [X:Z]$. 

We also introduce a smooth plane quartic curve $\mathcal{Q}$ given by
\begin{equation}
\label{Eq:Rosenhain_g3b}
  \hat{Y}^4  = \big(X-Z\big) \, \big( X- \lambda_1\, Z\big) \,  \big( X- \lambda_2 \, Z\big) \,  \big( X- \lambda_3 \, Z\big) 
\end{equation} 
for $[X:\hat{Y}:Z] \in \mathbb{P}^3$, and the double cover $\pi^{\mathcal{Q}}_{\mathcal{E}}: \mathcal{Q} \to \mathcal{E}$ is given by
\[
 \pi^{\mathcal{Q}}_{\mathcal{E}}: \quad [ X: \pm \hat{Y}: Z] \mapsto  [X:y:Z]=[X:\hat{Y}^2:Z] \;.
 \]
 The four branch points of $\pi^{\mathcal{Q}}_{\mathcal{E}}$ are the bielliptic fixed points $[\lambda_i:0:1]$ for $1\le i \le 4$.
By the genus-degree formula $\mathcal{Q}$ has genus three.

The situations is depicted in Figure~\ref{Relations_Curves}. The morphism $\pi^{\mathcal{Q}}_{\mathcal{E}}$ and $\pi^{\mathcal{D}}_{\mathcal{E}}$ have
the ramification given by the points $[\lambda_i:0:1]$ for $1\le i \le 4$ and the points $[0:\pm l:1]$ and $[1:\pm1:0]$, respectively. 
The hyperelliptic map $\pi^{\mathcal{E}}: \mathcal{E} \to \mathbb{P}^1$ projects these points to $[X:Z]=[\lambda_i:1]$ and $[X:Z]=[1:0]$ and $[0:1]$,
i.e., precisely the image of the six Weierstrass points of $\mathcal{C}$ under the projection $\pi^{\mathcal{C}}: \mathcal{C} \to \mathbb{P}^1$.
 \begin{figure}[ht]
 \scalebox{0.75}{
\centerline{
\xymatrix{
& \mathcal{D}^{(g=3)}: y^2 = \prod_{i=1}^4 \left(x^2 - \lambda_i z^2\right)  \ar[dd]_{\pi^{\mathcal{D}}_{\mathcal{E}}} \ar[rd]_{\pi^{\mathcal{D}}_{\mathcal{C}}} \\
\mathcal{Q}^{(g=3)}: \hat{Y}^4 = \prod_{i=1}^4 \left(Z - \lambda_i Z\right)  \ar[rd]_{\pi^{\mathcal{Q}}_{\mathcal{E}}}
& & \mathcal{C}^{(g=2)}: Y^2 = X  Z \, \prod_{i=1}^4 \left(X - \lambda_i Z\right)\ar[dd]_{\pi^{\mathcal{C}}} \\
& \mathcal{E}^{(g=1)}: y^2 = \prod_{i=1}^4 \left(X - \lambda_i Z\right) \ar[dr]_{\pi^{\mathcal{E}}}   \\
&&  \mathbb{P}^1 \ni [X:Z]
}}}
 \caption{\label{Relations_Curves}}
\end{figure}
 
\subsection{Kummer surface from the genus-two curve $\mathcal{C}$}
We start with two copies of the genus-two curve in Equation~(\ref{Eq:Rosenhain}) and form the symmetric product 
$\mathcal{C}^{(2)} = (\mathcal{C}\times\mathcal{C})/\langle \sigma_{\mathcal{C}^{(2)} } \rangle$ where $\sigma_{\mathcal{C}^{(2)}}$ interchanges
the copies of $\mathcal{C}$.
The variety $\mathcal{C}^{(2)}/\langle \imath^\mathcal{C} \times  \imath^\mathcal{C} \rangle$ is given in terms of the variables
$[z_1:z_2:z_3:\tilde{z}_4] \in \mathbb{WP}(1,1,1,3)$ with
\begin{equation}
\label{transfo_kummer}
 z_1=Z^{(1)}Z^{(2)},\; z_2=X^{(1)}Z^{(2)}+X^{(2)}Z^{(1)}, \; z_3=X^{(1)}X^{(2)}, \; \tilde{z}_4=Y^{(1)}Y^{(2)}
 \end{equation}
by the equation
\begin{equation}
\label{kummer_middle}
  \tilde{z}_4^2 = z_1  z_3 \,  \prod_{i=1}^4 \big( \lambda_i^2 \, z_1  -  \lambda_i \, z_2 +  z_3 \big) \;.
\end{equation}

We will use the notation $p_{ij}$ with $1\le i < j \le 6$ to label the $15$ singular points of Equation~(\ref{kummer_middle}), known as nodes.
The nodes descend from the elements $\mathfrak{p}_{ij} \in \mathbf{A}[2]$ in Equation~(\ref{oder2points}).
The ten nodes $p_{ij}$  with $1\le i < j \le  5$ are given by setting
$$
 [z_1: z_2 : z_3 : \tilde{z}_4] = [1: \lambda_i+\lambda_j : \lambda_i \lambda_j: 0] \;,
$$
where we have used $\lambda_4 =1$, $\lambda_5=0$.
These points are obtained by combining Weierstrass points $\mathfrak{p}_{i}$ and $\mathfrak{p}_{j}$  on $\mathcal{C}$ given by
\begin{equation}
\label{pencilC_DP}
[X^{(1)}:Y^{(1)}:Z^{(1)}]=[\lambda_i: 0 : 1] \; \text{and} \; [X^{(2)}:Y^{(2)}:Z^{(2)}]=[\lambda_j: 0 : 1] 
\end{equation}
in Equation~(\ref{transfo_kummer}).
Similarly, five nodes  $p_{i6}$ for $1\le i \le 5$ are given by
$$
 [z_1: z_2 : z_3 : \tilde{z}_4] = [0: 1 : \lambda_i : 0] ,
$$
and obtained by combining Weierstrass points $\mathfrak{p}_{i}$ and $\mathfrak{p}_{6}$ on $\mathcal{C}$ given by
\begin{equation}
\label{pencilC_DP_b}
[X^{(1)}:Y^{(1)}:Z^{(1)}]=[\lambda_i: 0 : 1] \; \text{and} \; [X^{(2)}:Y^{(2)}:Z^{(2)}]=[1: 0 : 0] 
\end{equation}
in Equation~(\ref{transfo_kummer}).
 In Table~\ref{tab:nodes} we listed the nodes $p_{ij}$ corresponding to the
 points $\mathfrak{p}_{ij} \in \mathbf{A}[2]$. We relate these nodes to the notation used in Kumar~\cite{MR3263663} 
 and Mehran~\cite{MR2804549} in Table~\ref{tab:nodes}.  Mehran uses the notation $e_{i'j'}$ for the nodes.

\begin{table}
\scalebox{0.9}{
\begin{tabular}{c|c|c|l}
$p$ & $n$ & $e$  & $[z_1: z_2 : z_3 : \tilde{z}_4]$\\
\hline
$p_0$ 	& $n_0$ 		& $e_0$ 	& --\\
$p_{56}$	& $n_1$ 		& $e_{12}$ &  $[0: 1 : 0 : 0]$\\
$p_{16}$	& $n_3$ 		& $e_{14}$ &  $[0: 1 : \lambda_1 : 0]$\\
$p_{26}$	& $n_4$ 		& $e_{15}$ &  $[0: 1 : \lambda_2 : 0]$\\
$p_{36}$	& $n_5$ 		& $e_{16}$ &  $[0: 1 : \lambda_3 : 0]$\\
$p_{46}$	& $n_2$ 		& $e_{13}$ &  $[0: 1 : 1 : 0]$\\
$p_{15}$	& $n_{13}$ 	& $e_{24}$ &  $[1: \lambda_1 : 0 : 0]$\\
$p_{25}$	& $n_{14}$ 	& $e_{25}$ &  $[1: \lambda_2 : 0 : 0]$\\
$p_{35}$	& $n_{15}$ 	& $e_{26}$ &  $[1: \lambda_3 : 0 : 0]$\\
$p_{45}$	& $n_{12}$ 	& $e_{23}$ &  $[1: 1 : 0 : 0]$\\
$p_{14}$	& $n_{23}$ 	& $e_{34}$ &  $[1: \lambda_1+1 : \lambda_1 : 0]$\\
$p_{24}$	& $n_{24}$ 	& $e_{35}$ &  $[1: \lambda_2+1 : \lambda_2 : 0]$\\
$p_{34}$	& $n_{25}$ 	& $e_{36}$ &  $[1: \lambda_3+1 : \lambda_3 : 0]$\\
$p_{13}$	& $n_{35}$ 	& $e_{46}$ &  $[1: \lambda_1+\lambda_3 : \lambda_1\lambda_3 : 0]$\\
$p_{23}$	& $n_{45}$ 	& $e_{56}$ &  $[1: \lambda_2+\lambda_3 : \lambda_2\lambda_3 : 0]$\\
$p_{12}$	& $n_{34}$ 	& $e_{45}$ &  $[1: \lambda_1+\lambda_2 : \lambda_1\lambda_2 : 0]$
\end{tabular}}
\bigskip
\caption{Nodes on a generic Jacobian Kummer surface}
\label{tab:nodes}
\end{table}

Six tropes, i.e., hypersurfaces that intersect the quartic surface in a conic, are now easily found by inspection: for a given integer 
$i$ with $1 \le i \le 5$, the nodes $p_{ij}$ or $p_{ji}$ all lie on the plane
\begin{equation}
 \mathsf{T}_i: \quad \lambda_i^2 z_1 - \lambda_i z_2 + z_3 = 0 \;,
\end{equation}
and we set $\mathsf{T}_6: z_1=0$. Thus, Equation~(\ref{kummer_middle}) becomes
\[
 \tilde{z}_4^2 = \mathsf{T}_1 \mathsf{T}_2 \mathsf{T}_3 \mathsf{T}_4 \mathsf{T}_5 \mathsf{T}_6 \;,
\]
which is a double cover of $\mathbb{P}^2$ branched along a reducible plane sextic curve (see Fig.~\ref{6Lines}) -- the union of six lines all tangent to a conic. In fact,
the trope $\mathsf{T}_i$ is tangent to the conic $K_2=z_2^2 - 4 \, z_1 \, z_3=0$ 
at the non-singular points $p_{ii}: [1: 2\lambda_i : \lambda_i^2 :0]$ for $i=1,\dots,5$, and $\mathsf{T}_6$ is tangent to 
$K_2=0$ at the point $p_{66}: [0:0:1:0]$. These points of tangency are obtained by combining the Weierstrass points on $\mathcal{C}$ in Equation~(\ref{pencilC_DP}) 
for $i=j=1, \dots 5$ or two copies of $\mathfrak{p}_6 \in \mathcal{C}$, respectively, in Equation~(\ref{transfo_kummer}).
A $16_6$-configuration descends from the $16_6$-configuration on $\mathbf{A}=\operatorname{Jac}(\mathcal{C})$ with
the remaining $10$ tropes $\mathsf{T}_{ijk}$ with $1\le i < j < k \le 6$ descending from the theta-divisors $\mathfrak{t}_{ijk}$ .

Conversely, if we start with a singular quartic surface in $\mathbb{P}^3$, called Kummer quartic, 
with the maximal number of simple nodes which -- for a quartic in $\mathbb{P}^3$ -- is 16. 
For the singular point $p_0$ on this singular quartic surface in $\mathbb{P}^3$, we identify the lines in $\mathbb{P}^3$ through the point $p_0$ with $\mathbb{P}^2$ 
and map any line in the tangent cone of $p_0$ to itself. In this way one obtains a double cover of $\mathbb{P}^2$ branched along 
a plane curve of degree six  where all the nodes of the quartic surface different from $p_0$ map to nodes of the sextic.
By the genus-degree formula, the maximal number of nodes on a sextic curve is obtained when the curve is a union of six lines, in which case we have fifteen
remaining nodes apart from $p_0$. In particular, the branch locus of the double cover to $\mathbb{P}^2$ is a reducible plane sextic curve, namely the union of six lines all tangent to a conic. 

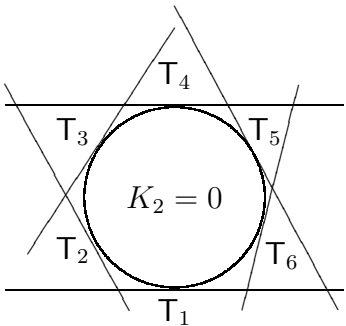
\begin{figure}[ht]
$$
  \begin{xy}
   <0cm,0cm>;<1.5cm,0cm>:
    (2,-.3)*++!D\hbox{$\mathsf{T}_1$},
    (1.1,0.25)*++!D\hbox{$\mathsf{T}_2$},
    (1.1,1.3)*++!D\hbox{$\mathsf{T}_3$},
    (2,1.8)*++!D\hbox{$\mathsf{T}_4$},
    (2.8,1.3)*++!D\hbox{$\mathsf{T}_5$},
    (2.95,0.2)*++!D\hbox{$\mathsf{T}_6$},
    (2,0.7)*++!D\hbox{$K_2=0$},
    (.5,0.18);(3.5,0.18)**@{-},
    (0.5,2);(1.6,0)**@{-},
    (3.1,2);(2.6,0)**@{-},
    (0.7,.5);(2,2.5)**@{-},
    (0.5,1.82);(3.5,1.82)**@{-},
    (2,2.7);(3.45,0)**@{-},
    (2,1)*\xycircle(.8,.8){},
  \end{xy}
  $$
\caption{Double cover branched along reducible sextic}
\label{6Lines}
\end{figure}

\begin{remark}
Let $\mathcal{L}$ be the ample line bundle on the abelian surface $\mathbf{A}=\operatorname{Jac}(\mathcal{C})$ defining its principal polarization 
and consider the rational map $\Phi_{\mathcal{L}^2}: \mathbf{A} \to \mathbb{P}^3$ associated with the line bundle $\mathcal{L}^2$. 
The map $\Phi_{\mathcal{L}^2}$ factors via an embedding of $\mathbf{A}/\langle -\mathbb{I} \rangle$ into $\mathbb{P}^3$ \cite[Sec.~10.2]{MR2062673}.
\end{remark}

\subsubsection{An elliptic fibration}
\label{sec:elliptic_fibration_middle}
We use the rational transformation $[z_1:z_2:z_3:\tilde{z}_4]=[T_1z: x: T_2 z: y z]$ 
with $[(T_1, T_2) , (x,y,z)] \in \mathbb{F}^{3,2}_{-1}$ where we have introduced 
\begingroup\makeatletter\def\f@size{10}
 \begin{align}
 \label{HSB}
  \mathbb{F}^{l,m}_{-k} = \left\lbrace  \begin{array}{c} \Big( (T_1, T_2) , (x,y,z) \Big) \\ \in \mathbb{C}^2_{\not = 0} \times \mathbb{C}^3_{(x,z)\not = 0} \end{array} \Big|\;  \forall \lambda \in \mathbb{C}^\star:\!\!   \begin{array}{l} 
  \big((T_1, T_2) , (x,y,z)\big)  \sim  \big(T_1, T_2) , (\lambda x, \lambda^m y, \lambda z)\big) \\
  \big((T_1, T_2) , (x,y,z)\big)  \sim  \big((\lambda T_1, \lambda T_2) , (\lambda^k x, \lambda^l y, z)\big)
 \end{array}
 \right\rbrace
\end{align}
\endgroup
such that $\pi^{l,m}_{-k}: \mathbb{F}^{l,m}_{-k} \to \mathbb{F}_{-k}$ with $[(T_1, T_2) , (x,y,z)]  \mapsto  [(T_1, T_2) , (x,z)]$
is the projection onto the Hirzebruch surface $\mathbb{F}_{-k}\cong\mathbb{F}_{1+k}$.
Equation~(\ref{kummer_middle}) becomes the equation of a genus-one fibration $\bar{\mathcal{Y}\,}\!_1$ over $\mathbb{P}^1$ given by
\begin{equation}
\label{kummer_middle_ell}
   y^2 = T_1 T_2 \,  \prod_{i=1}^4 \left( \lambda_i \, x  - \big(\lambda_i^2 \, T_1  +T_2 \big) \, z \right)\;.
\end{equation}
It is easy to check that the projection $\pi^{ \mathcal{Y}_1}:  \bar{\mathcal{Y}\,}\!_1\to \mathbb{P}^1$ with $([T_1:T_2],[x:y:z]) \mapsto [T_1:T_2]$ 
defines a K3 fibration with two fibers of Kodaira-type $I_0^*$ and
six fibers of Kodaira-type $I_2$. Four sections are given by  $\mathsf{E}_i: [x:y:z]=[\lambda_i^2 \, T_1  +T_2:0:\lambda_i]$
for $1\le i \le4$. We choose the point given by $\mathsf{E}_4$ to be the neutral element of the Mordell-Weil group of sections,
turning Equation~(\ref{kummer_middle_ell}) into a Jacobian elliptic fibration. The Mordell-Weil group is generated by the 4 two-torsion sections 
$\mathsf{E}_i$ for $1\le i \le4$ and the section $\mathsf{P}$ of infinite order given by
\begin{equation}
\mathsf{P}: \left\lbrace\begin{array}{rl}
 x & = (T_1 + T_2) ( T_2 -\lambda_1\lambda_2\lambda_3 T_1) +  T_1 T_2 \prod_{i=1}^3 (\lambda_i -1)  \,,\\[0.2em]
 y & = T_1 T_2 \prod_{i=1}^3 (\lambda_i -1) (T_2 - T_1 \! \prod_{k=1, k\not= i }^3 \! \lambda_k)\,,\\[0.4em]
 z & =  T_2 -\lambda_1\lambda_2\lambda_3 T_1 \;.
\end{array}\right.
\end{equation}

The six nodes $p_{ij}$ with $1\le i < j \le 4$ 
are the singular points of elliptic fibration on $ \bar{\mathcal{Y}\,}\!_1$ in Equation~(\ref{kummer_middle_ell}) given by
\begin{equation}
\label{sings_middle}
  \Big([T_1:T_2],[x:y:z]\Big)  = \Big([1: \lambda_i \lambda_j  ], [\lambda_i+\lambda_j : 0 : 1 ]\Big).
 \end{equation}
Specifically, in the singular fibers $\mathcal{Y}_{1,t_{ij}}$ over the six points $t_{ij}: [T_1:T_2]=[1: \lambda_i \lambda_j]$ for $1\le i < j \le 4$, 
the two-torsion sections $\mathsf{E}_i=\mathsf{E}_j$ coincide at the point $[x:y:z]= [\lambda_i+\lambda_j : 0 : 1 ]$. Each fiber of type $I_2$ over $t_{ij}$ has the form
\begin{equation}
\label{kummer1_ell_sing}
  y^2 =   \lambda_i \lambda_j \,  \left( x  - \big(\lambda_i   + \lambda_j \big) \, z\right)^2 \! \prod_{k=1, k\not= i,j}^4 \left( \lambda_k \, x  - \big(\lambda_k^2   + \lambda_i \lambda_j \big) \, z \right).
\end{equation}
Moreover, the nodes $p_{i6}$ and  $p_{i5}$ for $1\le i \le 4$ are the points of intersection of the sections $\mathsf{E}_i$ with
the fibers of type $I_0^*$ over $[T_1:T_2]=[1:0]$  at $[x:y:z]=[\lambda_i:0:1]$ and over $[T_1:T_2]=[0:1]$  at $[x:y:z]=[1:0:\lambda_i]$, respectively.

In terms of the elliptic fibration~(\ref{kummer_middle_ell}), the 4 points of tangency $p_{ii}$ for $1\le i \le 4$
are given by
\[
  \Big([T_1:T_2],[x:y:z]\Big)  = \Big([1: \lambda_i^2 ], [2 \, \lambda_i : 0 : 1 ]\Big)
 \] 
and are located in the smooth fibers over the points $[T_1:T_2]=[1: \lambda_i^2 ]$ 
on the two-torsion section $\mathsf{E}_i$.  The two remaining points of tangency $p_{55}$ and $p_{66}$ are given by the points $[x:y:z]=[0:0:1]$ 
over $[T_1:T_2]=[1:0]$ and $[T_1:T_2]=[0:1]$, respectively.

\subsubsection{The elliptic fibration under a special projective automorphism}
There is an extra involution acting on the surface~(\ref{kummer_middle_ell}). To see this, one applies fiberwise a   onal change of coordinates.
In each fiber given by Equation~(\ref{kummer_middle_ell}), we apply a linear transformation that maps the points $\mathsf{E}_i: [x:y:z]=[\lambda_i^2 \, T_1  +T_2:0:\lambda_i]$ 
for $1 \le i \le 4$ to the points $\mathsf{E}'_i: [x':y':z']=[\lambda_i' T_1T_2:0:1]$ for $1\le i\le3$ and 
$\mathsf{E}'_4: [x':y':z']=[T_2^2 + l^2 T_1^2:0:1]$. The transformation is given by
 \begin{equation}
 \label{transfo_kummer_middle}
 x'  = z' \, T_1 T_2  \, \frac{A \, x + B\, z}{C \, x + D \, z}\; , \quad
 y'  = (z')^2 \, y \, T_1 T_2 \,  \frac{\prod_{i=1}^3 (\lambda_i-1) (T_2-T_1\prod_{k=1, k\not= i}^{3}\lambda_k)}{(C \, x + D \, z)^2} ,
 \end{equation}
where $A(T_1,T_2), C(T_1,T_2)$ and $B(T_1,T_2), D(T_1,T_2)$ are homogeneous polynomials in $[T_1:T_2]$ of degree $1$ and $2$, respectively, that satisfy
\[
 \det \left( \begin{array}{cc} A & B \\ C & D \end{array} \right) = \prod_{i=1}^3 (\lambda_i-1) (T_2-T_1\!\!\prod_{k=1, k\not= i}^{3}\lambda_k) \;.
 \]
The map~(\ref{transfo_kummer_middle}) transforms Equation~(\ref{kummer_middle_ell}) into the equation
\begin{equation}
\label{kummer_middle_ell_p}
   (y')^2 = \left( x' - \big(T_2^2 + l^2 T_1^2\big) \, z' \right) \,  \prod_{i=1}^3 \left( x'  - \lambda_i' \, l \, T_1 T_2 \, z' \right)\;.
\end{equation}
The projection $([T_1:T_2],[x':y':z']) \mapsto [T_1:T_2]$ 
defines the same elliptic K3 fibration as Equation~(\ref{kummer_middle_ell}). The Mordell-Weil group is now generated by the 4 two-torsion sections $\mathsf{E}'_i$
for $1\le i \le4$ and a section $\mathsf{P}'$ of infinite order given by $[x':y':z']=[1:1:0]$.
It is obvious that Equation~(\ref{kummer_middle_ell_p}) admits the additional involution
\begin{equation}
\label{C2_involutions_middle}
\begin{split}
  \imath^{\mathcal{Y}_1}: \quad  \big([T_1:T_2],[x':y':z']\big)  \mapsto  \big([T_2:l^2 T_1],[l^2 x':l^4y':z']\big) .
\end{split}
\end{equation}
We move the point given by $\mathsf{E}_4'$ to infinity and
convert Equation~(\ref{kummer_middle_ell_p}) to Weierstrass form using a transformation defined over $\mathbb{C}(t)$ with $t=T_2/(l T_1)$. 
After translation by two-torsion the Weierstrass fibration becomes
\begin{equation}
\label{kummer_middle_ell_p_W}
\begin{split}
Y^2  = \; X & \big( X- t \, \left( t^2 - t \, \lambda'_3  +1 \right)   \left( \lambda_1'-\lambda_2' \right) \big) \, \big( X- t \,\left( t^2 - t \, \lambda'_2  +1 \right)    \left( \lambda_1'-\lambda_3' \right) \big) \;,
\end{split}
\end{equation}
with a two-torsion section given by $(X,Y)=(0,0)$ and a distinguished section $\mathsf{S}_0$ given by the point at infinity.
We will also denote this Weierstrass fibration by $Y^2= X^3 - p_2(t) \, t \, X^2+p_1(t)\, t^2 X$ with
\begin{equation}
\label{polynomials_left}
\begin{split}
 p_1(t) &=   \left( \lambda_1'-\lambda_2' \right)  \left( \lambda_1'-\lambda_3' \right) \left( t^2 - t \, \lambda'_2  +1 \right)  \left( t^2 - t \, \lambda'_2  +1 \right)  ,\\
 p_2(t) & = (2\lambda_1'-\lambda_2'-\lambda_3')  \, t^2 - (\lambda_1'(\lambda_2'+\lambda_3') - 2 \lambda_2' \lambda_3') \, t +(2\lambda_1'-\lambda_2'-\lambda_3') \;.
 \end{split}
 \end{equation}
 The discriminant of the elliptic fiber is given by
 \begin{equation}
 \label{discrim_M}
 \begin{split}
  \Delta_{\mathcal{Y}_1} = &\;  \left( \lambda_1'-\lambda_2' \right)^2  \left( \lambda_1'-\lambda_3' \right)^2  \left( \lambda_2'-\lambda_3' \right)^2 \\
  \times & \;  t^6  \left( t^2 - t \, \lambda'_1  +1 \right)^2  \left( t^2 - t \, \lambda'_2  +1 \right)^2 \left( t^2 - t \, \lambda'_3  +1 \right)^2 \;.
 \end{split} 
 \end{equation}
Acting on Equation~(\ref{kummer_middle_ell_p_W}) is the involution $\imath^{\mathcal{Y}_1}$ as
\begin{equation}
\label{C2_involutions_middle_b}
\begin{split}
  \imath^{\mathcal{Y}_1}: \quad  \big(t, X, Y\big)  \mapsto  \left(\frac{1}{t}, \frac{X}{t^4},-\frac{Y}{t^6} \right) ,
\end{split}
\end{equation}
which leaves the holomorphic two-form $dt\wedge dX/Y$ invariant.
 We collect the properties of the Jacobian elliptic K3 fibration in Equation~(\ref{kummer_middle_ell_p_W}) in the following:
\begin{lemma}
\label{lem:EllMiddle}
Equation~(\ref{kummer_middle_ell_p_W}) defines a Jacobian elliptic K3 fibration $\pi^{ \mathcal{Y}_1}:  \bar{\mathcal{Y}\,}\!_1\to \mathbb{P}^1$ of Picard rank $17$ with 
two singular fibers of type $I_0^*$, 6 singular fibers of type $I_2$, a determinant of the discriminant form equal to  $2^6$, and 
a Mordell-Weil group of sections given by $\operatorname{MW}(\pi^{ \mathcal{Y}_1}, \mathsf{S}_0)=(\mathbb{Z}/2)^2\oplus \langle 1 \rangle$. 
\end{lemma}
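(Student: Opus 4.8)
The plan is to read off all the stated invariants directly from the Weierstrass equation~(\ref{kummer_middle_ell_p_W}), namely
\[
 Y^2 = X^3 - p_2(t)\, t\, X^2 + p_1(t)\, t^2\, X \;,
\]
with $p_1, p_2$ as in~(\ref{polynomials_left}) and discriminant $\Delta_{\mathcal{Y}_1}$ as in~(\ref{discrim_M}). First I would confirm that this is a K3 surface: writing the equation in the homogeneous Weierstrass form~(\ref{Eq:Weierstrass}), the coefficients $g_2, g_3$ have degrees $4$ and $6$ in $t$, so the total space is a K3 fibration, and the presence of a rational section (the point at infinity $\mathsf{S}_0$) makes it Jacobian. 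The fact that it agrees with the fibration induced by the projection $([T_1:T_2],[x':y':z']) \mapsto [T_1:T_2]$ on~(\ref{kummer_middle_ell_p}) is already established in the text, so the fibration is the one claimed.

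Next I would classify the singular fibers using Kodaira's vanishing-order criterion applied to $\Delta_{\mathcal{Y}_1}$. The discriminant factors as
\[
 \Delta_{\mathcal{Y}_1} \sim t^6 \, \big(t^2 - t\lambda_1' + 1\big)^2 \big(t^2 - t\lambda_2' + 1\big)^2 \big(t^2 - t\lambda_3' + 1\big)^2 \;,
\]
up to the nonzero constant $(\lambda_1'-\lambda_2')^2(\lambda_1'-\lambda_3')^2(\lambda_2'-\lambda_3')^2$. For the six generic roots of the three quadratics, $\Delta$ vanishes to order $2$ while $g_2, g_3$ do not vanish, giving six fibers of type $I_2$. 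At $t=0$ the equation has the form $Y^2 = X(X - c_1 t \cdots)(X - c_2 t \cdots)$; one checks $\operatorname{ord}_{t=0}(g_2)=2$, $\operatorname{ord}_{t=0}(g_3)=3$, and $\operatorname{ord}_{t=0}(\Delta)=6$, which is exactly the signature of an $I_0^*$ fiber. By the symmetry~(\ref{C2_involutions_middle_b}), $t \mapsto 1/t$, the behavior at $t=\infty$ mirrors that at $t=0$, yielding a second $I_0^*$ fiber; this I would verify by a coordinate change $t \mapsto 1/t$ and the standard Weierstrass rescaling, confirming the vanishing orders $(2,3,6)$ there as well. The main arithmetic obstacle is the bookkeeping at $t=0$ and $t=\infty$: one must rule out the competing type $I_2^*$ and ensure the roots of the quadratics are distinct from $0,\infty$ and from each other for generic Rosenhain parameters, so that the $I_2$ fibers do not collide or merge with the $I_0^*$ fibers.

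With the fiber types in hand, the remaining invariants follow by lattice theory. The Euler-characteristic (or Shioda-Tate) count gives the Picard rank: the two $I_0^*$ fibers contribute root lattices $D_4$ (rank $4$ each) and the six $I_2$ fibers contribute $A_1$ (rank $1$ each), so the root contribution to $\mathcal{W}^{\mathrm{root}}$ has rank $2\cdot 4 + 6 \cdot 1 = 14$; adding the rank-$2$ hyperbolic lattice $\mathcal{H}$ spanned by fiber and section gives $\rho = 2 + 14 + r$ where $r = \operatorname{rank}\operatorname{MW}$. Since the four sections $\mathsf{E}'_i$ are two-torsion and $\mathsf{P}'$ has infinite order, $\operatorname{MW}$ contains $(\mathbb{Z}/2)^2 \oplus \langle 1 \rangle$; the torsion $(\mathbb{Z}/2)^2$ is forced by the full two-torsion visible in the $D_4$ and $A_1$ fibers together with the two-torsion section $(X,Y)=(0,0)$, and I would pin down the free rank as exactly one by the height-pairing computation via Nishiyama's lemma, so that $\rho = 17$. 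Finally the discriminant form: using~(\ref{eqn:transL}), $(D_{\operatorname{T}(\mathcal{Y}_1)}, q) \cong (D_{\mathcal{W}}, -q_{\mathcal{W}})$, and $D_{\mathcal{W}}$ is computed from the root contributions $D_4^{\oplus 2} \oplus A_1^{\oplus 6}$ modulo the torsion sections and the infinite-order section; the discriminant of $D_4$ is $4$ and of each $A_1$ is $2$, and after quotienting by the Mordell-Weil glue one obtains a determinant of $2^6$ for the discriminant form, as claimed. This last step, keeping track of how the torsion sections and the generator $\mathsf{P}'$ reduce the naive determinant $4^2 \cdot 2^6$ down to $2^6$, is the part I expect to require the most care.
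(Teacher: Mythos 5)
Your overall route is the same one the paper takes implicitly: Lemma~\ref{lem:EllMiddle} carries no separate proof in the text and simply collects the data established in the surrounding computation (the fiber count ``two $I_0^*$ and six $I_2$'' read off from Equation~(\ref{kummer_middle_ell}), the four visible two-torsion sections $\mathsf{E}'_i$, the infinite-order section $\mathsf{P}'$, and the discriminant~(\ref{discrim_M})), with the lattice-theoretic bookkeeping left to the reader and later cross-checked against fibration {\tt (1)} in Kumar's classification. Your Kodaira analysis at the six roots of the quadratics and at $t=0,\infty$ is correct, as is the discriminant-form count $4^2\cdot 2^6\cdot h(\mathsf{P}')/|(\mathbb{Z}/2)^2|^2=2^6$.

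There is, however, one step that would fail as written: you propose to ``pin down the free rank as exactly one by the height-pairing computation via Nishiyama's lemma.'' A height-pairing computation shows that $\mathsf{P}'$ is non-torsion and computes its height; it cannot bound the Mordell--Weil rank from above, so Shioda--Tate only gives you $\rho\ge 17$ and, correspondingly, that the claimed lattice is a finite-index sublattice of $\operatorname{NS}(\mathcal{Y}_1)$. The equality $\rho=17$ (hence $\operatorname{rank}\operatorname{MW}=1$ and the determinant exactly $2^6$) comes from the genericity hypothesis on $\mathcal{C}$: Equation~(\ref{kummer_middle}) is by construction the Kummer quartic of $\operatorname{Jac}(\mathcal{C})$, and for $\operatorname{NS}(\operatorname{Jac}\mathcal{C})=\mathbb{Z}[\mathcal{C}]$ the transcendental lattice $\operatorname{T}(\operatorname{Kum}(\operatorname{Jac}\mathcal{C}))\cong\operatorname{T}(\operatorname{Jac}\mathcal{C})(2)$ has rank $5$, forcing $\rho=17$. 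You should replace the height-pairing appeal with this geometric input (or with some other a priori upper bound on the Picard number); once that is in place the rest of your argument closes up correctly.
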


\begin{remark}
The divisor class of the fiber in Equation~(\ref{kummer_middle_ell_p_W}) is easily found to be $2\mathsf{T}_6 + {P}_{16} +{P}_{26} + {P}_{36} + {P}_{46}$
where ${P}_{ij}$ are the exceptional divisors obtained from resolving the nodes $p_{ij}$ in Table~\ref{tab:nodes}. This corresponds to the divisor class
$H - P_0 -P_{56}$ where $H$ is the hyperplane class. Therefore, in addition to the node $p_0$ the elliptic fibration depends on the choice 
of an additional node, say $p_{56}$, for which there are $15$ choices.
\end{remark}

The fibration in Lemma~\ref{lem:EllMiddle} is the elliptic fibration {\tt (1)} in the list of all possible elliptic fibrations on a generic Kummer surface determined 
by Kumar in~\cite[Thm.~2]{MR3263663}. We have the following corollary:
\begin{corollary}
\label{cor:genus_two}
The K3 surface $\mathcal{Y}_1$  is the Kummer surface $\operatorname{Kum}(\operatorname{Jac} \mathcal{C})$ of the principally polarized abelian surface
$\operatorname{Jac}(\mathcal{C})$ with $\mathcal{C}$ given in Equation~(\ref{Eq:Rosenhain_g2}).
\end{corollary}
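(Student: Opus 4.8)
The plan is to identify $\mathcal{Y}_1$ with $\operatorname{Kum}(\operatorname{Jac}\mathcal{C})$ by recognizing the singular surface from which $\mathcal{Y}_1$ is built as the classical singular Kummer quotient $\operatorname{Jac}(\mathcal{C})/\langle -\mathbb{I}\rangle$, and then checking that the passage to the Weierstrass form Equation~(\ref{kummer_middle_ell_p_W}) carries out exactly the minimal resolution. First I would invoke the Abel--Jacobi map $\operatorname{Sym}^2(\mathcal{C}) \to \operatorname{Pic}^2(\mathcal{C}) \cong \operatorname{Jac}(\mathcal{C})$, $\{p,q\} \mapsto [p+q]$, which is birational: it is an isomorphism away from the $\mathbb{P}^1$ of hyperelliptic-conjugate pairs $\{p,\imath^\mathcal{C}(p)\}$, i.e. the divisors in the canonical system $|K|$, all of which collapse to the single class $[K]$. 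Under this identification the involution $\imath^\mathcal{C}\times\imath^\mathcal{C}$ induced on $\mathcal{C}^{(2)}$ corresponds to $-\mathbb{I}$ on $\operatorname{Jac}(\mathcal{C})$: using $p+\imath^\mathcal{C}(p)\sim K$ we get $[\imath^\mathcal{C}p+\imath^\mathcal{C}q]=[2K-p-q]$, which is $-[p+q]$ once $\operatorname{Pic}^2$ is identified with $\operatorname{Jac}(\mathcal{C})$ by subtracting $K$. Hence $\mathcal{C}^{(2)}/\langle \imath^\mathcal{C}\times\imath^\mathcal{C}\rangle$, cut out by Equation~(\ref{kummer_middle}), is birational to the singular Kummer surface $\operatorname{Jac}(\mathcal{C})/\langle -\mathbb{I}\rangle$.

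Second, I would confirm that Equation~(\ref{kummer_middle}) genuinely models this quotient rather than merely a related double sextic. The surrounding discussion already supplies the needed dictionary: the fifteen nodes $p_{ij}$ are matched in Table~\ref{tab:nodes} with the two-torsion points $\mathfrak{p}_{ij}\in\mathbf{A}[2]$, the six tropes $\mathsf{T}_i$ are exhibited tangent to the conic $K_2=0$, and the whole $16_6$-configuration is shown to descend from the $16_6$-configuration on $\mathbf{A}=\operatorname{Jac}(\mathcal{C})$. Since this node-and-trope combinatorics characterizes the singular Kummer quotient, its minimal resolution is $\operatorname{Kum}(\operatorname{Jac}\mathcal{C})$.

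Third, I would verify that $\mathcal{Y}_1$ is precisely this minimal resolution. The transformations leading to Equation~(\ref{kummer_middle_ell_p_W}) are birational, and by the fiber analysis of Section~\ref{sec:elliptic_fibration_middle} the nodes $p_{ij}$ lie on the two-torsion sections inside the six $I_2$ fibers and the two $I_0^*$ fibers; resolving these Kodaira fibers is exactly the blow-up of the sixteen rational double points. Thus the smooth K3 surface $\mathcal{Y}_1$ equals $\operatorname{Kum}(\operatorname{Jac}\mathcal{C})$. As a consistency check, or as a purely lattice-theoretic alternative, Lemma~\ref{lem:EllMiddle} records two $I_0^*$, six $I_2$, Mordell--Weil group $(\mathbb{Z}/2)^2\oplus\langle 1\rangle$, and Picard rank $17$, matching Kumar's fibration {\tt (1)} on the generic Jacobian Kummer surface; by the Torelli theorem this data together with the agreement of the Rosenhain moduli $\lambda_i$ pins down the isomorphism class.

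The main obstacle I expect lies in the first step: checking that $\imath^\mathcal{C}\times\imath^\mathcal{C}$ descends to $-\mathbb{I}$ and not to a two-torsion translate, and that the birational identification introduces no spurious components, so that Equation~(\ref{kummer_middle}) is literally $\operatorname{Jac}(\mathcal{C})/\langle -\mathbb{I}\rangle$. This is what makes the explicit node-trope dictionary of Table~\ref{tab:nodes} valuable, since it reduces the identification to matching a finite configuration rather than performing a transcendental period computation.
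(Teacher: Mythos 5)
Your argument is correct, and it takes a more explicit route than the paper does. The paper's justification for Corollary~\ref{cor:genus_two} is essentially one line: the fibration in Lemma~\ref{lem:EllMiddle} (two $I_0^*$, six $I_2$, Mordell--Weil group $(\mathbb{Z}/2)^2\oplus\langle 1\rangle$) is fibration {\tt (1)} in Kumar's list of elliptic fibrations on a generic Jacobian Kummer surface, and the identification with $\operatorname{Jac}(\mathcal{C})$ for the \emph{specific} curve $\mathcal{C}$ is left implicit in the fact that the whole construction starts from $\mathcal{C}^{(2)}$ with the Rosenhain roots $\lambda_i$ appearing as the moduli of the Weierstrass data. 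You instead supply the classical identification directly: the Abel--Jacobi map $\operatorname{Sym}^2(\mathcal{C})\to\operatorname{Pic}^2(\mathcal{C})\cong\operatorname{Jac}(\mathcal{C})$ is birational (contracting only the canonical $g^1_2$), it conjugates $\imath^{\mathcal{C}}\times\imath^{\mathcal{C}}$ into $-\mathbb{I}$ because $p+\imath^{\mathcal{C}}(p)\sim K$ gives $[\imath^{\mathcal{C}}p+\imath^{\mathcal{C}}q]=[2K]-[p+q]$, and therefore Equation~(\ref{kummer_middle}) is a birational model of $\operatorname{Jac}(\mathcal{C})/\langle-\mathbb{I}\rangle$ whose minimal resolution is $\mathcal{Y}_1$. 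What your route buys is a moduli-aware identification that does not rest on Kumar's classification, which by itself only shows that $\mathcal{Y}_1$ is the Kummer surface of \emph{some} Jacobian of the right fibration type; what the paper's route buys is brevity and reuse of machinery it needs anyway for $\mathcal{Y}_2$ and $\mathcal{Y}_3$ (where it does carry out the cross-ratio comparison you allude to). Your flagged concern about a two-torsion translate is resolved correctly by your displacement computation, and it is in any case harmless: for $a$ of order two, $t_a\circ(-\mathbb{I})$ is conjugate to $-\mathbb{I}$ by a half-translation, so the quotient surface would be the same.
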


The additional involution in Equation~(\ref{C2_involutions_middle_b}) comes from a special birational automorphism known as Kummer translation \cite{MR1458752}.
 We have the following:
\begin{proposition}
\label{Lem:BT_Y1}
The Nikulin involution $\imath^{\mathcal{Y}_1}$ in Equation~(\ref{C2_involutions_middle}) descends from the
translation of $\mathbf{A}=\operatorname{Jac}(\mathcal{C})$ by the two-torsion point $\mathfrak{p}_{56}=\mathfrak{e}_{12} \in \mathbf{A}[2]$
to an automorphism of $\mathcal{Y}_1$. In particular, the translation is linear, i.e.,
induced by a projective automorphism of $\mathbb{P}^3$.
\end{proposition}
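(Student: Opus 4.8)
The plan is to deduce the statement from the general behaviour of two-torsion translations on Kummer surfaces, reserving the explicit matching with $\imath^{\mathcal{Y}_1}$ for the end. First I would record the descent. For any $\tau\in\mathbf{A}[2]$ the translation $t_{\tau}\colon a\mapsto a+\tau$ satisfies $(-\mathbb{I})\circ t_{\tau}\circ(-\mathbb{I})=t_{-\tau}=t_{\tau}$ because $2\tau=0$; hence $t_{\tau}$ normalises $\langle-\mathbb{I}\rangle$ and descends to an automorphism $\bar{t}_{\tau}$ of $\mathbf{A}/\langle-\mathbb{I}\rangle$, which lifts to the minimal resolution $\operatorname{Kum}(\mathbf{A})=\mathcal{Y}_1$. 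Since $t_{\tau}$ preserves the holomorphic two-form of $\mathbf{A}$, the induced involution $\bar{t}_{\tau}$ is symplectic, i.e.\ a Nikulin involution.

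Next I would establish the linearity. Choosing $\mathcal{L}$ symmetric, the remark preceding the statement (following \cite[Sec.~10.2]{MR2062673}) realises $\Phi_{\mathcal{L}^2}$ as an embedding of $\mathbf{A}/\langle-\mathbb{I}\rangle$ into $\mathbb{P}^3=\mathbb{P}\bigl(H^0(\mathbf{A},\mathcal{L}^2)^{\vee}\bigr)$ as the Kummer quartic in the coordinates $[z_1:z_2:z_3:\tilde{z}_4]$. Because $\tau$ is two-torsion one has $t_{\tau}^{*}\mathcal{L}^2\otimes\mathcal{L}^{-2}=\mathcal{O}_{\mathbf{A}}$: indeed this class equals $T(\mathcal{L}^2)(\tau)$, and since $T(\mathcal{L}^2)=2\,T(\mathcal{L})$ and $T(\mathcal{L})\colon\mathbf{A}\to\operatorname{Pic}^0(\mathbf{A})$ is a homomorphism, it equals $T(\mathcal{L})(2\tau)=\mathcal{O}_{\mathbf{A}}$. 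Consequently $t_{\tau}$ acts, up to an irrelevant scalar, on the four-dimensional space $H^0(\mathbf{A},\mathcal{L}^2)$ and therefore as a projective automorphism of $\mathbb{P}^3$ which carries the Kummer quartic to itself and restricts to $\bar{t}_{\tau}$. This is exactly the asserted linearity once $\tau=\mathfrak{p}_{56}$.

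It then remains to single out $\tau=\mathfrak{p}_{56}$ by matching the action on the sixteen nodes. Using the group law~(\ref{group_law}) I would compute that $t_{\mathfrak{p}_{56}}$ interchanges $\mathfrak{p}_0\leftrightarrow\mathfrak{p}_{56}$, sends $\mathfrak{p}_{i6}\mapsto\mathfrak{p}_{i5}$ (from $\mathfrak{p}_{i6}+\mathfrak{p}_{56}=\mathfrak{p}_{i5}$), and permutes $\mathfrak{p}_{ij}\leftrightarrow\mathfrak{p}_{kl}$ with $\{k,l\}=\{1,2,3,4\}\setminus\{i,j\}$ for $1\le i<j\le 4$. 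On the other side, formula~(\ref{C2_involutions_middle}) sends $[T_1:T_2]\mapsto[T_2:l^2T_1]$, so it exchanges the two fibres of type $I_0^{*}$ over $[1:0]$ and $[0:1]$ — hence the node sets $\{p_{i6}\}$ and $\{p_{i5}\}$ — and sends $t_{ij}\colon[1:\lambda_i\lambda_j]$ to $[\lambda_i\lambda_j:l^2]=[1:\lambda_k\lambda_l]=t_{kl}$, using $l^2=\lambda_1\lambda_2\lambda_3\lambda_4$ with $\lambda_4=1$; together with the exchange $p_0\leftrightarrow p_{56}$ dictated by the fibre class $H-P_0-P_{56}$, this reproduces precisely the permutation above.

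Finally I would upgrade this coincidence of node-permutations to an equality of automorphisms. Both $\bar{t}_{\mathfrak{p}_{56}}$ and $\imath^{\mathcal{Y}_1}$ are symplectic involutions of $\mathcal{Y}_1$ inducing the same isometry of $\operatorname{NS}(\mathcal{Y}_1)$ — which is generated up to finite index by the node, trope, fibre and section classes — so by the Torelli theorem for K3 surfaces they coincide; alternatively one transports~(\ref{C2_involutions_middle}) back through the substitution $[z_1:z_2:z_3:\tilde{z}_4]=[T_1z:x:T_2z:yz]$ and the fibrewise map~(\ref{transfo_kummer_middle}) to the homogeneous coordinates of $\mathbb{P}^3$ and checks directly that the resulting $4\times 4$ matrix is the one implementing $t_{\mathfrak{p}_{56}}$ on $H^0(\mathbf{A},\mathcal{L}^2)$. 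I expect this explicit transport — bookkeeping the auxiliary rational changes of variables that put the fibration in Weierstrass form — to be the only real obstacle; the descent, the symplectic property, and the linearity are formal.
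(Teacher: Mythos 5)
Your proposal is correct, and its decisive step --- pinning down the translation by matching the permutation that $\imath^{\mathcal{Y}_1}$ induces on the singular fibres ($t_{ij}\mapsto t_{kl}$ with $\{k,l\}=\{1,2,3,4\}\setminus\{i,j\}$, using $l^2=\lambda_1\lambda_2\lambda_3$) and on the $I_0^*$-components ($p_{i5}\leftrightarrow p_{i6}$) against the group law~(\ref{group_law}) on $\mathbf{A}[2]$ --- is exactly the paper's proof. The surrounding material you add (the formal descent of $t_\tau$ through $\langle -\mathbb{I}\rangle$ and its symplectic nature, the linearity via $t_\tau^*\mathcal{L}^2\cong\mathcal{L}^2$, and the Torelli/lattice argument upgrading a coincidence of node permutations to an equality of automorphisms) is sound and supplies justifications the paper leaves implicit: in particular the paper's proof never actually argues the ``linear'' claim, and it passes from ``same action on the nodes'' to ``descends from the translation'' without comment, so your second and fourth paragraphs are genuine, if standard, value added rather than redundancy.
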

\begin{proof}
It is easy to check that the involution $\imath^{\mathcal{Y}_1}$ in Equation~(\ref{C2_involutions_middle})
interchanges the roots of each factor $t^2 - t \, \lambda'_i  +1$ for $1\le i \le 3$ in the discriminant or, equivalently,
the singularities $p_{ij}$ and $p_{k4}$ in Equation~(\ref{sings_middle}) with $\{i,j,k\}=\{1,2,3\}$.
Similarly the involution interchanges the components of the $I_0^*$-fibers, i.e., $p_{k5}$ and $p_{k6}$ for $k=1, \dots, 4$.
From the group law for the order-two points in Equation~(\ref{group_law}) it follows that the automorphism corresponds to translation
by $p_{56}$. We have already checked that $\imath^{\mathcal{Y}_1}$ leaves the holomorphic two-form $dt\wedge dX/Y$ invariant.
\end{proof}
 
\subsubsection{A Van Geemen-Sarti involution}
The translation by the two-torsion section $(X,Y)=(0,0)$ in Equation~(\ref{kummer_middle_ell_p_W}) is a Nikulin involution $\imath^{\mathcal{Y}_1}_F$ on $\mathcal{Y}_1$,
which for $X\not = 0$ is given by
\begin{equation}
\label{VGS_involution_middle}
 \imath^{\mathcal{Y}_1}_F: \; (X,Y)  \mapsto  (X',Y') = (X,Y) \overset{.}{+}  (0,0) = \left( \frac{p_1(t)\, t^2}{X}, - \frac{p_1(t)\, t^2 Y}{X^2} \right)
\end{equation}
and leaves the holomorphic two-form $dt\wedge dX/Y$ invariant.
We obtain a new K3 surface $\mathcal{X}_1$ by resolving the eight nodes of  $\mathcal{Y}_1/\imath^{\mathcal{Y}_1}_F$. In fact, 
fiberwise translations by a section of order two 
in a Jacobian elliptic fibration is also known as \emph{Van Geemen-Sarti involution}.
The isogeny  $\phi^{\mathcal{Y}_1}_{\mathcal{X}_1}: \mathcal{Y}_1 \dashrightarrow \mathcal{X}_1$ is the rational quotient map  
with a kernel generated by the sections given by $(X,Y)=(0,0)$ and the point at infinity.  We obtain for $\bar{\mathcal{X}\,}\!_1$ the Weierstrass model 
\begin{equation}
\label{kummer_middle_ell_dual_W}
 y^2 = x  \, \Big( x^2 +2\, p_2(t) \, t \, x  + \left(p_2^2(t) - 4 \, p_1(t)\right)  t^2 \Big)  \;,
\end{equation}
with a two-torsion section given by $(x,y)=(0,0)$, a distinguished section $\mathsf{s}_0$ given by the point at infinity, and
a discriminant given by
 \begin{equation}
 \label{discrim_MU}
 \begin{split}
  \Delta_{\mathcal{X}_1} =  \; & 16 \, \left( \lambda_1'-\lambda_2' \right)  \left( \lambda_1'-\lambda_3' \right)  \left( \lambda_2'-\lambda_3' \right)^4\\
  \times \; & t^6  \left( t^2 - t \, \lambda'_1  +1 \right)^4  \left( t^2 - t \, \lambda'_2  +1 \right) \left( t^2 - t \, \lambda'_3  +1 \right) \;.
 \end{split} 
 \end{equation}
The explicit formulas for the isogeny and the dual isogeny of a Van Geemen-Sarti involution are well known and given by
\begin{equation}
\label{Eq:isogeny}
{\phi}^{\mathcal{Y}_1}_{\mathcal{X}_1}: \quad \bar{\mathcal{Y}\,}\!_1 \dashrightarrow \bar{\mathcal{X}\,}\!_1\,, \quad  (X,Y) \mapsto \left( \frac{Y^2}{X^2}, \frac{Y \, \big(p_1(t)\, t^2 - X^2\big)}{X^2}\right)\,,
\end{equation}
and
\begin{equation}
\label{eqn:dual_isog}
{\phi}^{\mathcal{X}_1}_{\mathcal{Y}_1}:  \quad \bar{\mathcal{X}\,}\!_1 \dashrightarrow \bar{\mathcal{Y}\,}\!_1\,, \quad  (x,y) \mapsto \left( \frac{y^2}{4 \,x^2}, \frac{y \, \big((p_2^2(t) -4 \, p_1(t)) \, t^2- x^2 \big)}{8 \, x^2}\right)\,.
\end{equation}
The translation by the two-torsion section $(x,y)=(0,0)$ in Equation~(\ref{kummer_middle_ell_dual_W}) is a Nikulin involution $\imath^{\mathcal{X}_1}_F$ on $\mathcal{X}_1$
covering ${\phi}^{\mathcal{X}_1}_{\mathcal{Y}_1}$.
We have the following:
\begin{lemma}
\label{lem:EllMiddleTop}
Equation~(\ref{kummer_middle_ell_dual_W}) defines a  Jacobian elliptic K3 fibration $\pi^{ \mathcal{X}_1}:  \bar{\mathcal{X}\,}\!_1\to \mathbb{P}^1$ of Picard rank 17
with two singular fibers of type $I_0^*$, two fibers of type $I_4$, four fibers of type $I_1$, a determinant of the discriminant form equal to $2^5$,
and a Mordell-Weil group of sections $\operatorname{MW}(\pi^{ \mathcal{X}_1}, \mathsf{s}_0)=\mathbb{Z}/2 \oplus \langle \frac{1}{2} \rangle$.
\end{lemma}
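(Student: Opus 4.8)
The plan is to read off every invariant of~(\ref{kummer_middle_ell_dual_W}) directly from its coefficients, in exact parallel with the proof of Lemma~\ref{lem:EllMiddle}. Writing the fibration as $y^2 = x^3 + a_2\,x^2 + a_4\,x$ with $a_2 = 2\,p_2(t)\,t$ and $a_4 = \big(p_2^2(t) - 4\,p_1(t)\big)\,t^2$, the discriminant is $\Delta = a_4^2\,(a_2^2 - 4\,a_4) = 16\,p_1(t)\,\big(p_2^2(t) - 4\,p_1(t)\big)^2\,t^6$. The computational heart of the argument is the polynomial identity
\begin{equation*}
 p_2(t)^2 - 4\,p_1(t) = (\lambda_2' - \lambda_3')^2\,\big(t^2 - \lambda_1'\,t + 1\big)^2 \;,
\end{equation*}
which I would verify by matching coefficients of $t^0,\dots,t^4$ from the definitions in~(\ref{polynomials_left}); already the coefficient of $t^4$ reduces to $(2\lambda_1'-\lambda_2'-\lambda_3')^2 - 4(\lambda_1'-\lambda_2')(\lambda_1'-\lambda_3') = (\lambda_2'-\lambda_3')^2$. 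Substituting this identity converts $\Delta$ into the stated $\Delta_{\mathcal{X}_1}$ of Equation~(\ref{discrim_MU}) and, crucially, exhibits $a_4$ as the perfect square $a_4 = \big[(\lambda_2'-\lambda_3')(t^2-\lambda_1' t+1)\,t\big]^2$.

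With the discriminant factored, I would run Tate's algorithm at each vanishing locus. At $t=0$ one finds $v(\Delta)=6$ with $v(c_4)=2$, hence type $I_0^*$; since $\deg_t\Delta_{\mathcal{X}_1}=18$ forces $v_\infty(\Delta)=6$, the same analysis after homogenising gives a second $I_0^*$ at $t=\infty$. At the two roots of $t^2-\lambda_1' t+1$ the constant term $a_4$ vanishes to order two while $a_2$ does not, so $v(c_4)=0$ and $v(\Delta)=4$ yield multiplicative fibers $I_4$; at the four roots of $(t^2-\lambda_2' t+1)(t^2-\lambda_3' t+1)$ one has $v(\Delta)=1$, i.e.\ four fibers $I_1$. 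The Euler numbers sum to $2\cdot 6 + 2\cdot 4 + 4\cdot 1 = 24$, confirming that $\bar{\mathcal{X}\,}\!_1$ is a K3 surface. For the Picard number I would invoke the Van Geemen--Sarti isogeny $\phi^{\mathcal{Y}_1}_{\mathcal{X}_1}$ of Equation~(\ref{Eq:isogeny}): it identifies the transcendental lattices of $\mathcal{X}_1$ and $\mathcal{Y}_1$ up to finite index and scaling, so $\rho(\mathcal{X}_1)=\rho(\mathcal{Y}_1)=17$, and the Shioda--Tate formula $17 = 2 + (4+4) + (3+3) + 0 + \operatorname{rank}\operatorname{MW}$ forces $\operatorname{rank}\operatorname{MW}(\pi^{\mathcal{X}_1},\mathsf{s}_0)=1$.

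The delicate part, and the step I expect to be the main obstacle, is pinning down the full Mordell--Weil group rather than merely its rank. The section $(x,y)=(0,0)$ gives a $\mathbb{Z}/2$, and the quadratic factor $x^2 + 2\,p_2(t)\,t\,x + (p_2^2-4p_1)\,t^2$ does not split over $\mathbb{C}(t)$ since its discriminant in $x$ equals $16\,p_1(t)\,t^2$ and $p_1$ is a product of two distinct irreducible quadratics; hence there is no second rational $2$-torsion section. The genuinely subtle point is that $a_4$ \emph{is} a perfect square, which naively suggests that $(0,0)$ is divisible by two and would contribute a $\mathbb{Z}/4$ of torsion. Applying the complete halving criterion, $(0,0)\in 2\,\mathcal{X}_1(\mathbb{C}(t))$ requires in addition that one of $a_2 \pm 2\sqrt{a_4} = 2t\big(p_2 \pm (\lambda_2'-\lambda_3')(t^2-\lambda_1' t+1)\big)$ be a square in $\mathbb{C}(t)$; each is $2t$ times a quadratic and so has nontrivial squarefree part, ruling this out. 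Odd torsion is excluded because $\phi^{\mathcal{Y}_1}_{\mathcal{X}_1}$ is an isomorphism on prime-to-$2$ torsion and $\operatorname{MW}(\mathcal{Y}_1)_{\mathrm{tors}}=(\mathbb{Z}/2)^2$ is a $2$-group. Therefore $\operatorname{MW}_{\mathrm{tors}}=\mathbb{Z}/2$.

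Finally I would produce the infinite-order generator as the image under $\phi^{\mathcal{Y}_1}_{\mathcal{X}_1}$ of the generator $\mathsf{P}'$ of $\mathcal{Y}_1$. Because a degree-two isogeny scales the canonical height by two, this image has height $2$; the remaining task is to show it is twice a primitive section of height $\tfrac12$, which I would confirm by computing the Shioda height of the primitive generator from the components it meets at the two $I_0^*$ and two $I_4$ fibers, namely $h = 2\cdot 2 - 1 - 1 - \tfrac34 - \tfrac34 = \tfrac12$ (determining these components precisely is the fiddly bookkeeping hidden here). This gives $\operatorname{MW}(\pi^{\mathcal{X}_1},\mathsf{s}_0) = \mathbb{Z}/2 \oplus \langle \tfrac12\rangle$. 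Shioda's discriminant formula then closes the argument:
\begin{equation*}
 |\det\operatorname{NS}(\mathcal{X}_1)| = \frac{\prod_v |\det T_v| \cdot |\det\operatorname{MWL}|}{|\operatorname{MW}_{\mathrm{tors}}|^2} = \frac{2^8\cdot \tfrac12}{2^2} = 2^5 \;,
\end{equation*}
where $\prod_v|\det T_v| = \det(D_4)^2\det(A_3)^2 = 2^8$, and this equals the determinant of the discriminant form of $\operatorname{T}(\mathcal{X}_1)$, as claimed.
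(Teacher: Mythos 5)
Your computation is correct, and it supplies a proof that the paper in fact omits: Lemma~\ref{lem:EllMiddleTop} is stated there with no argument at all, the authors evidently regarding it as read off from the displayed Weierstrass model~(\ref{kummer_middle_ell_dual_W}), the discriminant~(\ref{discrim_MU}), and (as they do explicitly for the parallel Lemma~\ref{lem:EllLeftTop}) comparison with Kumar's classification of elliptic fibrations on Jacobian Kummer surfaces. Your route is self-contained where theirs is by citation. The key identity $p_2^2-4p_1=(\lambda_2'-\lambda_3')^2(t^2-\lambda_1't+1)^2$ is right (with $A=(t^2-\lambda_3't+1)(\lambda_1'-\lambda_2')$ and $B=(t^2-\lambda_2't+1)(\lambda_1'-\lambda_3')$ one has $p_2=A+B$, $p_1=AB$, so $p_2^2-4p_1=(A-B)^2$), the Tate-algorithm fiber analysis and Euler-number count are correct, the halving criterion correctly rules out $\mathbb{Z}/4$ since $a_2\pm2\sqrt{a_4}=4tA$ or $4tB$ has odd degree, and the Shioda determinant formula closes consistently.

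The one place you have not actually closed the argument is the height of the free generator: from $\operatorname{rank}\operatorname{MW}=1$ and the fact that $\phi^{\mathcal Y_1}_{\mathcal X_1}(\mathsf P')$ has height $2$, the lattice $\langle h\rangle$ could a priori be $\langle 2\rangle$ or $\langle \tfrac12\rangle$ (both are compatible with $2\operatorname{MW}(\mathcal X_1)\subseteq\phi(\operatorname{MW}(\mathcal Y_1))\subseteq\operatorname{MW}(\mathcal X_1)$), and your formula $4-1-1-\tfrac34-\tfrac34=\tfrac12$ presupposes exactly the component data you defer. To finish, either carry out that bookkeeping on an explicit section of height $\tfrac12$ (e.g.\ exhibit the half of $\phi^{\mathcal Y_1}_{\mathcal X_1}(\mathsf P')$ directly from the isogeny formula~(\ref{Eq:isogeny})), or argue non-circularly via the even eight: the fiber-type data you have already established identifies the branch locus as $\Delta_{34}$, whence by Mehran $\operatorname{T}(\mathcal X_1)\cong H(2)^2\oplus\langle-2\rangle$ has determinant $2^5$, which forces $h=\tfrac12$ rather than $2$ in the determinant formula. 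Either repair is routine, so I regard the proposal as correct modulo this explicitly flagged bookkeeping.
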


The dual fiberwise isogeny ${\phi}^{\mathcal{X}_1}_{\mathcal{Y}_1}$ has a geometric interpretation: 
in the reducible fibers of the Jacobian elliptic K3 surface $\mathcal{Y}_1$ 
eight disjoint smooth rational curves $\lbrace C_1, \dots, C_8\rbrace$ consisting only of exceptional divisors, or nodes, can be identified that form an even eight $\Delta = C_1 + \dots + C_8 
\in 2 \, \operatorname{NS}(\mathcal{Y}_1)$.  One then obtains the surface $\mathcal{X}_1$ as double cover of $\mathcal{Y}_1$ branched along $\Delta$ 
after the inverse images of the curves $C_i$ for $1\le i \le 8$ are blown down. That is, the double branched cover map is precisely the dual two-isogeny
$\phi^{\mathcal{X}_1}_{\mathcal{Y}_1}: \mathcal{X}_1 \dashrightarrow\mathcal{Y}_1$.

Mehran labels the fifteen different even eights consisting of exceptional curves (up to taking complements) on the Kummer surface 
$\operatorname{Kum}(\operatorname{Jac} \mathcal{C})$ of the Jacobian of a generic genus-two curve $\mathcal{C}$ 
by $\Delta_{ij}$ with $1 \le i < j \le 6$, and proves that these 15 even eights give rise to all 15 isomorphism classes of 
rational double 
covers of $\operatorname{Kum}(\operatorname{Jac} \mathcal{C})$ with transcendental lattice $H(2)\oplus H(2) \oplus \langle -2 \rangle$  \cite[Prop.~4.2]{MR2804549}.
The even eights $\Delta_{ij}$ are given by 
\[
 \Delta_{ij} = E_{1i} + \dots + \widehat{E_{ij}} + \dots + E_{i6} + E_{1j} + \dots + \widehat{E_{ij}} + \dots + E_{j6} \;,
 \]
where $E_{11}=0$, and $E_{ij}$ are the exceptional divisor obtained by resolving the nodes $e_{ij}$ in Table~\ref{tab:nodes}.
Moreover, Mehran proved that each rational map $\phi_{ij}: \operatorname{Kum}(\mathbf{B}_{ij}) \dashrightarrow \mathcal{Y}_1=\operatorname{Kum}(\operatorname{Jac} \mathcal{C})$ 
is induced by an isogeny $\varphi_{ij}: \mathbf{B}_{ij} \to \operatorname{Jac}(\mathcal{C})$ of abelian surfaces of degree two~\cite{MR2804549}. 
It follows that the surfaces $\mathbf{B}_{ij}$ are $(1,2)$-polarized abelian surfaces.

Using Mehran's notation we have the following:

\begin{lemma}
The K3 surface $\mathcal{X}_1$  is the Kummer surface $\operatorname{Kum}(\mathbf{B}_{34})$ of a $(1,2)$-polarized abelian surface
$\mathbf{B}_{34}$ that covers $\varphi_{34}: \mathbf{B}_{34} \to \operatorname{Jac}(\mathcal{C})$ by an isogeny of degree two
such that the induced rational map $\phi_{34}: \mathcal{X}_1 = \operatorname{Kum}(\mathbf{B}_{34}) \dashrightarrow \mathcal{Y}_1=\operatorname{Kum}(\operatorname{Jac} \mathcal{C})$
associated with the even eight $\Delta_{34}$ realizes the fiberwise two-isogeny $\phi^{\mathcal{X}_1}_{\mathcal{Y}_1}: \mathcal{X}_1 \dashrightarrow\mathcal{Y}_1$ 
in Equation~(\ref{eqn:dual_isog}).
\end{lemma}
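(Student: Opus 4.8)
The plan is to pin down the \emph{even eight} $\Delta\subset\mathcal{Y}_1=\operatorname{Kum}(\operatorname{Jac}\mathcal{C})$ that serves as the branch locus of the dual fiberwise isogeny $\phi^{\mathcal{X}_1}_{\mathcal{Y}_1}$ of Equation~(\ref{eqn:dual_isog}), to express its eight members among the exceptional divisors $P_{ij}$ in Mehran's $e$-notation through Table~\ref{tab:nodes}, and to recognize the result as $\Delta_{34}$. Once this is done the claim follows formally: Proposition~4.2 of~\cite{MR2804549} asserts that $X(\mathcal{Y}_1,\Delta_{34})=\operatorname{Kum}(\mathbf{B}_{34})$ for a $(1,2)$-polarized abelian surface $\mathbf{B}_{34}$ covering $\operatorname{Jac}(\mathcal{C})$ by a degree-two isogeny $\varphi_{34}$, and that the associated rational map $\phi_{34}$ is the degree-two cover branched along $\Delta_{34}$. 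Since the Picard group of a K3 surface is torsion-free, a double cover branched along a prescribed even eight is unique, so $\phi_{34}$ and $\phi^{\mathcal{X}_1}_{\mathcal{Y}_1}$ coincide; this yields both the identification $\mathcal{X}_1=\operatorname{Kum}(\mathbf{B}_{34})$ and the final assertion of the lemma.

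First I would determine the four members of $\Delta$ coming from the $I_2$ fibers. Comparing the discriminants~(\ref{discrim_M}) and~(\ref{discrim_MU}) with the fiber data of Lemmas~\ref{lem:EllMiddle} and~\ref{lem:EllMiddleTop}, the isogeny turns the two $I_2$ fibers over the roots of $t^2-t\lambda'_1+1$ into $I_4$ fibers and the four $I_2$ fibers over the roots of $t^2-t\lambda'_2+1$ and $t^2-t\lambda'_3+1$ into $I_1$ fibers. Using the substitution $t=\lambda_i\lambda_j/l$ for the node $p_{ij}$ from~(\ref{sings_middle}), together with $\lambda'_m=(\lambda_m+\prod_{k\ne m}\lambda_k)/l$ and $l^2=\lambda_1\lambda_2\lambda_3$, one checks directly that $\{p_{14},p_{23}\}$ sit over $t^2-t\lambda'_1+1$, that $\{p_{13},p_{24}\}$ sit over $t^2-t\lambda'_2+1$, and that $\{p_{12},p_{34}\}$ sit over $t^2-t\lambda'_3+1$ (consistent with the pairing of singular fibers by $\imath^{\mathcal{Y}_1}$ in Proposition~\ref{Lem:BT_Y1}). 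The $I_2\!\to\!I_1$ fibers are exactly those on which the kernel section meets the non-identity component, so their extra components $P_{12},P_{13},P_{24},P_{34}$ lie in $\Delta$, whereas $P_{14},P_{23}$ (on the $I_2\!\to\!I_4$ fibers) do not.

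The remaining four members of $\Delta$ are legs of the two $I_0^*$ fibers, two per fiber, and isolating them is the crux. I would first identify the kernel two-torsion section $(0,0)$ of~(\ref{kummer_middle_ell_p_W}): the two nonzero finite two-torsion points carry $X$-coordinates proportional to $t\,(t^2-t\lambda'_3+1)(\lambda'_1-\lambda'_2)$ and $t\,(t^2-t\lambda'_2+1)(\lambda'_1-\lambda'_3)$, whose surviving factors $(t^2-t\lambda'_2+1)$ and $(t^2-t\lambda'_3+1)$ single out the index $1$, so the translation used to reach~(\ref{kummer_middle_ell_p_W}) places $\mathsf{E}'_1$ at $X=0$; hence $(0,0)=\mathsf{E}'_1$, which meets $P_{16}$ (over $t=0$) and $P_{15}$ (over $t=\infty$). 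Since $\mathsf{S}_0=\mathsf{E}'_4$ meets $P_{46}$ and $P_{45}$, and translation by $\mathsf{E}'_1$ acts on the $(\mathbb{Z}/2)^2$ component group of each $I_0^*$ fiber by interchanging the leg pairs $\{P_{16},P_{46}\}$, $\{P_{26},P_{36}\}$ (respectively $\{P_{15},P_{45}\}$, $\{P_{25},P_{35}\}$), the covering involution $\imath^{\mathcal{X}_1}_F$ recovers $P_{16},P_{46}$ and $P_{15},P_{45}$ as branch curves. The main obstacle is precisely this last step: one must prove rigorously that the two branch legs per $I_0^*$ fiber are the pair met by $\mathsf{S}_0$ and the kernel section, and not the complementary pair $P_{26},P_{36},P_{25},P_{35}$ (which would instead give $\Delta_{56}$, the only other even eight compatible with the four $I_2$ members). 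I would settle this by analyzing the fixed locus of $\imath^{\mathcal{X}_1}_F$ on the central components of the $I_0^*$ fibers of $\mathcal{X}_1$ in~(\ref{kummer_middle_ell_dual_W}) and tracking their images under $\phi^{\mathcal{X}_1}_{\mathcal{Y}_1}$, the point being that the two legs of $\mathcal{Y}_1$ that merge under the forward isogeny $\phi^{\mathcal{Y}_1}_{\mathcal{X}_1}$ are exactly those recovered as branch curves by the dual cover.

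Collecting the eight curves gives $\Delta=\{P_{12},P_{13},P_{15},P_{16},P_{24},P_{34},P_{45},P_{46}\}$. Translating through Table~\ref{tab:nodes}, this becomes $\{E_{45},E_{46},E_{24},E_{14},E_{35},E_{36},E_{23},E_{13}\}=\{E_{13},E_{14},E_{23},E_{24},E_{35},E_{36},E_{45},E_{46}\}$, which is exactly Mehran's even eight $\Delta_{34}$. Feeding this into the formal argument of the first paragraph completes the proof.
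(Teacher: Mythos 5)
Your proposal follows essentially the same route as the paper's own proof: compare the discriminants~(\ref{discrim_M}) and~(\ref{discrim_MU}) to see that the $I_2$ fibres over $t^2-t\lambda_2'+1$ and $t^2-t\lambda_3'+1$ degenerate to $I_1$, conclude that the node components $P_{12},P_{13},P_{24},P_{34}$ of those four fibres lie in the even eight, translate through Table~\ref{tab:nodes} into Mehran's $e$-notation, and invoke Mehran's classification to identify the double cover with $\operatorname{Kum}(\mathbf{B}_{34})$. The one point where you genuinely go beyond the paper is your observation that the four $I_2$-components $E_{35},E_{36},E_{45},E_{46}$ are common to \emph{two} of Mehran's even eights, $\Delta_{34}$ and $\Delta_{56}$, both of which complete to an even eight by adjoining two legs from each $I_0^*$ fibre; the paper's proof passes directly from these four components to ``the even eight is $\Delta_{34}$'' and only verifies, rather than derives, that the remaining members sit in the $I_0^*$ fibres --- a check that $\Delta_{56}$ would pass equally well. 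Your proposed resolution --- read off from the factors $(\lambda_1'-\lambda_2')(t^2-t\lambda_3'+1)$ and $(\lambda_1'-\lambda_3')(t^2-t\lambda_2'+1)$ in~(\ref{kummer_middle_ell_p_W}) that the kernel section $(0,0)$ is $\mathsf{E}'_1$, and argue that the branch legs of each $I_0^*$ fibre are the pair met by the zero section and the kernel section, namely $P_{46},P_{16}$ and $P_{45},P_{15}$ --- reaches the correct answer and is corroborated a posteriori by Proposition~\ref{Lem:VGSI_Y1}, where the translation by $(0,0)$ is shown to interchange exactly these pairs. Your final step (excluding the complementary legs $P_{26},P_{36},P_{25},P_{35}$ by tracking the fixed locus of $\imath^{\mathcal{X}_1}_F$ on the central components) remains a sketch rather than a proof, so the gap is narrowed but not fully closed; still, the mechanism you describe is the right one, and on the whole your write-up is more careful than the paper's at the only place where its argument is genuinely incomplete.
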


\begin{proof}
We show that an even eight $\Delta $ can be constructed within the reducible fibers of $\mathcal{Y}_1$ such that
the double cover branched along the even eight realizes the fiberwise two-isogeny. Therefore,
the singular fibers $6 I_2 + 2I_0^*$ must be mapped to the singular fibers $2 I_4 + 4 I_1 + 2I_0^*$. To turn a fiber of type $I_0^*$ into another fiber of type $I_0^*$
by double cover, two rational curves different from the central fiber must be chosen to be part of the branch locus. The remaining four disjoint rational curves of the even eight
must be picked from the six fibers of type $I_2$.  The four reducible fibers of type $I_2$ with marked components are then mapped to fibers of type $I_1$, whereas
in the remaining two fibers of type $I_2$ the components are doubled turning them into fibers of type $I_4$. 

From comparing the discriminants in Equation~(\ref{discrim_M}) and Equation~(\ref{discrim_MU}), we observe that the reducible fibers over $t^2 - t \, \lambda'_2  +1$
and $t^2 - t \, \lambda'_3  +1$ change from type $I_2$ to $I_1$. Therefore, the reducible components in these $I_2$-fibers make up four components of 
the even eight. Using Table~\ref{tab:nodes}, it follows that in the notation of Mehran~\cite{MR2804549} these components are the divisors 
$E_{35}=P_{24}, E_{46}=P_{13}$ and $E_{36}=P_{34}, E_{45}=P_{12}$
which are all nodes. Therefore, the even eight is $\Delta_{34}$ which also contains the divisors $E_{13}=P_{46}, E_{14}=P_{16}$ and $E_{23}=P_{45}, E_{24}=P_{15}$.
It is easy to check that these divisors are in fact rational components of the fibers of type $I_0^*$ over $t=0$ and $t=\infty$, respectively.
\end{proof}

\begin{proposition}
\label{Lem:VGSI_Y1}
The Van Geemen-Sarti involution $\imath^{\mathcal{Y}_1}_F$ in Equation~(\ref{VGS_involution_middle}) descends from the 
translation of $\mathbf{A}=\operatorname{Jac}(\mathcal{C})$ by the two-torsion point $\mathfrak{p}_{14}=\mathfrak{e}_{34} \in \mathbf{A}[2]$
to an automorphism of $\mathcal{Y}_1$. 
\end{proposition}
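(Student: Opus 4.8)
The plan is to argue exactly as in the proof of Proposition~\ref{Lem:BT_Y1}: I would determine how $\imath^{\mathcal{Y}_1}_F$ permutes the reducible fibers and their components of the Jacobian elliptic fibration~(\ref{kummer_middle_ell_p_W}), and then match the resulting permutation of the $16_6$-configuration with the one induced by translation by the two-torsion point $\mathfrak{p}_{14}=\mathfrak{e}_{34}$ through the group law~(\ref{group_law}). In contrast to the base-transformation involution of Proposition~\ref{Lem:BT_Y1}, the involution~(\ref{VGS_involution_middle}) fixes the base $\mathbb{P}^1$ pointwise and acts only inside the fibers, being fiberwise translation by the two-torsion section $(X,Y)=(0,0)$. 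In the model $Y^2=X(X-tA)(X-tB)$ with $A=(t^2-t\lambda'_3+1)(\lambda'_1-\lambda'_2)$ and $B=(t^2-t\lambda'_2+1)(\lambda'_1-\lambda'_3)$, this translation interchanges $\mathsf{S}_0$ with $(0,0)$ and interchanges the remaining two-torsion sections $(tA,0)\leftrightarrow(tB,0)$. First I would record, fiber by fiber, which components are exchanged: on each $I_2$ fiber the two components are swapped precisely when $(0,0)$ meets the non-identity component, and on each $I_0^*$ fiber the four multiplicity-one legs are permuted in two pairs according to the action of $(0,0)$ on the component group $(\mathbb{Z}/2)^2$.

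The organizing tool for this bookkeeping is the even eight $\Delta_{34}$ isolated in the preceding lemma. Comparing the discriminants~(\ref{discrim_M}) and~(\ref{discrim_MU}) shows that the $I_2$ fibers over $t^2-t\lambda'_2+1$ and $t^2-t\lambda'_3+1$ are exactly the ones on which $(0,0)$ meets the non-identity component, namely those dropping to type $I_1$ under the dual isogeny~(\ref{eqn:dual_isog}), whereas the components of the $I_2$ fibers over $t^2-t\lambda'_1+1$ are fixed and become the two $I_4$ fibers. The nodes affected in this way are precisely the members $P_{12},P_{24},P_{13},P_{34}$ of $\Delta_{34}$ recorded in Table~\ref{tab:nodes}, together with the four legs $P_{15},P_{45},P_{16},P_{46}$ of the two $I_0^*$ fibers. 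Reading off the induced permutation of the nodes and tropes and comparing it with the addition rules~(\ref{group_law}), written with $\{i,j,k,l,m,n\}=\{1,\dots,6\}$, I would identify it with addition of $\mathfrak{p}_{14}$; the agreement of the indices of $\mathfrak{e}_{34}$ with those of the even eight $\Delta_{34}$ is the expected consistency check with Mehran's correspondence~\cite{MR2804549} between even eights and two-torsion translations.

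Once the two permutations are shown to agree, equality of the automorphisms follows formally. The translation $t_{\mathfrak{p}_{14}}$ commutes with $-\mathbb{I}$, hence descends to $\mathbf{A}/\langle-\mathbb{I}\rangle$ and lifts to a symplectic automorphism of $\mathcal{Y}_1$, while $\imath^{\mathcal{Y}_1}_F$ is symplectic because it leaves $dt\wedge dX/Y$ invariant. A symplectic involution of a K3 surface acts trivially on the transcendental lattice, so two symplectic involutions inducing the same map on $\operatorname{NS}(\mathcal{Y}_1)$ induce the same Hodge isometry of $H^2(\mathcal{Y}_1,\mathbb{Z})$ and coincide by the Torelli theorem; since $\operatorname{NS}(\mathcal{Y}_1)$ is generated by the curves of the $16_6$-configuration, the matching of permutations suffices. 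I expect the main obstacle to lie in the second step. Because the fiberwise involution moves nodes into fiber components and thereby intertwines nodes with tropes of the $16_6$-configuration, the permutation it induces is far more delicate to pin down than the one in Proposition~\ref{Lem:BT_Y1}, and one must use the even eight $\Delta_{34}$ with care to be certain that the group-law element singled out is $\mathfrak{p}_{14}$ and not another of the fiber two-torsion classes.
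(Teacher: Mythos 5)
Your proposal is correct and takes essentially the same route as the paper's proof: both read off, from the even eight $\Delta_{34}$ and the comparison of the discriminants~(\ref{discrim_M}) and~(\ref{discrim_MU}), which $I_2$-components are swapped and how the legs of the two $I_0^*$ fibers are paired, and then match this permutation against the group law~(\ref{group_law}) to single out $\mathfrak{p}_{14}$. Your closing Torelli argument makes explicit a step the paper leaves implicit, and the only slip is cosmetic: the fibers dropping from $I_2$ to $I_1$ do so under the forward isogeny~(\ref{Eq:isogeny}), not the dual~(\ref{eqn:dual_isog}).
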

\begin{proof}
The involution must map the set of nodes given by $\{E_{35}, E_{46}, E_{36}, E_{45}\}=\{ P_{12}, P_{13}, P_{24}, P_{34}\}$
into itself. Moreover, it must interchange the two marked components in each fiber of type $I_0^*$, that is
$E_{13}=P_{46} \leftrightarrow E_{14} = P_{16}$ and $E_{23}=P_{45} \leftrightarrow E_{24}=P_{15}$. It follows that the involution is induced by
translation by $p_{14}$.
\end{proof}

\subsubsection{The action of $(2,2)$-isogeny}
\label{ssec:action_isog}
A $(2,2)$-isogeny $\psi: \operatorname{Jac}(\mathcal{C}) \to \operatorname{Jac}(\hat{\mathcal{C}\,}\!)$ induces an algebraic map
between the corresponding Kummer surfaces  \cite[Sec.~3]{MR1406090} which we denote by 
\begin{equation}
\label{2_2_isog}
  \Psi: \quad \operatorname{Kum}(\operatorname{Jac} \mathcal{C}) \dashrightarrow   \operatorname{Kum}(\operatorname{Jac} \hat{\mathcal{C}\,}\!).
\end{equation}
For $\hat{\mathcal{C}\,}\!=\hat{\mathcal{C}\,}\!_{12}$ the morphism $\Psi$ is realized by the change in modular parameters derived in Section~\ref{ssec:2isog}.
Since the Weierstrass equation~(\ref{kummer_middle_ell_p_W}) only depends on $\{\lambda_1', \lambda_2', \lambda_2' \}$ it follows that $\Psi_{12}:
\mathcal{Y}_1 \to \hat{\mathcal{Y}\,}\!_1=\operatorname{Kum}(\operatorname{Jac} \hat{\mathcal{C}\,}\!_{12}),$ preserves the elliptic fibration in Lemma~\ref{lem:EllMiddle}. Therefore, for the even eight $\Delta_{12}$ on $\mathcal{Y}_1$, one 
naturally obtains an even eight  $\hat{\Delta}_{12}$
on $\hat{\mathcal{Y}\,}\!_1$, and in turn a $(1,2)$-polarized abelian surface $\hat{\mathbf{B}}_{12}$ with  two-isogeny $\hat{\varphi}_{12}: \hat{\mathbf{B}}_{12} \to \operatorname{Jac} \hat{\mathcal{C}\,}\!_{12}$
inducing a rational map $\hat{\phi\,}\!_{12}: \operatorname{Kum}(\hat{\mathbf{B}}_{12}) \dashrightarrow \hat{\mathcal{Y}\,}\!_1$.
Using the fact that the rational map $\hat{\phi}_{12}$ realizes a fiberwise isogeny with a dual isogeny $\phi^{\hat{\mathcal{Y}\,}\!_1}_{\hat{\mathcal{X}\,}\!_1}$, we obtain an algebraic map
\[
  \Psi'_{12}= \phi^{\hat{\mathcal{Y}\,}\!_1}_{\hat{\mathcal{X}\,}\!_1} \circ \Psi_{12} \circ \phi^{\mathcal{X}_1}_{\mathcal{Y}_1}: \quad \mathcal{X}_1 = \operatorname{Kum}(\mathbf{B}_{12}) \dashrightarrow  \hat{\mathcal{X}\,}\!_1 = \operatorname{Kum}(\hat{\mathbf{B}}_{12}) 
\]
preserving the Jacobian elliptic fibration in Lemma~\ref{lem:EllMiddleTop}

\subsection{Kummer surface from the genus-three curve $\mathcal{D}$}
We start with two copies of the genus-three curve in Equation~(\ref{Eq:Rosenhain_g3}) and the symmetric product $\mathcal{D}^{(2)} = (\mathcal{D}\times\mathcal{D})/\langle \sigma_{\mathcal{D}^{(2)} } \rangle$.
The variety $\mathcal{D}^{(2)}/\langle \imath^\mathcal{D}_b \times  \imath^\mathcal{D}_b \rangle$ is given in terms of the variables
$[Z_1:Z_2:Z_3:\tilde{Z}_4] \in \mathbb{WP}(1,2,1,4)$ with
\begin{equation}
\label{transfo_kummer2}
 Z_1=z^{(1)}z^{(2)}, \;Z_2=( x^{(1)} z^{(2)} )^2+(x^{(2)}z^{(1)})^2, \;Z_3=x^{(1)}x^{(2) }, \;\tilde{Z}_4=y^{(1)}y^{(2)}
 \end{equation}
by the equation
\begin{equation}
\label{kummer3}
  \tilde{Z}_4^2 =  \prod_{i=1}^4 \big( \lambda_i^2 \, Z_1^2  -  \lambda_i \, Z_2 +  Z_3^2 \big) \;.
\end{equation}

We determine the singular points on the octic surface given by Equation~(\ref{kummer3}). 
There are 12 singular points $q_{ij}^{\pm}$ of Equation~(\ref{kummer3}) with $1 \le i < j \le 4$ given by
$$
 [Z_1: Z_2 : Z_3 : \tilde{Z}_4] = [1: \lambda_i+\lambda_j : \pm \sqrt{\lambda_i \lambda_j }: 0] .
$$
These points are obtained by combining the Weierstrass points on $\mathcal{D}$ given by
\begin{equation}
\label{pencil_DP}
[x^{(1)}:y^{(1)}:z^{(1)}]=[\pm \sqrt{\lambda_i}: 0 : 1] \; \text{and} \; [x^{(2)}:y^{(2)}:z^{(2)}]=[\pm \sqrt{\lambda_j}: 0 : 1] 
\end{equation}
in Equation~(\ref{transfo_kummer2}).
Next we use the two pairs of bielliptic fixed points on $\mathcal{D}$ given by $[0:\pm l:1]$ and $[1:\pm1:0]$.
Combining Weierstrass points with a pair of bielliptic fixed points on $\mathcal{D}$ in Equation~(\ref{transfo_kummer2}), i.e., 
\begin{equation}
\label{pencil_DP2a}
[x^{(1)}:y^{(1)}:z^{(1)}]=[\pm \sqrt{\lambda_i}: 0 : 1] \; \text{and} \; [x^{(2)}:y^{(2)}:z^{(2)}]=[1: \pm 1 : 0] ,
\end{equation}
gives the four singular points $q_{i6}$ for $1 \le i \le 4$ with
$$
 [Z_1: Z_2 : Z_3 : \tilde{Z}_4] = [0: 1:  \sqrt{\lambda_i}: 0] .
$$
Similarly, combining Weierstrass points with the second pair of bielliptic fixed points on $\mathcal{D}$ in Equation~(\ref{transfo_kummer2}), i.e., 
\begin{equation}
\label{pencil_DP2b}
[x^{(1)}:y^{(1)}:z^{(1)}]=[\pm \sqrt{\lambda_i}: 0 : 1] \; \text{and} \; [x^{(2)}:y^{(2)}:z^{(2)}]=[0: \pm l: 1] ,
\end{equation}
gives the four singular points $q_{i5}$ for $1 \le i \le 4$ with
$$
 [Z_1: Z_2 : Z_3 : \tilde{Z}_4] = [1: \lambda_i: 0 : 0] .
$$

\subsubsection{An elliptic fibration}
\label{ssec:ell_fib_X2}
Using the rational transformation $[Z_1:Z_2:Z_3:\tilde{Z}_4]=[S_1Z: XZ: S_2 Z: Y Z^2]$ with $[(S_1, S_2) , (X,Y,Z)] \in \mathbb{F}^{4,2}_{-2}$,
Equation~(\ref{kummer3}) becomes the equation of a genus-one fibration $\bar{\mathcal{X}\,}\!_2$ over $\mathbb{P}^1$ given by
\begin{equation}
\label{kummer_left}
   Y^2 =   \prod_{i=1}^4 \left( \lambda_i \, X  - \big(\lambda_i^2 \, S_1^2  +S_2^2 \big) \, Z \right)\;.
\end{equation}
The projection $\pi^{ \mathcal{X}_2}:  \bar{\mathcal{X}\,}\!_2\to \mathbb{P}^1$ with $([S_1:S_2],[X:Y:Z]) \mapsto [S_1:S_2]$ 
defines a K3 fibration with twelve fibers of Kodaira-type $I_2$. 
Four sections are given by  $\mathsf{e}_i: [X:Y:Z]=[\lambda_i^2 \, S_1^2  +S_2^2:0:\lambda_i]$ for $1\le i \le4$. 
We choose the point given by $\mathsf{e}_4$ to be the neutral element of the Mordell-Weil group of sections,
turning Equation~(\ref{kummer_left}) into a Jacobian elliptic fibration. The Mordell-Weil group contains the 4 two-torsion sections 
$\mathsf{e}_i$ for $1\le i \le4$ and three sections $\mathsf{p}, \mathsf{q}, \mathsf{r}$ of infinite order that are given by
\begin{equation}
\mathsf{p}: \left\lbrace\begin{array}{rl}
 X & = (S_1^2 + S_2^2) ( S_2^2 -\lambda_1\lambda_2\lambda_3 S_1^2) +  S_1^2 S_2^2 \prod_{i=1}^3 (\lambda_i -1)  \,,\\[0.2em]
 Y & =  S_1 S_2 \prod_{i=1}^3 (\lambda_i -1) (S^2_2 - S^2_1 \! \prod_{k=1, k\not= i }^3 \! \lambda_k)\,,\\[0.4em]
 Z & =  S^2_2 -\lambda_1\lambda_2\lambda_3 S^2_1 \;,
\end{array}\right.
\end{equation}
$\mathsf{q}: [X:Y:Z]=[1: l :0]$, and
\begin{equation}
\mathsf{r}: \left\lbrace \begin{array}{rl}
 X & = (S_1^2 + S_2^2) ( S^4_2 -\lambda_1\lambda_2\lambda_3 S^4_1) -  \prod_{i=1}^3 (S^2_2 - \lambda_i S^2_1)  \,,\\[0.2em]
 Y & =  \prod_{i=1}^3 (S^2_2 - \lambda_i S^2_1) (S^2_2 - S^2_1 \! \prod_{k=1, k\not= i }^3 \! \lambda_k)\,,\\[0.4em]
 Z & =  S_2^4 -\lambda_1\lambda_2\lambda_3 S_1^4 \;.
\end{array}\right.
\end{equation}
The sections $\mathsf{p}$ and $\mathsf{r}$ can be decomposed further using the two minimal
sections of infinite order given by $\mathsf{s}_{\pm}: [X:Y:Z]=[\mp 2S_1 S_2: \prod_{i=1}^4 ( \lambda_i S_1 \pm S_2) : 1]$ 
such that $\mathsf{p}=\mathsf{s}_+ + \mathsf{s}_-$ and $\mathsf{r}=\mathsf{s}_+ - \mathsf{s}_-$. It is obvious that Equation~(\ref{kummer_left}) admits the involution
\begin{equation}
\label{j_involution_left}
\begin{split}
  \jmath^{\mathcal{X}_2}: \quad \big([S_1:S_2],[X:Y:Z]\big)  \mapsto  \big([-S_1:S_2],[X:-Y:Z]\big) .
\end{split}
\end{equation}
The involution acts on sections according to relations 
$\jmath^{\mathcal{X}_2}(\mathsf{p}, \mathsf{e}_i)= \mathsf{p}, \mathsf{e}_i$, $\jmath^{\mathcal{X}_2}( \mathsf{q}, \mathsf{r})= -\mathsf{q}, -\mathsf{r}$,
and $\jmath^{\mathcal{X}_2}(\mathsf{s}_{\pm})= \mathsf{s}_{\mp}$.

The twelve points $q_{ij}^{\pm}$ with $1\le i < j \le 4$ 
are the singular points of elliptic fibration on $\mathcal{X}_2$ in Equation~(\ref{kummer_middle_ell}), namely the points
\begin{equation}
\label{sings_middle_b}
  \Big([S_1:S_2],[X:Y:Z]\Big)  = \Big([1: \pm \sqrt{\lambda_i \lambda_j } ], [\lambda_i+\lambda_j : 0 : 1 ]\Big).
\end{equation}
In the singular fibers $\mathcal{X}_{2,s^{\pm}_{ij}}$ over the twelve points $s^\pm_{ij}: [1: \pm \sqrt{\lambda_i \lambda_j } ]$ for $1\le i < j \le 4$, 
the two-torsion sections $\mathsf{e}_i=\mathsf{e}_j$ coincide at the point $[X:Y:X]= [\lambda_i+\lambda_j : 0 : 1 ]$. 
The fiber of type $I_2$ over $s^{\pm}_{ij}$ has the form
\begin{equation}
\label{kummer1_ell_sing_b}
  Y^2 =   \lambda_i \lambda_j \,  \left( X  - \big(\lambda_i   + \lambda_j \big) \, Z\right)^2 \! \prod_{k=1, k\not= i,j}^4 \left( \lambda_k \, X  - \big(\lambda_k^2   + \lambda_i \lambda_j \big) \, Z \right)\;.
\end{equation}

\subsubsection{The elliptic fibration under a special projective automorphism}
There is an additional involution acting on the elliptic fibration~(\ref{kummer_left}). To see this we will apply fiberwise a birational change of coordinates.
In each fiber of Equation~(\ref{kummer_left}) we apply a linear transformation that maps the points 
$\mathsf{e}_i: [X:Y:Z]=[\lambda_i^2 \, S^2_1  +S^2_2:0:\lambda_i]$ for $1 \le i \le 4$
to the points $\mathsf{e}'_i: [X':Y':Z']=[\lambda_i' S^2_1S^2_2:0:1]$ for $1\le i\le3$ and $\mathsf{e}'_4: [X':Y':Z']=[S_2^4 + \lambda_1\lambda_2\lambda_3 S_1^4:0:1]$.
The transformation is given by
 \begin{equation}
 \label{transfo_kummer_left}
 X'  = Z'   \, \frac{a \, X + b\, Z}{c \, X + d \, Z}\; , \quad
 Y'  = (Z')^2 \, Y \, S^2_1 S^2_2 \, \frac{\prod_{i=1}^3 (\lambda_i-1) (S^2_2-S^2_1\prod_{k=1, k\not= i}^{3}\lambda_k)}{(c \, X + d\, Z)^2} ,
 \end{equation}
where the relation to the transformation in Equation~(\ref{transfo_kummer_middle}) is given by $a=A(S^2_1,S^2_2)$ and similar for $b, c, d$.
It transforms Equation~(\ref{kummer_left}) into the equation
\begin{equation}
\label{kummer_left_ell_p}
   (Y')^2 = \left( X' - \big(S_2^4 + l^2 S_1^4\big) \, Z' \right) \,  \prod_{i=1}^3 \left( X'  - \lambda_i' \, l \, S_1^2 \, S_2^2 \, Z' \right)\;.
\end{equation}
The projection $([S_1:S_2],[X':Y':Z']) \mapsto [S_1:S_2]$ 
defines the same elliptic K3 fibration as Equation~(\ref{kummer_left}). The Mordell-Weil group contains the 4 two-torsion sections $\mathsf{e}'_i$
for $1\le i \le4$ and sections $\mathsf{p}', \mathsf{q}', \mathsf{r}'$ obtained from $\mathsf{p}, \mathsf{q}, \mathsf{r}$ by transformation~(\ref{transfo_kummer_left}).
It is obvious that Equation~(\ref{kummer_left_ell_p}) admits the additional involution
\begin{equation}
\label{involution_left}
\begin{split}
  \imath^{\mathcal{X}_2}: \quad \big([S_1:S_2],[X':Y':Z']\big)  \mapsto  \big([ S_2: l \, S_1],[l^2 \, X': l^4 \, Y':Z']\big) .
\end{split}
\end{equation}
We move the point given by $\mathsf{e}_4'$ to infinity and
convert Equation~(\ref{kummer_left_ell_p}) to Weierstrass form using a transformation defined over $\mathbb{C}(s)$ with $s=S_2/ \sqrt{l} \, S_1$. 
After translation by two-torsion the Weierstrass fibration is
\begin{equation}
\label{kummer_left_ell_p_W}
\begin{split}
y^2  = \; x \; & \big( x- \left( s^4 - s^2 \, \lambda'_2  +1 \right)   \left( \lambda_1'-\lambda_2' \right) \big) \, \big( x- \left( s^4 - s^2 \, \lambda'_2  +1 \right)    \left( \lambda_1'-\lambda_3' \right) \big) \;, 
\end{split}
\end{equation}
with two-torsion section $(x,y)=(0,0)$ and a distinguished section $\mathsf{s}'_0$ given by the point at infinity.
We will write the Weierstrass fibration as $y^2= x^3 - p_2(s^2) \, x^2+p_1(s^2)\,  x$ with
polynomials $p_1, p_2$ defined in Equations~(\ref{polynomials_left}).
Acting on Equation~(\ref{kummer_left_ell_p_W}) are the involutions $\imath^{\mathcal{X}_2}$ and $\jmath^{\mathcal{X}_2}$ as
\begin{equation}
\label{C2_involutions_left_b}
\begin{split}
  \imath^{\mathcal{X}_2}: \big(s, x, y\big)  \mapsto  \left(\frac{1}{s}, \frac{x}{s^4}, -\frac{y}{s^6} \right) , \quad
  \jmath^{\mathcal{X}_2}: \big(s, x, y\big)  \mapsto  \big(-s, x, -y\big) ,
\end{split}
\end{equation}
which leave the holomorphic two-form $ds\wedge dx/y$ invariant.
  The discriminant of the elliptic fiber is given by
 \begin{equation}
 \label{discrim_L}
 \begin{split}
  \Delta_{\mathcal{X}_2} =  \; & \left( \lambda_1'-\lambda_2' \right)^2  \left( \lambda_1'-\lambda_3' \right)^2  \left( \lambda_2'-\lambda_3' \right)^2\\
  \times \; &  \left( s^4 - s^2 \, \lambda'_1  +1 \right)^2  \left( s^4 - s^2 \, \lambda'_2  +1 \right)^2 \left( s^4 - s^2 \, \lambda'_3  +1 \right)^2 \;.
 \end{split} 
 \end{equation}
 We collect the properties of the Jacobian elliptic K3 fibration in Equation~(\ref{kummer_left_ell_p_W}) in the following:
\begin{lemma}
\label{lem:EllLeft}
Equation~(\ref{kummer_left_ell_p_W}) defines a  Jacobian elliptic K3 fibration $\pi^{ \mathcal{X}_2}:  \bar{\mathcal{X}\,}\!_2\to \mathbb{P}^1$ of Picard rank $17$ with 
12 singular fibers of type $I_2$, a determinant of the discriminant form equal to $2^5$, and a Mordell-Weil group of sections 
$\operatorname{MW}(\pi^{ \mathcal{X}_2},\mathsf{s}'_0)=(\mathbb{Z}/2)^2 \oplus \langle \frac{1}{2} \rangle^{\oplus 3}$. 
\end{lemma}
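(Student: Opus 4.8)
The plan is to read off all the invariants from the two-torsion Weierstrass model~(\ref{kummer_left_ell_p_W}) together with the discriminant~(\ref{discrim_L}), and to control the global lattice data by exhibiting the surface as a quadratic base change of the fibration $\mathcal{Y}_1$ of Lemma~\ref{lem:EllMiddle}. First I would observe that the substitution $t = s^2$, $X = s^2 x$, $Y = s^3 y$ carries the Weierstrass form~(\ref{kummer_middle_ell_p_W}) of $\mathcal{Y}_1$ into~(\ref{kummer_left_ell_p_W}), so that $\pi^{\mathcal{X}_2}$ is the pullback of $\pi^{\mathcal{Y}_1}$ along the degree-two cover $s \mapsto t = s^2$ of the base, ramified over $t = 0, \infty$. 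The deck transformation of this cover is exactly the involution $\jmath^{\mathcal{X}_2}\colon (s,x,y) \mapsto (-s,x,-y)$ of~(\ref{C2_involutions_left_b}); since it fixes the holomorphic two-form it is symplectic, and its quotient is (a resolution of) $\mathcal{Y}_1$. Counting degrees, $p_2(s^2)$ has degree $4$ and $p_1(s^2)$ degree $8$ in $s$, so the associated Weierstrass coefficients have degrees $8$ and $12$; hence $\mathcal{X}_2$ has $\chi = 2$ and, being relatively minimal with only rational double points in its Weierstrass model, is a Jacobian elliptic K3 surface with zero section $\mathsf{s}'_0$.

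For the singular fibers I would argue directly from~(\ref{discrim_L}): writing~(\ref{kummer_left_ell_p_W}) as $y^2 = x\,(x^2 - p_2(s^2)x + p_1(s^2))$, the discriminant is proportional to $p_1^2\,(p_2^2 - 4p_1)$, which by~(\ref{polynomials_left}) equals a nonzero constant times $\prod_{i=1}^3 (s^4 - s^2\lambda'_i + 1)^2$. For generic Rosenhain data the three quartics $s^4 - s^2\lambda'_i + 1$ share no root, so $\Delta_{\mathcal{X}_2}$ has exactly twelve distinct double zeros and is nonvanishing at $s = 0, \infty$. At a zero of $s^4 - s^2\lambda'_2 + 1$ or $s^4 - s^2\lambda'_3 + 1$ one has $p_1 = 0$, so the root $x = 0$ of the cubic becomes a double root colliding with the two-torsion point $(0,0)$; at a zero of $s^4 - s^2\lambda'_1 + 1$ one has $p_2^2 = 4p_1$ with $p_1 \neq 0$, so the two nonzero roots collide away from $x=0$. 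In every case the cubic acquires a single node, giving multiplicative reduction of Kodaira type $I_2$, and $\operatorname{ord}\Delta = 2$ confirms this. Thus $\pi^{\mathcal{X}_2}$ has precisely twelve fibers of type $I_2$ and no other reducible or additive fibers; equivalently, this is the statement that under the ramified base change the two $I_0^*$ fibers of $\mathcal{Y}_1$ at $t = 0, \infty$ become smooth while each of its six $I_2$ fibers splits into two.

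It remains to pin down the Picard number, the Mordell--Weil group, and the discriminant form. The full two-torsion $(\mathbb{Z}/2)^2$ is visible because the cubic in~(\ref{kummer_left_ell_p_W}) splits completely over $\mathbb{C}(s)$, giving the sections $\mathsf{e}'_1, \dots, \mathsf{e}'_4$. For the free part I would use the three explicit infinite-order sections $\mathsf{s}_+, \mathsf{s}_-$ and $\mathsf{q}$ of the model~(\ref{kummer_left}), transport them to~(\ref{kummer_left_ell_p_W}), and evaluate the height pairing $\langle P,P\rangle = 2\chi + 2(P\cdot\mathsf{s}'_0) - \sum_v \operatorname{contr}_v(P)$ (and its bilinear analogue for distinct sections), where each $I_2$ fiber contributes $\tfrac12$ exactly when a section meets its non-identity component. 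I expect this to yield the Gram matrix $\operatorname{diag}(\tfrac12, \tfrac12, \tfrac12)$, i.e. three mutually orthogonal sections of height $\tfrac12$. The rank is then forced to be exactly three: since $\jmath^{\mathcal{X}_2}$ is symplectic it acts trivially on $\operatorname{T}(\mathcal{X}_2) \otimes \mathbb{Q}$, so $\operatorname{rank}\operatorname{T}(\mathcal{X}_2) = \operatorname{rank}\operatorname{T}(\mathcal{Y}_1)$ and $\rho(\mathcal{X}_2) = \rho(\mathcal{Y}_1) = 17$ by Lemma~\ref{lem:EllMiddle}; the Shioda--Tate formula $\rho = 2 + \sum_v (m_v - 1) + \operatorname{rank}\operatorname{MW} = 2 + 12 + \operatorname{rank}\operatorname{MW}$ then gives $\operatorname{rank}\operatorname{MW} = 3$. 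Finally, Shioda's determinant formula
\[
 |\det \operatorname{NS}(\mathcal{X}_2)| = \frac{\prod_v |\det T_v| \cdot \det \operatorname{MWL}}{|\operatorname{MW}_{\mathrm{tor}}|^2} = \frac{2^{12}\cdot 2^{-3}}{2^4} = 2^5
\]
identifies the determinant of the discriminant form, and combining~(\ref{eqn:transL}) with Theorem~\ref{thm:Nikulin} shows the discriminant group is two-elementary, namely $(\mathbb{Z}/2)^5$, consistent with $\operatorname{T}(\mathcal{X}_2) = H(2)^2 \oplus \langle -2 \rangle$.

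The main obstacle is the Mordell--Weil computation in the last paragraph, and specifically the claim that the three exhibited sections generate the free part rather than an index-$n$ sublattice: if they spanned a proper sublattice the determinant formula would return $2^5/n^2$, so I must verify that none of $\mathsf{s}_\pm, \mathsf{q}$ is divisible in $\operatorname{MW}/\operatorname{MW}_{\mathrm{tor}}$. I would settle this by checking that the resulting discriminant form is two-elementary of length five -- the only value in this family compatible with Mehran's classification of degree-two covers of $\operatorname{Kum}(\operatorname{Jac}\mathcal{C})$ with transcendental lattice $H(2)\oplus H(2)\oplus\langle -2\rangle$ \cite{MR2804549} -- which rules out any proper overlattice and simultaneously identifies $\operatorname{MWL}$ with $\langle \tfrac12\rangle^{\oplus 3}$. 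The height bookkeeping, tracking which non-identity components the sections meet across the twelve $I_2$ fibers, is the one genuinely laborious step; everything else is a degree count or an application of the structural results already quoted.
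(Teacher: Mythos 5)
Your proposal is correct in substance and supplies far more than the paper does: the paper prints Lemma~\ref{lem:EllLeft} with an empty proof, treating it as a summary of the data already displayed in Section~\ref{ssec:ell_fib_X2} --- the twelve singular fibers over $s^{\pm}_{ij}$ in Equations~(\ref{sings_middle_b})--(\ref{kummer1_ell_sing_b}), the four two-torsion sections $\mathsf{e}_i$, the three independent infinite-order sections $\mathsf{q},\mathsf{s}_{+},\mathsf{s}_{-}$, and the discriminant~(\ref{discrim_L}). Your route --- realizing Equation~(\ref{kummer_left_ell_p_W}) as the quadratic base change $t=s^2$ of Equation~(\ref{kummer_middle_ell_p_W}), reading off the twelve $I_2$ fibers from the discriminant and the disappearance of the two $I_0^*$ fibers at the ramification points, transferring the Picard number through the symplectic deck transformation $\jmath^{\mathcal{X}_2}$, and then closing the loop with Shioda--Tate and the determinant formula --- is a legitimate, essentially self-contained verification of every claim in the lemma, and it anticipates precisely the map $\phi^{\mathcal{X}_2}_{\mathcal{Y}_1}$ and the involution that the paper introduces in the very next subsection.

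The one step you must tighten is the exclusion of a proper overlattice of $\langle \mathsf{q},\mathsf{s}_{+},\mathsf{s}_{-}\rangle$ in $\operatorname{MW}/\operatorname{MW}_{\mathrm{tor}}$. A saturation of index $n$ would change the N\'eron--Severi determinant from $2^5$ to $2^5/n^2$, i.e.\ to $2^3$ or $2$ for $n=2,4$; but these are exactly the determinants of the other two Picard-rank-$17$ transcendental lattices in Mehran's list, $H(2)\oplus H\oplus\langle -2\rangle$ and $H^2\oplus\langle -2\rangle$. So a bare appeal to ``compatibility with Mehran's classification'' does not by itself rule out $n>1$: you must first know that $\phi^{\mathcal{X}_2}_{\mathcal{Y}_1}$ is the double cover branched along an even eight of \emph{exceptional} curves, which is what forces $\operatorname{T}(\mathcal{X}_2)\cong H(2)^2\oplus\langle -2\rangle$ and hence determinant $2^5$. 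That identification is exactly the content of Lemma~\ref{Kum_B_12} (the even eight $\Delta_{12}$ sitting inside the two $I_0^*$-fibers of $\mathcal{Y}_1$ over $t=0,\infty$), so the needed input is available; alternatively, the direct height bookkeeping you defer would settle the saturation question with no appeal to Mehran at all.
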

\emptyproof

\subsubsection{A Nikulin involution by base transformation}
The relation between the K3 surfaces given by Equation~(\ref{kummer_middle}) and Equation~(\ref{kummer_left}) is described as follows:
the rational map 
\begin{equation}
\label{cover_map}
\begin{split}
  \pi^{\mathcal{D}^{(2)}_b}_{\mathcal{C}^{(2)}}: \quad \mathcal{D}^{(2)}/\langle \imath^\mathcal{D}_b \times  \imath^\mathcal{D}_b \rangle 
   \dashrightarrow  & \; \mathcal{C}^{(2)}/\langle \imath^\mathcal{C}_h \times  \imath^\mathcal{C}_h \rangle \\
 [Z_1:Z_2:Z_3:\tilde{Z}_4] \mapsto  & \; [z_1:z_2:z_3:\tilde{z}_4]=[Z_1^2:Z_2:Z_3^2:Z_1Z_3\tilde{Z}_4] \;,
\end{split}
\end{equation}
induces a rational map between the corresponding Jacobian elliptic K3 surfaces in Equation~(\ref{kummer_middle_ell_p_W}) and Equation~(\ref{kummer_left_ell_p_W}) given by
\begin{equation}
\label{eqn:double_cover}
\begin{split}
  \phi^{\mathcal{X}_2}_{\mathcal{Y}_1}: \quad \bar{\mathcal{X}\,}\!_2  \dashrightarrow  & \; \bar{\mathcal{Y}\,}\!_1, \quad
  \Big(s, x, y\Big) \mapsto  \Big(t,  X, Y\Big)= \Big(s^2,  s^2x, s^3y\Big).
\end{split}
\end{equation}
The morphism $\phi^{\mathcal{X}_2}_{\mathcal{Y}_1}$ maps sections according to $\phi^{\mathcal{X}_2}_{\mathcal{Y}_1} \circ  \mathsf{e}'_i = \mathsf{E}'_i$ for $1 \le i \le4$
and the same relation holds for the sections $\mathsf{p}'$ and $\mathsf{P}'$. 
The involution $\jmath^{\mathcal{X}_2}$ interchanges the sheets of $\phi^{\mathcal{X}_2}_{\mathcal{Y}_1}$,
and $\imath^{\mathcal{X}_2}$ acts equivariantly, i.e., $\phi^{\mathcal{X}_2}_{\mathcal{Y}_1}\circ \imath^{\mathcal{X}_2} = \imath^{\mathcal{Y}_1} \circ \phi^{\mathcal{X}_2}_{\mathcal{Y}_1}$. Using Mehran's notation and \cite[Prop.~5.1]{MR2804549}, we have the following:

\begin{lemma}
\label{Kum_B_12}
The K3 surface $\mathcal{X}_2$  is the Kummer surface $\operatorname{Kum}(\mathbf{B}_{12})$ of a $(1,2)$-polarized abelian surface
$\mathbf{B}_{12}$ that covers $\varphi_{12}: \mathbf{B}_{12} \to \operatorname{Jac} (\mathcal{C})$ by an isogeny of degree two
such that the induced rational map $\phi_{12}: \mathcal{X}_2 = \operatorname{Kum}(\mathbf{B}_{12}) \dashrightarrow \mathcal{Y}_1=\operatorname{Kum}(\operatorname{Jac} \mathcal{C})$
associated with the even eight $\Delta_{12}$ realizes the double cover $\phi^{\mathcal{X}_2}_{\mathcal{Y}_1}: \mathcal{X}_2 \dashrightarrow\mathcal{Y}_1$ in Equation~(\ref{eqn:dual_isog}).
\end{lemma}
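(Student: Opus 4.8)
The plan is to realize $\phi^{\mathcal{X}_2}_{\mathcal{Y}_1}$ from Equation~(\ref{eqn:double_cover}) as a quadratic base change of the Jacobian elliptic fibration on $\mathcal{Y}_1$, to identify the associated Nikulin involution and its even eight with $\Delta_{12}$, and then to invoke Mehran's classification so that the abelian surface $\mathbf{B}_{12}$ and the isogeny $\varphi_{12}$ come for free.

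First I would substitute $t = s^2$, $X = s^2 x$, $Y = s^3 y$ into the Weierstrass equation~(\ref{kummer_middle_ell_p_W}) of $\mathcal{Y}_1$ and check that it reproduces Equation~(\ref{kummer_left_ell_p_W}). This identifies $\phi^{\mathcal{X}_2}_{\mathcal{Y}_1}$ with the degree-two base change $t\mapsto s^2$ of the base $\mathbb{P}^1$, ramified exactly over $t=0$ and $t=\infty$, and exhibits $\jmath^{\mathcal{X}_2}\colon (s,x,y)\mapsto(-s,x,-y)$ as its deck transformation. Since $t=0,\infty$ are the base points of the two $I_0^*$ fibers of $\mathcal{Y}_1$ (Section~\ref{sec:elliptic_fibration_middle}), the transformation rule for Kodaira fibers under a ramified quadratic base change gives $I_0^*\mapsto I_0$ over $s=0,\infty$ and doubles each of the six $I_2$ fibers away from the ramification, producing the $12\,I_2$ configuration of Lemma~\ref{lem:EllLeft}; this already matches the fibration invariants of $\mathcal{X}_2$.

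Next I would verify that $\jmath^{\mathcal{X}_2}$ is a Nikulin (symplectic) involution and pin down its even eight. It leaves $ds\wedge dx/y$ invariant by Equation~(\ref{C2_involutions_left_b}), and its fixed locus consists precisely of the four two-torsion points on each of the two fibers over $s=0,\infty$, i.e.\ eight isolated points. Hence the quotient $\mathcal{X}_2/\langle\jmath^{\mathcal{X}_2}\rangle$ acquires eight nodes whose minimal resolution is $\mathcal{Y}_1$, and the eight exceptional curves constitute an even eight on $\mathcal{Y}_1$. These eight points map under $\phi^{\mathcal{X}_2}_{\mathcal{Y}_1}$ to the two-torsion points of the two $I_0^*$ fibers, which by Section~\ref{sec:elliptic_fibration_middle} are the non-central components carrying the nodes $p_{i6}$ (over $t=0$) and $p_{i5}$ (over $t=\infty$) for $1\le i\le 4$. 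Translating through Table~\ref{tab:nodes}, these eight curves are $E_{13},E_{14},E_{15},E_{16}$ and $E_{23},E_{24},E_{25},E_{26}$, which is exactly the even eight $\Delta_{12}$. Having established $\mathcal{X}_2 = X(\mathcal{Y}_1,\Delta_{12})$, I would then apply \cite[Prop.~4.2]{MR2804549} to conclude that $\mathcal{X}_2=\operatorname{Kum}(\mathbf{B}_{12})$ for a $(1,2)$-polarized abelian surface $\mathbf{B}_{12}$, and \cite[Prop.~5.1]{MR2804549} to conclude that $\phi_{12}$ is induced by a degree-two isogeny $\varphi_{12}\colon\mathbf{B}_{12}\to\operatorname{Jac}(\mathcal{C})$, thereby identifying $\phi^{\mathcal{X}_2}_{\mathcal{Y}_1}$ with $\phi_{12}$.

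I expect the middle step to be the main obstacle. Unlike the Van Geemen--Sarti case, where one reads off the marked components of the $I_2$ fibers that change type, here the entire even eight lies inside the two $I_0^*$ fibers; so I must argue carefully that a ramified quadratic base change branched at two $I_0^*$ fibers is the \emph{same} operation as the even-eight double cover branched along their eight non-central components, and confirm that the node labeling of Table~\ref{tab:nodes} reproduces Mehran's index $\Delta_{12}$ exactly rather than some other $\Delta_{ij}$.
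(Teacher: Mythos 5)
Your proposal follows essentially the same route as the paper's proof: identify $\phi^{\mathcal{X}_2}_{\mathcal{Y}_1}$ as the quadratic base change ramified over $t=0,\infty$ (the base points of the two $I_0^*$ fibers), read off the eight non-central components there as $E_{13},\dots,E_{16},E_{23},\dots,E_{26}=\Delta_{12}$ via Table~\ref{tab:nodes}, and invoke Mehran's Propositions~4.2 and~5.1. The paper's version is terser (it simply asserts the identification of the even eight), while you supply the justification via the Nikulin involution $\jmath^{\mathcal{X}_2}$ and its eight fixed points, which is correct and fills the gap you flag in your last paragraph.
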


\begin{proof}
The double cover in Equation~(\ref{eqn:double_cover}) has the ramification points $t=0$ and $t=\infty$.
These are the base points of fibers of type $I_0^*$ in Lemma~(\ref{lem:EllMiddle}). It is easy to check
that the disjoint rational components in these reducible fibers are -- in the notation of Mehran --  given by
$E_{13}, E_{14}, E_{15}, E_{16}$ and $E_{23}, E_{24}, E_{25}, E_{26}$, respectively.
These form the even eight $\Delta_{12}$.
\end{proof}

We have the following result:
\begin{theorem}
\label{prop1}
The Kummer surface $\operatorname{Kum}(\mathbf{B}_{12})$ of a $(1,2)$-polarized abelian surface
$\mathbf{B}_{12}$ that covers $\varphi_{12}: \mathbf{B}_{12} \to \operatorname{Jac} (\mathcal{C})$ by an isogeny of degree two
is birationally equivalent to quotient of the symmetric square of a bielliptic and hyperelliptic genus-three curve $\mathcal{D}$ 
that is the double cover of a generic genus-two curve $\mathcal{C}$.
\end{theorem}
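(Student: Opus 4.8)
The plan is to read off the theorem by assembling the explicit birational model of $\mathcal{X}_2$ built in this section together with its identification in Lemma~\ref{Kum_B_12}, so that the statement becomes a chaining of birational equivalences already established. First I would confirm that the octic surface of Equation~(\ref{kummer3}) genuinely represents $\mathcal{D}^{(2)}/\langle \imath^{\mathcal{D}}_b\times\imath^{\mathcal{D}}_b\rangle$. Since $\imath^{\mathcal{D}}_b\times\imath^{\mathcal{D}}_b$ commutes with the transposition $\sigma_{\mathcal{D}^{(2)}}$, the group they generate has order four, and the four functions $Z_1,Z_2,Z_3,\tilde{Z}_4$ of Equation~(\ref{transfo_kummer2}) are invariant under it; writing $u=x^{(1)}/z^{(1)}$ and $v=x^{(2)}/z^{(2)}$ one has $Z_3/Z_1=uv$ and $Z_2/Z_1^2=u^2+v^2$, which recover the generic orbit $\{(\pm u,\pm v),(\pm v,\pm u)\}$, so these invariants separate a generic orbit and the induced map has degree four. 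Substituting (\ref{transfo_kummer2}) into the two defining relations $(y^{(k)})^2=\prod_{i=1}^4((x^{(k)})^2-\lambda_i(z^{(k)})^2)$ and multiplying, each factor collapses to $\lambda_i^2 Z_1^2-\lambda_i Z_2+Z_3^2$, reproducing Equation~(\ref{kummer3}) exactly; hence the octic is a birational model of the quotient.

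Next I would verify that this singular octic is birationally the smooth K3 surface $\mathcal{X}_2$. The chain of rational maps is already in place: the substitution $[Z_1:Z_2:Z_3:\tilde{Z}_4]=[S_1Z:XZ:S_2Z:YZ^2]$ turns (\ref{kummer3}) into the genus-one fibration $\bar{\mathcal{X}\,}\!_2$ of Equation~(\ref{kummer_left}); the fiberwise linear transformation (\ref{transfo_kummer_left}) yields Equation~(\ref{kummer_left_ell_p}); and passage to Weierstrass form over $\mathbb{C}(s)$ with $s=S_2/(\sqrt{l}\,S_1)$ gives Equation~(\ref{kummer_left_ell_p_W}). Each map is birational, so the octic is birational to $\mathcal{X}_2$, and by Lemma~\ref{lem:EllLeft} the latter is a smooth K3 surface of Picard rank $17$. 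I would then check that the singularities of (\ref{kummer3})—the twelve nodes $q_{ij}^{\pm}$ together with the points $q_{i5},q_{i6}$ arising from pairs of fixed points of the involutions as recorded in Equations~(\ref{pencil_DP})--(\ref{pencil_DP2b})—are exactly rational double points, so that $\mathcal{X}_2$ is their minimal resolution and these singular points account precisely for the twelve $I_2$-fibers of Lemma~\ref{lem:EllLeft}.

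Finally I would invoke Lemma~\ref{Kum_B_12}, which identifies $\mathcal{X}_2$ with the Kummer surface $\operatorname{Kum}(\mathbf{B}_{12})$ of the $(1,2)$-polarized abelian surface $\mathbf{B}_{12}$ covering $\operatorname{Jac}(\mathcal{C})$ by the degree-two isogeny $\varphi_{12}$. Chaining the equivalences
\[
 \operatorname{Kum}(\mathbf{B}_{12}) \;=\; \mathcal{X}_2 \;\sim\; \mathcal{D}^{(2)}/\langle \imath^{\mathcal{D}}_b\times\imath^{\mathcal{D}}_b\rangle,
\]
where $\sim$ denotes birational equivalence, then yields the theorem. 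The step I expect to demand the most care is the second one: one must ensure that the quotient singularities are canonical (rational double points) and that no fixed locus of $\imath^{\mathcal{D}}_b\times\imath^{\mathcal{D}}_b$ on $\mathcal{D}^{(2)}$ contributes an extra component or a non-resolvable singularity, so that the smooth K3 surface $\mathcal{X}_2$ is truly the resolution of (\ref{kummer3}) and not a proper modification of it. This is pinned down by the explicit list of singular points above together with the Picard-rank computation of Lemma~\ref{lem:EllLeft}, which leaves no room for unaccounted exceptional divisors.
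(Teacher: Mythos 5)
Your overall route is the same as the paper's own (which is essentially a two-line citation): present the octic~(\ref{kummer3}) as a birational model of the quotient of $\mathcal{D}^{(2)}$, pass birationally to the Jacobian elliptic model $\mathcal{X}_2$, and conclude via Lemma~\ref{Kum_B_12}. Your second and third steps are sound. The gap is in the first step, and it is a genuine one: your orbit analysis tracks only the coordinates $u=x^{(1)}/z^{(1)}$, $v=x^{(2)}/z^{(2)}$ and ignores the $y$-coordinates. The functions $Z_1,Z_2,Z_3,\tilde{Z}_4$ of~(\ref{transfo_kummer2}) are invariant not only under $\sigma_{\mathcal{D}^{(2)}}$ and $\imath^{\mathcal{D}}_b\times\imath^{\mathcal{D}}_b$ but also under $\imath^{\mathcal{D}}_h\times\imath^{\mathcal{D}}_h$, which fixes all $x^{(k)},z^{(k)}$ and sends $\tilde{Z}_4=y^{(1)}y^{(2)}\mapsto(-y^{(1)})(-y^{(2)})=\tilde{Z}_4$; this involution does not lie in the order-four group $\langle\sigma_{\mathcal{D}^{(2)}},\imath^{\mathcal{D}}_b\times\imath^{\mathcal{D}}_b\rangle$, since $\imath^{\mathcal{D}}_b$ leaves $y$ untouched. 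Concretely, $w=y^{(1)}+y^{(2)}$ is invariant under the order-four group, and $w^2=(y^{(1)})^2+(y^{(2)})^2+2\tilde{Z}_4$ lies in the field generated by your invariants while $w$ itself does not (it changes sign under $\imath^{\mathcal{D}}_h\times\imath^{\mathcal{D}}_h$). Hence the generic fiber of $\mathcal{D}\times\mathcal{D}\to\text{(octic)}$ has eight points --- four choices of $(u,v)$ with $uv$ and $u^2+v^2$ fixed, times two choices of $(y^{(1)},y^{(2)})$ with fixed product --- i.e., two orbits of the order-four group, so the induced map $\mathcal{D}^{(2)}/\langle\imath^{\mathcal{D}}_b\times\imath^{\mathcal{D}}_b\rangle\dashrightarrow\text{(octic)}$ has degree two, not one. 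Equation~(\ref{kummer3}) is actually a model of the quotient by the order-eight group $\langle\sigma,\imath^{\mathcal{D}}_b\times\imath^{\mathcal{D}}_b,\imath^{\mathcal{D}}_h\times\imath^{\mathcal{D}}_h\rangle$; this is also what the intersection theory predicts, since the addition map $\mathcal{D}\times\mathcal{D}\to\mathbf{B}_{12}$ has degree $\mathcal{D}\cdot\mathcal{D}=4$ and composing with $\mathbf{B}_{12}\to\mathbf{B}_{12}/\langle-\mathbb{I}\rangle$ gives degree eight. Contrast this with the genus-two computation~(\ref{transfo_kummer})--(\ref{kummer_middle}) and with the later quotient~(\ref{kummer_left_b}) by $\imath^{\mathcal{D}}_h\times\imath^{\mathcal{D}}_h$, where the involution does flip the $y$-coordinate and therefore absorbs the residual sign ambiguity in $Y^{(1)}Y^{(2)}$; there your degree count would be correct.

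To be fair, the paper's own proof cites exactly Equations~(\ref{transfo_kummer2}) and~(\ref{kummer3}) and passes over this point silently, so you have not overlooked an argument that the paper supplies. But as written your verification does not close: ``these invariants separate a generic orbit'' is false for the order-four group. To repair it you would need either (i) an independent argument that $\mathcal{D}^{(2)}/\langle\imath^{\mathcal{D}}_b\times\imath^{\mathcal{D}}_b\rangle$ is abstractly birational to its further quotient by $\imath^{\mathcal{D}}_h\times\imath^{\mathcal{D}}_h$ (the generically two-to-one quotient map itself is of course not a birational equivalence, so this requires a separate identification of the covering surface), or (ii) to interpret the quotient in the statement as the one by the group generated by $\imath^{\mathcal{D}}_b\times\imath^{\mathcal{D}}_b$ and $(-\mathbb{I})\times(-\mathbb{I})$, with $-\mathbb{I}\mid_{\mathcal{D}}=\imath^{\mathcal{D}}_b\circ\imath^{\mathcal{D}}_h$, which is what the octic~(\ref{kummer3}) genuinely computes and for which the rest of your chain of birational maps to $\mathcal{X}_2=\operatorname{Kum}(\mathbf{B}_{12})$ goes through.
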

\begin{proof}
The statement follows from Lemma~\ref{Kum_B_12} and Equations~(\ref{transfo_kummer2}) and~(\ref{kummer3}).
\end{proof}

\begin{remark}
\label{Gab1}
In~\cite{MR1871336} the authors constructed a family of K3 surfaces with Picard number fourteen whose general member $\mathcal{X}$ 
admits an elliptic fibration with twelve reducible fibers of type $I_2$ and four two-torsion sections, and has $\operatorname{T}(\mathcal{X})=H(2)^2\oplus \langle -2 \rangle^{\oplus 4}$ and $\operatorname{NS}(\mathcal{X})=D_4(-1)^{\oplus 3}\oplus \langle 2 \rangle \oplus \langle -2 \rangle$. 
The general member is the double cover with a branch locus that is the union of four divisors of bidegree $(1,1)$ 
in $\mathbb{P}^1 \times \mathbb{P}^1$.  The Kummer surface $\operatorname{Kum}(\mathbf{B}_{12})$ is a specialization of this family obtained 
in Equation~(\ref{kummer_left}) requiring that the elliptic fibration admits three more infinite-order sections with certain intersection properties, 
namely the sections $\mathsf{q}, \mathsf{s}_{\pm}$.
\end{remark}

\subsubsection{A Van Geemen-Sarti involution}
The translation by the two-torsion section $(x,y)=(0,0)$ in Equation~(\ref{kummer_left_ell_p_W}) is a Nikulin involution $\imath_F$, and we obtain a K3 surface $\mathcal{Y}_2$ by resolving the eight nodes of 
$\mathcal{X}_2/\imath_F$. The isogeny $\phi^{\mathcal{X}_2}_{\mathcal{Y}_2}: \mathcal{X}_2 \dashrightarrow \mathcal{Y}_2$ is the rational quotient map over 
$\mathbb{C}(t)$ with a kernel generated by the section made from $(x,y)=(0,0)$ and the point at infinity. 
We obtain for $\bar{\mathcal{Y}\,}\!_2$ the Weierstrass model 
\begin{equation}
\label{kummer3_ell_dual_W}
 Y^2 = X \,  \big( X^2 +2\, p_2(s^2) \, X  + p_2^2(s^2) - 4 \, p_1(s^2)  \big)  \;,
\end{equation}
with a two-torsion section given by $(x,y)=(0,0)$ and a distinguished section $\mathsf{S}'_0$ given by the point at infinity.
The explicit formulas for the isogeny and the dual isogeny are well known and given by
\begin{equation}
\label{Eq:isogeny_left}
{\phi}^{\mathcal{X}_2}_{\mathcal{Y}_2}: \quad \bar{\mathcal{X}\,}\!_2 \dashrightarrow \bar{\mathcal{Y}\,}\!_2\,, \quad  (x,y) \mapsto \left( \frac{y^2}{x^2}, \frac{y \, \big(p_1(s^2)  - x^2\big)}{x^2}\right)\,,
\end{equation}
and
\begin{equation}
\label{Eq:isogeny_left_b}
{\phi}^{\mathcal{Y}_2}_{\mathcal{X}_2}:  \quad \bar{\mathcal{Y}\,}\!_2 \dashrightarrow \bar{\mathcal{X}\,}\!_2\,, \quad  (X,Y) \mapsto \left( \frac{Y^2}{4 \,X^2}, \frac{Y \, \big(p_2^2(s^2) -4 \, p_1(s^2)- X^2 \big)}{8 \, X^2}\right)\,.
\end{equation}
We denote the Nikulin involution covering the dual two-isogeny ${\phi}^{\mathcal{Y}_2}_{\mathcal{X}_2}$ by  $\imath^{\mathcal{Y}_2}_F$.
We have the following:
\begin{lemma}
\label{lem:EllLeftTop}
Equation~(\ref{kummer3_ell_dual_W}) defines a  Jacobian elliptic K3 fibration $\pi^{ \mathcal{Y}_2}:  \bar{\mathcal{Y}\,}\!_2\to \mathbb{P}^1$ of Picard rank 17
with four singular fibers of type $I_4$, eight fibers of type $I_1$, a determinant of the discriminant form equal to $2^6$,
and a Mordell-Weil group of sections $\operatorname{MW}(\pi^{ \mathcal{Y}_2}, \mathsf{S}'_0)=\mathbb{Z}/2 \oplus \langle 1 \rangle^{\oplus 3}$.
\end{lemma}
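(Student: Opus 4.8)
The plan is to extract all invariants directly from the explicit Weierstrass model~(\ref{kummer3_ell_dual_W}), exploiting that $\bar{\mathcal{Y}\,}\!_2$ is the target of the fiberwise two-isogeny $\phi^{\mathcal{Y}_2}_{\mathcal{X}_2}$ relating it to the Kummer surface $\mathcal{X}_2$ of Lemma~\ref{lem:EllLeft}; the computation runs exactly parallel to the passage from Equation~(\ref{discrim_M}) to Equation~(\ref{discrim_MU}) carried out for Lemma~\ref{lem:EllMiddleTop}. First, for the K3 property and the Picard rank: the map $\phi^{\mathcal{X}_2}_{\mathcal{Y}_2}$ is the rational quotient of $\mathcal{X}_2$ by the symplectic Nikulin involution $\imath_F$, so resolving the eight nodes of $\mathcal{X}_2/\imath_F$ yields a K3 surface and the degree-two correspondence preserves the rank of the transcendental lattice; hence $\rho(\mathcal{Y}_2)=\rho(\mathcal{X}_2)=17$. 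Alternatively, the coefficients $2\,p_2(s^2)$ and $p_2^2(s^2)-4p_1(s^2)$ of~(\ref{kummer3_ell_dual_W}) have degrees $4$ and $8$ in $s$, which confirms directly that the fibration is a relatively minimal elliptic K3 surface.

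Next I would determine the singular fibers. The cubic in~(\ref{kummer3_ell_dual_W}) has roots $0$ and $-p_2(s^2)\pm 2\sqrt{p_1(s^2)}$, so its discriminant is proportional to $p_1(s^2)\,\big(p_2^2(s^2)-4\,p_1(s^2)\big)^2$. The computational key is the polynomial identity
\[
 p_2(t)^2 - 4\,p_1(t) = (\lambda_2'-\lambda_3')^2\,(t^2-\lambda_1'\,t+1)^2 ,
\]
obtained by direct expansion from the definitions~(\ref{polynomials_left}); this is precisely what converts the discriminant~(\ref{discrim_M}) of $\mathcal{Y}_1$ into the discriminant~(\ref{discrim_MU}) of $\mathcal{X}_1$. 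Setting $t=s^2$ and combining with the factorization of $p_1(s^2)$ yields
\[
 \Delta_{\mathcal{Y}_2} \;\propto\; \big(s^4-\lambda_1'\,s^2+1\big)^4\,\big(s^4-\lambda_2'\,s^2+1\big)\,\big(s^4-\lambda_3'\,s^2+1\big) .
\]
For pairwise distinct $\lambda_i'$ the three quartics are separable and coprime, so $\Delta_{\mathcal{Y}_2}$ has four zeros of order four and eight simple zeros, with $\deg_s\Delta_{\mathcal{Y}_2}=24$; thus the fiber over $s=\infty$ is smooth and the Euler number $4\cdot4+8\cdot1=24$ is accounted for. At each fourfold zero one has $p_2^2=4p_1$, whence $c_4=16\big(p_2^2(s^2)+12\,p_1(s^2)\big)$ reduces to $256\,p_1(s^2)\neq0$, so those fibers are multiplicative of type $I_4$, while the simple zeros give $I_1$. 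Equivalently, this is the standard behaviour of the twelve $I_2$-fibers of $\mathcal{X}_2$ under $\phi^{\mathcal{X}_2}_{\mathcal{Y}_2}$: the kernel section lies on the identity component over the roots of $s^4-\lambda_1'\,s^2+1$ (so $I_2\to I_4$) and on the non-identity component over the remaining roots (so $I_2\to I_1$).

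For the Mordell-Weil group, the section $(X,Y)=(0,0)$ is visibly two-torsion and generates a $\mathbb{Z}/2$; there is no further two-torsion, since the quadratic factor of the cubic acquires no $\mathbb{C}(s)$-rational root ($p_1(s^2)$ is not a square in $\mathbb{C}[s]$, by its factorisation into distinct irreducibles). By the Shioda-Tate relation with $\rho=17$ and trivial lattice $U\oplus A_3^{\oplus 4}$ — the $I_1$-fibers contributing no roots — the free rank of $\operatorname{MW}$ equals $17-2-12=3$. Three independent infinite-order sections arise as the $\phi^{\mathcal{X}_2}_{\mathcal{Y}_2}$-images of the sections $\mathsf{q}$ and $\mathsf{s}_\pm$ of $\mathcal{X}_2$; evaluating the height pairing (with $\chi=2$ and the local contributions from the four $I_4$-fibers) shows the Mordell-Weil lattice is $\langle 1\rangle^{\oplus 3}$, which, being unimodular, is automatically saturated. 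Hence $\operatorname{MW}(\pi^{\mathcal{Y}_2},\mathsf{S}'_0)=\mathbb{Z}/2\oplus\langle 1\rangle^{\oplus 3}$.

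Finally, the determinant of the discriminant form follows from Shioda's formula together with Equation~(\ref{eqn:transL}):
\[
 \big|\det\operatorname{NS}(\mathcal{Y}_2)\big| = \frac{\big|\det\big(U\oplus A_3^{\oplus 4}\big)\big|\cdot \det\operatorname{MWL}}{\big|\operatorname{MW}_{\mathrm{tors}}\big|^2} = \frac{2^8\cdot 1}{2^2} = 2^6 ,
\]
and since $\operatorname{NS}(\mathcal{Y}_2)$ and $\operatorname{T}(\mathcal{Y}_2)$ are orthogonal complements in the unimodular K3 lattice, $\big|\det\operatorname{T}(\mathcal{Y}_2)\big|=2^6$ as well, which is the asserted determinant of the discriminant form. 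I expect the Mordell-Weil step to be the main obstacle: beyond verifying the polynomial identity, the height pairings of the three infinite-order sections must be computed with careful local bookkeeping at the four $I_4$-fibers, and the absence of higher torsion together with the saturation of the lattice of sections must be confirmed; the remaining assertions are immediate from the explicit discriminant and the Shioda-Tate framework, and every numeric can be cross-checked against the parallel computation for $\mathcal{X}_1$ in Lemma~\ref{lem:EllMiddleTop}.
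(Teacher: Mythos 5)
Your proposal is correct in substance but follows a genuinely different route from the paper. The paper's entire proof is a one-line comparison of the Weierstrass model~(\ref{kummer3_ell_dual_W}) with Kumar's explicit classification of all elliptic fibrations on a generic Jacobian Kummer surface \cite[Sec.~10]{MR3263663}, where the fiber configuration, Mordell--Weil lattice, and discriminant data are already tabulated (this identification is then reused in Corollary~\ref{lem:EllLeftTop2} to pin down which fibration in Kumar's list one has). You instead verify everything directly from the equation: your key identity $p_2(t)^2-4p_1(t)=(\lambda_2'-\lambda_3')^2(t^2-\lambda_1't+1)^2$ is exactly what reconciles the paper's own displayed discriminants~(\ref{discrim_M}) and~(\ref{discrim_MU}), and your factorization of $\Delta_{\mathcal{Y}_2}$, the $c_4$-test for multiplicative reduction, the $I_2\to I_4$ versus $I_2\to I_1$ behaviour under the two-isogeny according to which component carries the kernel section, and the Shioda--Tate count $17-2-12=3$ are all correct. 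What your approach buys is self-containedness --- it does not presuppose that $\mathcal{Y}_2$ is a Jacobian Kummer surface, which is something the paper only establishes afterwards using this very lemma --- at the cost of actually having to do the lattice-theoretic bookkeeping. That is also where the one real gap sits, and you flag it yourself: the assertion $\operatorname{MWL}=\langle 1\rangle^{\oplus 3}$ requires computing the heights and pairwise pairings of the three chosen sections at the four $I_4$ fibers, and the ``unimodular hence saturated'' shortcut is not automatic because a Mordell--Weil lattice need not be integral (e.g.\ $\langle 1\rangle$ embeds with index two in $\langle \tfrac14\rangle$); likewise $\mathbb{Z}/4$-torsion is not excluded by your two-torsion argument alone, since the component groups $( \mathbb{Z}/4)^4$ would admit it. Until those finitely many height evaluations are carried out, the determinant $2^6$ --- which you correctly derive from $\lvert\det(U\oplus A_3^{\oplus 4})\rvert\cdot\det\operatorname{MWL}/\lvert\operatorname{MW}_{\mathrm{tors}}\rvert^2$ --- is not fully closed, whereas the fiber types and the free rank are.
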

\begin{proof}
The proof follows by comparison of the Weierstrass model in Equation~(\ref{kummer3_ell_dual_W}) with the one in \cite[Sec.~10]{MR3263663}.
\end{proof}

A direct consequence is:
\begin{corollary}
\label{lem:EllLeftTop2}
The K3 surface $\mathcal{Y}_2$  is the Kummer surface $\operatorname{Kum}(\operatorname{Jac} \hat{\mathcal{C}\,}\!_{12})$ of the principally polarized abelian surface
$\operatorname{Jac} \hat{\mathcal{C}\,}\!_{12}$ where $\hat{\mathcal{C}\,}\!_{12}$ is related by $(2,2)$-isogeny to $\mathcal{C}$ by Equations~(\ref{relations_RosRoots}).
\end{corollary}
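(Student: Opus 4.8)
The plan is to use Lemma~\ref{lem:EllLeftTop} to recognize $\mathcal{Y}_2$ as the Kummer surface of \emph{some} genus-two Jacobian, and then to pin down that curve by a moduli computation. Indeed, the proof of Lemma~\ref{lem:EllLeftTop} already matches the Weierstrass model~(\ref{kummer3_ell_dual_W}), with its singular fibers $4 I_4 + 8 I_1$, Mordell--Weil group $\mathbb{Z}/2 \oplus \langle 1 \rangle^{\oplus 3}$, and discriminant-form determinant $2^6$, against one of Kumar's fibrations in~\cite[Sec.~10]{MR3263663} on the Kummer surface $\operatorname{Kum}(\operatorname{Jac}\mathcal{G})$ of a generic genus-two curve $\mathcal{G}$; the associated transcendental lattice is $H(2)\oplus H(2)\oplus\langle -4\rangle$, which is exactly that of such a Kummer surface (note it is the $2$-scaling of $\operatorname{T}(\operatorname{Jac}\mathcal{C})=H\oplus H\oplus\langle -2\rangle$). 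Thus $\mathcal{Y}_2\cong\operatorname{Kum}(\operatorname{Jac}\mathcal{G})$, and all that remains is to identify the class of $\mathcal{G}$ in $\mathcal{M}_2$.

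To determine $\mathcal{G}$, I would compare coefficients. Kumar's Weierstrass equation for the $4 I_4 + 8 I_1$ fibration has coefficients that are explicit rational functions of the Rosenhain, equivalently Igusa, invariants of $\mathcal{G}$; equating them with the coefficients $p_1(s^2), p_2(s^2)$ of~(\ref{kummer3_ell_dual_W}), which by~(\ref{polynomials_left}) are built from $\lambda_1', \lambda_2', \lambda_3'$, expresses the invariants of $\mathcal{G}$ as rational functions of the $\lambda_i'$, hence of the Rosenhain roots of $\mathcal{C}$. On the other hand, Corollary~\ref{cor:genus_two} applied to $\hat{\mathcal{C}\,}\!_{12}$ presents $\operatorname{Kum}(\operatorname{Jac}\hat{\mathcal{C}\,}\!_{12})$ through the fibration~(\ref{kummer_middle_ell_p_W}) in the primed moduli $\Lambda_1', \Lambda_2', \Lambda_3'$, while Proposition~\ref{lem:2isog_curve} records the relations~(\ref{relations_RosRoots}) between $\{\lambda_i'\}$ and $\{\Lambda_i'\}$ induced by the $(2,2)$-isogeny $\psi_{12}$. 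Substituting~(\ref{relations_RosRoots}), a direct computation shows that the invariants of $\mathcal{G}$ coincide with those of $\hat{\mathcal{C}\,}\!_{12}$ as given by~(\ref{Eq:Rosenhain2}) and~(\ref{Picard_sq}); equivalently, the two curves define the same point of $\mathcal{M}_2$. By Torelli's theorem $\operatorname{Jac}\mathcal{G}\cong\operatorname{Jac}\hat{\mathcal{C}\,}\!_{12}$, so $\mathcal{Y}_2 = \operatorname{Kum}(\operatorname{Jac}\hat{\mathcal{C}\,}\!_{12})$.

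I expect the genuine difficulty to lie in this last identification rather than in any lattice bookkeeping: one must check that the genus-two curve extracted from the $4 I_4 + 8 I_1$ model is the specific $(2,2)$-isogenous partner $\hat{\mathcal{C}\,}\!_{12}$, and not a different representative in its $S_6$-orbit of Rosenhain labelings or a different $(2,2)$-isogenous quotient of $\operatorname{Jac}\mathcal{C}$. Conceptually the statement is transparent: the $(1,2)$-polarized surface $\mathbf{B}_{12}$ interpolates the $(2,2)$-isogeny $\operatorname{Jac}\mathcal{C}\to\operatorname{Jac}\hat{\mathcal{C}\,}\!_{12}$, so $\mathcal{X}_2=\operatorname{Kum}(\mathbf{B}_{12})$ admits degree-two rational maps onto both $\mathcal{Y}_1=\operatorname{Kum}(\operatorname{Jac}\mathcal{C})$ (the base cover of Lemma~\ref{Kum_B_12}) and $\mathcal{Y}_2$ (the Van Geemen--Sarti isogeny), and the latter target must therefore be $\operatorname{Kum}(\operatorname{Jac}\hat{\mathcal{C}\,}\!_{12})$. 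Making this precise once more reduces to verifying the explicit substitution~(\ref{relations_RosRoots}), which is why I would treat the coefficient matching of the second paragraph as the decisive computational step.
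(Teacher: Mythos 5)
Your proposal follows essentially the same route as the paper: first match the fibration of Lemma~\ref{lem:EllLeftTop} against Kumar's fibration {\tt (7)} to conclude $\mathcal{Y}_2\cong\operatorname{Kum}(\operatorname{Jac}\tilde{\mathcal{C}})$ for some genus-two curve, then identify $\tilde{\mathcal{C}}$ with $\hat{\mathcal{C}\,}\!_{12}$ by a moduli computation that substitutes the relations~(\ref{relations_RosRoots}). The only difference is cosmetic: where you propose matching full Igusa invariants, the paper compares a single invariant, namely the cross-ratio $(\lambda_1'-2)/(\lambda_1'+2)$ of the four $I_4$ base points, and then invokes Corollary~\ref{cor:symmetric} to nail down the correct $(2,2)$-isogenous partner.
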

\begin{proof}
Lemma~\ref{lem:EllLeftTop} gives the same singular fibers and group of sections as the elliptic fibration labeled {\tt (7)} does in the list of all possible elliptic fibrations on a generic Kummer surface determined 
by Kumar in~\cite[Thm.~2]{MR3263663} whence $\mathcal{Y}_2=\operatorname{Kum}(\operatorname{Jac}{\tilde{\mathcal{C}}})$. A computation shows that the cross ratio of the four base points of the fibers of type $I_4$
in Equation~(\ref{kummer3_ell_dual_W}) is given by $(\lambda_1'-2)/(\lambda_1'+2)$. 

For a genus-two curve $\tilde{\mathcal{C}}_{12}$, 
the $(2,2)$-isogeny $\tilde{\psi}_{12}: \operatorname{Jac}(\tilde{\mathcal{C}}_{12}) \to \operatorname{Jac}(\hat{\tilde{\mathcal{C}\,}}\!_{12})$
induces an isomorphism $\tilde{\Psi}_{12}$ between $\mathcal{Y}_2$ and $\hat{\mathcal{Y}\,}\!_2=\operatorname{Kum}(\operatorname{Jac}\hat{\tilde{\mathcal{C}\,}}\!_{12})$.
Since the Weierstrass equation~(\ref{kummer3_ell_dual_W}) only depends on $\{\lambda_1', \lambda_2', \lambda_2' \}$ it follows that  $\tilde{\Psi}_{12}$
preserves the Jacobian elliptic fibration in Corollary~(\ref{lem:EllLeftTop2}). On the other hand, if we plug Equations~(\ref{relations_RosRoots}) into the cross ratio of the four base points we obtain
\[
 \frac{\lambda_1'-2}{\lambda_1'+2}= \frac{(\Lambda_3-1)(\Lambda_1-\Lambda_2)}{(\Lambda_2-1)(\Lambda_1-\Lambda_3)} \;
 \]
 which is the cross ratio of the fibers of type $I_4$ in the elliptic fibration {\tt (7)} on $\operatorname{Kum}(\operatorname{Jac}{\mathcal{C}})$ by Kumar in~\cite[Thm.~2]{MR3263663}.
 Therefore, we have $\hat{\tilde{\mathcal{C}\,}}\!_{12} = \mathcal{C}$ and $\tilde{\mathcal{C}} = \hat{\mathcal{C}\,}\!_{12}$
 by Corollary~\ref{cor:symmetric}.
\end{proof}

The fiberwise isogeny $\phi^{\mathcal{X}_2}_{\mathcal{Y}_2}$ has again a geometric interpretation: 
in the reducible fibers of the Jacobian elliptic K3 surface $\mathcal{Y}_2$ an even eight can be identified 
such that the double branched cover map is precisely the two-isogeny $\phi^{\mathcal{X}_2}_{\mathcal{Y}_2}$.
We have the following:
\begin{lemma}
\label{Kum_B_dual_12}
The K3 surface $\mathcal{X}_2$  is the Kummer surface $\operatorname{Kum}(\hat{\mathbf{B}}_{12})$ of a $(1,2)$-polarized abelian surface
$\hat{\mathbf{B}}_{12}$ that covers $\hat{\varphi}_{12}: \hat{\mathbf{B}}_{12} \to \operatorname{Jac} \hat{\mathcal{C}\,}\!_{12}$ by an isogeny of degree two
such that the induced rational map $\hat{\phi}_{12}: \operatorname{Kum}(\hat{\mathbf{B}}_{12}) \dashrightarrow \operatorname{Kum}(\operatorname{Jac} \hat{\mathcal{C}\,}\!_{12})$
associated with the even eight $\hat{\Delta}_{12}$ realizes the fiberwise two-isogeny $\phi^{\mathcal{X}_2}_{\mathcal{Y}_2}: \mathcal{X}_2 \dashrightarrow\mathcal{Y}_2$ 
in Equation~(\ref{Eq:isogeny_left}).
\end{lemma}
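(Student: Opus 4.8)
The plan is to reproduce, for the pair $\mathcal{Y}_2$, $\mathcal{X}_2$, the Van Geemen--Sarti/even-eight analysis already used for $\mathcal{Y}_1$, $\mathcal{X}_1$. By construction the fiberwise two-isogeny $\phi^{\mathcal{X}_2}_{\mathcal{Y}_2}$ of Equation~(\ref{Eq:isogeny_left}) is the rational quotient of $\mathcal{X}_2$ by the Van Geemen--Sarti (hence Nikulin) involution $\imath_F$ attached to the two-torsion section $(x,y)=(0,0)$ of Equation~(\ref{kummer_left_ell_p_W}). By the general theory recalled in Section~\ref{ssec:doubles}, $\mathcal{Y}_2$ is then the minimal resolution of $\mathcal{X}_2/\imath_F$, the eight exceptional curves over the nodes of this quotient form an even eight $\hat{\Delta}$ of nodes on $\mathcal{Y}_2$, and $\phi^{\mathcal{X}_2}_{\mathcal{Y}_2}\colon \mathcal{X}_2 \dashrightarrow \mathcal{Y}_2$ is exactly the double cover branched along $\hat{\Delta}$. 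Since Corollary~\ref{lem:EllLeftTop2} identifies $\mathcal{Y}_2 = \operatorname{Kum}(\operatorname{Jac}\hat{\mathcal{C}\,}\!_{12})$ with $\hat{\mathcal{C}\,}\!_{12}$ a generic genus-two curve, Mehran's classification \cite[Prop.~4.2]{MR2804549} shows that $\hat{\Delta}$ is one of the fifteen even eights $\hat{\Delta}_{ij}$ and that $X(\mathcal{Y}_2,\hat{\Delta})$ is the Kummer surface of a $(1,2)$-polarized abelian surface covering $\operatorname{Jac}\hat{\mathcal{C}\,}\!_{12}$ by a degree-two isogeny; it therefore suffices to show $\hat{\Delta}=\hat{\Delta}_{12}$.

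The key step is a discriminant comparison. First I would compute the discriminant of the Weierstrass model~(\ref{kummer3_ell_dual_W}): writing its cubic as $X\bigl(X^2+2p_2(s^2)X+(p_2^2(s^2)-4p_1(s^2))\bigr)$ and using the identity $p_2^2(s^2)-4p_1(s^2)=(\lambda_2'-\lambda_3')^2(s^4-s^2\lambda_1'+1)^2$, one finds $\Delta_{\mathcal{Y}_2}$ proportional to $(s^4-s^2\lambda_1'+1)^4(s^4-s^2\lambda_2'+1)(s^4-s^2\lambda_3'+1)$, consistent with the four fibers of type $I_4$ and eight fibers of type $I_1$ in Lemma~\ref{lem:EllLeftTop}. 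Comparing with $\Delta_{\mathcal{X}_2}$ of Equation~(\ref{discrim_L}), whose twelve fibers of type $I_2$ lie over the three factors $s^4-s^2\lambda_i'+1$, shows that under $\phi^{\mathcal{X}_2}_{\mathcal{Y}_2}$ the eight fibers of $\mathcal{X}_2$ over $\lambda_2'$ and $\lambda_3'$ cover the irreducible $I_1$-fibers of $\mathcal{Y}_2$ without ramification, whereas the four fibers over $\lambda_1'$ are branched double covers of the four $I_4$-fibers. Consequently the branch locus $\hat{\Delta}$ consists of two mutually non-adjacent components in each of the four $I_4$-fibers of $\mathcal{Y}_2$ over $s^4-s^2\lambda_1'+1$, eight disjoint nodes in total.

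Finally I would translate these eight fiber components into the node/trope notation of Mehran through Table~\ref{tab:nodes} and check that they are exactly $\hat{\Delta}_{12}=\{\hat{E}_{13},\hat{E}_{14},\hat{E}_{15},\hat{E}_{16},\hat{E}_{23},\hat{E}_{24},\hat{E}_{25},\hat{E}_{26}\}$, in direct analogy with the identification of $\Delta_{12}$ on $\mathcal{Y}_1$ used in Lemma~\ref{Kum_B_12}; the symmetry of the $(2,2)$-isogeny in Corollary~\ref{cor:symmetric}, which interchanges $\theta_i\leftrightarrow\Theta_i$ and $\lambda_i\leftrightarrow\Lambda_i$, is what forces the same index pair $12$ to occur. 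Granting $\hat{\Delta}=\hat{\Delta}_{12}$, the cited results \cite[Prop.~4.2 and 5.1]{MR2804549} then produce the abelian surface $\hat{\mathbf{B}}_{12}$ with $\mathcal{X}_2=X(\mathcal{Y}_2,\hat{\Delta}_{12})=\operatorname{Kum}(\hat{\mathbf{B}}_{12})$, the degree-two isogeny $\hat{\varphi}_{12}\colon\hat{\mathbf{B}}_{12}\to\operatorname{Jac}\hat{\mathcal{C}\,}\!_{12}$, and the identity $\hat{\phi}_{12}=\phi^{\mathcal{X}_2}_{\mathcal{Y}_2}$. The main obstacle is precisely this last bookkeeping: matching the abstract components of the $I_4$-fibers of the Weierstrass model~(\ref{kummer3_ell_dual_W}) to Mehran's labelled nodes requires passing through the identification of this fibration with Kumar's fibration {\tt (7)} and tracking which Weierstrass points of $\hat{\mathcal{C}\,}\!_{12}$ index each node, so as to certify that the resulting even eight carries the index $12$ rather than some other $\hat{\Delta}_{ij}$.
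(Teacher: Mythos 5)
Your proposal is correct and follows essentially the same route as the paper: both arguments pin down the even eight by tracking how the singular fibers $4\,I_4+8\,I_1$ of $\mathcal{Y}_2$ must transform into the $12\,I_2$ of $\mathcal{X}_2$ under the branched double cover, forcing two non-adjacent components (different from the one meeting the zero section) of each $I_4$-fiber into the branch locus, and then read off $\hat{\Delta}_{12}$ in Mehran's notation. Your explicit check that $p_2^2(s^2)-4p_1(s^2)=(\lambda_2'-\lambda_3')^2(s^4-s^2\lambda_1'+1)^2$, hence that the $I_4$-fibers sit over the roots of $s^4-s^2\lambda_1'+1$, fills in a detail the paper delegates to Lemma~\ref{lem:EllLeftTop}; note only that your appeal to the symmetry of Corollary~\ref{cor:symmetric} is heuristic (on $\mathcal{Y}_1$ the fiberwise isogeny produced $\Delta_{34}$, not $\Delta_{12}$, so the index does not transfer by symmetry alone), and the certification that the label is $12$ rather than some other $\hat{\Delta}_{ij}$ must indeed go through the node bookkeeping for Kumar's fibration {\tt (7)}, exactly as you acknowledge and as the paper itself simply asserts.
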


\begin{proof}
For a double branched cover to turn each of the four fibers of type $I_4$ into a fiber of type $I_2$, two rational components different from 
the central fiber must be chosen to be in the branch locus.  The eight fibers of type $I_1$ with no marked components are turned into fibers of type $I_2$ by double cover. 
The sum of the aforementioned rational components in the four fibers of type $I_4$ -- two components from each fiber --  form in the notation of Mehran the even eight given by
 \[
 \hat{\Delta}_{12}= \hat{E}_{13} +  \hat{E}_{23} + \hat{E}_{14} + \hat{E}_{24} + \hat{E}_{15} + \hat{E}_{25}+ \hat{E}_{16} + \hat{E}_{26}  \;.
\] 
\end{proof}
The Van Geemen-Sarti involution $\imath^{\mathcal{Y}_2}_F$  is also a Kummer translation:
\begin{proposition}
\label{Lem:VGSI_Y2}
The Van Geemen-Sarti involution $\imath^{\mathcal{Y}_2}_F$ 
covering the two-isogeny ${\phi}^{\mathcal{Y}_2}_{\mathcal{X}_2}$ in Equation~(\ref{Eq:isogeny_left_b}) descends from a 
translation of $\hat{\mathbf{A}}=\operatorname{Jac} \hat{\mathcal{C}\,}\!_{12}$ by the two-torsion point $\hat{\mathfrak{p}}_{56}=\hat{\mathfrak{e}}_{12} \in \hat{\mathbf{A}}[2]$
to an automorphism on $\mathcal{Y}_2$.
\end{proposition}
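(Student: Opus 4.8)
The plan is to follow the strategy of Proposition~\ref{Lem:VGSI_Y1}, transported to the Jacobian elliptic fibration of $\mathcal{Y}_2=\operatorname{Kum}(\operatorname{Jac}\hat{\mathcal{C}\,}\!_{12})$ of Lemma~\ref{lem:EllLeftTop} and Corollary~\ref{lem:EllLeftTop2}. By construction $\imath^{\mathcal{Y}_2}_F$ is the fiberwise translation by the two-torsion section $(X,Y)=(0,0)$ of the Weierstrass model~(\ref{kummer3_ell_dual_W}); being a Nikulin involution it leaves the holomorphic two-form invariant and is therefore symplectic. As in the proofs of Propositions~\ref{Lem:BT_Y1} and~\ref{Lem:VGSI_Y1}, such an involution is a Kummer translation: it descends from a translation of $\hat{\mathbf{A}}=\operatorname{Jac}\hat{\mathcal{C}\,}\!_{12}$ by a unique point $\mathfrak{t}\in\hat{\mathbf{A}}[2]$, and $\mathfrak{t}$ is pinned down by the permutation that $\imath^{\mathcal{Y}_2}_F$ induces on the sixteen exceptional divisors together with the group law~(\ref{group_law}).

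First I would analyze the action on the reducible fibers. By Lemma~\ref{lem:EllLeftTop} the fibration $\pi^{\mathcal{Y}_2}$ has four fibers of type $I_4$ and eight of type $I_1$, so the entire permutation is supported on the four $I_4$ fibers. The two-torsion section $(X,Y)=(0,0)$ meets the order-two element of the cyclic component group $\mathbb{Z}/4$ of each $I_4$ fiber, so fiberwise translation by it acts on every four-cycle of components by the rotation $\Theta_j\mapsto\Theta_{j+2}$ and fixes each fiber setwise. Since $\imath^{\mathcal{Y}_2}_F$ is an automorphism it sends node-components to node-components; the two node-components of $\hat{\Delta}_{12}$ lying in a given $I_4$ fiber (Lemma~\ref{Kum_B_dual_12}) must therefore be the two opposite components of its four-cycle, and are interchanged by the rotation. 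Reading off the labels, $\imath^{\mathcal{Y}_2}_F$ interchanges $\hat{E}_{1k}\leftrightarrow\hat{E}_{2k}$ for $k=3,4,5,6$, where the members of $\hat{\Delta}_{12}=\hat{E}_{13}+\hat{E}_{23}+\hat{E}_{14}+\hat{E}_{24}+\hat{E}_{15}+\hat{E}_{25}+\hat{E}_{16}+\hat{E}_{26}$ pair as $\{\hat{E}_{1k},\hat{E}_{2k}\}$ within each fiber, and it maps $\hat{\Delta}_{12}$ to itself.

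The concluding step is the group-law bookkeeping on $\hat{\mathbf{A}}[2]$. Writing the nodes in the $\hat{\mathfrak{e}}$-labelling of Table~\ref{tab:nodes} and applying the relations~(\ref{group_law}) in the form $\hat{\mathfrak{e}}_{1k}+\hat{\mathfrak{e}}_{12}=\hat{\mathfrak{e}}_{2k}$ for $k=3,4,5,6$, one checks that translation by $\hat{\mathfrak{e}}_{12}$ induces exactly the interchanges $\hat{E}_{1k}\leftrightarrow\hat{E}_{2k}$ found above and fixes each $I_4$ fiber setwise, and that no other two-torsion translation reproduces this permutation. Hence $\mathfrak{t}=\hat{\mathfrak{e}}_{12}=\hat{\mathfrak{p}}_{56}$, which together with the invariance of the two-form establishes that $\imath^{\mathcal{Y}_2}_F$ descends from the translation of $\hat{\mathbf{A}}=\operatorname{Jac}\hat{\mathcal{C}\,}\!_{12}$ by $\hat{\mathfrak{p}}_{56}=\hat{\mathfrak{e}}_{12}$ to an automorphism of $\mathcal{Y}_2$.

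The main obstacle I expect is the bookkeeping in the middle paragraph, namely confirming that the two node-components swapped inside each $I_4$ fiber are the same-index pairs $\{\hat{E}_{1k},\hat{E}_{2k}\}$ rather than some other matching among the eight nodes of $\hat{\Delta}_{12}$. Settling this requires pinning down which of the sixteen nodes and tropes occupies which component of each $I_4$ fiber, information one reads off by intersecting the explicit sections of~(\ref{kummer_left}) and the trope classes against the fiber components, exactly as in the identification carried out for $\mathcal{Y}_1$.
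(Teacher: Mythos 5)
Your proposal is correct and follows essentially the same route as the paper's proof: identify the permutation $\hat{E}_{1k}\leftrightarrow\hat{E}_{2k}$ (equivalently $\hat{p}_{j6}\leftrightarrow\hat{p}_{j5}$) induced on the node-components of the four $I_4$ fibers and then read off the translating two-torsion point from the group law~(\ref{group_law}). The extra detail you supply -- that translation by the two-torsion section rotates each $I_4$ four-cycle by two steps, forcing the paired nodes to be opposite components -- is a welcome justification of the interchange that the paper merely asserts.
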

\begin{proof}
The involution must map the set of nodes contained in each $I_4$-fiber into itself, i.e., map the nodes 
according to $\hat{e}_{1i}=\hat{p}_{j6} \leftrightarrow \hat{e}_{2i}=\hat{p}_{j5}$ for $i=3, 4, 5, 6$
and $j=4, 1, 2, 3$. It follows that the involution is induced by translation by $\hat{e}_{12}=\hat{p}_{56}$.
\end{proof}
The Nikulin involution $\jmath^{\mathcal{X}_2}$ in Equation~(\ref{C2_involutions_left_b}) lifts to an involution
$\jmath^{\mathcal{Y}_2}$ such that $\jmath^{\mathcal{X}_2} \circ \phi^{\mathcal{Y}_2}_{\mathcal{X}_2}
= \phi^{\mathcal{Y}_2}_{\mathcal{X}_2} \circ \jmath^{\mathcal{Y}_2}$ and leaves the elliptic fibration
in Equation~(\ref{C2_involutions_left_b}) invariant. It acts by $(s,X,Y) \mapsto (-s,X,-Y)$ and leaves the holomorphic two-form $ds\wedge dX/Y$ invariant.
We have the following:
\begin{proposition}
\label{Lem:BT_Y2}
 The Nikulin involution $\jmath^{\mathcal{Y}_2}$ descends from a 
translation of $\hat{\mathbf{A}}=\operatorname{Jac} \hat{\mathcal{C}\,}\!_{12}$ by the two-torsion point 
$\hat{\mathfrak{p}}_{12}=\hat{\mathfrak{e}}_{34} \in \hat{\mathbf{A}}[2]$ to an automorphism on $\mathcal{Y}_2$.
\end{proposition}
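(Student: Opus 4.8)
The plan is to mirror the proofs of Propositions~\ref{Lem:BT_Y1} and~\ref{Lem:VGSI_Y2}: first determine the permutation that $\jmath^{\mathcal{Y}_2}$ induces on the sixteen nodes of $\mathcal{Y}_2=\operatorname{Kum}(\operatorname{Jac}\hat{\mathcal{C}\,}\!_{12})$, and then read off the translation vector from the group law~(\ref{group_law}). Since $\jmath^{\mathcal{Y}_2}$ acts on the Weierstrass model~(\ref{kummer3_ell_dual_W}) by $(s,X,Y)\mapsto(-s,X,-Y)$ and all coefficients are polynomials in $t=s^2$, it is the deck transformation of the double cover $s\mapsto s^2$; its only fixed fibers lie over $s=0$ and $s=\infty$, while every other fiber over $s$ is exchanged with the fiber over $-s$. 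First I would record that the fibers over $s=0,\infty$ are smooth: evaluating~(\ref{kummer3_ell_dual_W}) at $s=0$ gives $p_2(0)^2-4p_1(0)=(\lambda_2'-\lambda_3')^2\neq0$ and $p_1(0)=(\lambda_1'-\lambda_2')(\lambda_1'-\lambda_3')\neq0$, and an analogous computation holds at $s=\infty$, so none of the reducible fibers is fixed by $\jmath^{\mathcal{Y}_2}$.

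Next I would locate the four fibers of type $I_4$. The discriminant of~(\ref{kummer3_ell_dual_W}) is proportional to $p_1(s^2)\,\bigl(p_2^2(s^2)-4p_1(s^2)\bigr)^2$: the eight simple zeros of $p_1(s^2)$ yield the eight fibers of type $I_1$, while the $I_4$ fibers sit over the zeros of $p_2^2-4p_1$. Since $\deg_t(p_2^2-4p_1)=4$ and the fibration has exactly four fibers of type $I_4$ by Lemma~\ref{lem:EllLeftTop}, the polynomial $p_2^2-4p_1$ must be the square of a quadratic $q(t)$, whose two roots $t_0,t_1$ produce the four base points $\pm s_0,\pm s_1$ with $s_k^2=t_k$. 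Because $q(0)^2=(\lambda_2'-\lambda_3')^2\neq0$, none of these lies over $s=0,\infty$, so $\jmath^{\mathcal{Y}_2}$ exchanges the $I_4$ fiber over $+s_0$ with the one over $-s_0$, and the fiber over $+s_1$ with the one over $-s_1$.

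The heart of the argument is then to match the nodal components across each exchanged pair. Using the identification of the eight nodes contained in the $I_4$ fibers established in the proof of Proposition~\ref{Lem:VGSI_Y2} --- namely $\hat{\mathfrak{e}}_{1i}=\hat{\mathfrak{p}}_{j6}$ and $\hat{\mathfrak{e}}_{2i}=\hat{\mathfrak{p}}_{j5}$ for $(i,j)\in\{(3,4),(4,1),(5,2),(6,3)\}$ --- I would determine which two $I_4$ fibers share a common value $t=s^2$ and are therefore interchanged by $s\mapsto-s$. Tracking the explicit coordinates of these nodes in~(\ref{kummer3_ell_dual_W}), equivalently transporting the known action of $\jmath^{\mathcal{X}_2}$ through the fiberwise isogeny $\phi^{\mathcal{Y}_2}_{\mathcal{X}_2}$ of Equation~(\ref{Eq:isogeny_left_b}), with which $\jmath^{\mathcal{Y}_2}$ is intertwined, should yield the permutation
\begin{equation*}
 \hat{\mathfrak{p}}_{46}\leftrightarrow\hat{\mathfrak{p}}_{35},\quad
 \hat{\mathfrak{p}}_{45}\leftrightarrow\hat{\mathfrak{p}}_{36},\quad
 \hat{\mathfrak{p}}_{16}\leftrightarrow\hat{\mathfrak{p}}_{26},\quad
 \hat{\mathfrak{p}}_{15}\leftrightarrow\hat{\mathfrak{p}}_{25}\,.
\end{equation*}
By the group law~(\ref{group_law}) each of these four exchanges is precisely addition of $\hat{\mathfrak{p}}_{12}$, so $\jmath^{\mathcal{Y}_2}$ agrees with translation by $\hat{\mathfrak{p}}_{12}=\hat{\mathfrak{e}}_{34}$ on the set of nodes; since the holomorphic two-form $ds\wedge dX/Y$ is preserved, as already noted in the text preceding the statement, the involution is symplectic and must coincide with this Kummer translation.

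I expect the main obstacle to be exactly this last matching step. Unlike the Van Geemen--Sarti involution of Proposition~\ref{Lem:VGSI_Y2}, which acts \emph{within} each reducible fiber and whose node permutation is read off immediately, here $\jmath^{\mathcal{Y}_2}$ mixes distinct fibers, so one must pin down not merely that the four $I_4$ fibers are interchanged in two pairs, but that they pair as the $\{\hat{\mathfrak{p}}_{46},\hat{\mathfrak{p}}_{45}\}$-fiber with the $\{\hat{\mathfrak{p}}_{36},\hat{\mathfrak{p}}_{35}\}$-fiber and the $\{\hat{\mathfrak{p}}_{16},\hat{\mathfrak{p}}_{15}\}$-fiber with the $\{\hat{\mathfrak{p}}_{26},\hat{\mathfrak{p}}_{25}\}$-fiber, together with the individual matching of components above. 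A different pairing would instead produce translation by $\hat{\mathfrak{p}}_{14}$ or $\hat{\mathfrak{p}}_{24}$, so this correspondence is precisely what selects the translation vector and has to be settled by an explicit computation --- either from the node coordinates in the octic model~(\ref{kummer3}) and~(\ref{kummer3_ell_dual_W}) or via the intertwining relation $\jmath^{\mathcal{X}_2}\circ\phi^{\mathcal{Y}_2}_{\mathcal{X}_2}=\phi^{\mathcal{Y}_2}_{\mathcal{X}_2}\circ\jmath^{\mathcal{Y}_2}$ --- before the group law can be invoked.
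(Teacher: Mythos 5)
Your proposal follows essentially the same route as the paper's proof: realize $\jmath^{\mathcal{Y}_2}$ as the lift of $\jmath^{\mathcal{X}_2}$ acting by $s\mapsto -s$, determine the induced permutation of the nodes sitting in the four $I_4$-fibers, and read off the translation vector $\hat{\mathfrak{p}}_{12}=\hat{\mathfrak{e}}_{34}$ from the group law~(\ref{group_law}); the pairing you predict, $\hat{\mathfrak{p}}_{16}\leftrightarrow\hat{\mathfrak{p}}_{26}$, $\hat{\mathfrak{p}}_{15}\leftrightarrow\hat{\mathfrak{p}}_{25}$, $\hat{\mathfrak{p}}_{46}\leftrightarrow\hat{\mathfrak{p}}_{35}$, $\hat{\mathfrak{p}}_{45}\leftrightarrow\hat{\mathfrak{p}}_{36}$, is exactly the one the paper asserts. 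The explicit node-matching computation you flag as the remaining obstacle is likewise left unwritten in the paper, so your additional scaffolding (smoothness of the fibers over $s=0,\infty$ and the location of the $I_4$-fibers) only makes the argument more complete.
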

\begin{proof}
The involution $\jmath^{\mathcal{X}_2}$ in Equation~(\ref{j_involution_left}) lifts to an involution
$\jmath^{\mathcal{Y}_2}$ given by $s \mapsto -s$. The involution maps the nodes contained in the $I_4$-fibers according
to $\{ \hat{p}_{16}, \hat{p}_{15} \} \leftrightarrow \{ \hat{p}_{26}, \hat{p}_{25} \}$, and also
$\{ \hat{p}_{46}, \hat{p}_{45} \} \leftrightarrow \{ \hat{p}_{35} , \hat{p}_{36}\}$.
It follows that the involution is induced by translation by $\hat{e}_{34}=\hat{p}_{12}$.
\end{proof}

In the notation used above we have the following corollary:
\begin{theorem}
\label{cor:equiv}
There is an isomorphism $\operatorname{Kum}(\mathbf{B}_{12}) \cong \operatorname{Kum}(\hat{\mathbf{B}}_{12})$.
\end{theorem}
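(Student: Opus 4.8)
The plan is to observe that both $(1,2)$-polarized Kummer surfaces appearing in the statement have already been realized as one and the same K3 surface, namely $\mathcal{X}_2$ given by the octic Equation~(\ref{kummer3}) together with its Jacobian elliptic fibration Equation~(\ref{kummer_left_ell_p_W}); the asserted isomorphism is then obtained simply by composing the two identifications. Concretely, I would first invoke Lemma~\ref{Kum_B_12}, which identifies $\mathcal{X}_2$ with $\operatorname{Kum}(\mathbf{B}_{12})$ by means of the even eight $\Delta_{12}$ on $\mathcal{Y}_1=\operatorname{Kum}(\operatorname{Jac}\mathcal{C})$ and the base-change cover $\phi^{\mathcal{X}_2}_{\mathcal{Y}_1}$ of Equation~(\ref{eqn:double_cover}). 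I would then invoke Lemma~\ref{Kum_B_dual_12}, which identifies the very same surface $\mathcal{X}_2$ with $\operatorname{Kum}(\hat{\mathbf{B}}_{12})$ by means of the even eight $\hat{\Delta}_{12}$ on $\mathcal{Y}_2=\operatorname{Kum}(\operatorname{Jac}\hat{\mathcal{C}}_{12})$ and the fiberwise two-isogeny $\phi^{\mathcal{X}_2}_{\mathcal{Y}_2}$ of Equation~(\ref{Eq:isogeny_left}). Composing these two isomorphisms yields
\[
 \operatorname{Kum}(\mathbf{B}_{12})\;\cong\;\mathcal{X}_2\;\cong\;\operatorname{Kum}(\hat{\mathbf{B}}_{12}),
\]
which is precisely the desired statement.

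What makes this more than a coincidence is that the single elliptic fibration Equation~(\ref{kummer_left_ell_p_W}) on $\mathcal{X}_2$ carries both relevant involutions simultaneously: the base-change involution $\jmath^{\mathcal{X}_2}$ (acting by $s\mapsto -s$), whose quotient recovers $\mathcal{Y}_1$, and the Van Geemen-Sarti involution $\imath_F$ (translation by the two-torsion section $(x,y)=(0,0)$), whose quotient recovers $\mathcal{Y}_2$. Since $\mathcal{Y}_1$ and $\mathcal{Y}_2$ are the Kummer surfaces of the $(2,2)$-isogenous Jacobians $\operatorname{Jac}\mathcal{C}$ and $\operatorname{Jac}\hat{\mathcal{C}}_{12}$ by Corollary~\ref{lem:EllLeftTop2}, the two descriptions of $\mathcal{X}_2$ correspond exactly to viewing it from either side of the $(2,2)$-isogeny, with the roles of $\mathbf{B}_{12}$ and $\hat{\mathbf{B}}_{12}$ interchanged. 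I would point to the symmetry recorded in Corollary~\ref{cor:symmetric}, namely that the maximal isotropic subgroups $\mathsf{k}_{12}$ and $\mathsf{K}_{12}$ play symmetric roles and that $\hat{\mathbf{A}}_{12}/\mathsf{K}_{12}=\mathbf{A}$, as the structural reason the intermediate $(1,2)$-polarized surfaces attached to the two ends must coincide; this is what furnishes the \emph{natural} isomorphism referred to in the introduction, rather than a mere abstract isometry of lattices.

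Since all the genuine content has been front-loaded into Lemmas~\ref{Kum_B_12} and~\ref{Kum_B_dual_12}, I do not expect any substantive obstacle to remain for the theorem itself; it is a formal consequence of having produced a single K3 surface that simultaneously realizes both $(1,2)$-polarized Kummer surfaces. The one point deserving a word of care is to verify that the two lemmas genuinely refer to the same surface $\mathcal{X}_2$ with a compatible marking, which they do, as each is phrased as an assertion about the fibration Equation~(\ref{kummer_left_ell_p_W}) arising from $\mathcal{D}^{(2)}/\langle \imath^{\mathcal{D}}_b \times \imath^{\mathcal{D}}_b \rangle$; this guarantees that the composite of the two isomorphisms is well-defined and completes the argument.
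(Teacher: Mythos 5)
Your argument is exactly the paper's: the proof of Theorem~\ref{cor:equiv} consists precisely of comparing Lemma~\ref{Kum_B_12} and Lemma~\ref{Kum_B_dual_12}, both of which identify the single surface $\mathcal{X}_2$ with $\operatorname{Kum}(\mathbf{B}_{12})$ and $\operatorname{Kum}(\hat{\mathbf{B}}_{12})$ respectively. Your additional remarks on the two involutions and the symmetry from Corollary~\ref{cor:symmetric} are consistent context but not needed beyond what the paper records.
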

\begin{proof}
The proof follows from comparison of Lemma~\ref{Kum_B_dual_12} and Lemma~\ref{Kum_B_12}.
\end{proof}
  
 \subsubsection{Two special pencils of genus-three curves}
Next, we consider the quotient of the symmetric square $\mathcal{D}^{(2)}$ by the \emph{hyperelliptic} involution, i.e., $\mathcal{D}^{(2)} /\langle \imath^\mathcal{D}_h \times  \imath^\mathcal{D}_h \rangle$.
The variety $\mathcal{D}^{(2)} /\langle \imath^\mathcal{D}_h \times  \imath^\mathcal{D}_h \rangle$ is given in terms of the variables
$[W_1:W_2:W_3:\tilde{W}_4] \in \mathbb{WP}(1,1,1,4)$ with
\begin{equation}
 W_1=z^{(1)}z^{(2)}, \; W_2= x^{(1)} z^{(2)} + x^{(2)}z^{(1)}, \; W_3=x^{(1)}x^{(2)}, \; \tilde{W}_4=y^{(1)}y^{(2)}
\end{equation}
by the equation
\begin{equation}
\label{kummer_left_b}
 \tilde{W}_4^2 =  \prod_{i=1}^4 \left( \big(\lambda_i \, W_1+ W_3\big)^2  -  \lambda_i \, W^2_2 \right) \;.
\end{equation}
Using the rational transformation $[W_1:W_2:W_3:\tilde{W}_4]=[S_1z: x: S_2 z: y]$ 
for $[(S_1, S_2) , (x,y, z)] \in \mathbb{F}^{4,4}_{-1}$, Equation~(\ref{kummer_left_b}) becomes a pencil $\pi^{ \mathcal{H}}: \mathcal{H} \to \mathbb{P}^1$
of bielliptic and hyperelliptic genus-three curves given by
\begin{equation}
\label{kummer_left_hyp}
  y^2 =   \prod_{i=1}^4 \left( \lambda_i \, x^2  - \big(\lambda_i \, S_1  +S_2 \big)^2 \, z^2 \right)
\end{equation}
with projection $\pi^{ \mathcal{H}}: ([S_1:S_2],[x:y:z]) \mapsto [S_1:S_2]$. 
The central fiber over $[S_1:S_2]=[0:1]$ is isomorphic to the curve $\mathcal{D}$ in Equation~(\ref{Eq:Rosenhain_g3}), i.e.,
\begin{equation}
   y^2 =   \prod_{i=1}^4 \left( z^2 - \lambda_i \, x^2  \right)\;.
\end{equation}

The rational map
\begin{equation}
\begin{split}
  \pi^{\mathcal{D}^{(2)}_h}_{\mathcal{D}^{(2)}_b}: \quad \mathcal{D}^{(2)}/\langle \imath^\mathcal{D}_h \times  \imath^\mathcal{D}_h \rangle 
   \dashrightarrow  & \; \mathcal{D}^{(2)}/\langle \imath^\mathcal{D}_b \times  \imath^\mathcal{D}_b \rangle \\
  [W_1:W_2:W_3:\tilde{W}_4]  \mapsto & \; [Z_1:Z_2:Z_3:\tilde{Z}_4]=[W_1:W_2^2-2W_1W_3:W_3:W_4] 
\end{split}
\end{equation}
induces a rational double cover between the fibered surfaces in Equation~(\ref{kummer_left_hyp}) and Equation~(\ref{kummer_left}), i.e.,
\begin{equation}
\begin{split}
  \phi^{\mathcal{H}}_{\mathcal{X}_2}: \quad \mathcal{H}  \dashrightarrow  & \; \mathcal{X}_2 \\
  \Big([S_1:S_2], [x:y:z]\Big) \mapsto  & \; \Big([S_1:S_2], [X:Y:Z]=  [x^2-2S_1S_2z^2:y:z^2]\Big).
\end{split}
\end{equation}
The bielliptic involution $\imath_b^{\mathcal{H}}: [x:y:z] \mapsto  [-x:y:z]=[x:y:-z]$ interchanges the sheets of $\phi^{\mathcal{H}}_{\mathcal{X}_2}$.

There is a second double cover of the surface in Equation~(\ref{kummer3}). A surface is defined
for $[Z_1:Z_2:Z_3:\hat{Z}_4] \in \mathbb{WP}(1,2,1,2)$ by the equation
\begin{equation}
\label{kummer_left_c}
  \hat{Z}_4^4 =  \prod_{i=1}^4 \big( \lambda_i^2 \, Z_1^2  -  \lambda_i \, Z_2 +  Z_3^2 \big) \;.
\end{equation}
Using the rational transformation $[Z_1:Z_2:Z_3:\hat{Z}_4]=[S_1Z: XZ: S_2 Z: \hat{Y} Z]$ with 
$[(S_1, S_2) , (X,Y,Z)] \in \mathbb{F}^{2,1}_{-2}$, 
Equation~(\ref{kummer_left_c}) becomes a pencil $\pi^{ \mathcal{B}}:  \mathcal{B} \to \mathbb{P}^1$ 
of plane quartic curves given by
\begin{equation}
\label{kummer_left_bi}
  \hat{Y}^4 =   \prod_{i=1}^4 \left( \lambda_i \, X  - \big(\lambda_i^2 \, S_1^2  +S_2^2 \big) \, Z \right)\;,
\end{equation}
with projection $\pi^{ \mathcal{B}}:  ([S_1:S_2],[X:\hat{Y}:Z]) \mapsto [S_1:S_2]$.
The general member of the pencil is a smooth irreducible plane quartic curve $ \mathcal{B}_s$, hence a genus-three, 
non-hyperelliptic curve.  Four sections are given by $\mathsf{E}_i: [X:\hat{Y}:Z]=[\lambda_i^2 \, S_1^2  +S_2^2:0:\lambda_i]$ for $1\le i \le4$.
The central fiber at $[S_1:S_2]=[0:1]$ is isomorphic to the smooth plane quartic curve $\mathcal{Q}$ in Equation~(\ref{Eq:Rosenhain_g3b}), i.e.,
\begin{equation}
   \hat{Y}^4 =   \prod_{i=1}^4 \left( Z - \lambda_i \, X  \right)\;.
\end{equation}

The map $\tilde{Z}_4=\hat{Z}_4^2$ induces a rational double cover between the fibered surfaces 
in Equation~(\ref{kummer_left_bi}) and Equation~(\ref{kummer_left}), i.e.,
\begin{equation}
\begin{split}
  \phi^{ \mathcal{B}}_{\mathcal{X}_2}: \quad  \mathcal{B}  \dashrightarrow  & \; \mathcal{X}_2 \\
  \Big([S_1:S_2], [X:\hat{Y}:Z]\Big) \mapsto  & \; \Big([S_1:S_2], [X:Y:Z]=  [X:\hat{Y}^2:Z]\Big).
\end{split}
\end{equation}
whose sheets are interchanged by the bielliptic involution $\imath^{ \mathcal{B}}: [X:\hat{Y}:Z] \mapsto  [X:-\hat{Y}:Z]$.
That is, each curve $ \mathcal{B}_s$  admits the involution $\imath^{ \mathcal{B}}$ covering the degree-two morphism 
$\phi^{ \mathcal{B}}_{\mathcal{X}_2}\!\!\mid_s$ onto the fiber $ \mathcal{B}_s /\langle\imath^{ \mathcal{B}}\rangle$ of $\mathcal{X}_2$.
Moreover, the morphism $\phi^{ \mathcal{B}}_{\mathcal{X}_2}$ maps sections according to 
$\phi^{ \mathcal{B}}_{\mathcal{X}_2}\circ \mathsf{E}_i = \mathsf{e}_i$,
and the involution fixes the sections $\mathsf{E}_i$, i.e., $\imath^{ \mathcal{B}}(\mathsf{E}_i )=\mathsf{E}_i$ for $1 \le i \le4$.

Equation~(\ref{kummer1_ell_sing_b}) proves that there is a Jacobian elliptic fibration on $\mathcal{X}_2$ with twelve singular fibers $\mathcal{X}_{2,s^{\pm}_{ij}}$ 
over the base points $s^\pm_{ij}: [1: \pm \sqrt{\lambda_i \lambda_j } ]$ for $1\le i < j \le 4$
which are irreducible curves with one node of geometric genus zero. 
Therefore, the pencil $ \mathcal{B}$ will
have twelve singular members $ \mathcal{B}_{s^\pm_{ij}}$ where the fibers are curves of geometric genus two with a tacnode singularity fixed by the bielliptic involution.
Similarly,  the pencil $\mathcal{H}$ will have twelve singular members $\mathcal{H}_{s^\pm_{ij}}$ where the fibers are curves of geometric genus two with one node 
fixed by the bielliptic involution. In addition, the pencil $\mathcal{H}$ has four more singular fibers of the form $y^2=l^2x^8$ over the points $[1: -\lambda_i ]$ for $1\le i < j \le 4$ at $[x:y:z]=[0:0:1]$. We have the following:

\begin{lemma}
\label{lem:DoublePoints}
Equation~(\ref{kummer_left_bi}) defines a pencil $ \mathcal{B}$ of bielliptic plane quartic curves over $\mathbb{P}^1$
with four sections $\mathsf{E}_i$ for $1 \le i \le 4$ fixed by the bielliptic involution and twelve singular fibers that are irreducible curves of geometric genus two with a node.
$ \mathcal{B}$ is a rational double cover of the K3 surface $\mathcal{X}_2$ in Lemma~\ref{Kum_B_12}
whose sheets are interchanged by the bielliptic involution on $\mathcal{B}$.
\end{lemma}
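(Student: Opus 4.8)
The plan is to derive the entire statement from the rational double cover $\phi^{\mathcal B}_{\mathcal X_2}\colon \mathcal B \dashrightarrow \mathcal X_2$ and the involution $\imath^{\mathcal B}$ introduced just above, together with the identification $\mathcal X_2=\operatorname{Kum}(\mathbf B_{12})$ from Lemma~\ref{Kum_B_12}. First I would fix $[S_1:S_2]$ and record that the fiber of Equation~(\ref{kummer_left_bi}) is the plane quartic $\hat Y^4=\prod_{i=1}^4 L_i$ with $L_i=\lambda_i X-(\lambda_i^2 S_1^2+S_2^2)Z$; all four lines $L_i=0$ pass through $[0:1:0]$, a point at which the equation reads $1=0$ and hence lies off the curve, so for distinct $L_i$ the fiber is a smooth irreducible quartic, i.e.\ a non-hyperelliptic genus-three curve. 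Since $\imath^{\mathcal B}\colon\hat Y\mapsto-\hat Y$ is an involution whose quotient map $\hat Y\mapsto \hat Y^2$ identifies the fiber with the genus-one curve $Y^2=\prod_i L_i$ of Equation~(\ref{kummer_left}), the general member is bielliptic and $\phi^{\mathcal B}_{\mathcal X_2}$ is the claimed rational double cover with sheets exchanged by $\imath^{\mathcal B}$.

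Next I would treat the four sections. Plugging $\mathsf E_i\colon[X:\hat Y:Z]=[\lambda_i^2 S_1^2+S_2^2:0:\lambda_i]$ into Equation~(\ref{kummer_left_bi}) makes the factor $L_i$ vanish for every $[S_1:S_2]$, so each $\mathsf E_i$ is a genuine section; as $\hat Y=0$ along $\mathsf E_i$ and $\imath^{\mathcal B}$ negates $\hat Y$, the $\mathsf E_i$ are pointwise fixed by the bielliptic involution. Moreover they are exactly the four zeros of $\prod_i L_i$ on a smooth fiber, hence the four ramification points of the bielliptic quotient, consistent with the Riemann--Hurwitz count $2\cdot3-2=2(2\cdot1-2)+4$.

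The heart of the proof is the degeneration analysis. A fiber of Equation~(\ref{kummer_left_bi}) is singular precisely when the binary quartic $\prod_i L_i$ has a multiple root, i.e.\ when two of the lines coincide; the condition $\lambda_i(\lambda_j^2 S_1^2+S_2^2)=\lambda_j(\lambda_i^2 S_1^2+S_2^2)$ simplifies to $S_2^2=\lambda_i\lambda_j S_1^2$ and yields the twelve points $s^{\pm}_{ij}\colon[S_1:S_2]=[1:\pm\sqrt{\lambda_i\lambda_j}]$ with $1\le i<j\le4$. These coincide with the twelve base points of the singular fibers of $\mathcal X_2$ in Equations~(\ref{sings_middle_b}) and~(\ref{kummer1_ell_sing_b}), so $\mathcal B_{s^{\pm}_{ij}}$ is the $\imath^{\mathcal B}$-double cover of the irreducible nodal curve $\mathcal X_{2,s^{\pm}_{ij}}$. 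I would then run Riemann--Hurwitz for the restriction of $\phi^{\mathcal B}_{\mathcal X_2}$ to this fiber to pin down its geometric genus and the type of its singular point, which sits on $\hat Y=0$ and is therefore fixed by $\imath^{\mathcal B}$; one reads off, as in the preceding discussion, that $\mathcal B_{s^{\pm}_{ij}}$ is an irreducible curve of geometric genus two. Irreducibility follows because the remaining two factors $L_k,L_l$ are distinct, so $\prod_i L_i$ is not a perfect square and $\hat Y^4=\prod_i L_i$ does not split.

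The step I expect to be the main obstacle is this last local analysis at $s^{\pm}_{ij}$: one must correctly identify the singularity created when two of the concurrent lines merge along $\hat Y=0$, verify its compatibility with the bielliptic involution, and reconcile the resulting geometric genus with the double-cover picture over the nodal fiber $\mathcal X_{2,s^{\pm}_{ij}}$ of $\mathcal X_2=\operatorname{Kum}(\mathbf B_{12})$. Everything else reduces to the direct substitutions above and to citing Lemma~\ref{Kum_B_12} and the already-constructed maps $\phi^{\mathcal B}_{\mathcal X_2}$ and $\imath^{\mathcal B}$.
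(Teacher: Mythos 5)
The parts of your proposal that you actually carry out are fine and coincide with what the paper does: the smoothness and irreducibility of the generic fiber (the four concurrent lines meet at $[0:1:0]$, which is off the curve), the four sections $\mathsf{E}_i$ lying in $\hat{Y}=0$ and hence fixed by $\imath^{\mathcal B}$, the Riemann--Hurwitz count for the bielliptic quotient, the identification of the twelve singular parameter values $S_2^2=\lambda_i\lambda_j S_1^2$ with the base points of the $I_2$ fibers of $\mathcal X_2$, and the irreducibility of the degenerate quartics. (The paper offers essentially no more than this: the lemma summarizes the paragraph preceding it, which simply asserts the degeneration behavior.) The genuine gap is exactly where you place it yourself: you never perform the local analysis at $s^{\pm}_{ij}$, you only promise it and assert that ``one reads off \dots geometric genus two.'' That is a promissory note for the only nontrivial claim in the statement, so the proposal is not a proof.

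Worse, the computation you propose does not deliver the stated conclusion when carried out. At $s^{\pm}_{ij}$ the two lines $L_i,L_j$ become proportional to $M=X-(\lambda_i+\lambda_j)Z$, so the fiber is $\hat{Y}^4=c\,M^2L_kL_l$, whose unique singular point is $[\lambda_i+\lambda_j:0:1]$ with local equation $v^4=u^2\cdot(\mathrm{unit})$. This is two smooth branches $u=\pm v^2/\sqrt{g}$ meeting with intersection multiplicity two, i.e.\ a tacnode with $\delta=2$, so the normalization has genus $3-2=1$; equivalently, Riemann--Hurwitz for the degree-four projection from $[0:1:0]$ gives $2g-2=4(-2)+(3+3+1+1)=0$, and the double cover of the normalization $\mathbb{P}^1$ of the nodal fiber $\mathcal X_{2,s^{\pm}_{ij}}$ is branched at four simple zeros of $Y$ (two over the node, one each at $L_k=0$, $L_l=0$), again giving genus one. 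So the singular members are tacnodal of geometric genus one, not nodal of geometric genus two. You should be aware that the source itself is inconsistent here --- the paragraph before the lemma calls the singularity a tacnode while the lemma calls it a node, and the ``genus two with one node'' description is Barth's description of the singular members of $|\mathcal L|$ on $\mathbf B_{12}$, which does not automatically transfer to these plane quartic models --- but in any case your deferred step is both the crux and, as proposed, does not close.
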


We have also established the following result:
\begin{proposition}
\label{trigonal}
The surface $\mathcal{X}_2$ defines fiberwise a 2-2-correspondence for the families of bielliptic quartics in Equation~(\ref{kummer_left_c})
and  bielliptic and hyperelliptic curves in Equation~(\ref{kummer_left_hyp})
by the equations
\begin{equation}
\begin{split}
  \mathcal{B}_s   \quad \rightarrow & \quad \mathcal{X}_{2,s}  \quad \leftarrow \quad \mathcal{H}_s \\
[X:\hat{Y}:Z]  \rightarrow [X:\hat{Y}^2:Z]  & =[x^2-2 S_1 S_2 z^2:y:z^2] \leftarrow [x:y:z]\;.
\end{split}
\end{equation}
Moreover, the involutions $\imath^{ \mathcal{B}}$ and $\imath_b^{\mathcal{H}}$
act as deck transformations, i.e., $\phi^{ \mathcal{B}}_{\mathcal{X}_2}\circ \imath^{ \mathcal{B}} =\phi^{ \mathcal{B}}_{\mathcal{X}_2}$ and
$\phi^{\mathcal{H}}_{\mathcal{X}_2}\circ \imath^{\mathcal{H}}_b =\phi^{\mathcal{H}}_{\mathcal{X}_2}$ in the diagram
\par\centerline{
\xymatrix{
{ \mathcal{B}_s} \ar@(ul,dl)[]|{\imath^{ \mathcal{B}}}\ar@{-->}[rd]^{\phi^{ \mathcal{B}}_{\mathcal{X}_2}} &&  {\mathcal{H}_s} \ar@(ur,dr)[]|{\imath_b^{\mathcal{H}}}\ar@{-->}[ld]_{\phi^{\mathcal{H}}_{\mathcal{X}_2}} \\
& \mathcal{X}_2\!\mid_{s=[S_1:S_2]} 
}}
\end{proposition}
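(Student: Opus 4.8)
The proposition records that the two fiberwise degree-two covers onto the common target $\mathcal{X}_2$, which were constructed immediately above, fit together into a single $2$-$2$ correspondence; the plan is therefore to verify the two covers explicitly, confirm the deck transformations by substitution, and then read off the correspondence. First I would verify $\phi^{\mathcal{B}}_{\mathcal{X}_2}$: on a fiber $\mathcal{B}_s$ defined by Equation~(\ref{kummer_left_bi}), setting $Y=\hat{Y}^2$ gives
\[
 Y^2=\hat{Y}^4=\prod_{i=1}^4\big(\lambda_i X-(\lambda_i^2 S_1^2+S_2^2)Z\big),
\]
which is exactly the defining equation~(\ref{kummer_left}) of the fiber $\mathcal{X}_{2,s}$. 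Thus $[X:\hat{Y}:Z]\mapsto[X:\hat{Y}^2:Z]$ sends $\mathcal{B}_s$ onto $\mathcal{X}_{2,s}$, and since $\pm\hat{Y}$ have the same image the cover has degree two with sheets interchanged by $\imath^{\mathcal{B}}\colon[X:\hat{Y}:Z]\mapsto[X:-\hat{Y}:Z]$, so $\phi^{\mathcal{B}}_{\mathcal{X}_2}\circ\imath^{\mathcal{B}}=\phi^{\mathcal{B}}_{\mathcal{X}_2}$.

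Next I would verify $\phi^{\mathcal{H}}_{\mathcal{X}_2}$ by completing the square. Substituting $X=x^2-2S_1S_2z^2$, $Y=y$, $Z=z^2$ yields, for each $i$,
\[
 \lambda_i X-(\lambda_i^2 S_1^2+S_2^2)Z=\lambda_i x^2-(\lambda_i S_1+S_2)^2 z^2,
\]
so the right-hand side of Equation~(\ref{kummer_left}) pulls back term by term to the right-hand side of Equation~(\ref{kummer_left_hyp}), and hence $\mathcal{H}_s$ is carried onto $\mathcal{X}_{2,s}$. Because the coordinates $x$ and $z$ enter only through $x^2$, $z^2$ while $y$ is fixed, the two preimages of a generic point are exchanged by $\imath_b^{\mathcal{H}}\colon[x:y:z]\mapsto[-x:y:z]=[x:y:-z]$, giving $\phi^{\mathcal{H}}_{\mathcal{X}_2}\circ\imath_b^{\mathcal{H}}=\phi^{\mathcal{H}}_{\mathcal{X}_2}$ and degree two.

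With both restrictions $\phi^{\mathcal{B}}_{\mathcal{X}_2}\!\mid_s$ and $\phi^{\mathcal{H}}_{\mathcal{X}_2}\!\mid_s$ degree-two morphisms onto the same curve $\mathcal{X}_{2,s}$, the relation $b\sim h\iff\phi^{\mathcal{B}}_{\mathcal{X}_2}(b)=\phi^{\mathcal{H}}_{\mathcal{X}_2}(h)$ is generically $2$-$2$: a generic point of $\mathcal{B}_s$ maps to a point of $\mathcal{X}_{2,s}$ with two preimages in $\mathcal{H}_s$, and symmetrically. The displayed chain of maps in the statement is precisely this pair of covers, and the two equalities just proved are the deck-transformation claims. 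There is no real obstacle---everything reduces to the quadratic substitution and the completion-of-square identity---so the only point deserving care is the behaviour over the twelve special bases $s^\pm_{ij}$, where $\mathcal{X}_{2,s}$ acquires a node and $\mathcal{B}_s$, $\mathcal{H}_s$ degenerate to genus-two curves with a tacnode and a node respectively; that the correspondence persists there follows from the matching of ramification and singular members recorded just before the statement.
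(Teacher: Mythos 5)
Your proposal is correct and follows essentially the same route as the paper, which states the proposition as "already established" by the immediately preceding explicit constructions of the two fiberwise double covers $\phi^{\mathcal{B}}_{\mathcal{X}_2}$ (via $Y=\hat{Y}^2$) and $\phi^{\mathcal{H}}_{\mathcal{X}_2}$ (via the completion-of-square substitution $X=x^2-2S_1S_2z^2$, $Z=z^2$) together with their deck transformations $\imath^{\mathcal{B}}$ and $\imath_b^{\mathcal{H}}$. Your substitution checks and the identification of the $2$-$2$ correspondence as the fiber product of the two degree-two covers match the paper's argument.
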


\begin{remark}
Proposition~\ref{trigonal} is a special case of the trigonal construction of a Hurwitz scheme parametrizing bielliptics in the special case when it 
puts in correspondence bielliptic and hyperelliptic genus-three curves.
\end{remark}
 
Using the Duality Theorem~\ref{thm:duality} we have established the following result:
 \begin{theorem}
 \label{cor:pencil}
For the genus-three curve $\mathcal{B}_s$ in Equation~(\ref{kummer_left_bi}) with $s\not =s^\pm_{ij}$ and the elliptic curve
$\mathcal{F}_s = \mathcal{B}_s /\langle\imath^{ \mathcal{B}} \rangle$,
it follows that $\mathcal{B}_s$ is embedded into $\operatorname{Prym}(\mathcal{B}_s/\mathcal{F}_s)$ 
with self-intersection $4$. Moreover, $\operatorname{Prym}(\mathcal{B}_s/\mathcal{F}_s)$
carries a natural $(1,2)$-polarization such that 
\begin{equation}
 \operatorname{Prym}\left( \mathcal{B}_s/\mathcal{F}_s\right) \cong \mathbf{B}_{12} \;.
\end{equation}
 \end{theorem}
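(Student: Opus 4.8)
The plan is to derive the two assertions from the two parts of the Duality Theorem~\ref{thm:duality}, feeding in the explicit geometry assembled above. First I would note that for $s\neq s^{\pm}_{ij}$ the fiber $\mathcal{B}_s$ of the pencil in Equation~(\ref{kummer_left_bi}) is a smooth plane quartic, hence a smooth bielliptic genus-three curve, by Lemma~\ref{lem:DoublePoints}; the twelve excluded parameters $s^{\pm}_{ij}$ are precisely the nodal members. On such a smooth $\mathcal{B}_s$ the bielliptic involution $\imath^{\mathcal{B}}$ covers the degree-two morphism $\phi^{\mathcal{B}}_{\mathcal{X}_2}\!\mid_s\colon \mathcal{B}_s\to\mathcal{F}_s=\mathcal{B}_s/\langle\imath^{\mathcal{B}}\rangle=\mathcal{X}_{2,s}$ onto the elliptic fiber of $\mathcal{X}_2$, with the four branch points given by the $\imath^{\mathcal{B}}$-fixed points $\mathsf{E}_i$ (Proposition~\ref{trigonal}). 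Part~(1) of Theorem~\ref{thm:duality} then applies verbatim and gives that $\mathcal{B}_s$ is embedded into $\operatorname{Prym}(\mathcal{B}_s/\mathcal{F}_s)$ with self-intersection $4$ and that $\operatorname{Prym}(\mathcal{B}_s/\mathcal{F}_s)$ carries a natural $(1,2)$-polarization $\mathcal{L}_s$ with $\mathcal{B}_s\in|\mathcal{L}_s|$. This already yields every claim except the identification of the Prym variety.

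For the isomorphism I would use Lemma~\ref{Kum_B_12}, which identifies $\mathcal{X}_2$ with $\operatorname{Kum}(\mathbf{B}_{12})$, and then argue that the pencil $\mathcal{B}$ is exactly the canonical linear system $|\mathcal{L}|$ of the $(1,2)$-polarization on $\mathbf{B}_{12}$. By the facts recalled from \cite{MR946234} in Section~\ref{sec:background}, $|\mathcal{L}|$ is a linear pencil of bielliptic genus-three curves, each invariant under $-\mathbb{I}$ with $\imath^{\mathcal{B}}=-\mathbb{I}\!\mid_{\mathcal{B}}$, with exactly four base points lying among the order-two points of $\mathbf{B}_{12}$ and with exactly twelve nodal members, namely the curves meeting the remaining twelve order-two points, which are irreducible of geometric genus two with one node. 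Passing to the $(-\mathbb{I})$-quotient turns this data into an elliptic fibration on $\operatorname{Kum}(\mathbf{B}_{12})=\mathcal{X}_2$. The plan is to match it with the fibration of Lemma~\ref{lem:EllLeft}: the generic fibers are the $\mathcal{F}_s=\mathcal{X}_{2,s}$, the four base points of $|\mathcal{L}|$ resolve to the four $\imath^{\mathcal{B}}$-fixed sections $\mathsf{E}_i$ (each fiber meeting $\mathsf{E}_i$ at the branch point $\mathsf{e}_i$), and the twelve nodal members of $\mathcal{B}$ in Lemma~\ref{lem:DoublePoints} correspond to the twelve singular fibers. Since the $(1,2)$-polarization class on $\mathbf{B}_{12}$ is the unique generator of $\operatorname{NS}(\mathbf{B}_{12})$ and $h^0(\mathcal{L})=2$, this pencil is unique up to a two-torsion translation, so $\mathcal{B}$ coincides with $|\mathcal{L}|$ and each $\mathcal{B}_s$ with $s\neq s^{\pm}_{ij}$ is a (smooth) general member. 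Part~(2) of Theorem~\ref{thm:duality} then yields $\operatorname{Prym}(\mathcal{B}_s/\mathcal{F}_s)\cong\mathbf{B}_{12}$, independently of $s$.

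I expect this matching step to be the main obstacle. Part~(1) of the duality theorem is immediate once smoothness is noted, but identifying the Prym with the specific surface $\mathbf{B}_{12}$ -- rather than with some a priori different $(1,2)$-polarized surface -- requires showing that the abstractly built pencil $\mathcal{B}$ is the intrinsic system $|\mathcal{L}|$ attached to $\mathbf{B}_{12}$. The delicate point is to verify that the elliptic fibration cut out on $\operatorname{Kum}(\mathbf{B}_{12})$ by the $(-\mathbb{I})$-quotient of $|\mathcal{L}|$ is precisely the fibration of Lemma~\ref{lem:EllLeft}; I would pin this down by comparing the four branch loci $\mathsf{e}_i$ with the images of the four base points of $|\mathcal{L}|$ and by matching the number and Kodaira type of the singular fibers, after which the uniqueness of the $(1,2)$-polarization forces the two Prym varieties to agree.
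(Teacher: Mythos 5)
Your proposal is correct and follows essentially the same route as the paper: both identify $\mathcal{X}_2$ with $\operatorname{Kum}(\mathbf{B}_{12})$ via Lemma~\ref{Kum_B_12}, recognize the pencil $\mathcal{B}$ as the linear system $|\mathcal{L}|$ of the $(1,2)$-polarization by matching the four bielliptic-fixed sections $\mathsf{E}_i$ with the base points of $|\mathcal{L}|$, and then invoke the duality theorem. Your extra step — pinning down the identification by the uniqueness of the polarization class in $\operatorname{NS}(\mathbf{B}_{12})$ and by comparing the twelve nodal members with the twelve singular fibers of Lemma~\ref{lem:EllLeft} — only makes explicit what the paper's proof leaves implicit.
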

 \begin{proof}
 The abelian surface $\mathbf{B}_{12}$ carries a line bundle $\mathcal{L}$ of self-intersection $\mathcal{L}\cdot \mathcal{L}=4$
defining a $(1,2)$-polarization such that any element of $|\mathcal{L}|$ is irreducible.
 Let $\widetilde{\mathbf{B}}_{12}$ be the surface obtained by blowing up $\mathbf{B}_{12}$ at the four base points and let
 $\widetilde{\Phi}: \widetilde{\mathbf{B}}\to \mathbb{P}^1$ be the the fibration induced by the pencil $|\mathcal{L}|$.
 We denote by $\mathsf{E}_i$ for $1 \le i \le 4$ the exceptional curves of the blow-up that are sections of $\widetilde{\Phi}$
 \cite[Ex.~10.1.4]{MR2062673}.  The general member $\mathcal{B} \in |\mathcal{L}|$ is a bielliptic curve of genus-three 
 such that $\imath^{\mathcal{B}} = -\mathbb{I} \mid_{\mathbf{B}_{12}}$ and  $\operatorname{Prym}(\mathcal{B}/\mathcal{F}) \cong \mathbf{B}_{12}$.
 
 We proved in Lemma~\ref{Kum_B_12} that the surface $\mathcal{X}_2$  is the Kummer surface $\operatorname{Kum}(\mathbf{B}_{12})$.
 In Lemma~\ref{Kum_B_12}, we proved that the pencil $ \mathcal{B}$ in Equation~(\ref{kummer_left_bi}) is a pencil of bielliptic plane quartic curves
 and forms a rational double cover of the K3 surface $\mathcal{X}_2$. The sheets are interchanged by the bielliptic involution on $\mathcal{B}$.
 Moreover, the preimages of the sections $\mathsf{e}_i$   for $1\le i \le 4$ on $\mathcal{X}_2$ are the sections $\mathsf{E}_i$  on $\mathcal{B}$
 which are fixed by the bielliptic involution. Hence, the sections $\mathsf{E}_i$ correspond to base points of $\mathbf{B}_{12}$ and $\mathcal{B}_s \in  |\mathcal{L}|$
 for each $s$ not in the singular locus, i.e., $s\not =s^\pm_{ij}$. The result then follows from Theorem~\ref{thm:duality}.
 \end{proof}
 
\begin{corollary}
 The double points in Lemma~\ref{lem:DoublePoints} are exactly the order-two points of 
 $\mathbf{B}_{12}$ different from the base points.
\end{corollary}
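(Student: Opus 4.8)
The plan is to combine the identification of the Prym variety in Theorem~\ref{cor:pencil} with Barth's description of the linear pencil $|\mathcal{L}|$ on a $(1,2)$-polarized abelian surface recalled in Section~\ref{sec:background} from \cite[Sec.~1.1]{MR946234}. By Theorem~\ref{cor:pencil} one has $\operatorname{Prym}(\mathcal{B}_s/\mathcal{F}_s) \cong \mathbf{B}_{12}$, so that the pencil $\mathcal{B}$ in Equation~(\ref{kummer_left_bi}) is, after blowing up its four base points, the fibration induced by $|\mathcal{L}|$ on $\mathbf{B}_{12}$. The proof of Theorem~\ref{cor:pencil} identifies the four sections $\mathsf{E}_i$ with the four base points of $|\mathcal{L}|$, which by Barth coincide with $\ker T(\mathcal{L}) \cong (\mathbb{Z}/2)^2$, i.e., the four order-two points $\{q_0, q_1, q_2, q_3\}$ of $\mathbf{B}_{12}$.

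First I would invoke the two structural facts of Barth: no member of $|\mathcal{L}|$ is singular at a base point, and any member meeting one of the twelve remaining order-two points $\{q_4, \dots, q_{15}\}$ is an irreducible curve of geometric genus two with a single node located precisely at that point. Next I would note that $\mathcal{B}$ is a pencil, so through each point of $\mathbf{B}_{12}$ which is not a base point there passes a unique member. Applying this to each $q_k$ with $4 \le k \le 15$ produces a unique singular member whose node sits at $q_k$. Conversely, since $\mathcal{L}$ is symmetric the $(-\mathbb{I})$-involution of $\mathbf{B}_{12}$ restricts to the bielliptic involution $\imath^{\mathcal{B}}$ on each member and must fix the unique node of a singular member; hence every such node is a fixed point of $-\mathbb{I}$, that is, an order-two point, and it cannot be a base point. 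Thus the node of every singular member is one of $q_4, \dots, q_{15}$, and distinct such points give distinct members.

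This establishes a bijection between the twelve non-base order-two points and the singular members of $\mathcal{B}$, each contributing exactly one node. By Lemma~\ref{lem:DoublePoints} the pencil has exactly twelve singular fibers, so the two counts match and the bijection is complete: the twelve double points of Lemma~\ref{lem:DoublePoints} are precisely the order-two points of $\mathbf{B}_{12}$ different from the four base points. The only point requiring care is matching these abstractly located nodes with the explicit nodes of the twelve singular fibers $\mathcal{B}_{s^\pm_{ij}}$ exhibited in Section~\ref{ssec:ell_fib_X2}; but both families are characterized uniquely as the singular members of a pencil on an abelian surface and share cardinality twelve, so no further computation is needed.
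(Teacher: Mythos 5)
Your proof is correct, but it takes a genuinely different route from the paper's. The paper argues computationally: it points back to the explicit coordinates of the twelve singular points $q_{ij}^{\pm}$ on the octic model of $\mathcal{X}_2=\operatorname{Kum}(\mathbf{B}_{12})$, observes that these are precisely the nodes of the twelve singular members $\mathcal{B}_{s^\pm_{ij}}$ and that they map two-to-one onto the six nodes $p_{ij}$ ($1\le i<j\le 4$) of $\operatorname{Kum}(\operatorname{Jac}\mathcal{C})$, and reads off the identification with the non-base two-torsion points of $\mathbf{B}_{12}$ from the Kummer structure. You instead argue abstractly from Theorem~\ref{cor:pencil} together with Barth's structure theory of $|\mathcal{L}|$: members are not singular at base points, a member through a non-base two-torsion point is forced to be singular, the unique node of a singular member must be fixed by $-\mathbb{I}$ (hence is a non-base two-torsion point), and the pencil property makes the node-assignment injective, so the count $12=12$ closes the bijection. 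Your version buys independence from the coordinate computations of Section~\ref{ssec:ell_fib_X2} and makes transparent \emph{why} the statement holds for any $(1,2)$-polarized abelian surface; the paper's version buys the explicit location of the double points (as the $q_{ij}^{\pm}$, i.e., sums of pairs of Weierstrass points of $\mathcal{D}$), which is used elsewhere. One small caveat: your sentence asserting that the unique member through $q_k$ has its node \emph{at} $q_k$ is not yet justified at that point of the argument (Barth only gives that the member is singular somewhere); this is harmless because your converse direction plus the counting establishes exactly that fact, but the claim should be deferred until after the bijection is in place.
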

\begin{proof}
We showed that the twelve singular points $q_{ij}^{\pm}$  for $1\le i < j \le 4$
on $\mathcal{X}_2$ give the twelve singular members $ \mathcal{B}_{s^\pm_{ij}}$ 
where the fibers are curves of geometric genus two with a tacnode singularity fixed by the bielliptic involution, and
are mapped to the six nodes $p_{ij}$ on $\mathcal{Y}_1$. Hence, they are 
exactly the order-two points of $\mathbf{B}_{12}$ different from the base points.
\end{proof}

\subsection{Kummer surface from the genus-one curve $\mathcal{E}$}
We start with two copies of the elliptic curve in Equation~(\ref{Eq:Rosenhain_g1}) and the symmetric product $\mathcal{E}^{(2)} = (\mathcal{E}\times\mathcal{E})/\langle \sigma_{\mathcal{E}^{(2)} } \rangle$.

The variety $\mathcal{E}^{(2)} /\langle \imath^\mathcal{E}_h \times  \imath^\mathcal{E}_h \rangle$ is given in terms of the variables
$w_1=Z^{(1)}Z^{(2)}$, $w_2=X^{(1)} Z^{(2)} +X^{(2)}Z^{(1)}$, $w_3=X^{(1)}X^{(2) }$, and
$\tilde{w}_4=Y^{(1)}Y^{(2)}$ with $[w_1:w_2:w_3:\tilde{w}_4] \in \mathbb{WP}(1,1,1,2)$ by the equation
\begin{equation}
\label{kummer4}
\tilde{w}_4^2 =  \prod_{i=1}^4 \big( \lambda_i^2 \, w_1  -  \lambda_i \, w_2 +  w_3\big) \;.
\end{equation}

\subsubsection{Rational elliptic surfaces}
Using the rational transformation $[w_1:w_2:w_3:\tilde{w}_4]=[T_1 z: x: T_2 z: y]$ for $[(T_1, T_2) , (x,y,z)] \in \mathbb{F}^{2,2}_{-1}$, 
Equation~(\ref{kummer4}) becomes the equation of a genus-one fibration $\mathcal{Z}_1$ over $\mathbb{P}^1$ given by
\begin{equation}
\label{rational1_ell}
    y^2 = \prod_{i=1}^4 \left( \lambda_i \, x  - \big(\lambda_i^2 \, T_1  +T_2 \big) \, z \right)\;.
\end{equation}
The projection $\pi^{\mathcal{Z}_1}: \mathcal{Z}_1\to \mathbb{P}^1$ with $([T_1:T_2],[x:y:z]) \mapsto [T_1:T_2]$ 
defines a rational fibration with six fibers of type $I_2$. Four sections are given by  $\mathsf{E}_i: [x:y:z]=[\lambda_i^2 \, T_1  +T_2:0:\lambda_i]$
for $1\le i \le4$. We choose the point given by $\mathsf{E}_4$ to be the neutral element of the Mordell-Weil group of sections,
turning Equation~(\ref{rational1_ell}) into a Jacobian elliptic fibration. The Mordell-Weil is generated by 4 two-torsion 
sections $\mathsf{E}_i$ for $1\le i \le4$ and two sections of infinite order
$\mathsf{Q}: [x:y:z]=[1:\pm l:0]$ and
\begin{equation}
\mathsf{R}: \left\lbrace \begin{array}{rl}
 x & = (T_1 + T_2) ( T^2_2 -l^2 T^2_1) -  \prod_{i=1}^3 (T_2 - \lambda_i T_1)  \,,\\[0.2em]
 y & =  \pm  \prod_{i=1}^3 (T_2 - \lambda_i T_1) (T_2 - T_1 \! \prod_{k=1, k\not= i }^3 \! \lambda_k)\,,\\[0.4em]
 z & =  T_2^2 -l^2 T_1^2 \;.
\end{array}\right.
\end{equation}
 
 From the cover map $\pi^{\mathcal{D}}_b: \mathcal{D} \to \mathcal{E}$ we obtain a degree-two, rational map given by
\begin{equation}
\begin{split}
  \pi^{\mathcal{D}^{(2)}_b}_{\mathcal{E}^{(2)}}: \quad \mathcal{D}^{(2)}/\langle \imath^\mathcal{D}_b \times  \imath^\mathcal{D}_b \rangle 
   \dashrightarrow  & \; \mathcal{E}^{(2)}/\langle \imath^\mathcal{E}_h \times  \imath^\mathcal{E}_h \rangle \\
  [Z_1:Z_2:Z_3:\tilde{Z}_4]\mapsto & \; [w_1:w_2:w_3:\tilde{w}_4] = [Z^2_1: Z_2: Z^2_3:\tilde{Z}_4] \;,
\end{split}
\end{equation}
that induces a map between the corresponding elliptic surfaces in Equation~(\ref{kummer_left}) and Equation~(\ref{rational1_ell}), i.e., 
\begin{equation}
\begin{split}
  \phi^{\mathcal{X}_2}_{\mathcal{Z}_1}: \quad \mathcal{X}_2 \dashrightarrow  & \; \mathcal{Z}_1 \\
  \Big([S_1:S_2], [X:Y:Z]\Big) \mapsto & \; \Big([T_1:T_2], [x:y:z]\Big)= \Big([S_1^2:S_2^2], [X:Y:Z]\Big).
\end{split}
\end{equation}
The involution $\jmath^{ \mathcal{X}_2}$ interchanges the sheets of $\phi^{\mathcal{X}_2}_{\mathcal{Z}_1}$; the morphism maps sections 
according to $\phi^{\mathcal{X}_2}_{\mathcal{Z}_1} \circ \mathsf{e}_i = \mathsf{E}_i$  for $1 \le i \le4$,
and the same relations holds between $\mathsf{q}$ and $\mathsf{Q}$, and $\mathsf{r}$ and $\mathsf{R}$, respectively.
We have the following:
\begin{lemma}
\label{lem:RatZ1}
Equation~(\ref{rational1_ell}) defines a rational Jacobian elliptic fibration $\pi^{ \mathcal{Z}_1}:  \bar{\mathcal{Z}\,}\!_1\to \mathbb{P}^1$
with six singular fibers of type $I_2$, eight fibers of type $I_1$,
and a Mordell-Weil group of sections $\operatorname{MW}(\pi^{ \mathcal{Z}_1},\mathsf{E}_4)=(\mathbb{Z}/2)^2 \oplus \langle \frac{1}{2} \rangle^{\oplus 2}$.
\end{lemma}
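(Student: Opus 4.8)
The plan is to extract every assertion of the lemma directly from a Weierstrass description of (\ref{rational1_ell}), using the quadratic base change $\phi^{\mathcal{X}_2}_{\mathcal{Z}_1}$ introduced just above as an independent check. First I would reduce the binary quartic fiber $y^2 = \prod_{i=1}^4(\lambda_i x - (\lambda_i^2 T_1 + T_2)z)$ to Weierstrass form over $\mathbb{C}(t)$ with $t = T_2/T_1$. Writing the four roots of the quartic in $x$ as $x_i = \lambda_i + t/\lambda_i$, the coefficients of the quartic are polynomials in $t$ of degrees $0,1,2,3,4$, so the classical invariants $I,J$ of the quartic — equal up to constants to $g_2$ and $g_3$ — have degrees at most $4$ and $6$. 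Hence the fibration is relatively minimal with $\chi(\mathcal{O})=1$; equivalently $\deg\Delta = 12$, so $\bar{\mathcal{Z}\,}\!_1$ is a \emph{rational} elliptic surface of Picard number $10$.

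Next I would locate the singular fibers from the discriminant. A short computation gives $x_i - x_j = (\lambda_i-\lambda_j)(\lambda_i\lambda_j - t)/(\lambda_i\lambda_j)$, so $\Delta \propto \prod_{1\le i<j\le 4}(T_2 - \lambda_i\lambda_j T_1)^2$, a form of degree $12$ with a double zero at each of the six points $t=\lambda_i\lambda_j$. Reading off the Kodaira type from the orders of vanishing of $(g_2,g_3,\Delta)$, and noting that $g_2,g_3$ do not vanish at these points (so the reduction is multiplicative, $j\to\infty$), each of the six points carries a fiber of type $I_2$. This matches Lemma~\ref{lem:EllLeft}: the twelve $I_2$ fibers of $\mathcal{X}_2$ sit over $s^{\pm}_{ij} = \pm\sqrt{\lambda_i\lambda_j}$, are interchanged in pairs by the deck involution $\jmath^{\mathcal{X}_2}$ of the base map $[T_1:T_2]=[S_1^2:S_2^2]$, and descend to the six $I_2$ fibers over $t=\lambda_i\lambda_j$.

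For the Mordell-Weil group I would first invoke Shioda-Tate: for a rational elliptic surface $\operatorname{rank}\operatorname{MW} = 10 - 2 - \sum_v(m_v-1) = 10 - 2 - 6 = 2$. The sections $\mathsf{E}_1,\dots,\mathsf{E}_4$ lie on the branch locus $y=0$ and are exactly the four two-torsion points of the Jacobian of the quartic, so with $\mathsf{E}_4$ as neutral element they generate a subgroup isomorphic to $(\mathbb{Z}/2)^2$. The two infinite-order sections $\mathsf{Q}$ and $\mathsf{R}$ span the free part; a height-pairing computation using $\langle P,P\rangle = 2 + 2(P\cdot\mathsf{E}_4) - \sum_v \operatorname{contr}_v(P)$, with local contribution $\tfrac12$ at each $I_2$ fiber met in its non-identity component, should show that $\mathsf{Q}$ and $\mathsf{R}$ each have height $\tfrac12$ and together span the lattice $\langle\tfrac12\rangle^{\oplus 2}$.

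The main obstacle is the last step: proving that $(\mathbb{Z}/2)^2 \oplus \langle\tfrac12\rangle^{\oplus 2}$ exhausts $\operatorname{MW}$, i.e.\ that neither the torsion nor the free lattice admits a proper overlattice. This is precisely the bookkeeping of the embedding of the root lattice $A_1^{\oplus 6}$ of the six $I_2$ fibers into the frame lattice $E_8$, and I would settle it by matching the determinant of the narrow Mordell-Weil lattice against the value forced by this fiber configuration via the Oguiso-Shioda classification of rational elliptic surfaces, with the descent of sections along $\phi^{\mathcal{X}_2}_{\mathcal{Z}_1}$ (which sends $\mathsf{e}_i\mapsto\mathsf{E}_i$, $\mathsf{q}\mapsto\mathsf{Q}$, $\mathsf{r}\mapsto\mathsf{R}$) serving as a consistency check against the known group $\operatorname{MW}(\pi^{\mathcal{X}_2},\mathsf{s}_0')=(\mathbb{Z}/2)^2\oplus\langle\tfrac12\rangle^{\oplus 3}$ of the covering K3 surface.
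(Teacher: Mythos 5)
Your strategy is sound, and it actually supplies a verification that the paper omits: Lemma~\ref{lem:RatZ1} is stated without proof, as a summary of the explicit construction in the surrounding text (the six $I_2$ fibers over $t=\lambda_i\lambda_j$, the four two-torsion sections $\mathsf{E}_i$, and the infinite-order sections $\mathsf{Q},\mathsf{R}$). Your Weierstrass reduction, the factorization $\Delta\propto\prod_{i<j}(T_2-\lambda_i\lambda_j T_1)^2$, the Shioda--Tate rank count, and the height-pairing computation are exactly the right tools, and your degree count $\deg\Delta=12$ correctly identifies $\bar{\mathcal{Z}\,}\!_1$ as a rational elliptic surface.

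Two points, however, should not be passed over. First, your own (correct) discriminant computation shows that the twelve zeros of $\Delta$ are exhausted by the six double zeros, so the fibration has \emph{no} fibers of type $I_1$; the clause ``eight fibers of type $I_1$'' in the statement is incompatible with the Euler number $12$ of a rational elliptic surface already carrying six $I_2$ fibers, and appears to be carried over from Lemma~\ref{lem:EllLeftTop}. You cannot claim to ``extract every assertion of the lemma'' from a computation that refutes one of them; you should say explicitly that the fiber configuration is $6I_2$ and nothing else. Second, the substantive content of the Mordell--Weil claim --- that $\mathsf{Q}$ and $\mathsf{R}$ each have height $\tfrac12$, are orthogonal, and generate the free part --- is only promised (``should show''). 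To carry it out you must decide, fiber by fiber, whether a section passes through the $A_1$ singularity of the Weierstrass model (hence meets the exceptional component after resolution) or through a smooth point; for instance $\mathsf{Q}=[1:l:0]$ meets the non-identity component exactly at the three fibers $t=\lambda_i\lambda_4$, giving $\langle\mathsf{Q},\mathsf{Q}\rangle=2-3\cdot\tfrac12=\tfrac12$. Also be careful with your proposed consistency check against $\operatorname{MW}(\pi^{\mathcal{X}_2},\mathsf{s}'_0)$: under the degree-two base change $\phi^{\mathcal{X}_2}_{\mathcal{Z}_1}$ heights scale by the degree, i.e.\ $\langle\phi^{*}\mathsf{Q},\phi^{*}\mathsf{Q}\rangle_{\mathcal{X}_2}=2\,\langle\mathsf{Q},\mathsf{Q}\rangle_{\mathcal{Z}_1}$, so the $\langle\tfrac12\rangle$ summands of the two surfaces do not match up naively and the check as you describe it would appear to fail rather than confirm the answer. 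Once the heights and the vanishing of $\langle\mathsf{Q},\mathsf{R}\rangle$ are in hand, your determinant argument (torsion $(\mathbb{Z}/2)^2$ forces $\det T'=2^{6}/2^{4}=4$, hence $\det\operatorname{MW}_{\mathrm{free}}=\tfrac14$) does close the argument.
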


A second surface is obtained as follows: for $[(U_1, U_2) , (X,Y,Z)] \in \mathbb{F}^{2,2}_{-1}$ 
the following equation defines a genus-one fibration $\mathcal{Z}_3$ over $\mathbb{P}^1$ given by
\begin{equation}
\label{rational2_ell}
  Y^2 = (X- U_2 Z) \, \prod_{i=1}^3 \left( X  - \lambda'_i \, l \, U_1 \, Z \right)\;.
\end{equation}
The projection $\pi^{ \mathcal{Z}_3}: \mathcal{Z}_3 \to \mathbb{P}^1$ with $([U_1:U_2],[X:Y:Z]) \mapsto [U_1:U_2]$ 
defines a rational fibration with three fibers of type $I_2$ and one fiber of type $I_0^*$. 
Four sections are given by  $\mathsf{e}'_i: [\lambda_i' \, l \, U_1:0:1]$ for $1\le i \le 3$ and $\mathsf{e}'_4: [U_2:0:1]$.
We choose the point given by $\mathsf{e}'_4$ to be the neutral element of the Mordell-Weil group of sections,
turning Equation~(\ref{rational2_ell}) into a Jacobian elliptic fibration. The Mordell-Weil group is generated by the 4 two-torsion sections 
$\mathsf{e}_i'$ for $1\le i \le 4$ and one section of infinite order given by
$\mathsf{p}': [X:Y:Z]=[1:1:0]$.  
A degree-two map  $\phi^{\mathcal{Y}_1}_{\mathcal{Z}_3}: \mathcal{Y}_1 \dashrightarrow  \mathcal{Z}_3$ between the surfaces in Equation~(\ref{kummer_middle_ell_p}) and Equation~(\ref{rational2_ell}) is given by $ [U_1:U_2] = [  T_1T_2 : T_2^2 + l^2 T_1^2]$ and $ [X:Y:Z] = [x': y': z']$. 
The involution $\imath^{\mathcal{Y}_1}$ interchanges the sheets of $\phi^{\mathcal{Y}_1}_{\mathcal{Z}_3}$;
the morphism maps sections  according to $\phi^{\mathcal{Y}_1}_{\mathcal{Z}_3} \circ \mathsf{E}'_i = \mathsf{e}'_i $  for $1 \le i \le4$;
the same relation holds for $\mathsf{P}'$ and $\mathsf{p}'$. We have the following:

\begin{lemma}
\label{lem:RatZ3}
Equation~(\ref{rational2_ell}) defines a rational Jacobian elliptic fibration $\pi^{ \mathcal{Z}_3}:  \bar{\mathcal{Z}\,}\!_3\to \mathbb{P}^1$ 
with three fibers of type $I_2$ and one fiber of type $I_0^*$, and a Mordell-Weil group of sections 
$\operatorname{MW}(\pi^{ \mathcal{Z}_3}, \mathsf{e}_4')=(\mathbb{Z}/2)^2 \oplus \langle \frac{1}{2} \rangle$.
\end{lemma}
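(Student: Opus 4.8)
The plan is to reduce Equation~(\ref{rational2_ell}) to a Weierstrass model, read off the singular fibers from the discriminant, and then pin down the Mordell--Weil group by combining the two-torsion coming from the four rational roots with a Shioda--Tate count and a single height computation. First I would regard Equation~(\ref{rational2_ell}) as a quartic-model genus-one fibration over the base $\mathbb{P}^1$ with coordinate $[U_1:U_2]$; since the root $\mathsf{e}'_4$ is chosen as the neutral element, the fibration is Jacobian. Converting the monic quartic $(X-U_2 Z)\prod_{i=1}^3(X-\lambda'_i\,l\,U_1 Z)$ into the Weierstrass form~(\ref{Eq:Weierstrass}) produces $g_2,g_3$ of degrees at most $4$ and $6$ in $(U_1,U_2)$, so $\mathcal{Z}_3$ is the relatively minimal rational elliptic surface with $\chi(\mathcal{O}_{\mathcal{Z}_3})=1$.

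Working in the affine coordinate $u=U_2/U_1$, the four roots are $u$ and the three constants $\lambda'_i\,l$, so the quartic discriminant is proportional to $\prod_{i=1}^3(u-\lambda'_i\,l)^2$, which agrees with the Weierstrass discriminant up to the nonvanishing fourth power of the leading coefficient. Hence $\Delta$ vanishes to order two at each of the three base points $U_2=\lambda'_i\,l\,U_1$, where exactly two roots collide and $j$ stays finite, giving three fibres of Kodaira type $I_2$. At $U_1=0$ the three factors degenerate simultaneously and the fibre acquires a triple root $Y^2=(X-U_2Z)X^3$; rescaling $U_1=\epsilon$, $U_2=1$ shows $\Delta\sim\epsilon^{6}$, while after rescaling the fibre coordinate the four roots limit to three distinct finite points and the point at infinity, so $j$ remains finite and generic (neither $0$ nor $1728$). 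An additive fibre with $\Delta$ of order six and generic finite $j$ is exactly Kodaira type $I_0^*$. Since the total degree of $\Delta$ is $3\cdot 2+6=12$ with no remainder, these are all the singular fibres, and the Euler numbers balance as $3\cdot 2+6=12$, confirming the rational elliptic surface.

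Next I would determine the Mordell--Weil group. The four rational roots $\mathsf{e}'_1,\dots,\mathsf{e}'_4$ are the four $2$-torsion points of the quartic model, so with $\mathsf{e}'_4$ as origin their differences generate a $(\mathbb{Z}/2)^2$ of two-torsion sections. Because every singular fibre has a two-elementary component group ($\mathbb{Z}/2$ for each $I_2$ and $(\mathbb{Z}/2)^2$ for the $I_0^*$), the torsion subgroup injects into a two-elementary group and is therefore itself two-elementary; as the full two-torsion of an elliptic curve is at most $(\mathbb{Z}/2)^2$, the torsion equals exactly $(\mathbb{Z}/2)^2$. For the free part I would invoke the Shioda--Tate relation on the rational elliptic surface, $10=2+\sum_v(m_v-1)+\operatorname{rank}\operatorname{MW}$, where $\sum_v(m_v-1)=3\cdot 1+4=7$ forces $\operatorname{rank}\operatorname{MW}=1$. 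The infinite-order generator is $\mathsf{p}'=[1:1:0]$, and I would compute its height by Shioda's formula $h(\mathsf{p}')=2\chi+2(\mathsf{p}'\cdot\mathsf{e}'_4)-\sum_v\mathrm{contr}_v(\mathsf{p}')$ with $\chi=1$ and $(\mathsf{p}'\cdot\mathsf{e}'_4)=0$ (the two sections sit at $Z=0$ and $Z=1$, hence are disjoint); tracking which components of the resolved $I_2$ and $I_0^*$ fibres the section $\mathsf{p}'$ meets gives $\sum_v\mathrm{contr}_v(\mathsf{p}')=3/2$ and therefore $h(\mathsf{p}')=\tfrac12$. This identifies the free part as $\langle\tfrac12\rangle$ and yields $\operatorname{MW}(\pi^{\mathcal{Z}_3},\mathsf{e}'_4)=(\mathbb{Z}/2)^2\oplus\langle\tfrac12\rangle$.

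The step I expect to be the main obstacle is the precise height computation for $\mathsf{p}'$: one must resolve the three $I_2$ fibres and, more delicately, the $I_0^*$ fibre over $U_1=0$, and then follow through these resolutions exactly which component of each singular fibre the section $\mathsf{p}'=[1:1:0]$ passes through, so as to obtain the correction sum $3/2$ rather than some other half-integer. As a consistency check that sidesteps the blow-up bookkeeping, I would exploit the degree-two map $\phi^{\mathcal{Y}_1}_{\mathcal{Z}_3}\colon\mathcal{Y}_1\dashrightarrow\mathcal{Z}_3$ constructed above, under which $\mathsf{P}'\mapsto\mathsf{p}'$ and $\mathsf{E}'_i\mapsto\mathsf{e}'_i$, to transport the Mordell--Weil data of Lemma~\ref{lem:EllMiddle} down to $\mathcal{Z}_3$ and confirm the result.
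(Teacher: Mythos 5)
Your proposal is correct, and it supplies in full the standard verification that the paper leaves implicit: Lemma~\ref{lem:RatZ3} is stated there without a separate proof, the justification being the explicit exhibition of the fibration, its four two-torsion sections $\mathsf{e}'_i$ and the infinite-order section $\mathsf{p}'$, together with the degree-two base change $\phi^{\mathcal{Y}_1}_{\mathcal{Z}_3}$ relating it to the fibration of Lemma~\ref{lem:EllMiddle}. Your route --- Weierstrass reduction, discriminant and Kodaira-type analysis ($3I_2+I_0^*$), Shioda--Tate for the rank, two-elementarity of the component groups for the torsion, and the height of $\mathsf{p}'$ --- is the natural direct computation and is sound; the one step you rightly flag as delicate, the correction sum $\sum_v\mathrm{contr}_v(\mathsf{p}')=3/2$, is indeed the only place where bookkeeping could go wrong, but it is independently forced both by your base-change consistency check (heights double under the degree-two pullback to $\mathcal{Y}_1$, where $h(\mathsf{P}')=1$) and by the Oguiso--Shioda classification, which assigns the Mordell--Weil lattice $\langle\tfrac12\rangle$ with torsion $(\mathbb{Z}/2)^2$ to the trivial lattice $D_4\oplus A_1^{\oplus 3}$ on a rational elliptic surface. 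So nothing is missing; your argument is simply more self-contained than the paper's.
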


\subsubsection{A K3 elliptic surface}
We consider the quadratic twist of Equation~(\ref{rational2_ell}), i.e., for $[(U_1, U_2) , (X,Y,Z)] \in \mathbb{F}^{3,2}_{-1}$ a genus-one fibration $\mathcal{X}_3$ over $\mathbb{P}^1$ given by
\begin{equation}
\label{K3_2_ell}
   Y^2 = (U_2^2 - 4 \, l^2 U_1^2) \, (X- U_2 Z) \, \prod_{i=1}^3 \left( X  - \lambda'_i \, l \, U_1 \, Z \right)\;.
\end{equation}
The projection $\pi^{ \mathcal{X}_3}: \mathcal{X}_3\to \mathbb{P}^1$ with $([U_1:U_2],[X:Y:Z]) \mapsto [U_1:U_2]$ 
defines a K3 fibration with three fibers of Kodaira type $I_2$ and three fibers of Kodaira type $I_0^*$. 
Four sections are given by  $\mathsf{e}'_i: [\lambda_i' l U_1:0:1]$ for $1\le i \le 3$ and $\mathsf{e}'_4: [U_2:0:1]$.
We move the point given by $\mathsf{e}_4'$ to infinity and convert Equation~(\ref{K3_2_ell})
to Weierstrass form with coefficients in $\mathbb{C}[u]$ using a transformation defined over 
$\mathbb{C}(u)$ with $u=U_2/(lU_1)$. After translation by two-torsion, the Weierstrass fibration is given by
\begin{equation}
\label{kummer_right_ell_p_W}
\begin{split}
y^2  = x \, \big( x-   \left( \lambda'_1-\lambda'_2 \right)  \left( u - \lambda'_3\right)  (u^2 - 4) \big)\,  \big( x-  \left( \lambda'_1 - \lambda'_3 \right)  \left( u-\lambda'_2 \right) (u^2 - 4 ) \big) \;,
\end{split}
\end{equation}
with two-torsion section $(x,y)=(0,0)$, and a distinguished section $\mathsf{s}''_0$ given by the point at infinity.
We will denote the Weierstrass fibration by $y^2= x^3 - (u^2-4) \, q_2(u)  \, x^2+(u^2-4)^2 \, q_1(u)\,  x$ with
\begin{equation}
\label{polynomials_right}
\begin{split}
 q_1(u) &=   \left( \lambda_1'-\lambda_2' \right)  \left( \lambda_1'-\lambda_3' \right) \left( u -  \lambda'_2  \right)  \left( u - \lambda'_3 \right)  ,\\
 q_2(u) & = (2\lambda_1'-\lambda_2'-\lambda_3')  \, u - (\lambda_1'(\lambda_2'+\lambda_3') - 2 \lambda_2' \lambda_3')  \;,
 \end{split}
 \end{equation}
 such that $p_i(t)=t^{3-i} \, q_i(u=(t^2+1)/t)$ for $i=1, 2$ when compared with Equations~(\ref{polynomials_left}).
 The discriminant is given by
 \begin{equation}
 \label{discrim_R}
 \begin{split}
  \Delta_{\mathcal{X}_3} =  \left( \lambda_1'-\lambda_2' \right)^2  \left( \lambda_1'-\lambda_3' \right)^2  \left( \lambda_2'-\lambda_3' \right)^2  (u^2-4)^6  \left( u -  \lambda'_1 \right)^2  \left( u - \lambda'_2  \right)^2 \left( u -  \lambda'_3  \right)^2 \;.
 \end{split} 
 \end{equation}

A degree-two map between the elliptic surfaces in Equation~(\ref{kummer_middle_ell_p_W}) and Equation~(\ref{kummer_right_ell_p_W})
is given by
\begin{equation}
\label{double_cover_rev}
\begin{split}
  \phi^{\mathcal{Y}_1}_{\mathcal{X}_3}: \quad \mathcal{Y}_1 \dashrightarrow  & \; \mathcal{X}_3, \\
  \Big(t, X,Y\Big) \mapsto  & \Big(u, x,y\Big)= \left(t+\frac{1}{t}, (1-\frac{1}{t^2})^2X,(1-\frac{1}{t^2})^3Y\right).
  \end{split}
\end{equation}
The involution $\imath^{\mathcal{Y}_1}$ interchanges the two sheets of $\phi^{\mathcal{Y}_1}_{\mathcal{X}_3}$;
the morphism maps sections  according to $\phi^{\mathcal{Y}_1}_{\mathcal{X}_3} \circ \mathsf{E}'_i = \mathsf{e}'_i$  for $1 \le i \le4$.
We collect the properties of the Jacobian elliptic K3 fibration in Equation~(\ref{kummer_right_ell_p_W}) in the following:
\begin{proposition}
\label{lem:EllRight}
Equation~(\ref{kummer_right_ell_p_W}) defines a Jacobian elliptic K3 fibration $\pi^{ \mathcal{X}_3}:  \bar{\mathcal{X}\,}\!_3\to \mathbb{P}^1$ of Picard rank 17
with three singular fibers of type $I_2$, three fibers of type $I_0^*$, a determinant of the discriminant form equal to $2^5$,
and a Mordell-Weil group of sections $\operatorname{MW}(\pi^{ \mathcal{X}_3}, \mathsf{s}''_0)=(\mathbb{Z}/2)^2$.
\end{proposition}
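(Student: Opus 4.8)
The plan is to read off the fibration data of (\ref{kummer_right_ell_p_W}) by exploiting two structural facts already in place. First, by the relation $p_i(t)=t^{3-i}q_i(u)$ feeding into (\ref{polynomials_right}), the surface $\mathcal{X}_3$ is the \emph{quadratic twist} by the polynomial $u^2-4$ of the rational Jacobian elliptic surface $\mathcal{Z}_3$ of Lemma~\ref{lem:RatZ3}. Second, by (\ref{double_cover_rev}) together with (\ref{C2_involutions_middle_b}), the surface $\mathcal{X}_3$ is the quotient of $\mathcal{Y}_1=\operatorname{Kum}(\operatorname{Jac}\mathcal{C})$ by the symplectic involution $\imath^{\mathcal{Y}_1}\colon t\mapsto 1/t$, i.e. the minimal resolution of $\mathcal{Y}_1/\langle\imath^{\mathcal{Y}_1}\rangle$. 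First I would determine the singular fibers. The twisting character $\mathbb{C}(u)(\sqrt{u^2-4})/\mathbb{C}(u)$ is ramified exactly at the places $u=\pm 2$, where $u^2-4$ has odd valuation, and is \emph{unramified} at $u=\infty$, where $u^2-4$ is a square. Since $\mathcal{Z}_3$ has smooth fibers at $u=\pm 2$ (generically $\lambda_i'\neq\pm 2$) and a fiber of type $I_0^*$ at $u=\infty$, the ramified twist turns the two $I_0$ fibers at $u=\pm 2$ into $I_0^*$, preserves the $I_0^*$ at infinity, and leaves the three $I_2$ fibers at $u=\lambda_1',\lambda_2',\lambda_3'$ unchanged; this produces three $I_2$ and three $I_0^*$ fibers.

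I would then cross-check this against the discriminant (\ref{discrim_R}): its affine zeros are $u=\pm 2$ with multiplicity $6$ and $u=\lambda_i'$ with multiplicity $2$, so the total affine vanishing degree is $12+6=18$, leaving vanishing order $24-18=6$ at $u=\infty$, consistent with three $I_0^*$ and three $I_2$ on a K3 surface. That the collision of all three roots of $x(x-a)(x-b)$ at $u=\pm 2$ yields exactly $I_0^*$ and nothing worse is guaranteed by the twist picture (the fiber is $I_0$ downstairs), but can also be confirmed by a single run of Tate's algorithm on (\ref{kummer_right_ell_p_W}). Finally, since each fiber is of the completely split form $y^2=x(x-a)(x-b)$, the three two-torsion points $(0,0),(a,0),(b,0)$ extend to disjoint sections, giving $(\mathbb{Z}/2)^2\subseteq\operatorname{MW}(\pi^{\mathcal{X}_3},\mathsf{s}''_0)$.

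For the rank and lattice invariants I would argue as follows. Because $\imath^{\mathcal{Y}_1}$ is symplectic (it preserves $dt\wedge dX/Y$, cf. Proposition~\ref{Lem:BT_Y1}) and $\mathcal{X}_3=\mathcal{Y}_1/\langle\imath^{\mathcal{Y}_1}\rangle$, the Nikulin quotient has the same transcendental rank as $\mathcal{Y}_1$, so $\rho(\mathcal{X}_3)=\rho(\mathcal{Y}_1)=17$ by Corollary~\ref{cor:genus_two}. The Shioda--Tate decomposition together with Nishiyama's lemma then gives $17=2+\sum_v(m_v-1)+\operatorname{rank}\operatorname{MW}=2+(3\cdot 1+3\cdot 4)+\operatorname{rank}\operatorname{MW}=17+\operatorname{rank}\operatorname{MW}$, forcing $\operatorname{rank}\operatorname{MW}=0$, whence $\operatorname{MW}=(\mathbb{Z}/2)^2$. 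For the discriminant form I would use $\mathcal{W}^{\mathrm{root}}=A_1^{\oplus 3}\oplus D_4^{\oplus 3}$ with determinant $2^3\cdot 4^3=2^9$; the identification $\mathcal{W}/\mathcal{W}^{\mathrm{root}}\cong\operatorname{MW}=(\mathbb{Z}/2)^2$ realizes $\mathcal{W}$ as an even overlattice of index $4$, so $\det\mathcal{W}=2^9/4^2=2^5$, and by (\ref{eqn:transL}) the discriminant form of $\operatorname{T}(\mathcal{X}_3)$ has determinant $2^5$ as well.

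The discriminant bookkeeping and the identification of the two-torsion sections should be routine. The two genuinely delicate points are, first, the fiber at $u=\infty$: one must check carefully that the even order of $u^2-4$ there makes the twist unramified, so that the $I_0^*$ of $\mathcal{Z}_3$ survives rather than being eliminated, which is exactly what produces the third $I_0^*$. The second, and main, obstacle is pinning the Picard number down as \emph{exactly} $17$ (equivalently $\operatorname{rank}\operatorname{MW}=0$); rather than invoking genericity, I would secure it through the symplectic-quotient relation $\mathcal{X}_3=\mathcal{Y}_1/\langle\imath^{\mathcal{Y}_1}\rangle$ coming from (\ref{double_cover_rev}) and (\ref{C2_involutions_middle_b}), which transfers $\rho=17$ from $\mathcal{Y}_1$ directly to $\mathcal{X}_3$.
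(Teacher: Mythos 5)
Your proof is correct. The paper states Proposition~\ref{lem:EllRight} without an explicit proof, treating it as a routine reading of the Weierstrass data, and your verification --- the quadratic twist of $\mathcal{Z}_3$ by $u^2-4$ (ramified at $u=\pm 2$, unramified at $u=\infty$) for the fiber types $3I_2+3I_0^*$, the Shioda--Tate count $17=2+15+\operatorname{rank}\operatorname{MW}$ for the Mordell--Weil rank, and the overlattice index $4$ in $A_1^{\oplus 3}\oplus D_4^{\oplus 3}$ for the determinant $2^9/2^4=2^5$ --- is exactly the intended computation. The one step you supply that the paper leaves implicit is pinning down $\rho(\mathcal{X}_3)=17$ via the symplectic quotient $\mathcal{X}_3=\mathcal{Y}_1/\langle\imath^{\mathcal{Y}_1}\rangle$ coming from~(\ref{double_cover_rev}); this is sound, since a Nikulin quotient preserves the transcendental rank, and your conclusion $\operatorname{MW}=(\mathbb{Z}/2)^2$ is complete once one adds that torsion injects into the direct sum of the fiber component groups (all $2$-elementary here) while the generic fiber carries only $(\mathbb{Z}/2)^2$ of two-torsion.
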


\subsubsection{A Van Geemen-Sarti involution}
The translation by the two-torsion section $(x,y)=(0,0)$ in Equation~(\ref{kummer_right_ell_p_W}) is a Nikulin involution $\imath_F$, and we obtain a K3 surface $\mathcal{Y}_3$ by resolving the eight nodes of 
$\mathcal{X}_3/\imath_F$. The isogeny  $\phi^{\mathcal{X}_3}_{\mathcal{Y}_3}: \mathcal{X}_3 \dashrightarrow \mathcal{Y}_3$ is the rational quotient map over 
$\mathbb{C}(u)$ with the kernel generated by the section $(x,y)=(0,0)$ and the point at infinity. 
We obtain for $\bar{\mathcal{Y}\,}\!_3$ the Weierstrass model 
\begin{equation}
\label{K3_2_ell_dual_W}
 Y^2 = X \,  \Big( X^2 -2\, (u^2-4) \, q_2(u) \, X  + (u^2-4)^2\, \big(q_2^2(u) - 4 \, q_1(u)\big)  \Big)  \;,
\end{equation}
with a two-torsion section given by $(X,Y)=(0,0)$ and a distinguished section $\mathsf{S}''_0$ given by the point at infinity.
The explicit formulas for the isogeny and the dual isogeny are well known and given by
\begin{equation}
\label{Eq:isogeny_right}
{\phi}^{\mathcal{X}_3}_{\mathcal{Y}_3}: \quad \bar{\mathcal{X}\,}\!_3 \dashrightarrow \bar{\mathcal{Y}\,}\!_3\,, \quad  (x,y) \mapsto \left( \frac{y^2}{x^2}, \frac{y \, \big((u^2-4)^2  q_1(u)  - x^2\big)}{x^2}\right)\,,
\end{equation}
and
\begin{equation}
{\phi}^{\mathcal{Y}_3}_{\mathcal{X}_3}:  \quad \bar{\mathcal{Y}\,}\!_3 \dashrightarrow \bar{\mathcal{X}\,}\!_3\,, \quad  (X,Y) \mapsto \left( \frac{Y^2}{4 \,X^2}, \frac{Y \, \big((u^2-4)^2(q_2^2(u) -4 \, q_1(u))- X^2 \big)}{8 \, X^2}\right)\,.
\end{equation}
We denote the Nikulin involution covering the dual two-isogeny ${\phi}^{\mathcal{Y}_3}_{\mathcal{X}_3}$ by  $\imath^{\mathcal{Y}_3}_F$.
We have the following:
\begin{lemma}
\label{lem:RightTop}
Equation~(\ref{K3_2_ell_dual_W}) defines a  Jacobian elliptic K3 fibration $\pi^{ \mathcal{Y}_3}:  \bar{\mathcal{Y}\,}\!_3\to \mathbb{P}^1$ of Picard rank 17
with a singular fiber of type $I_4$, two fibers of type $I_1$, three fibers of type $I^*_0$, a determinant of the discriminant form equal to $2^6$,
and a Mordell-Weil group of sections $\operatorname{MW}(\pi^{ \mathcal{Y}_3}, \mathsf{S}''_0)=\mathbb{Z}/2$.
\end{lemma}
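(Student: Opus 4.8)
The plan is to follow the computational strategy already used for the companion dual fibrations (cf.\ the proofs of Lemmas~\ref{lem:EllMiddleTop} and~\ref{lem:EllLeftTop}): compute the discriminant of the Weierstrass model~(\ref{K3_2_ell_dual_W}), read off the Kodaira fiber types from the orders of vanishing, and then extract the Picard number, the Mordell--Weil group, and the determinant of the discriminant form by lattice-theoretic bookkeeping. First I would compute the discriminant. Writing~(\ref{K3_2_ell_dual_W}) as $Y^2 = X(X^2 + bX + c)$ with $b = -2(u^2-4)q_2(u)$ and $c = (u^2-4)^2(q_2^2(u) - 4q_1(u))$, the discriminant is proportional to $c^2(b^2-4c)$, and since $b^2 - 4c = 16\,(u^2-4)^2 q_1(u)$ one obtains
\[
 \Delta_{\mathcal{Y}_3} \; \propto \; (u^2-4)^6 \, q_1(u)\,\big(q_2^2(u) - 4q_1(u)\big)^2 .
\]
The decisive simplification is the identity $q_2^2(u) - 4q_1(u) = (\lambda_2'-\lambda_3')^2 (u - \lambda_1')^2$, which follows by comparing with the quadratic factor $X^2 - (u^2-4)q_2\,X + (u^2-4)^2 q_1$ of $\mathcal{X}_3$ in Proposition~\ref{lem:EllRight}: its roots $\alpha,\beta$ in~(\ref{kummer_right_ell_p_W}) satisfy $\alpha+\beta = (u^2-4)q_2$, $\alpha\beta = (u^2-4)^2 q_1$, and a short computation gives $(\alpha-\beta)^2 = (u^2-4)^2(\lambda_2'-\lambda_3')^2(u-\lambda_1')^2$.

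Hence the factor $(q_2^2-4q_1)^2$ is a fourth power of $(u-\lambda_1')$, and $\Delta_{\mathcal{Y}_3}$ vanishes to order $4$ at $u=\lambda_1'$, to order $1$ at the two zeros $u = \lambda_2',\lambda_3'$ of $q_1$, and to order $6$ at each of $u = \pm 2$ (and, since the total affine degree is $18$, to order $24-18=6$ at $u=\infty$). Next I would determine the fiber types from Kodaira's table~\cite{MR0184257}. At $u = \lambda_1'$ one has $c\to 0$ to order $2$ while $b$ stays nonzero, because $q_2(\lambda_1') = 2(\lambda_1'-\lambda_2')(\lambda_1'-\lambda_3')\neq 0$; thus $g_2\neq 0$, the reduction is multiplicative, and $\operatorname{ord}(\Delta)=4$ gives a fiber of type $I_4$. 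At $u=\lambda_2',\lambda_3'$ one finds $\operatorname{ord}(\Delta)=1$ with multiplicative reduction, giving two fibers of type $I_1$. At $u=\pm 2$ and at $u=\infty$ the factor $(u^2-4)$ forces $\operatorname{ord}(g_2)=2$, $\operatorname{ord}(g_3)=3$, $\operatorname{ord}(\Delta)=6$, the signature of type $I_0^*$. The Euler numbers $4 + 2\cdot 1 + 3\cdot 6 = 24$ confirm that $\mathcal{Y}_3$ is a K3 surface.

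Finally I would read off the lattice data. The singular fibers contribute the root lattice $\mathcal{W}^{\mathrm{root}} = A_3 \oplus D_4^{\oplus 3}$, so in the decomposition $\operatorname{NS}=\mathcal{H}\oplus\mathcal{W}$ the trivial sublattice $\mathcal{H}\oplus A_3\oplus D_4^{\oplus 3}$ already has rank $2+3+12 = 17$; therefore the Picard number is $17$ and the Mordell--Weil rank is $0$. The section $(X,Y)=(0,0)$ is $2$-torsion, so $\mathbb{Z}/2 \subseteq \operatorname{MW}(\pi^{\mathcal{Y}_3},\mathsf{S}_0'')$, and under the isogeny~(\ref{Eq:isogeny_right}) the full $2$-torsion $(\mathbb{Z}/2)^2$ of $\mathcal{X}_3$ collapses onto exactly this section. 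Assuming the torsion is $\mathbb{Z}/2$, the determinant of the discriminant form follows from $|\det(A_3\oplus D_4^{\oplus 3})| = 4\cdot 4^3 = 2^8$ and the index-$2$ overlattice produced by the torsion section via $2^8/|\mathbb{Z}/2|^2 = 2^6$, in agreement with the asserted value.

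The hard part will be the rigorous exclusion of any additional Mordell--Weil torsion: the fiber configuration $I_4 + 3I_0^*$ allows the torsion group to embed isotropically into $\mathbb{Z}/4 \oplus (\mathbb{Z}/2)^6$, so the singular fibers alone do not immediately force $\operatorname{MW}=\mathbb{Z}/2$. The cleanest rigorous route is to note that the discriminant-form determinant $2^6$ coincides with that of the principally polarized Kummer surfaces $\mathcal{Y}_1$ and $\mathcal{Y}_2$, identify $\mathcal{Y}_3$ as such a Jacobian Kummer surface, and then match the configuration $I_4 + 2I_1 + 3I_0^*$ together with its section lattice against the corresponding entry in Kumar's classification of elliptic fibrations~\cite[Thm.~2]{MR3263663}, exactly as was done for $\mathcal{Y}_2$ in the proof of Lemma~\ref{lem:EllLeftTop}; this simultaneously pins down $\operatorname{MW}=\mathbb{Z}/2$ and the determinant $2^6$.
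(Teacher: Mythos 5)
Your computation is correct and, in substance, it is the argument the paper relies on: the paper states this lemma without a written proof and, as with the companion Lemmas~\ref{lem:EllMiddleTop} and~\ref{lem:EllLeftTop}, justifies it by matching the Weierstrass data against Kumar's classification (the identification with fibration {\tt (4)} is made explicitly only in the proof of Lemma~\ref{lem:RightTop2}). What you add is the explicit verification: the identity $q_2^2(u)-4q_1(u)=(\lambda_2'-\lambda_3')^2(u-\lambda_1')^2$ and the value $q_2(\lambda_1')=2(\lambda_1'-\lambda_2')(\lambda_1'-\lambda_3')$ both check out, so the discriminant $\Delta_{\mathcal{Y}_3}\propto(u^2-4)^6\,q_1(u)\,(u-\lambda_1')^4$ does give $I_4+2I_1+3I_0^*$ with Euler number $24$, and the lattice bookkeeping $2^8/2^2=2^6$ is right once the torsion is known to be exactly $\mathbb{Z}/2$. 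Your deferral to Kumar for that last point is legitimate and is exactly what the paper does; but note you could also close it internally with the height formula: a torsion section with zero height must collect total fiber contribution $4$, which forces it through the far component of the $I_4$ fiber and a non-identity component of each $I_0^*$ fiber, and since the difference of two such sections would again need contribution $4$ while passing through the identity component of the $I_4$ fiber, there is at most one nontrivial torsion section and no $4$-torsion (whose $I_4$-contribution $3/4$ can never sum to $4$ with the $I_0^*$ contributions). One small caveat: the rank-$17$ trivial lattice only gives $\rho\ge 17$; equality uses the standing genericity assumption on $\mathcal{C}$, which the paper also takes for granted.
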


We also obtain:
 \begin{lemma}
\label{lem:RightTop2}
The K3 surface $\mathcal{Y}_3$  is the Kummer surface $\operatorname{Kum}(\operatorname{Jac} \hat{\mathcal{C}\,}\!_{12})$ of the principally polarized abelian surface
$\operatorname{Jac} \hat{\mathcal{C}\,}\!_{12}$
 where $\hat{\mathcal{C}\,}\!_{12}$ is related by $(2,2)$-isogeny to $\mathcal{C}$ by Equations~(\ref{relations_RosRoots}).
\end{lemma}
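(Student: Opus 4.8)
The plan is to pin down $\mathcal{Y}_3$ inside Kumar's classification of elliptic fibrations on a generic Jacobian Kummer surface, exactly as was done for $\mathcal{Y}_2$ in Corollary~\ref{lem:EllLeftTop2}. Lemma~\ref{lem:RightTop} already records the complete lattice-theoretic fingerprint of the fibration $\pi^{\mathcal{Y}_3}$: singular fibers $I_4 + 2\,I_1 + 3\,I_0^*$, Picard rank $17$, determinant of the discriminant form $2^6$, and $\operatorname{MW}(\pi^{\mathcal{Y}_3},\mathsf{S}''_0) = \mathbb{Z}/2$. The root lattice $A_3 \oplus D_4^{\oplus 3}$ contributed by these fibers, together with the hyperbolic plane spanned by the generic fiber and the zero section, already exhausts the rank-$17$ N\'eron--Severi lattice, so the Mordell--Weil rank is zero and the configuration matches a single entry of Kumar's list~\cite[Thm.~2]{MR3263663}. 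This identifies $\mathcal{Y}_3$ as $\operatorname{Kum}(\operatorname{Jac}\tilde{\mathcal{C}})$ for some genus-two curve $\tilde{\mathcal{C}}$, reducing the lemma to the determination of $[\tilde{\mathcal{C}}] \in \mathcal{M}_2$.

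First I would locate the six singular fibers from the discriminant. Propagating the quadratic-twist structure of Equation~(\ref{K3_2_ell}) through the two-isogeny~(\ref{Eq:isogeny_right}) and inspecting Equation~(\ref{K3_2_ell_dual_W}), the three fibers of type $I_0^*$ sit at the rigid triple $u = 2,\,-2,\,\infty$ (the zeros of the twisting factor $u^2-4$ together with the base point at infinity). Among the three fibers at $u = \lambda_1',\lambda_2',\lambda_3'$ inherited from the $I_2$ fibers of $\mathcal{X}_3$, the two-torsion section $(x,y)=(0,0)$ meets the identity component precisely at $u = \lambda_1'$ --- where the two non-identity roots of Equation~(\ref{kummer_right_ell_p_W}) collide --- and meets a non-identity component at $u = \lambda_2',\lambda_3'$. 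Hence under the Van Geemen--Sarti isogeny the fiber at $u=\lambda_1'$ becomes the single $I_4$ fiber, while those at $u=\lambda_2',\lambda_3'$ become the two $I_1$ fibers.

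With these positions in hand, the distinguished invariant of the fibration is the cross-ratio of the $I_4$-position against the three $I_0^*$-positions,
\[
  \frac{\lambda_1'-2}{\lambda_1'+2},
\]
which is exactly the invariant that surfaced in Corollary~\ref{lem:EllLeftTop2}. Substituting the $(2,2)$-isogeny relations~(\ref{relations_RosRoots}) yields
\[
  \frac{\lambda_1'-2}{\lambda_1'+2} = \frac{(\Lambda_3-1)(\Lambda_1-\Lambda_2)}{(\Lambda_2-1)(\Lambda_1-\Lambda_3)},
\]
and the right-hand side is precisely the corresponding cross-ratio that Kumar records for this fibration on $\operatorname{Kum}(\operatorname{Jac}\hat{\mathcal{C}\,}\!_{12})$, written in the Rosenhain roots $\Lambda_i$ of $\hat{\mathcal{C}\,}\!_{12}$. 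Together with the matching of the remaining fiber positions $u = \lambda_2',\lambda_3'$ this forces $[\tilde{\mathcal{C}}] = [\hat{\mathcal{C}\,}\!_{12}]$, and Corollary~\ref{cor:symmetric} upgrades the equality of moduli to $\tilde{\mathcal{C}} = \hat{\mathcal{C}\,}\!_{12}$, giving $\mathcal{Y}_3 = \operatorname{Kum}(\operatorname{Jac}\hat{\mathcal{C}\,}\!_{12})$. As a geometric consistency check, the chain $\mathcal{Y}_1 \xrightarrow{\,u=t+1/t\,} \mathcal{X}_3 \xrightarrow{\,\phi^{\mathcal{X}_3}_{\mathcal{Y}_3}\,} \mathcal{Y}_3$, beginning at $\mathcal{Y}_1 = \operatorname{Kum}(\operatorname{Jac}\mathcal{C})$ (Corollary~\ref{cor:genus_two}), composes the degree-two base transformation induced by the Kummer translation $\imath^{\mathcal{Y}_1}$ with a fiberwise two-isogeny --- precisely the Kummer-level realization of the $(2,2)$-isogeny $\operatorname{Jac}\mathcal{C} \to \operatorname{Jac}\hat{\mathcal{C}\,}\!_{12}$.

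The step I expect to be the main obstacle is the final matching. One must confirm that the configuration $I_4 + 2\,I_1 + 3\,I_0^*$ with $\operatorname{MW} = \mathbb{Z}/2$ singles out a unique fibration in Kumar's list (so that the fiber data alone determine the surface to be a Kummer surface of a Jacobian), and then verify that the full set of fiber positions $\{2,-2,\infty,\lambda_1',\lambda_2',\lambda_3'\}$, not merely the headline cross-ratio, coincides after~(\ref{relations_RosRoots}) with the data Kumar assigns to $\operatorname{Kum}(\operatorname{Jac}\hat{\mathcal{C}\,}\!_{12})$. The delicate bookkeeping --- tracking which $I_2$ fiber of $\mathcal{X}_3$ degenerates to $I_4$ under the Van Geemen--Sarti isogeny and confirming the rigidity of the $I_0^*$ triple at $u = 2,-2,\infty$ --- is where the argument must be carried out with care, just as in the proof of Corollary~\ref{lem:EllLeftTop2}.
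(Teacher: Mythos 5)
Your proposal is correct and follows essentially the same route as the paper's proof: identify the fibration as entry {\tt (4)} of Kumar's list via the fiber/Mordell--Weil data of Lemma~\ref{lem:RightTop}, compute the cross-ratio $(\lambda_1'-2)/(\lambda_1'+2)$ of the $I_4$ and $I_0^*$ base points, substitute Equations~(\ref{relations_RosRoots}) to match it with the corresponding invariant for $\operatorname{Kum}(\operatorname{Jac}\hat{\mathcal{C}\,}\!_{12})$, and close with Corollary~\ref{cor:symmetric}. Your additional bookkeeping of which $I_2$ fiber of $\mathcal{X}_3$ becomes the $I_4$ under the Van Geemen--Sarti isogeny is accurate and only makes the argument more explicit.
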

\begin{proof}
Lemma~\ref{lem:RightTop} gives the same singular fibers and group of sections as fibration labeled {\tt (4)} 
in the list of all possible elliptic fibrations on a generic Kummer surface determined 
by Kumar in~\cite[Thm.~2]{MR3263663} whence $\mathcal{Y}_3=\operatorname{Kum}(\operatorname{Jac}{\tilde{\mathcal{C}}})$. A computation shows that the cross ratio of the four base points of the fibers of type $I_4$
and $I_0^*$ in Equation~(\ref{K3_2_ell_dual_W}) is given by $(\lambda_1'-2)/(\lambda_1'+2)$. 

The $(2,2)$-isogeny $\tilde{\psi}_{12}: \operatorname{Jac}(\tilde{\mathcal{C}}_{12}) \to \operatorname{Jac}(\hat{\tilde{\mathcal{C}\,}}\!_{12})$
induces an isomorphism $\tilde{\Psi}_{12}$ between $\mathcal{Y}_3$ and $\hat{\mathcal{Y}\,}\!_3=\operatorname{Kum}(\operatorname{Jac}\hat{\tilde{\mathcal{C}\,}}\!_{12})$.
Since the Weierstrass equation~(\ref{kummer3_ell_dual_W}) only depends on $\{\lambda_1', \lambda_2', \lambda_2' \}$ it follows that $\tilde{\Psi}_{12}$
preserves the Jacobian elliptic fibration in Lemma~(\ref{lem:RightTop2}). On the other hand, if we plug Equations~(\ref{relations_RosRoots}) into the cross ratio of the four base points we obtain
\[
 \frac{\lambda_1'-2}{\lambda_1'+2}= \frac{\Lambda'_1-\Lambda'_2}{\Lambda'_1-\Lambda'_3} =\frac{(\Lambda_3-1)(\Lambda_1-\Lambda_2)}{(\Lambda_2-1)(\Lambda_1-\Lambda_3)} \;
 \]
 which is the cross ratio of the fibers of type $I_4$ and $I_0^*$ in the elliptic fibration {\tt (4)} on $\operatorname{Kum}(\operatorname{Jac}{\mathcal{C}})$ by Kumar in~\cite[Thm.~2]{MR3263663}.
  Therefore, we have $\hat{\tilde{\mathcal{C}\,}}\!_{12} = \mathcal{C}$ and $\tilde{\mathcal{C}} = \hat{\mathcal{C}\,}\!_{12}$ by Corollary~\ref{cor:symmetric}.
\end{proof}

We can now prove that the K3 surface $\mathcal{X}_3$ is a $(1,2)$-polarized Kummer surface:
\begin{lemma}
\label{Kum_B_dual_12b}
The K3 surface $\mathcal{X}_3$  is the Kummer surface $\operatorname{Kum}(\hat{\mathbf{B}}_{12})$ of a $(1,2)$-polarized abelian surface
$\hat{\mathbf{B}}_{12}$ that covers $\hat{\varphi}_{12}: \hat{\mathbf{B}}_{12} \to \operatorname{Jac} \hat{\mathcal{C}\,}\!$ by an isogeny of degree two
such that the induced rational map $\hat{\phi}_{12}: \operatorname{Kum}(\hat{\mathbf{B}}_{12}) \dashrightarrow \operatorname{Kum}(\operatorname{Jac} \hat{\mathcal{C}\,}\!)$
associated with the even eight $\hat{\Delta}_{12}$ realizes the fiberwise two-isogeny $\phi^{\mathcal{X}_3}_{\mathcal{Y}_3}: \mathcal{X}_3 \dashrightarrow\mathcal{Y}_3$ 
in Equation~(\ref{Eq:isogeny_right}).
\end{lemma}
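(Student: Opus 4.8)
The plan is to mirror the proof of Lemma~\ref{Kum_B_dual_12}, replacing the Kumar fibration of type \texttt{(7)} on $\mathcal{Y}_2$ by the type \texttt{(4)} fibration on $\mathcal{Y}_3=\operatorname{Kum}(\operatorname{Jac}\hat{\mathcal{C}\,}\!_{12})$ furnished by Lemma~\ref{lem:RightTop2}. By construction $\mathcal{Y}_3$ is the minimal resolution of $\mathcal{X}_3/\imath_F$, where $\imath_F$ is the Van~Geemen--Sarti involution given by translation by the two-torsion section $(x,y)=(0,0)$ of Equation~(\ref{kummer_right_ell_p_W}); hence, in the language of Section~\ref{ssec:doubles}, the fiberwise two-isogeny $\phi^{\mathcal{X}_3}_{\mathcal{Y}_3}$ of Equation~(\ref{Eq:isogeny_right}) is precisely the rational double cover of $\mathcal{Y}_3$ branched along an even eight $\Delta$ of disjoint $(-2)$-curves supported on the reducible fibers of $\mathcal{Y}_3$. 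It therefore suffices to locate $\Delta$ inside $\mathcal{Y}_3$ and to identify it, in Mehran's labeling, with $\hat{\Delta}_{12}$.

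First I would pin down the branch components by comparing discriminants. Equation~(\ref{discrim_R}) exhibits the six singular fibers of $\mathcal{X}_3$, namely three fibers of type $I_2$ over $u=\lambda_1',\lambda_2',\lambda_3'$ and three fibers of type $I_0^*$ over $u=\pm 2,\infty$, while the isogenous Weierstrass model~(\ref{K3_2_ell_dual_W}) has discriminant proportional to $(u^2-4)^6(u-\lambda_1')^4(u-\lambda_2')(u-\lambda_3')$, reproducing the configuration $I_4+2I_1+3I_0^*$ of Lemma~\ref{lem:RightTop}. Reasoning exactly as in the earlier lemmas, the $I_4$ fiber over $u=\lambda_1'$ can descend to the $I_2$ fiber of $\mathcal{X}_3$ only if two of its non-central components lie in the branch locus; each of the three $I_0^*$ fibers over $u=\pm2,\infty$ remains of type $I_0^*$ only if two of its non-central components are branch curves; and the two $I_1$ fibers over $u=\lambda_2',\lambda_3'$, carrying no markable components, double to the two remaining $I_2$ fibers. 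This accounts for exactly $2+2\cdot 3=8$ disjoint $(-2)$-curves, so $\Delta$ is the even eight consisting of two components of the $I_4$ fiber together with two components of each of the three $I_0^*$ fibers.

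Next I would identify these eight curves with exceptional divisors of $\operatorname{Kum}(\operatorname{Jac}\hat{\mathcal{C}\,}\!_{12})$. Using Table~\ref{tab:nodes} and the explicit positions of the nodes $\hat{p}_{ij}$ in the fibration~(\ref{kummer_right_ell_p_W}) --- tracked just as in Propositions~\ref{Lem:VGSI_Y2} and~\ref{Lem:BT_Y2} --- I expect the eight components to be $\hat{p}_{i5}$ and $\hat{p}_{i6}$ for $1\le i\le4$, i.e., the divisors $\hat{E}_{13}+\hat{E}_{23}+\hat{E}_{14}+\hat{E}_{24}+\hat{E}_{15}+\hat{E}_{25}+\hat{E}_{16}+\hat{E}_{26}=\hat{\Delta}_{12}$ already encountered in Lemma~\ref{Kum_B_dual_12}. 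Since the type-\texttt{(4)} and type-\texttt{(7)} fibrations distribute the same thirty nodes of $\mathcal{Y}_2\cong\mathcal{Y}_3$ over different reducible fibers, the one genuinely non-routine step is to verify that the even eight produced here generates the same Nikulin sublattice up to an automorphism of $\mathcal{Y}_3$, hence represents the same class $\hat{\Delta}_{12}$, rather than another of Mehran's fifteen even eights.

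Once this identification is secured, Mehran's \cite[Prop.~4.2, Prop.~5.1]{MR2804549} shows that $X(\mathcal{Y}_3,\hat{\Delta}_{12})$ is the Kummer surface $\operatorname{Kum}(\hat{\mathbf{B}}_{12})$ of a $(1,2)$-polarized abelian surface $\hat{\mathbf{B}}_{12}$ carrying a degree-two isogeny $\hat{\varphi}_{12}\colon\hat{\mathbf{B}}_{12}\to\operatorname{Jac}\hat{\mathcal{C}\,}\!_{12}$, whose induced rational map $\hat{\phi}_{12}$ is exactly the branched double cover $\phi^{\mathcal{X}_3}_{\mathcal{Y}_3}$ of Equation~(\ref{Eq:isogeny_right}). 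This yields $\mathcal{X}_3\cong\operatorname{Kum}(\hat{\mathbf{B}}_{12})$ and completes the argument. The main obstacle, as noted, is the combinatorial bookkeeping that confirms the even eight is $\hat{\Delta}_{12}$; a useful consistency check is that the resulting surface must then agree with the $\mathcal{X}_2$ of Lemma~\ref{Kum_B_dual_12}, both being branched double covers of $\mathcal{Y}_2\cong\mathcal{Y}_3$ along the same class.
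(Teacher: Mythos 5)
Your plan is the paper's plan: interpret $\phi^{\mathcal{X}_3}_{\mathcal{Y}_3}$ as the double cover of $\mathcal{Y}_3$ branched along an even eight of nodes supported on the reducible fibers, use the fiber-type bookkeeping ($I_4\to I_2$ and $I_0^*\to I_0^*$ each demand two non-central branch components, $2I_1\to 2I_2$ demand none, totalling eight), identify the even eight as $\hat{\Delta}_{12}$, and invoke Mehran to conclude $\mathcal{X}_3=\operatorname{Kum}(\hat{\mathbf{B}}_{12})$. So there is no divergence of method.

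However, the step you yourself flag as ``the one genuinely non-routine step'' is exactly the content of the paper's proof, and you leave it as an expectation rather than executing it. The paper closes it as follows. First, the even eight must consist of exceptional divisors (nodes), and by Kumar's explicit description of fibration {\tt (4)} the $I_4$ fiber contains only \emph{two} nodes, $\hat{E}_{16}$ and $\hat{E}_{26}$; these are therefore forced into the branch locus with no choice to be made. Second, each $I_0^*$ fiber contains exactly three nodes, namely $\{\hat{E}_{13},\hat{E}_{23},\hat{E}_{36}\}$, $\{\hat{E}_{14},\hat{E}_{24},\hat{E}_{46}\}$, $\{\hat{E}_{15},\hat{E}_{25},\hat{E}_{56}\}$, and one then checks against the (finite, known) list of thirty even eights of nodes that the only even eight containing $\{\hat{E}_{16},\hat{E}_{26}\}$ together with two nodes from each of these triples is $\hat{\Delta}_{12}$. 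Your sketch instead asserts ``I expect the eight components to be $\hat{p}_{i5}$ and $\hat{p}_{i6}$'' and proposes to verify agreement of Nikulin sublattices up to automorphism; that framing is also slightly off target, since Mehran's Proposition 4.2 distinguishes the fifteen classes $\Delta_{ij}$, so what is needed (and what the paper supplies) is the identification of the branch divisor with the specific labelled even eight $\hat{\Delta}_{12}$, not merely with some even eight in its automorphism orbit. Supplying the two-node count in the $I_4$ fiber and the uniqueness argument among even eights would complete your proof along the paper's lines.
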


\begin{proof}
For a double branched cover to turn the fiber of type $I_4$ into a fiber of type $I_2$, two rational components different from 
the central fiber must be chosen to be in the branch locus.  Results of Kumar~\cite{MR3263663} show that there only two such components in the fiber of type $I_4$
in this fibration, namely $\hat{E}_{16}, \hat{E}_{26}$. To turn a fiber of type $I_0^*$ into another fiber of type $I_0^*$
by double cover, two rational curves different from the central fiber must be chosen to be in the branch locus.  
Results of Kumar~\cite{MR3263663} show that each $I_0^*$-fiber in this fibration contains exactly three nodes, namely
\begin{equation}
\label{list_nodes}
 \hat{E}_{13}, \hat{E}_{23}, \hat{E}_{36},  \quad \hat{E}_{14}, \hat{E}_{24}, \hat{E}_{46} \quad \hat{E}_{15} , \hat{E}_{25}, \hat{E}_{56} \;.
\end{equation}
There is no even eight that contains $\{\hat{E}_{16}, \hat{E}_{26}\}$ and two components of $\{\hat{E}_{13}, \hat{E}_{23}, \hat{E}_{36}\}$ unless the components
$\hat{E}_{13}, \hat{E}_{23}$ are chosen. Similarly, there is no even eight that contains $\{\hat{E}_{16}, \hat{E}_{26}\}$ and two components of 
$\{\hat{E}_{14}, \hat{E}_{24}, \hat{E}_{46}\}$ unless the two components
$\hat{E}_{14}, \hat{E}_{24}$ are chosen. The only overlapping case is in the notation of Mehran the even eight given by the even eight
 \[
 \hat{\Delta}_{12}= \hat{E}_{13} +  \hat{E}_{23} + \hat{E}_{14} + \hat{E}_{24} + \hat{E}_{15} + \hat{E}_{25}+ \hat{E}_{16} + \hat{E}_{26}  \;.
\] 
\end{proof}

We have the following:
\begin{proposition}
\label{Lem:VGSI_Y3}
The van Geemen-Sarti 
involution $\imath^{\mathcal{Y}_3}_F$ covering the two-isogeny ${\phi}^{\mathcal{Y}_3}_{\mathcal{X}_3}$ in Equation~(\ref{Eq:isogeny_right}) descends from a 
translation of $\hat{\mathbf{A}}=\operatorname{Jac} \hat{\mathcal{C}\,}\!_{12}$ by the two-torsion point $\hat{\mathfrak{p}}_{56}=\hat{\mathfrak{e}}_{12} \in \hat{\mathbf{A}}[2]$
to an automorphism on $\mathcal{Y}_3$.
\end{proposition}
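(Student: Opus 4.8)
The plan is to mirror the proofs of Propositions~\ref{Lem:VGSI_Y1} and~\ref{Lem:VGSI_Y2}: realize $\imath^{\mathcal{Y}_3}_F$ as a fiberwise translation, read off the permutation it induces on the exceptional curves contained in the reducible fibers of $\pi^{\mathcal{Y}_3}$, and then recognize that permutation as a single translation on $\hat{\mathbf{A}}[2]$ by means of the group law~(\ref{group_law}).

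First I would note that $\imath^{\mathcal{Y}_3}_F$ is by definition the translation by the two-torsion section $(X,Y)=(0,0)$ of the Weierstrass model~(\ref{K3_2_ell_dual_W}); hence it fixes the base $\mathbb{P}^1$, preserves the fiber class, and is symplectic. By Lemma~\ref{lem:RightTop2} the surface $\mathcal{Y}_3$ is the Kummer surface $\operatorname{Kum}(\operatorname{Jac}\hat{\mathcal{C}\,}\!_{12})$, and a symplectic involution of a Kummer surface acting trivially on the transcendental lattice descends from translation by a unique two-torsion point $\hat{\mathfrak{p}}\in\hat{\mathbf{A}}[2]$ of $\hat{\mathbf{A}}=\operatorname{Jac}\hat{\mathcal{C}\,}\!_{12}$. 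The task therefore reduces to identifying $\hat{\mathfrak{p}}$.

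To determine $\hat{\mathfrak{p}}$ I would use the description of the reducible fibers from Lemma~\ref{Kum_B_dual_12b}. Because the two-torsion section meets the component of the $I_4$-fiber opposite to the one met by $\mathsf{S}''_0$, the fiberwise translation interchanges the two node-components $\hat{E}_{16}$ and $\hat{E}_{26}$ of that fiber; in each of the three fibers of type $I_0^*$ it fixes the central multiplicity-two component and interchanges the outer node-components $\hat{E}_{13}\leftrightarrow\hat{E}_{23}$, $\hat{E}_{14}\leftrightarrow\hat{E}_{24}$, and $\hat{E}_{15}\leftrightarrow\hat{E}_{25}$ in pairs. Translating through the dictionary of Table~\ref{tab:nodes}, every one of these interchanges has the form $\hat{p}_{j6}\leftrightarrow\hat{p}_{j5}$, and the group law~(\ref{group_law}) gives $\hat{p}_{j6}+\hat{p}_{56}=\hat{p}_{j5}$ for each $j$. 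Thus the induced permutation of the sixteen nodes is exactly translation by $\hat{p}_{56}=\hat{e}_{12}$, so that $\hat{\mathfrak{p}}=\hat{\mathfrak{p}}_{56}=\hat{\mathfrak{e}}_{12}$, as claimed.

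As a sanity check I would confirm that translation by $\hat{p}_{56}$ stabilizes the node-set of every reducible fiber: the $I_4$-fiber carries $\{\hat{p}_0,\hat{p}_{56},\hat{p}_{35},\hat{p}_{36}\}$, while the three $I_0^*$-fibers carry $\{\hat{p}_{12},\hat{p}_{34},\hat{p}_{45},\hat{p}_{46}\}$, $\{\hat{p}_{13},\hat{p}_{24},\hat{p}_{15},\hat{p}_{16}\}$, and $\{\hat{p}_{14},\hat{p}_{23},\hat{p}_{25},\hat{p}_{26}\}$, each of which is visibly closed under $+\hat{p}_{56}$. I expect the main obstacle to lie in the third step, namely in pinning down from Kumar's explicit model of fibration {\tt (4)} exactly which exceptional curve sits on which component of the $I_4$- and $I_0^*$-fibers and which component the two-torsion section passes through; only after this combinatorial matching is fixed does the pairing $\hat{p}_{j6}\leftrightarrow\hat{p}_{j5}$ become unambiguous. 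Once that is settled, the identification $\hat{\mathfrak{p}}=\hat{\mathfrak{p}}_{56}$ follows at once from Equation~(\ref{group_law}).
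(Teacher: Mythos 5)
Your proposal is correct and follows essentially the same route as the paper: the paper's proof likewise reads off that the fiberwise translation by the two-torsion section swaps $\hat{E}_{1i}\leftrightarrow\hat{E}_{2i}$ for $i=3,4,5,6$ within the $I_4$- and $I_0^*$-fibers identified in Lemma~\ref{Kum_B_dual_12b}, and then concludes via the node dictionary and the group law~(\ref{group_law}) that this permutation is translation by $\hat{e}_{12}=\hat{p}_{56}$. Your version merely spells out the intermediate steps (and the closure sanity check) that the paper leaves implicit.
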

\begin{proof}
The involution must map the set of nodes according to $\hat{E}_{1i} \leftrightarrow \hat{E}_{2i}$ for $i=3, 4, 5, 6$.
It follows that the involution is induced by translation by $\hat{e}_{12}=\hat{p}_{56}$.
\end{proof}

The maximal isotropic subgroup $\mathsf{k}_{12}$ of the two-torsion $\mathbf{A}[2]$ of $\mathbf{A}=\operatorname{Jac}(\mathcal{C})$ defines the $(2,2)$-isogeny 
$\psi_{12}: \mathbf{A}= \operatorname{Jac}(\mathcal{C}) \to \hat{\mathbf{A}}_{12} = \operatorname{Jac}(\hat{\mathcal{C}\,}\!_{12})$ by $\hat{\mathbf{A}}_{12}=
\mathbf{A}/\mathsf{k}_{12}$.
The decomposition of the associated algebraic map $\Psi_{12}: \operatorname{Kum}(\operatorname{Jac} \mathcal{C}) 
\to \operatorname{Kum}(\operatorname{Jac} \hat{\mathcal{C}\,}\!_{12})$ of Kummer surfaces as defined Section~\ref{ssec:action_isog} 
into  $\phi^{\mathcal{X}_3}_{\mathcal{Y}_3} \circ \phi^{\mathcal{Y}_1}_{\mathcal{X}_3}$ corresponds to a factorization of $\mathsf{k}_{12}$ 
given by two generators that can be characterized as Nikulin involutions on the corresponding Kummer surfaces:
\begin{theorem}
\label{cor:psi}
The maximal isotropic subgroup $\mathsf{k}_{12}$ of the two-torsion $\mathbf{A}[2]$ 
defining the $(2,2)$-isogeny  $\psi_{12}: \mathbf{A}= \operatorname{Jac}(\mathcal{C}) \to \hat{\mathbf{A}}_{12}=
\mathbf{A}/\mathsf{k}_{12}$ in Equations~(\ref{relations_RosRoots}) is given by
\[ 
 \mathsf{k}_{12}= \{ \mathfrak{p}_0, \mathfrak{p}_{12}, \mathfrak{p}_{56}, \mathfrak{p}_{34}=\mathfrak{p}_{12}+ \mathfrak{p}_{56} \} \;.
\]
The translations by the two-torsion point  $\mathfrak{p}_{12}$ and $\mathfrak{p}_{56}$ descend to automorphisms 
on $\mathcal{Y}_1=\operatorname{Kum}(\operatorname{Jac}\mathcal{C})$ 
that act on the elliptic fibration given by $H - P_0 -P_{56}$ 
as Van Geemen-Sarti involution $\imath_F^{\mathcal{Y}_1}$  and Nikulin involution $\imath^{\mathcal{Y}_1}$ of a rational base transformation, 
respectively.
\end{theorem}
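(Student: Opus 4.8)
The plan is to prove the statement in two stages: first to pin down the subgroup $\mathsf{k}_{12}=\ker\psi_{12}$ explicitly in the $\mathfrak{p}_{ij}$-notation of Equation~(\ref{oder2points}), and then to identify its two generators with the involutions $\imath_F^{\mathcal{Y}_1}$ and $\imath^{\mathcal{Y}_1}$ already studied on the fibration with fiber class $H-P_0-P_{56}$.

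For the first stage I would compute $\ker\psi_{12}$ directly from the analytic model of Section~\ref{ssec:2isog}. Since $\psi_{12}$ is realized by $\underline{z}\mapsto 2\underline{z}$ from $\mathbb{C}^2/\langle\mathbb{Z}^2\oplus\underline{\tau}\,\mathbb{Z}^2\rangle$ to $\mathbb{C}^2/\langle\mathbb{Z}^2\oplus 2\underline{\tau}\,\mathbb{Z}^2\rangle$, a point lies in the kernel exactly when $2\underline{z}\in\mathbb{Z}^2\oplus 2\underline{\tau}\,\mathbb{Z}^2$; solving gives $\underline{z}\in\tfrac12\mathbb{Z}^2/\mathbb{Z}^2$, i.e. the four two-torsion points whose half-integer characteristic has vanishing $\underline{a}$-part. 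These are precisely the points carrying the even characteristics $\theta_1,\theta_2,\theta_3,\theta_4$, and one checks from the Weil form that they constitute a maximal isotropic subgroup. It then remains to translate these four characteristics into Weierstrass labels through the Thomae dictionary of Lemma~\ref{ThomaeLemma}; the resulting nontrivial elements form a G\"opel triple $\mathfrak{p}_{ij},\mathfrak{p}_{kl},\mathfrak{p}_{mn}$ whose pairs partition $\{1,\dots,6\}$ and, by the group law~(\ref{group_law}), sum to one another. An independent and more geometric determination is furnished by Lemma~\ref{lem:R2isog}: the isogeny $\psi_{12}$ is the Richelot isogeny for the splitting of the sextic into the factors paired as $(1,\lambda_1),(\lambda_2,\lambda_3),(0,\infty)$, and the three pairs of this splitting must index the nontrivial elements of $\mathsf{k}_{12}$; in particular the factor $(0,\infty)$ contributes $\mathfrak{p}_{56}$.

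For the second stage I would invoke the two preceding propositions. Proposition~\ref{Lem:BT_Y1} identifies the translation by $\mathfrak{p}_{56}=\mathfrak{e}_{12}$ with the Nikulin involution $\imath^{\mathcal{Y}_1}$ of the base transformation~(\ref{C2_involutions_middle}), and Proposition~\ref{Lem:VGSI_Y1} identifies the translation by the second generator with the Van Geemen--Sarti involution $\imath_F^{\mathcal{Y}_1}$ of~(\ref{VGS_involution_middle}); both act on the Jacobian elliptic fibration of Lemma~\ref{lem:EllMiddle}, whose fiber class is $H-P_0-P_{56}$ by the Remark following that lemma, so the two involutions indeed live on the same elliptic pencil. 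Finally Lemma~\ref{lem:decomp} shows that the product of the translations by two independent generators of a maximal isotropic subgroup is multiplication by two; this matches the factorization $\Psi_{12}=\phi^{\mathcal{X}_3}_{\mathcal{Y}_3}\circ\phi^{\mathcal{Y}_1}_{\mathcal{X}_3}$ from Section~\ref{ssec:action_isog} and confirms that $\imath^{\mathcal{Y}_1}$ and $\imath_F^{\mathcal{Y}_1}$ realize, at the level of $\mathcal{Y}_1$, the two complementary halves of $\psi_{12}$.

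The step I expect to be the main obstacle is the combinatorial bookkeeping of the first stage: one must carry the particular Thomae normalization of Lemma~\ref{ThomaeLemma} all the way to the $\mathfrak{p}_{ij}$-labels and make certain that the two generators so produced are exactly the two-torsion points that Propositions~\ref{Lem:BT_Y1} and~\ref{Lem:VGSI_Y1} attached to $\imath^{\mathcal{Y}_1}$ and $\imath_F^{\mathcal{Y}_1}$ by reading off the action on the components of the $I_2$- and $I_0^*$-fibers via Table~\ref{tab:nodes}. To insulate the argument against an index error I would in fact run it backwards: start from the two translations already pinned down geometrically in those propositions, verify via the group law~(\ref{group_law}) that together with $\mathfrak{p}_0$ they form a maximal isotropic subgroup, and then confirm that the change of moduli they induce on $\mathcal{Y}_1$ is precisely the one recorded for $\psi_{12}$ in Equations~(\ref{relations_RosRoots}). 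Matching that change of moduli rigidly determines $\mathsf{k}_{12}$ and closes the argument without having to recompute the analytic kernel in the $\mathfrak{p}_{ij}$-notation from scratch.
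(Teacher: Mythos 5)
Your proposal is correct, and its second stage coincides with the paper's actual proof, which consists of citing Propositions~\ref{Lem:BT_Y1} and~\ref{Lem:VGSI_Y1} together with the equivariance of the Van Geemen--Sarti involutions under the factorization $\Psi_{12}=\phi^{\mathcal{X}_3}_{\mathcal{Y}_3}\circ\phi^{\mathcal{Y}_1}_{\mathcal{X}_3}$. What you add, and what the paper omits, is an independent determination of $\ker\psi_{12}$ — either analytically from the two-torsion points with vanishing $\underline{a}$-characteristic, or combinatorially from the Richelot pairing of Lemma~\ref{lem:R2isog} — and this extra check is genuinely worth carrying out because it surfaces an indexing inconsistency in the paper itself. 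The pairing $(1,\lambda_1),(\lambda_2,\lambda_3),(0,\infty)=(\lambda_4,\lambda_1),(\lambda_2,\lambda_3),(\lambda_5,\lambda_6)$ produces the G\"opel group $\{\mathfrak{p}_0,\mathfrak{p}_{14},\mathfrak{p}_{23},\mathfrak{p}_{56}\}$, which agrees with Proposition~\ref{Lem:VGSI_Y1} (where $\imath_F^{\mathcal{Y}_1}$ is the translation by $\mathfrak{p}_{14}=\mathfrak{e}_{34}$) but not with the displayed $\mathsf{k}_{12}=\{\mathfrak{p}_0,\mathfrak{p}_{12},\mathfrak{p}_{56},\mathfrak{p}_{34}\}$ nor with the theorem's attribution of $\imath_F^{\mathcal{Y}_1}$ to $\mathfrak{p}_{12}$. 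Your plan to run the argument backwards from the two propositions is exactly the right way to settle this: it forces $\mathsf{k}_{12}=\langle\mathfrak{p}_{14},\mathfrak{p}_{56}\rangle=\{\mathfrak{p}_0,\mathfrak{p}_{14},\mathfrak{p}_{23},\mathfrak{p}_{56}\}$, so the labels $\mathfrak{p}_{12},\mathfrak{p}_{34}$ in the statement should be read as $\mathfrak{p}_{14},\mathfrak{p}_{23}$; with that correction your argument closes the proof along the same lines as the authors', with the kernel now actually verified rather than asserted.
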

\begin{proof}
The proof follows from Propositions~\ref{Lem:BT_Y1} and~\ref{Lem:VGSI_Y1}, and the fact that by construction the two 
Van Geemen-Sarti involutions $\imath_F^{\mathcal{Y}_1}$ and $\imath_F^{\mathcal{Y}_3}$ act equivariantly
with respect to $\Psi_{12}=\phi^{\mathcal{X}_3}_{\mathcal{Y}_3} \circ \phi^{\mathcal{Y}_1}_{\mathcal{X}_3}$, i.e.,
$\Psi_{12} \circ \imath_F^{\mathcal{Y}_1} = \imath_F^{\mathcal{Y}_3}\circ \Psi_{12}$.
\end{proof}
Using Lemma~\ref{lem:decomp} we can find another maximal isotropic subgroup $\!\hat{\,\mathsf{k}}_{12}$ 
such that  $\!\hat{\,\mathsf{k}}_{12}+\mathsf{k}_{12}=\mathbf{A}[2]$, $\!\hat{\,\mathsf{k}}_{12} \cap \mathsf{k}_{12}=\{ \mathfrak{p}_0\}$.
 Set $\hat{\mathbf{A}}_{12}=\mathbf{A}/\mathsf{k}_{12}$, and denote the image of $\!\hat{\,\mathsf{k}}_{12}$ in $\hat{\mathbf{A}}_{12}$
 by $\mathsf{K}_{12}$. The group $\mathsf{K}_{12}$ of the two-torsion $\hat{\mathbf{A}}_{12}[2]$ defines the dual $(2,2)$-isogeny 
$\hat{\psi}_{12}: \hat{\mathbf{A}}_{12} = \operatorname{Jac}(\hat{\mathcal{C}\,}\!_{12}) \to \mathbf{A}$ since $\mathbf{A} = \hat{\mathbf{A}}_{12} /\mathsf{K}_{12}$.
The decomposition of the associated algebraic map $\hat{\Psi}_{12}: \operatorname{Kum}(\operatorname{Jac} \hat{\mathcal{C}\,}\!_{12})
\to \operatorname{Kum}(\operatorname{Jac} \mathcal{C})$ of Kummer surfaces  into  
$\phi^{\mathcal{X}_2}_{\mathcal{Y}_1} \circ \phi^{\mathcal{Y}_2}_{\mathcal{X}_2}$ corresponds to a factorization of $\mathsf{K}_{12}$ 
given by two generators:
\begin{theorem}
\label{cor:hat_psi}
The maximal isotropic subgroup $\mathsf{K}_{12}$ of the two-torsion $\hat{\mathbf{A}}_{12}[2]$ defining the $(2,2)$-isogeny 
$\hat{\psi}_{12}: \hat{\mathbf{A}}_{12} = \operatorname{Jac}(\hat{\mathcal{C}\,}\!_{12}) \to \mathbf{A} = \hat{\mathbf{A}}_{12} /\mathsf{K}_{12}$ 
in Equations~(\ref{relations_RosRoots})  is given by
\[ 
 \mathsf{K}_{12}= \{ \hat{\mathfrak{p}}_0, \hat{\mathfrak{p}}_{12}, \hat{\mathfrak{p}}_{56}, \hat{\mathfrak{p}}_{34}=\hat{\mathfrak{p}}_{12}+ \hat{\mathfrak{p}}_{56} \} \;.
\]
The translations by the two-torsion point 
$\hat{\mathfrak{p}}_{56}$ and $\hat{\mathfrak{p}}_{12}$ descend to automorphisms on $\mathcal{Y}_2=\operatorname{Kum}(\operatorname{Jac} \hat{\mathcal{C}\,}\!_{12})$
that act on the elliptic fibration given by $\mathsf{E}_4 + \mathsf{Q} + P_{46} + P_{56}$ as Van Geemen-Sarti involution $\imath_F^{\mathcal{Y}_2}$ and Nikulin involution 
$\jmath^{\mathcal{Y}_2}$ of a rational base transformation, respectively.
\end{theorem}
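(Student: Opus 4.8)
The plan is to mirror the short proof of Theorem~\ref{cor:psi}, transporting every statement to the isogenous surface $\hat{\mathbf{A}}_{12}=\operatorname{Jac}\hat{\mathcal{C}\,}\!_{12}$ via the symmetry recorded in Corollary~\ref{cor:symmetric}. First I would pin down the subgroup $\mathsf{K}_{12}$. By Lemma~\ref{lem:decomp}, $\mathsf{K}_{12}$ is the image in $\hat{\mathbf{A}}_{12}$ of a complementary maximal isotropic subgroup $\!\hat{\,\mathsf{k}}_{12}\subset\mathbf{A}[2]$ with $\!\hat{\,\mathsf{k}}_{12}+\mathsf{k}_{12}=\mathbf{A}[2]$ and $\!\hat{\,\mathsf{k}}_{12}\cap\mathsf{k}_{12}=\{\mathfrak{p}_0\}$. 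Since Corollary~\ref{cor:symmetric} shows that the roles of $\mathbf{A}$ and $\hat{\mathbf{A}}_{12}$ and of the isogenies $\psi_{12}$ and $\hat{\psi}_{12}$ are symmetric under $\lambda_i\leftrightarrow\Lambda_i$, $\theta_i\leftrightarrow\Theta_i$, the dual subgroup $\mathsf{K}_{12}$ must have on $\hat{\mathbf{A}}_{12}$ the same shape that Theorem~\ref{cor:psi} assigns to $\mathsf{k}_{12}$ on $\mathbf{A}$, namely $\mathsf{K}_{12}=\{\hat{\mathfrak{p}}_0,\hat{\mathfrak{p}}_{12},\hat{\mathfrak{p}}_{56},\hat{\mathfrak{p}}_{34}\}$; the relation $\hat{\mathfrak{p}}_{34}=\hat{\mathfrak{p}}_{12}+\hat{\mathfrak{p}}_{56}$ is then forced by the group law~(\ref{group_law}) on $\hat{\mathbf{A}}_{12}[2]$.

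Next I would identify each generator with the asserted involution. This is precisely the content of Propositions~\ref{Lem:VGSI_Y2} and~\ref{Lem:BT_Y2}: translation by $\hat{\mathfrak{p}}_{56}=\hat{\mathfrak{e}}_{12}$ descends to the Van Geemen-Sarti involution $\imath_F^{\mathcal{Y}_2}$ covering the fiberwise two-isogeny $\phi^{\mathcal{Y}_2}_{\mathcal{X}_2}$ of Equation~(\ref{Eq:isogeny_left_b}), whereas translation by $\hat{\mathfrak{p}}_{12}=\hat{\mathfrak{e}}_{34}$ descends to the Nikulin involution $\jmath^{\mathcal{Y}_2}$ lifting the base-transformation involution $\jmath^{\mathcal{X}_2}$. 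Both preserve the fibration with fiber class $\mathsf{E}_4+\mathsf{Q}+P_{46}+P_{56}$ on $\mathcal{Y}_2$, because $\imath_F^{\mathcal{Y}_2}$ acts fiberwise while $\jmath^{\mathcal{Y}_2}$ covers a degree-two base change.

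Finally I would check that the factorization $\mathsf{K}_{12}=\langle\hat{\mathfrak{p}}_{56}\rangle\oplus\langle\hat{\mathfrak{p}}_{12}\rangle$ matches the factorization $\hat{\Psi}_{12}=\phi^{\mathcal{X}_2}_{\mathcal{Y}_1}\circ\phi^{\mathcal{Y}_2}_{\mathcal{X}_2}$ of the induced Kummer map, in which $\phi^{\mathcal{Y}_2}_{\mathcal{X}_2}$ is the fiberwise two-isogeny (covered by $\imath_F^{\mathcal{Y}_2}$) and $\phi^{\mathcal{X}_2}_{\mathcal{Y}_1}$ is the degree-two base transformation (whose sheets are interchanged by $\jmath^{\mathcal{X}_2}$, lifted by $\jmath^{\mathcal{Y}_2}$). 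The main obstacle will be verifying that these two geometric operations are genuinely generated by the two translations and together descend $\hat{\psi}_{12}$; I would settle this, dual to the proof of Theorem~\ref{cor:psi}, by tracking the induced action on the nodes of $\mathcal{Y}_2$ recorded in Table~\ref{tab:nodes} and by invoking the equivariance of the Van Geemen-Sarti involutions with respect to $\hat{\Psi}_{12}$, the relation dual to the one used for $\Psi_{12}$ in Theorem~\ref{cor:psi}.
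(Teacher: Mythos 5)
Your proposal is correct and follows essentially the same route as the paper: the paper's proof likewise rests on Propositions~\ref{Lem:BT_Y2} and~\ref{Lem:VGSI_Y2} together with the equivariance $\hat{\Psi}_{12} \circ \imath_F^{\mathcal{Y}_2} = \imath_F^{\mathcal{Y}_1}\circ \hat{\Psi}_{12}$ for $\hat{\Psi}_{12}=\phi^{\mathcal{X}_2}_{\mathcal{Y}_1} \circ \phi^{\mathcal{Y}_2}_{\mathcal{X}_2}$, with the identification of $\mathsf{K}_{12}$ coming from Lemma~\ref{lem:decomp} and Corollary~\ref{cor:symmetric} exactly as you describe.
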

\begin{proof}
The proof follows from Propositions~\ref{Lem:BT_Y2} and~\ref{Lem:VGSI_Y2}, and the fact that by construction the two 
Van Geemen-Sarti involutions $\imath_F^{\mathcal{Y}_2}$ and $\imath_F^{\mathcal{Y}_1}$ act equivariantly
with respect to $\hat{\Psi}_{12}=\phi^{\mathcal{X}_2}_{\mathcal{Y}_1} \circ \phi^{\mathcal{Y}_2}_{\mathcal{X}_2}$, i.e.,
$\hat{\Psi}_{12} \circ \imath_F^{\mathcal{Y}_2} = \imath_F^{\mathcal{Y}_1}\circ \hat{\Psi}_{12}$.
\end{proof}

\subsection{Double covers of the projective plane}
We describe the surface $\mathcal{X}_3$ as branched double cover using the moduli $\lambda_1', \lambda_2', \lambda_3'$:
\begin{lemma}
The surface $\mathcal{X}_3$ is the double branched cover of $\mathbb{P}^2$ given by
\begin{equation}
\label{Eq:SixLines}
 y^2 = \ell_1 \, \ell_2 \, \ell_3 \, \ell_4 \,  \ell_5 \, \ell_6 \;,
\end{equation}
where the six lines $\ell_1 , \dots,  \ell_6$ in $\mathbb{P}^2$ are given by
\begin{equation}
\label{Eq:SixLines_def}
 \ell_i: Z_2 - \lambda_i' Z_3=0 \, (\text{for $i=1,2,3$}), \quad \ell_4: Z_1 -Z_2=0, \quad \ell_{5,6}: Z_1 \pm 2 Z_3=0 \;.
\end{equation} 
In particular, the surface $\mathcal{X}_3$ is the minimal non-singular model for the double 
cover of the projective plane branched along six lines.  
Moreover, the six lines intersect as follows
\begin{align*}
 \ell_i \cap \ell_j = \lbrace [1:0:0] \rbrace, &\quad \ell_i \cap \ell_4 = \lbrace [\lambda_i': \lambda_i':1] \rbrace, \quad
  \ell_i \cap \ell_{5,6}  = \lbrace [\pm 2: \lambda_i':1] \rbrace, \\
  \ell_4 \cap \ell_{5,6} &= \lbrace [2:2:\mp1] \rbrace, \quad  \ell_5 \cap \ell_{6} = \lbrace [0:1:0] \rbrace.
\end{align*}
The parameters are given by
\begin{equation}
\label{moduli}
 \lambda_1'= \frac{\theta_1^4+\theta_2^4-\theta_3^4-\theta_4^4}{\theta_1^2\theta_2^2-\theta_3^3\theta_4^2}, \quad \lambda_2' =\frac{\theta_1^4+\theta_2^4}{\theta_1^2\theta_2^2}, \quad \lambda_3' =\frac{\theta_3^4+\theta_4^4}{\theta_3^2\theta_4^2} \;.
\end{equation}
 \end{lemma}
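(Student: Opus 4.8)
The plan is to transform the Weierstrass model of $\mathcal{X}_3$ in Equation~(\ref{K3_2_ell}) into a double cover of $\mathbb{P}^2$ by an explicit rescaling, to read off the six branch lines together with their intersection points, and finally to re-express the coefficients $\lambda_1',\lambda_2',\lambda_3'$ through genus-two theta-constants.

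First I would exploit the fact that $\mathcal{X}_3$ is, by construction, the quadratic twist of the rational elliptic surface $\mathcal{Z}_3$ of Equation~(\ref{rational2_ell}) by the factor $U_2^2-4\,l^2U_1^2$. On the affine chart $U_1=1$, $Z=1$ I set $u=U_2/l$ and substitute $X=l\,\xi$, $Y=l^3\,\eta$ into Equation~(\ref{K3_2_ell}). Each factor $X-\lambda_i'\,l\,U_1Z=l(\xi-\lambda_i')$ and $X-U_2Z=l(\xi-u)$ contributes one power of $l$, while $U_2^2-4\,l^2U_1^2=l^2(u^2-4)$; the total factor $l^6$ then cancels against $Y^2=l^6\eta^2$, yielding the reduced double cover
\[
 \eta^2 = (u^2-4)\,(\xi-u)\,(\xi-\lambda_1')\,(\xi-\lambda_2')\,(\xi-\lambda_3')\,.
\]
Since $(U_2,X,Y)\mapsto(u,\xi,\eta)$ is invertible, this identifies a dense open subset of $\mathcal{X}_3$ with a dense open subset of the affine double plane.

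Next I would homogenize. Regarding the right-hand side as a polynomial of total degree six in $(u,\xi)$ and setting $[Z_1:Z_2:Z_3]=[u:\xi:1]$, the sextic factors exactly as $\ell_1\cdots\ell_6$ from Equation~(\ref{Eq:SixLines_def}): the quadratic $u^2-4$ splits into $\ell_5,\ell_6$, the factor $\xi-u$ becomes $\ell_4$ (up to an overall sign I absorb by rescaling one defining linear form), and the $\xi-\lambda_i'$ give $\ell_1,\ell_2,\ell_3$. Because $\mathcal{X}_3$ is a smooth K3 surface by Proposition~\ref{lem:EllRight} and the minimal resolution of the double sextic along six lines (whose only singularities are the node at each pairwise crossing and the $D_4$-point at the triple point $[1:0:0]$) is again a K3 surface, the birational identification above upgrades to an isomorphism; hence $\mathcal{X}_3$ is the stated minimal non-singular model. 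I would then record the pairwise intersections directly: $\ell_i\cap\ell_j=\{[1:0:0]\}$ for $1\le i<j\le3$ (as $Z_2=Z_3=0$), $\ell_i\cap\ell_4=\{[\lambda_i':\lambda_i':1]\}$, $\ell_i\cap\ell_{5,6}=\{[\pm2:\lambda_i':1]\}$, $\ell_4\cap\ell_{5,6}=\{[2:2:\mp1]\}$, and $\ell_5\cap\ell_6=\{[0:1:0]\}$.

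For the moduli I would insert the Rosenhain roots of Equation~(\ref{Picard}) together with $l^2=\lambda_1\lambda_2\lambda_3$ into the definitions $\lambda_1'=(\lambda_1+\lambda_2\lambda_3)/l$, $\lambda_2'=(\lambda_2+\lambda_1\lambda_3)/l$, $\lambda_3'=(\lambda_3+\lambda_1\lambda_2)/l$. A short simplification gives $\lambda_2'=(\theta_1^4+\theta_2^4)/(\theta_1^2\theta_2^2)$ and $\lambda_3'=(\theta_3^4+\theta_4^4)/(\theta_3^2\theta_4^2)$ immediately, and first produces $\lambda_1'=(\theta_8^4+\theta_{10}^4)/(\theta_8^2\theta_{10}^2)$; the stated form of $\lambda_1'$ then follows from the principal transformation formulas~(\ref{Eq:degree2doubling}) and~(\ref{Eq:degree2doublingR}), since writing both expressions in the variables $\Theta_i^2$ shows each equals $2(\Theta_1^2\Theta_2^2+\Theta_3^2\Theta_4^2)/(\Theta_1^2\Theta_2^2-\Theta_3^2\Theta_4^2)$. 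The main obstacle is the first step: seeing that the quadratic-twist structure of $\mathcal{X}_3$ is exactly what makes its total space a double sextic whose branch locus degenerates into six lines with three concurrent, and choosing the rescaling that exhibits this. Once the reduced equation is obtained, the identification of the six lines and their intersections is immediate, and the only genuinely non-trivial residue is the single theta identity matching the two expressions for $\lambda_1'$.
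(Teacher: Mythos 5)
Your proof is correct and follows essentially the same route as the paper: the paper's one-line proof is precisely the substitution $Z_1=U_2Z$, $Z_2=X$, $Z_3=l\,U_1Z$ in Equation~(\ref{K3_2_ell}), which is the homogeneous form of your affine rescaling. You additionally spell out the verification of the theta-constant expressions~(\ref{moduli}) (which the paper leaves implicit), and your computation there — reducing $\lambda_1'=(\theta_8^4+\theta_{10}^4)/(\theta_8^2\theta_{10}^2)$ to the stated form via the degree-two transformation formulas — checks out.
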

 \begin{proof}
 The proof follows directly by setting $Z_1=U_2Z$, $Z_2=X$, $Z_3=l \, U_1 Z$ in Equation~(\ref{K3_2_ell}).
 In terms of affine coordinates we have $Z_1=u$, $Z_2=x$, $Z_3=1$.
 \end{proof}
 There is a second way of obtaining  surface $\mathcal{X}_3$ as double branched cover of $\mathbb{P}^2$ using the moduli 
 $\Lambda_1', \Lambda_2', \Lambda_3'$:
\begin{lemma}
\label{lem:double_cover}
The surface $\mathcal{X}_3$ is the double branched cover of $\mathbb{P}^2$ given by
\begin{equation}
\label{Eq:SixLines_b}
 Y^2 = L_1 \, L_2 \, L_3 \, L_4 \,  L_5 \, L_6 \;,
\end{equation}
where the six lines $L_1 , \dots,  L_6$ in $\mathbb{P}^2$ are given by
\begin{equation}
\label{Eq:SixLines_def_b}
 L_i: Z_2 - \Lambda_i' Z_3=0 \, (\text{for $i=1,2,3$}), \quad L_4: Z_1 -Z_2=0, \quad L_{5,6}: Z_1 \pm 2 Z_3=0 \;.
\end{equation} 
In particular, the surface $\mathcal{X}_3$ is the minimal non-singular model for the double 
cover of the projective plane branched along six lines.  
Moreover, the six lines intersect as follows
\begin{align*}
 L_i \cap L_j = \lbrace [1:0:0] \rbrace, &\quad L_i \cap L_4 = \lbrace [\Lambda_i': \Lambda_i':1] \rbrace, \quad
  L_i \cap L_{5,6}  = \lbrace [\pm 2: \Lambda_i':1] \rbrace, \\
  L_4 \cap L_{5,6} &= \lbrace [2:2:\mp1] \rbrace, \quad  L_5 \cap L_{6} = \lbrace [0:1:0] \rbrace.
\end{align*}
The parameters are given by
\begin{equation}
\label{moduli-b}
 \Lambda_1'= \frac{\Theta_1^4+\Theta_2^4-\Theta_3^4-\Theta_4^4}{\Theta_1^2\Theta_2^2-\Theta_3^3\Theta_4^2}, \quad \Lambda_2' =\frac{\Theta_1^4+\Theta_2^4}{\Theta_1^2\Theta_2^2}, \quad \Lambda_3' =\frac{\Theta_3^4+\Theta_4^4}{\Theta_3^2\Theta_4^2} \;.
\end{equation}
 \end{lemma}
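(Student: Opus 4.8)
The plan is to obtain the second presentation not by a fresh computation but by exploiting the symmetry of the entire $(2,2)$-isogeny construction recorded in Corollary~\ref{cor:symmetric}, together with the isomorphism of Theorem~\ref{cor:equiv}. Recall that in the lemma immediately preceding this one the description of $\mathcal{X}_3$ as the double cover (\ref{Eq:SixLines}) with slopes $\lambda_1',\lambda_2',\lambda_3'$ was read off directly from the Weierstrass model (\ref{K3_2_ell}) by the substitution $Z_1=U_2Z$, $Z_2=X$, $Z_3=l\,U_1Z$. Every ingredient entering that argument --- the Rosenhain roots $\lambda_i$, the quantity $l$ with $l^2=\lambda_1\lambda_2\lambda_3$, the primed moduli $\lambda_i'$, and hence the six lines --- is attached to the genus-two curve $\mathcal{C}$ and its theta-constants $\theta_1,\dots,\theta_4$. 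So the whole presentation is the output of the chain of constructions of Sections~\ref{sec:kummers} run on $\mathcal{C}$.

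First I would carry out the identical chain of constructions with the dual curve $\hat{\mathcal{C}\,}\!_{12}$ in place of $\mathcal{C}$. Its Rosenhain roots are $\Lambda_1,\Lambda_2,\Lambda_3$, the corresponding quantity is $L$ with $L^2=\Lambda_1\Lambda_2\Lambda_3$, and the primed moduli are $\Lambda_1',\Lambda_2',\Lambda_3'$; substituting these into the dual analogue of (\ref{K3_2_ell}) produces, by the verbatim argument of the preceding lemma, the double cover (\ref{Eq:SixLines_b}) branched along the six lines (\ref{Eq:SixLines_def_b}). Denote the resulting K3 surface by $\hat{\mathcal{X}}_3$. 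Applying Lemma~\ref{Kum_B_dual_12b} with $\hat{\mathcal{C}\,}\!_{12}$ in the role of $\mathcal{C}$ identifies $\hat{\mathcal{X}}_3$ as the $(1,2)$-polarized Kummer surface covering the Jacobian of the curve dual to $\hat{\mathcal{C}\,}\!_{12}$; by Corollary~\ref{cor:symmetric} that dual curve is $\mathcal{C}$ itself, so $\hat{\mathcal{X}}_3=\operatorname{Kum}(\mathbf{B}_{12})$. On the other hand Lemma~\ref{Kum_B_dual_12b} gives $\mathcal{X}_3=\operatorname{Kum}(\hat{\mathbf{B}}_{12})$ for the original surface.

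The remaining step --- and the real crux --- is the identification $\hat{\mathcal{X}}_3\cong\mathcal{X}_3$, which is exactly the content of Theorem~\ref{cor:equiv}, namely $\operatorname{Kum}(\mathbf{B}_{12})\cong\operatorname{Kum}(\hat{\mathbf{B}}_{12})$. This is where the substantive work resides: it is the nontrivial fact that the $(1,2)$-polarized surface and its $(2,2)$-isogenous partner have isomorphic Kummer surfaces, and once it is granted the two double-cover presentations describe one and the same K3 surface. Transporting (\ref{Eq:SixLines_b}) across this isomorphism then yields the asserted representation of $\mathcal{X}_3$. The incidence pattern of the six lines is formally identical to that of the previous lemma with $\lambda_i'$ replaced by $\Lambda_i'$ --- note that $L_4,L_5,L_6$ are unchanged and only the three lines concurrent at $[1:0:0]$ acquire new slopes --- so it follows from the same elementary intersection computation. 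Finally, the moduli formula (\ref{moduli-b}) is the dual of (\ref{moduli}) under the interchange $\theta_i\leftrightarrow\Theta_i$, $\lambda_i\leftrightarrow\Lambda_i$; this is legitimate because the derivation of (\ref{moduli}) uses only Lemma~\ref{ThomaeLemma} and the definition $\lambda_i'=(\lambda_i+\lambda_j\lambda_k)/l$, whose dual counterparts are (\ref{Picard_sq}), the principal transformations (\ref{Eq:degree2doubling})--(\ref{Eq:degree2doublingR}), and $\Lambda_i'=(\Lambda_i+\Lambda_j\Lambda_k)/L$. I would expect the only genuine obstacle to be the appeal to Theorem~\ref{cor:equiv}: a direct proof that would instead exhibit an explicit projective equivalence between the two six-line configurations with parameters related by (\ref{relations_RosRoots}) is possible in principle but considerably more laborious, and the symmetry route is designed precisely to circumvent it.
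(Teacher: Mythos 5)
Your route is correct in outline but genuinely different from the paper's. The paper proves the lemma by a single explicit computation: starting from the Weierstrass model (\ref{kummer_right_ell_p_W}) of $\mathcal{X}_3$, it substitutes the isogeny relations (\ref{relations_RosRoots}) and applies a fractional-linear change of the base coordinate,
\[
 u \;=\; \lambda_1' + \frac{4(\lambda_1'-\lambda_2')(\lambda_1'-\lambda_3')}{(\lambda_2'-\lambda_3')U+2\lambda_2'+2\lambda_3'-4\lambda_1'}\,,
\]
together with a fiberwise rescaling of $x$, after which the equation literally becomes $Y^2=L_1\cdots L_6$ with $Z_1=X$, $Z_2=U$, $Z_3=1$. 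In other words, the paper carries out exactly the ``explicit projective equivalence between the two six-line configurations'' that you judged too laborious and built your argument to avoid. What the explicit transformation buys is an identification at the level of the given model --- in particular the new base parameter $U$ is a M\"obius transform of $u$, so the elliptic pencil through $[1:0:0]$ is visibly the fibration $\pi^{\mathcal{X}_3}$ --- and the same substitution is immediately reused in the following lemma for $\mathcal{Y}_3$. What your symmetry route buys is conceptual economy, at the price of two caveats you should make explicit. First, your conclusion is an abstract isomorphism $\mathcal{X}_3\cong\hat{\mathcal{X}}_3$, which suffices for the ``minimal non-singular model'' phrasing but is weaker than the paper's coordinate-level statement. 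Second, the appeal to Theorem~\ref{cor:equiv} is only legitimate if the dual-of-the-dual even eight really carries the label $\Delta_{12}$: by Mehran's rigidity result the surfaces $\operatorname{Kum}(\mathbf{B}_{ij})$ are pairwise non-isomorphic, so if the dualization of Lemma~\ref{Kum_B_dual_12b} produced some other $\Delta_{i'j'}$ the chain $\hat{\mathcal{X}}_3=\operatorname{Kum}(\mathbf{B}_{12})\cong\operatorname{Kum}(\hat{\mathbf{B}}_{12})=\mathcal{X}_3$ would break. That bookkeeping rests on Corollary~\ref{cor:symmetric} and is consistent with the paper's conventions, but it is the one load-bearing step in your argument that is asserted rather than verified, and a sign ambiguity in the choice of $L$ with $L^2=\Lambda_1\Lambda_2\Lambda_3$ (which flips all $\Lambda_i'$) should also be pinned down to match (\ref{moduli-b}).
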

 \begin{proof}
In Equation~(\ref{kummer_right_ell_p_W}), we use the transformation in Equations~(\ref{relations_RosRoots}) and set
\begin{align*}
 u & =\lambda_1' + \dfrac{4 (\lambda_1'-\lambda_2') (\lambda_1'-\lambda_3')}{(\lambda_2'-\lambda_3')U+2\lambda_2'+2\lambda_3'-4\lambda_1'},\\
 x & = \frac{256 \, (\Lambda_1'-\Lambda_2')^3 (\Lambda_1'-\Lambda_3')^3 (U-\Lambda_2')\,(U-\Lambda_3')\,(X-U)}{(\Lambda_2'-\Lambda_3')^4(\Lambda_1'-2)(\Lambda_1'+2)(U-\Lambda_1')^3}\;.
 \end{align*}
In terms of these affine coordinates we then have $Z_1=X$, $Z_2=U$, $Z_3=1$.
 \end{proof}

We immediately obtain the following:
\begin{corollary}
\label{lem:branched_double}
The surface $\mathcal{X}_3$ is the minimal non-singular model for 
the double cover of the projective plane branched along six lines, three of which have a common point.
The situation is shown in Figure~\ref{Fig:6Lines}.
\end{corollary}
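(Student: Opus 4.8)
The plan is to read off the statement directly from the explicit branch configuration produced in Lemma~\ref{lem:double_cover} (and equally from the unlabeled lemma immediately preceding it). First I would invoke Lemma~\ref{lem:double_cover}, which already realizes $\mathcal{X}_3$ as the minimal non-singular model of the double cover of $\mathbb{P}^2$ branched along the six lines $L_1,\dots,L_6$ of Equation~(\ref{Eq:SixLines_def_b}), together with the complete table of their pairwise intersections. Since that lemma supplies both the branch equation $Y^2=L_1\cdots L_6$ and all the intersection points, the only work left is to extract the concurrency statement from this data.

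Next I would observe that each of the three lines $L_1,L_2,L_3$ has the form $Z_2-\Lambda_i'Z_3=0$, so all three vanish simultaneously exactly at the point $[1:0:0]$; this is precisely the entry $L_i\cap L_j=\{[1:0:0]\}$ recorded in the lemma for $1\le i<j\le 3$. Hence three of the six branch lines pass through the single common point $[1:0:0]$. To confirm that the configuration is of precisely the claimed type, I would verify from the remaining entries that the complementary triple $L_4,L_5,L_6$ is \emph{not} concurrent: the lemma gives $L_4\cap L_5=\{[2:2:-1]\}$, $L_4\cap L_6=\{[2:2:1]\}$, and $L_5\cap L_6=\{[0:1:0]\}$, which are three distinct points, while the mixed intersections $L_i\cap L_4$ and $L_i\cap L_{5,6}$ are all simple and distinct. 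Thus exactly one triple of the six lines is concurrent.

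The step is therefore bookkeeping rather than a genuine obstacle: everything rests on the explicit coordinates already produced, and the corollary is simply the geometric reading of that configuration, realizing the second limit case of the double sextic construction discussed in the introduction and depicted in Figure~\ref{Fig:6Lines}. The only point needing a moment's care is confirming that precisely one triple (rather than two, which would force additional singularities on the double cover beyond the single triple point) is concurrent; the intersection table settles this at once, and the \emph{minimal non-singular model} assertion is inherited verbatim from Lemma~\ref{lem:double_cover}.
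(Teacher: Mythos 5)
Your proposal is correct and follows exactly the route the paper intends: the corollary is stated as an immediate consequence of Lemma~\ref{lem:double_cover}, and the only content is reading off from the intersection table that $L_1,L_2,L_3$ (each of the form $Z_2-\Lambda_i'Z_3=0$) are concurrent at $[1:0:0]$ while no other triple is. Your extra check that the complementary triple is not concurrent is a reasonable piece of diligence that the paper leaves implicit.
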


\subsubsection{Special Elliptic fibration from double cover}
The elliptic pencil on $\mathcal{X}_3$ discussed in Proposition~\ref{lem:EllRight}
corresponds to the line with four points (marked by intersections) as it runs through the pencil determined by the one common point of three line.
Let us describe the situation in more detail:

Let $L_1, L_2, \dots, L_6$ be the six lines in $\mathbb{P}^2$ of Lemma~\ref{lem:double_cover}. Assume that the first three lines meet at a common point $p_0$. The six lines are subject to no other general conditions. We shall construct the surface $\mathcal{X}_3$ obtained as the double cover of $\mathbb{P}^2$ 
branched over the six lines. 

Take the blow-up of $\mathbb{P}^2$ at the point $p_0$ and denote by $E_0$ the resulting exceptional curve. 
The proper transforms of $L_1, L_2$ and $L_3$ meet $E_0$ at 
three points. We blow up these points with resulting exceptional curves $E_{01}$, $E_{02}$ and $E_{03}$.  
Furthermore, we denote by $p_{ij}$ the intersection point $L_i \cap L_j$ and we perform blow-ups at these twelve points with 
resulting exceptional curves $E_{ij}$.  
Denote by $\mathcal{R}$ the rational surface obtained from $\mathbb{P}^2$ after the above sixteen blow-ups. A standard computation 
gives for twice the canonical divisor
\begin{equation}
\label{canonical}
2 K_{\mathcal{R}} \ \sim \ E_0 + \sum_{i=1}^6 L_i \ .
\end{equation}
Here, by abuse of notation, $L_i$ also denoted the proper transform on $\mathcal{R}$ of the line $L_i$. 

Consider $\mathcal{X}_3$ as the surface obtained as double cover of $\mathcal{R}$ branched over the right-hand-side divisor of $(\ref{canonical})$. Since 
the branching locus is smooth, the double-cover surface $\mathcal{X}_3$ is also smooth. Moreover, via $(\ref{canonical})$ and the Hurwitz formula, 
one sees that $\mathcal{X}_3$ is a K3 surface. 
Note that each of the exceptional curves $E_{ij}$ meets the branching locus exactly twice. Therefore, they lift on $\mathcal{X}_3$ to rational curves 
which we shall denote by $G_{ij}$. We also denote by $G_0$ and $S_1, S_2, \dots, S_6$ the proper transforms 
of the curves in the branching locus.  

Consider the pencil of lines through the point $p_0$. This pencil becomes base-point free when lifted to $\mathcal{R}$, i.e., $ \vert H - E_0 \vert $.
It defines a ruling on $\mathcal{R}$ whose generic member is a smooth rational curve that meets $E_0$, $L_4, L_5, L_6$ 
exactly once and does not meet $L_1, L_2, L_3$. Pulling back the pencil to the K3 surface $\mathcal{X}_3$, we obtain an elliptic fibration 
$\pi^{\mathcal{X}_3} \colon \mathcal{X}_3 \rightarrow \mathbb{P}^1$. An analysis of the singular fibers in this fibration reveals the following situation. 
One has three singular fibers of type $I_0^*$ given by
\begin{equation}
\label{I0star-fibers}
\begin{split}
2S_1 + G_{01} + G_{14} + G_{15} + G_{16} , &\qquad 2S_2 + G_{02} + G_{24} + G_{25} + G_{26}, \\
2S_3 + G_{03} + &G_{34} + G_{35} + G_{36} .
\end{split}
\end{equation}
There are additional singular fibers. Consider the three lines in $\mathbb{P}^2$ passing through $p_0$ and $p_{ij}$ where $ \{ i, j \}  \subset \{4,5,6\}$. The proper transforms of these lines in $\mathcal{R}$ meet the branching locus exactly twice and hence they lift to rational curves in $\mathcal{X}_3$ which we denote by 
$U_{ij}$. Then the following make three singular fibers of type $I_2$ in our elliptic fibration:
$$ G_{45}+ U_{45}, \ \ G_{46}+ U_{46}, \ \ G_{56}+ U_{56}  \ .$$
The Euler characteristics of the above singular fibers add up to twenty-four. Hence the above make all the singular fibers of the elliptic fibration $\pi^{\mathcal{X}_3} \colon \mathcal{X}_3 \rightarrow \mathbb{P}^1 $.  We note that the fibration $\pi^{\mathcal{X}_3}$ has four sections given by the rational curves $E_0$, $L_1$, $L_2$, and $L_3$. These generate a Mordell-Weil group $\operatorname{MW}(\pi^{\mathcal{X}_3}, E_0)$ isomorphic to $(\mathbb{Z}/2)^2$. 
\par We also note that a second elliptic fibration on $\mathcal{X}_3$ with a similar set of singular fibers/sections can be obtained by considering the pencil of conics through the four points $p_0$, $p_{45}, p_{46}, p_{56}$.

\begin{figure}[ht]
  $$
  \begin{xy}
    <0cm,0cm>;<1cm,0cm>:
    (-1,0.5);(5.5,0.5)**@{-}, 
    (-1,2.8);(5.5,-0.5)**@{-},   
    (-1,2);(5.5,1.1)**@{-},    
    (-1,-0.5);(5,3.5)**@{-},
    (4,3.5);(4,-0.5)**@{-},
    (4.35,3.5);(2.2,-0.5)**@{-},
    (4.31,2.6)*++!\hbox{\tiny{$p_0$}},
    (4.31,-0.5)*++!\hbox{\tiny{$L_1$}},
    (2.6,-0.5)*++!\hbox{\tiny{$L_2$}},
    (-.4,-0.5)*++!\hbox{\tiny{$L_3$}},
  \end{xy}
  $$
\caption{\label{Fig:6Lines}}
\end{figure}

In the context of the above double cover model we also have:
\begin{theorem}
\label{prop:double_cover}
The minimal non-singular model for  the double cover of the projective plane branched along six lines, 
three of which have a common point, is the $(1,2)$-polarized Kummer surface $\operatorname{Kum}(\mathbf{B}_{12})$.
\end{theorem}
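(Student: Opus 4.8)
The plan is to prove the statement by synthesis: I would identify the minimal non-singular model of the branched double cover with the K3 surface $\mathcal{X}_3$ already studied, and then transport that identification to $\operatorname{Kum}(\mathbf{B}_{12})$ along the chain of isomorphisms constructed in the previous subsections. Since all the hard analytic and lattice-theoretic work has been done in the lemmas above, the theorem itself reduces to chaining those results together, with one verification of compatibility that I would make explicit.

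First I would invoke the explicit blow-up construction carried out in the preceding subsection. Starting from $\mathbb{P}^2$ with the six lines $L_1, \dots, L_6$, where $L_1, L_2, L_3$ meet at the common point $p_0$, the sixteen blow-ups produce the rational surface $\mathcal{R}$ satisfying $2K_{\mathcal{R}} \sim E_0 + \sum_{i=1}^6 L_i$ as in Equation~(\ref{canonical}). The double cover of $\mathcal{R}$ branched over this \emph{smooth} divisor is, via the Hurwitz formula, a smooth K3 surface; by Corollary~\ref{lem:branched_double} this surface is precisely the minimal non-singular model $\mathcal{X}_3$. Next I would confirm that the elliptic fibration obtained from the pencil $|H - E_0|$ of lines through $p_0$ coincides with the Jacobian elliptic fibration of Proposition~\ref{lem:EllRight}: the fiber analysis of the line pencil yields three fibers of type $I_0^*$ coming from $L_1, L_2, L_3$ as in Equation~(\ref{I0star-fibers}), three fibers of type $I_2$ coming from the lines joining $p_0$ to the points $p_{ij}$ with $\{i,j\}\subset\{4,5,6\}$, and a Mordell-Weil group $(\mathbb{Z}/2)^2$ generated by the sections $E_0, L_1, L_2, L_3$. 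This is exactly the fiber configuration and Mordell-Weil data recorded in Proposition~\ref{lem:EllRight}, so the two fibrations agree and $\mathcal{X}_3$ here carries the Weierstrass model~(\ref{kummer_right_ell_p_W}).

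To close the argument I would use the identifications already proved. By Lemma~\ref{Kum_B_dual_12b}, the surface $\mathcal{X}_3$ is the Kummer surface $\operatorname{Kum}(\hat{\mathbf{B}}_{12})$ of the $(1,2)$-polarized abelian surface $\hat{\mathbf{B}}_{12}$ isogenous to $\operatorname{Jac} \hat{\mathcal{C}\,}\!_{12}$. By Theorem~\ref{cor:equiv} there is an isomorphism $\operatorname{Kum}(\hat{\mathbf{B}}_{12}) \cong \operatorname{Kum}(\mathbf{B}_{12})$. Composing these, the minimal non-singular model of the double cover of $\mathbb{P}^2$ branched along the six lines, three of which meet at a common point, is $\operatorname{Kum}(\mathbf{B}_{12})$, as claimed.

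The main obstacle, already resolved in the preceding lemmas, is the verification that the geometrically defined fibration arising from the line pencil is genuinely the one whose Weierstrass form was matched against Kumar's classification. The decisive step is the cross-ratio computation of the four distinguished base points of the $I_0^*$ and $I_4$ fibers carried out in Lemma~\ref{lem:RightTop2}, which pins down that $\hat{\mathcal{C}\,}\!_{12}$ is indeed the $(2,2)$-isogenous curve via Corollary~\ref{cor:symmetric}, and hence that the intermediate $(1,2)$-polarized surface appearing is $\hat{\mathbf{B}}_{12}$ rather than some other even-eight quotient. Granting those identifications, the theorem follows by direct concatenation of Corollary~\ref{lem:branched_double}, Lemma~\ref{Kum_B_dual_12b}, and Theorem~\ref{cor:equiv}.
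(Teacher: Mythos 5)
Your proposal is correct and follows essentially the same route as the paper: the paper's proof is precisely the concatenation of Corollary~\ref{lem:branched_double} (identifying the minimal non-singular model of the branched double cover with $\mathcal{X}_3$), the identification $\mathcal{X}_3=\operatorname{Kum}(\hat{\mathbf{B}}_{12})$ from Lemma~\ref{Kum_B_dual_12b}, and the isomorphism $\operatorname{Kum}(\hat{\mathbf{B}}_{12})\cong\operatorname{Kum}(\mathbf{B}_{12})$ of Theorem~\ref{cor:equiv}. The additional verifications you flag (matching the line-pencil fibration with Proposition~\ref{lem:EllRight} and the cross-ratio argument of Lemma~\ref{lem:RightTop2}) are exactly the supporting steps the paper carries out in the preceding subsections rather than inside the proof itself.
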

\begin{proof}
The proof follows from applying Lemma~\ref{lem:branched_double} and Theorem~\ref{cor:equiv}
to $\mathcal{X}_3=\operatorname{Kum}(\hat{\mathbf{B}}_{12})$.
\end{proof}

\begin{remark}
\label{Gab2}
As pointed out in Remark~\ref{Gab1}, there is a description of $\operatorname{Kum}(\mathbf{B}_{12})$ as specialization within a family of K3 surface with Picard rank fourteen
whose general member is the double cover of the projective plane with a branch locus that is the 
union of four divisors of bidegree $(1,1)$ in $\mathbb{P}^1 \times \mathbb{P}^1$. In \cite[Sec.~2.4.5]{MR2363136orig} Garbagnati gives another description of the branch locus
as reducible sextic: there is a projection of $\mathbb{P}^1 \times \mathbb{P}^1$ to $\mathbb{P}^2$ 
such that the union of the four divisors of bidegree $(1,1)$ is sent in the union of two conics and two lines. This is achieved by embedding 
$\mathbb{P}^1 \times \mathbb{P}^1 \hookrightarrow \mathbb{P}^3$ such that the projection of $\mathbb{P}^1 \times \mathbb{P}^1$ 
from one of the points of intersections of two divisor is a plane. The four divisors are then mapped to two lines and two conics.
Garbagnati also proves that the general member admits a genus one curve fibration (without section) with three fibers of type $I_0^*$ \cite[Sec.~2.4.5]{MR2363136orig}.
The specialization to $\operatorname{Kum}(\mathbf{B}_{12})$ is given by requiring that the branch locus is tangent to three conics.
\end{remark}

Similarly, we have the following:
\begin{lemma}
The surface $\mathcal{Y}_3=\operatorname{Kum}(\operatorname{Jac} \hat{\mathcal{C}\,}\!)$ 
is the double branched cover of $\mathbb{P}^2$ given by
\begin{equation}
\label{Eq:SixLinesB}
 Y^2 = X \, \big(X^2 + U \, X + 1\big)\, (U-\Lambda_1') \, (U-\Lambda_2') \, (U-\Lambda_3')\;,
\end{equation}
In particular, $\mathcal{Y}_3$ is the double cover of the projective plane branched  
along four lines, exactly three of which have a common point, and a conic tangent to the fourth
line which does not meet the common point. The situation is shown in Figure~\ref{Fig:4Lines1Conic}. 
\end{lemma}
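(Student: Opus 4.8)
The plan is to follow the template of Lemma~\ref{lem:double_cover}, but starting from the Weierstrass model of $\mathcal{Y}_3$ rather than that of $\mathcal{X}_3$. By Lemma~\ref{lem:RightTop2} we already know $\mathcal{Y}_3 = \operatorname{Kum}(\operatorname{Jac}\hat{\mathcal{C}\,}\!_{12})$, and Equation~(\ref{K3_2_ell_dual_W}) provides the explicit Jacobian elliptic model $Y^2 = X\big(X^2 - 2(u^2-4)q_2(u)X + (u^2-4)^2(q_2^2(u)-4q_1(u))\big)$ over the base coordinate $u$, with $q_1, q_2$ as in Equations~(\ref{polynomials_right}). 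The first key fact I would record is the polynomial identity $q_2(u)^2 - 4q_1(u) = (\lambda_2'-\lambda_3')^2(u-\lambda_1')^2$, a direct expansion which shows that the nonzero two-torsion collides to fourth order precisely at $u = \lambda_1'$. With this identity the fibre of Equation~(\ref{K3_2_ell_dual_W}) carries only the single rational two-torsion section $(X,Y)=(0,0)$, consistent with $\operatorname{MW}(\pi^{\mathcal{Y}_3}) = \mathbb{Z}/2$ from Lemma~\ref{lem:RightTop}.

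Next I would Weierstrass-analyse the candidate double cover~(\ref{Eq:SixLinesB}). Writing $c(U) = (U-\Lambda_1')(U-\Lambda_2')(U-\Lambda_3')$ and restricting the double cover $Y^2 = c(U)\,X(X^2 + UX + 1)$ to the pencil of lines $U=\text{const}$ through the common point $[1:0:0]$, one clears the leading coefficient by the birational rescaling $(X,Y)\mapsto(x,y)=(c(U)X,c(U)Y)$ to obtain the Weierstrass form $y^2 = x\big(x^2 + c(U)U\,x + c(U)^2\big)$. A singular-fibre count for this fibration gives three fibres of type $I_0^*$ over $U = \Lambda_1', \Lambda_2', \Lambda_3'$ (where $c$ has simple zeros, so $\operatorname{ord}(\Delta)=6$ with $g_2, g_3$ of orders $2, 3$), two fibres of type $I_1$ over $U = \pm 2$ (where $U^2-4$ vanishes simply), and one multiplicative fibre of type $I_4$ over $U = \infty$, together with $\operatorname{MW} = \mathbb{Z}/2$. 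This matches the configuration $I_4 + 2\,I_1 + 3\,I_0^*$ of Lemma~\ref{lem:RightTop} exactly.

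I would then exhibit the explicit birational identification of the two Jacobian elliptic surfaces. On the base this is the same Möbius substitution $u = \lambda_1' + 4(\lambda_1'-\lambda_2')(\lambda_1'-\lambda_3')\big/\big((\lambda_2'-\lambda_3')U + 2\lambda_2'+2\lambda_3'-4\lambda_1'\big)$ used in Lemma~\ref{lem:double_cover}, which sends the $I_0^*$-locus $\{2,-2,\infty\}$ of Equation~(\ref{K3_2_ell_dual_W}) to $\{\Lambda_1',\Lambda_2',\Lambda_3'\}$, the $I_4$-point $u=\lambda_1'$ to $U=\infty$, and the $I_1$-points $\{\lambda_2',\lambda_3'\}$ to $\{2,-2\}$; combined with the moduli relations~(\ref{relations_RosRoots}) and a fibrewise rescaling of $(X,Y)$ this carries Equation~(\ref{K3_2_ell_dual_W}) into~(\ref{Eq:SixLinesB}). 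Since a Jacobian elliptic K3 surface is determined by its Weierstrass data, the two surfaces are isomorphic, and setting $Z_1 = X$, $Z_2 = U$, $Z_3 = 1$ realizes $\mathcal{Y}_3$ as the stated double cover. Reading off the homogenized branch locus in $[Z_1:Z_2:Z_3]$ then gives the geometry of Figure~\ref{Fig:4Lines1Conic}: the three lines $Z_2 = \Lambda_i'Z_3$ meet at $[1:0:0]$, the fourth line $Z_1 = 0$ avoids that point, and the conic $Z_1^2 + Z_1 Z_2 + Z_3^2 = 0$ meets $Z_1=0$ only at $[0:1:0]$ and to order two, i.e.\ it is tangent to the fourth line at a point different from the common point and does not pass through $[1:0:0]$; the adjunction/Hurwitz count as in Corollary~\ref{lem:branched_double} confirms the double cover is a K3 surface.

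The main obstacle is the explicit identification in the third paragraph: one must check simultaneously that the single Möbius base change transporting the labelled singular fibres is compatible with the $(2,2)$-isogeny moduli change~(\ref{relations_RosRoots}) and that the induced fibrewise rescaling converts the Weierstrass cubic into the conic-normalized double sextic. I expect the safest way to pin down the rescaling rigorously is to match the cross-ratio of the four degenerate base points of the $I_0^*$ and $I_4$ fibres and to invoke Corollary~\ref{cor:symmetric}, exactly as in the proof of Lemma~\ref{lem:RightTop2}; once this is fixed, the remaining steps are routine expansions of the identity for $q_2^2-4q_1$ and of the branch-locus incidences.
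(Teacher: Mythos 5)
Your proposal is correct and follows essentially the same route as the paper's proof: the paper simply rescales $X$ and $Y$ in Equation~(\ref{K3_2_ell_dual_W}), applies the moduli relations~(\ref{relations_RosRoots}), and performs the M\"obius base change $U=\Lambda_1' + 4(\Lambda_1'-\Lambda_2')(\Lambda_1'-\Lambda_3')/\big((\Lambda_2'-\Lambda_3')u+2\Lambda_2'+2\Lambda_3'-4\Lambda_1'\big)$, which is the same correspondence you invoke (written in the $\Lambda'$-coefficients rather than the $\lambda'$-coefficients, the two forms agreeing by Corollary~\ref{cor:symmetric}). Your additional checks --- the identity $q_2^2-4q_1=(\lambda_2'-\lambda_3')^2(u-\lambda_1')^2$, the singular-fibre count $I_4+2\,I_1+3\,I_0^*$ for the model~(\ref{Eq:SixLinesB}), and the cross-ratio matching --- are correct supporting detail for the same computation rather than a different argument.
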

\begin{proof}
In Equation~(\ref{K3_2_ell_dual_W}), we rescale $X\mapsto (u^2-4) (u-\lambda_1')(\lambda_2'-\lambda_3')X$,
use the transformation in Equations~(\ref{relations_RosRoots}), and introduce
the new variable 
\[
 U=\Lambda_1' + \dfrac{4 (\Lambda_1'-\Lambda_2') (\Lambda_1'-\Lambda_3')}{(\Lambda_2'-\Lambda_3')u+2\Lambda_2'+2\Lambda_3'-4\Lambda_1'} \;.
\]
After rescaling $Y$ we obtain Equation~(\ref{Eq:SixLinesB}).
\end{proof}

\begin{figure}[ht]
  $$
  \begin{xy}
    <0cm,0cm>;<1cm,0cm>:
    (-1,0);(5.5,0)**@{-},
    (-1,-0.5);(5,3.5)**@{-},
    (4,3.5);(4,-0.5)**@{-},
    (4.35,3.5);(2.2,-0.5)**@{-},
    (2,0.9)*\xycircle(2.2,.9){},
    (5.2,0.1)*++!\hbox{\tiny{$X$}},
    (4.51,-0.5)*++!\hbox{\tiny{$U-\Lambda_1'$}},
    (2.8,-0.5)*++!\hbox{\tiny{$U-\Lambda_2'$}},
    (-.2,-0.5)*++!\hbox{\tiny{$U-\Lambda_3'$}},
  \end{xy}
  $$
\caption{\label{Fig:4Lines1Conic}}
\end{figure}

\subsection{Degeneration to higher Picard number}
In this section we describe the limit where the Picard number of the constructed Jacobian elliptic K3 surfaces increases to $18$.
If we set $\Lambda_1=K_1$, $\Lambda_2=K_2 \epsilon$, $\Lambda_3=\epsilon$, and $R^2= K_1 K_2$ we obtain
\begin{equation}
 \lim_{\epsilon \to 0} \Lambda'_1= \infty, \quad    \Lambda'_2= \frac{K_1+K_2}{R}, \quad \Lambda'_3= \frac{1+K_1K_2}{R} \;.
\end{equation}

The limit is realized by replacing $(U-\Lambda_1') \, (U-\Lambda_2') \, (U-\Lambda_3')$ by $(U-\Lambda_2') \, (U-\Lambda_3')$ in
Equation~(\ref{Eq:SixLinesB}). Equation~(\ref{Eq:SixLines}) is then easily shown to
coincide with the elliptic fibration $\mathfrak{J}_7$ in the list of all possible elliptic fibrations on the Kummer surface 
of two non-isogenous elliptic curves determined by Kuwata and Shioda in~\cite{MR2409557} .
The following lemma then follows immediately:
\begin{lemma}
\label{lem:degen_Y3}
In the limit $\Lambda_1' \to \infty$ and
\begin{equation}
 \Lambda'_2 = \frac{K_1+K_2}{R} \,, \quad \Lambda'_3= \frac{1+K_1 K_2}{R} \,, \quad R^2= K_1 K_2 \;,
\end{equation} 
the elliptic surface $\mathcal{Y}_3$ is the Kummer surface $\operatorname{Kum}(\hat{\mathcal{E}\,}\!_1 \times \hat{\mathcal{E}\,}\!_2)$
of two non-isogenous elliptic curves where each elliptic curve is given in Legendre normal form by
\begin{equation}
\label{Eq:ECcurve_dual}
 \hat{\mathcal{E}\,}\!_i=\mathcal{E}(K_i): \quad y^2 = x \, (x-1) \, (x-K_i) 
\end{equation}
for $i=1,2$.
\end{lemma}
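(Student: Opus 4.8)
The plan is to carry out the degeneration directly on the double-plane model~(\ref{Eq:SixLinesB}) of $\mathcal{Y}_3$ and then to match the limiting surface against the Kuwata--Shioda classification~\cite{MR2409557}. First I would rescale the fiber coordinate by $Y \mapsto \sqrt{-\Lambda_1'}\,Y$ in Equation~(\ref{Eq:SixLinesB}); the normalized factor $(U-\Lambda_1')/(-\Lambda_1')$ then tends to $1$ as $\Lambda_1' \to \infty$, so that the equation degenerates to the affine double cover
\[
 Y^2 = X \, \big( X^2 + U\, X + 1\big)\, (U-\Lambda_2') \, (U-\Lambda_3') \;.
\]
Geometrically the branch line $U = \Lambda_1'$ is being sent to the line at infinity, so the limit is again the minimal non-singular model of a double plane and hence a K3 surface. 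The projection to the base parameter $U$ is an elliptic fibration, and I would first confirm that it specializes correctly: for generic $\Lambda_1'$ the fibration~(\ref{K3_2_ell_dual_W}) carries the configuration $I_4 + 2 I_1 + 3 I_0^*$ of Lemma~\ref{lem:RightTop}, with the $I_4$ over $U = \infty$ and one $I_0^*$ over $U = \Lambda_1'$; in the limit this $I_0^*$ collides with the $I_4$ at infinity to form a single fiber of type $I_4^*$, yielding the configuration $I_4^* + 2 I_0^* + 2 I_1$. An Euler-number bookkeeping ($10 + 2\cdot 6 + 2\cdot 1 = 24$) together with a Shioda--Tate count shows that the Picard rank rises from $17$ to $18$, as required for a degeneration to a Kummer surface of a product.

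Next I would bring the limiting equation to the Weierstrass form $y^2 = x\,(x^2 + c\,U\,x + c^2)$ with $c = (U-\Lambda_2')(U-\Lambda_3')$, substitute $\Lambda_2' = (K_1+K_2)/R$, $\Lambda_3' = (1+K_1K_2)/R$, $R^2 = K_1 K_2$, and then apply a fractional-linear change of the base coordinate $U$ together with the compensating rescalings of $x$ and $y$ that identify the result with the defining equation of the fibration $\mathfrak{J}_7$ of~\cite{MR2409557} on $\operatorname{Kum}(\mathcal{E}(K_1) \times \mathcal{E}(K_2))$. The two Legendre parameters are then recovered from the symmetric relations $K_1 + K_2 = R\,\Lambda_2'$ and $K_1 K_2 = R^2 = R\,\Lambda_3' - 1$, so that $K_1, K_2$ are the roots of $t^2 - R\,\Lambda_2'\, t + R^2$; the manifest symmetry of $\{\Lambda_2',\Lambda_3'\}$ under $K_1 \leftrightarrow K_2$ reflects the interchangeability of the two factors of the product, a consistency check I would verify along the way. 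This identifies the limit of $\mathcal{Y}_3$ with $\operatorname{Kum}(\hat{\mathcal{E}}_1 \times \hat{\mathcal{E}}_2)$, where $\hat{\mathcal{E}}_i = \mathcal{E}(K_i)$ is as in~(\ref{Eq:ECcurve_dual}).

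The main obstacle will be the explicit verification in the second step, since Kuwata and Shioda normalize their fibrations differently: producing the correct base Möbius transformation and the rescalings of $x$ and $y$, and then checking that the Weierstrass coefficients (equivalently, $g_2$, $g_3$, and the discriminant) agree, is the computational heart of the argument. Once the equations are matched, the remaining claim that the two elliptic curves are non-isogenous is immediate, since the $j$-invariants of $\mathcal{E}(K_1)$ and $\mathcal{E}(K_2)$ are independent functions of the generic parameters $K_1, K_2$; this is precisely the condition under which $\operatorname{Kum}(\mathcal{E}(K_1) \times \mathcal{E}(K_2))$ has Picard rank $18$, consistent with the rank computation carried out in the first step.
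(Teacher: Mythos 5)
Your proposal is correct and follows essentially the same route as the paper: pass to the limit by dropping the factor $(U-\Lambda_1')$ from the double-plane model~(\ref{Eq:SixLinesB}) and identify the resulting fibration with Kuwata--Shioda's $\mathfrak{J}_7$ on $\operatorname{Kum}(\mathcal{E}(K_1)\times\mathcal{E}(K_2))$. The paper asserts this identification without detail (``is then easily shown to coincide''), whereas you additionally supply the justifying rescaling of $Y$, the singular-fiber bookkeeping (the $I_0^*$ over $U=\Lambda_1'$ colliding with the $I_4$ over $U=\infty$ to give $I_4^*+2I_0^*+2I_1$ and Picard rank $18$), and the recovery of $K_1,K_2$ from $\Lambda_2',\Lambda_3',R$ --- all of which check out against Lemma~\ref{lem:RightTop} and the substitution stated before the lemma.
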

\begin{remark}
The same result applies to the fibration on $\mathcal{Y}_2$ given by Equation~(\ref{kummer3_ell_dual_W}):
in the limit $\Lambda_1' \to \infty$ and after rescaling it coincides with the elliptic fibration $\mathfrak{J}_1$ in the list of all possible elliptic fibrations on the Kummer surface 
of two non-isogenous elliptic curves determined by Kuwata and Shioda in~\cite{MR2409557}.
\end{remark}

Taking the limit $\lambda_1'\to \infty$ and rescaling is achieved for the surface $\mathcal{Y}_1$ by replacing Equation~(\ref{kummer_middle_ell_p_W}) by 
\begin{equation}
\label{kummer_middle_ell_p_W_b}
\begin{split}
Y^2  = \; X \, \big( X- t \, \left( t^2 - t \, \lambda'_3  +1 \right)  \big) \, \big( X- t \,\left( t^2 - t \, \lambda'_2  +1 \right)  \big) \;.
\end{split}
\end{equation}
Equation~(\ref{kummer_middle_ell_p_W_b}) is easily shown to
coincide with the elliptic fibration $\mathfrak{J}_6$ in the list of all possible elliptic fibrations on the Kummer surface 
of two non-isogenous elliptic curves determined by Kuwata and Shioda in~\cite{MR2409557} .
We have the following:
\begin{lemma}
\label{lem:degen_Y1}
In the limit $\lambda_1' \to \infty$ with
\begin{equation}
 \lambda'_2 = \frac{k_1+k_2}{r} \,, \quad \lambda'_3= \frac{1+k_1 k_2}{r} \,, \quad r^2= k_1 k_2 \;,
\end{equation} 
the elliptic surface $\mathcal{Y}_1$ is the Kummer surface $\operatorname{Kum}(\mathcal{E}_1 \times \mathcal{E}_2)$
of two non-isogenous elliptic curves where each elliptic curve is given in Legendre normal form by
\begin{equation}
\label{Eq:ECcurve}
 \mathcal{E}_i=\mathcal{E}(k_i): \quad y^2 = x \, (x-1) \, (x-k_i) 
\end{equation}
for $i=1,2$.
\end{lemma}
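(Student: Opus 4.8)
The plan is to reduce the statement to the explicit Weierstrass model already recorded in Equation~(\ref{kummer_middle_ell_p_W_b}) and then to match it against Kuwata and Shioda's list of elliptic fibrations on the Kummer surface of a product of two elliptic curves in~\cite{MR2409557}, exactly as was done for $\mathcal{Y}_3$ in Lemma~\ref{lem:degen_Y3}.

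First I would carry out the degeneration at the level of the Jacobian fibration of Lemma~\ref{lem:EllMiddle}. In Equation~(\ref{kummer_middle_ell_p_W}) the two factors $(\lambda_1'-\lambda_2')$ and $(\lambda_1'-\lambda_3')$ diverge as $\lambda_1'\to\infty$, so I would rescale the coordinates $(X,Y)$ by suitable powers of $\lambda_1'$ (namely $X\mapsto \lambda_1' X$ and $Y\mapsto (\lambda_1')^{3/2}Y$) and divide through; in the limit this produces precisely Equation~(\ref{kummer_middle_ell_p_W_b}). It is worth tracking the singular fibers during this process: the quadratic $t^2-t\lambda_1'+1$ occurring in the discriminant~(\ref{discrim_M}) has roots whose product is $1$ and whose sum is $\lambda_1'$, so as $\lambda_1'\to\infty$ one root tends to $0$ and the other to $\infty$. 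Hence the two fibers of type $I_2$ sitting over the roots of $t^2-t\lambda_1'+1$ collide with the two fibers of type $I_0^*$ over $t=0$ and $t=\infty$, each such collision producing a fiber of type $I_2^*$, while the four fibers of type $I_2$ over the roots of $(t^2-t\lambda_2'+1)(t^2-t\lambda_3'+1)$ survive. The resulting configuration $2\,I_2^*+4\,I_2$ has a trivial lattice of rank $2+2\cdot 6+4\cdot 1=18$, so the Picard number jumps to $18$, as is required for a Kummer surface of a product of elliptic curves.

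Next I would insert the parametrization $\lambda_2'=(k_1+k_2)/r$, $\lambda_3'=(1+k_1k_2)/r$ with $r^2=k_1k_2$. A short computation of the discriminants of the two quadratics gives the factorizations
\[
t^2-t\lambda_2'+1=\Bigl(t-\tfrac{k_1}{r}\Bigr)\Bigl(t-\tfrac{k_2}{r}\Bigr),\qquad t^2-t\lambda_3'+1=\bigl(t-r\bigr)\Bigl(t-\tfrac{1}{r}\Bigr),
\]
so that the four fibers of type $I_2$ sit over $t=k_1/r,\,k_2/r,\,r,\,1/r$, a set that is symmetric under $t\mapsto 1/t$ and hence compatible with the involution $\imath^{\mathcal{Y}_1}$ of Equation~(\ref{C2_involutions_middle_b}). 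Substituting these factorizations into Equation~(\ref{kummer_middle_ell_p_W_b}) puts the model in the shape
\[
Y^2=X\,\bigl(X-t(t-r)(t-\tfrac1r)\bigr)\bigl(X-t(t-\tfrac{k_1}{r})(t-\tfrac{k_2}{r})\bigr).
\]
The decisive step is then to identify this with the fibration $\mathfrak{J}_6$ of~\cite{MR2409557} on $\operatorname{Kum}(\mathcal{E}_1\times\mathcal{E}_2)$: after an explicit M\"obius transformation of the base coordinate $t$ together with a rescaling of $(X,Y)$ one checks that the two Weierstrass equations coincide, and reading off the Legendre parameters of the two elliptic factors shows that they are exactly $k_1$ and $k_2$ as in Equation~(\ref{Eq:ECcurve}). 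The surface is therefore $\operatorname{Kum}(\mathcal{E}(k_1)\times\mathcal{E}(k_2))$, and for generic values the two factors are non-isogenous.

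I expect the main obstacle to be this final matching with Kuwata and Shioda's normal form. The bookkeeping of singular fibers above fixes the fibration among the finitely many fibrations of Oguiso's classification~\cite{MR1013073}, but pinning down that it is $\mathfrak{J}_6$ and, more importantly, that the moduli appearing in the equation are the \emph{Legendre} moduli $k_1,k_2$ themselves rather than some rational transform of them, requires the explicit change of coordinates bringing Equation~(\ref{kummer_middle_ell_p_W_b}) into the published form. This is the computation hidden behind the phrase ``easily shown to coincide'' and is most safely verified by comparing the $j$-invariants of the two elliptic factors on both sides, equivalently the cross-ratios of the base points of the reducible fibers.
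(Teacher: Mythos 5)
Your proposal is correct and follows essentially the same route as the paper, which simply asserts that the limit $\lambda_1'\to\infty$ after rescaling turns Equation~(\ref{kummer_middle_ell_p_W}) into Equation~(\ref{kummer_middle_ell_p_W_b}) and that the latter coincides with the fibration $\mathfrak{J}_6$ of Kuwata--Shioda. Your added bookkeeping --- the explicit rescaling $X\mapsto\lambda_1'X$, $Y\mapsto(\lambda_1')^{3/2}Y$, the collision of the two $I_2$ fibers over the roots of $t^2-t\lambda_1'+1$ with the $I_0^*$ fibers to produce $2\,I_2^*+4\,I_2$, and the factorizations $t^2-t\lambda_2'+1=(t-k_1/r)(t-k_2/r)$, $t^2-t\lambda_3'+1=(t-r)(t-1/r)$ --- is all consistent and merely fills in the computation the paper leaves implicit behind the phrase ``easily shown to coincide.''
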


\begin{remark}
In addition, sending $\Lambda'_2 \to 2$, that is $K_2 \to K_1$,
or, equivalently, $\Lambda'_3 \to 2$, that is $K_2 \to 1/K_1$, we obtain a one-parameter family of Jacobian elliptic K3 surfaces of Picard number 19.
For example, Equation~(\ref{Eq:SixLines}) and Equation~(\ref{Eq:SixLinesB}) then describe Jacobian elliptic surfaces with singular fibers
$2\, I_2^* + I_2 + I_0^*$ and $I_4^* + I_1^* + I_1 + I_0^*$, respectively. These families were considered by Hoyt in \cite{MR894512}.
In this limit, the surfaces $\mathcal{Y}_i$ for $i=1, \dots, 3$ are all Kummer surfaces of two isogenous elliptic curves.
\end{remark}

\subsubsection{Elliptic curve two-isogenies and moduli}
The elliptic curve given by
\begin{equation}
\label{EllC1}
\mathcal{E}(k): \quad y^2 = x \, (x-1) \, (x-k),
\end{equation}
has the two-torsion point $P: (x,y)=(0,0)$. The two-isogenous elliptic curve $\hat{\mathcal{E}\,}\!$ obtained by translation by the order-two point $P$ 
is given by
\begin{equation}
\label{EllC1_dual}
\hat{\mathcal{E}\,}\!=\mathcal{E}(k)/\langle P \rangle: \quad y^2 = x \, \left( x + \big(\sqrt{k} + 1\big)^2 \right) \, \left( x + \big(\sqrt{k} - 1\big)^2 \right) \;.
\end{equation}
The six values of the cross-ratio of the four ramification points $\{0, (\sqrt{k} \pm 1)^2, \infty\}$ are given by
\[
 \left\lbrace K, \; 1-K, \; \frac{1}{K}, \; 1-\frac{1}{K}, \; \frac{1}{1-K}, \; \dfrac{1}{1-\frac{1}{K}} \right \rbrace
\]
with
\begin{equation}
 K= \left(\frac{1-\sqrt{k}}{1+\sqrt{k}} \right)^2 \;,
\end{equation}
and we set $\hat{\mathcal{E}\,}\! = \mathcal{E}(K)$. If we introduce the Jacobi moduli $m$ and $M$ given by $k=m^2$ and $K=M^2$, respectively, we
obtain
\begin{equation}
\label{def_M}
 M=  \frac{1-m}{1+m}  , \quad  m=  \frac{1-M}{1+M}   \;.
\end{equation}

In terms of $\tau \in \mathbb{H}$ and $q=\exp{(2\pi i \tau)}$ the Jacobi-theta-functions are defined by
\begin{equation}
 \theta_2 = \sum_{n\in\mathbb{Z}} q^{\frac{(2n+1)^2}{8}} , \quad
 \theta_3 = \sum_{n\in\mathbb{Z}} q^{\frac{n^2}{2}} , \quad
 \theta_4 = \sum_{n\in\mathbb{Z}} (-1)^n \, q^{\frac{n^2}{2}},
\end{equation}
such that $\theta_3^4=\theta_2^4+ \theta_4^4$. In terms of theta-functions
expressions for the elliptic curve modulus and Jacobi modulus  are given by $k=  \theta_4^4/\theta_3^4$,
and $m =  \theta_4^2/\theta_2^4$, respectively.
We will denote by $\Theta_k$ for $k=2,3,4$ the functions
with doubled modular parameter. In the theory of elliptic theta-functions, we have the well known relations \cite[Sec.~4]{MR2485477}
\begin{equation}
\label{Eq:2isogEC}
\begin{split}
 \Theta_2^2 = \frac{1}{2} \, \left( \theta_3^2 - \theta_4^2 \right) , \quad \Theta_3^2 = \frac{1}{2} \, \left( \theta_3^2 + \theta_4^2 \right) , \quad \Theta_4^2 = \theta_3 \, \theta_4  \;,
\end{split} 
\end{equation}
and
 \begin{equation}
\label{Eq:2isogEC_b}
\begin{split}
 \theta_4^2 =  \Theta_3^2 - \Theta_2^2  , \quad \theta_3^2 = \Theta_3^2 + \Theta_2^2 , \quad \theta_2^2 = 2 \,\Theta_2 \Theta_3  \;.
\end{split} 
\end{equation}

Expressions for the elliptic curve modulus and Jacobi modulus compatible with Equation~(\ref{def_M}) and Equation~(\ref{Eq:2isogEC}) are given by
$K=  \Theta_2^4/\Theta_3^4$ and $M=  \Theta_2^2/\Theta_3^2$. Observe that the change corresponding to $\theta_4 \leftrightarrow \Theta_2$
was -- for genus-two theta-functions -- compensated by switching the roles of $\underline{a}^{(i)}$ and $\underline{b}^{(i)}$ 
in Equation~(\ref{Eqn:theta_short}) and Equation~(\ref{Eqn:Theta_short}).
We then have the following:
\begin{proposition}
In the limit $\epsilon \to 0$ with $\lambda_1=k_1$, $\lambda_2=k_2\, \epsilon$, $\lambda_3=\epsilon$ in Equation~(\ref{Eq:Rosenhain}), 
the principally polarized abelian surface $\operatorname{Jac}(\mathcal{C})$ is isomorphic 
to $\mathcal{E}_1 \times \mathcal{E}_2$ where each elliptic curve is given in Legendre normal form by
\begin{equation}
\label{Eq:ECcurve_b}
 \mathcal{E}_i=\mathcal{E}(k_i): \quad y^2 = x \, (x-1) \, (x-k_i) 
\end{equation}
for $i=1,2$. Additionally, in the limit $\epsilon \to 0$ the isogenous abelian surface 
$\operatorname{Jac}(\hat{\mathcal{C}\,}\!_{12})$ is isomorphic to $\hat{\mathcal{E}\,}\!_1 \times \hat{\mathcal{E}\,}\!_2$
where each elliptic curve is given in Legendre normal form by
\begin{equation}
\label{Eq:ECcurve_dual_b}
 \hat{\mathcal{E}\,}\!_i=\mathcal{E}(K_i): \quad y^2 = x \, (x-1) \, (x-K_i) 
\end{equation}
for $i=1,2$ and the Jacobi moduli $m_i, M_i$ with $k_i=m_i^2$ and $K_i=M_i^2$, respectively, are related by two-isogeny, i.e.,
\begin{equation}
\label{2iso_ec}
m_1 =  \pm \frac{1-M_1}{1+M_1} , \qquad m_2 =  \mp \frac{1+M_2}{1-M_2} \;.
\end{equation}
In the limit $\epsilon \to 0$, the $(2,2)$-isogeny $\psi_{12}: \operatorname{Jac}(\mathcal{C}) \to \operatorname{Jac}(\hat{\mathcal{C}\,}\!_{12})$
is the product of the two-isogenies of elliptic curves $\mathcal{E}_i \to \hat{\mathcal{E}\,}\!_i$.
\end{proposition}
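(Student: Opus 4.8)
The plan is to treat the three assertions---the product decomposition of $\operatorname{Jac}(\mathcal{C})$, that of $\operatorname{Jac}(\hat{\mathcal{C}\,}\!_{12})$, and the two-isogeny relation between their elliptic factors---by analyzing how the six branch points of each Rosenhain sextic degenerate as $\epsilon\to 0$, and then matching the limiting moduli against the elliptic two-isogeny normal form of Equations~(\ref{EllC1}) and~(\ref{EllC1_dual}).

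First I would establish the product decomposition of $\operatorname{Jac}(\mathcal{C})$. Under the substitution $\lambda_1=k_1$, $\lambda_2=k_2\,\epsilon$, $\lambda_3=\epsilon$, the branch points of the curve in Equation~(\ref{Eq:Rosenhain}) are $\{0,1,\infty,k_1,k_2\epsilon,\epsilon\}$, so as $\epsilon\to 0$ exactly three of them---namely $0$, $k_2\epsilon$, and $\epsilon$---coalesce while preserving the fixed mutual cross ratio determined by $k_2$. This is precisely the separating degeneration of a genus-two curve: the stable limit is a pair of elliptic curves meeting at one node, and the limit of the family $\operatorname{Jac}(\mathcal{C})$ in $\mathcal{A}_2$ lies on the product locus $\mathcal{A}_1\times\mathcal{A}_1$. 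To identify the two factors I would rescale. Keeping $X$ fixed, the coalescing cluster collapses to $0$, so the first component is the double cover of $\mathbb{P}^1$ branched at $\{0,1,k_1,\infty\}$, i.e.\ $\mathcal{E}(k_1)$; after the rescaling $X=\epsilon\,\xi$ the cluster $\{0,k_2\epsilon,\epsilon\}$ becomes $\{0,k_2,1\}$ and, together with the node at infinity, gives the double cover branched at $\{0,1,k_2,\infty\}$, i.e.\ $\mathcal{E}(k_2)$. Hence $\operatorname{Jac}(\mathcal{C})\cong\mathcal{E}(k_1)\times\mathcal{E}(k_2)$ with the product polarization. As an independent check, this is the limit of Lemma~\ref{lem:degen_Y1}: the primed moduli satisfy $\lambda_1'\to\infty$, $\lambda_2'=(k_1+k_2)/r$, $\lambda_3'=(1+k_1k_2)/r$ with $r^2=k_1k_2$, so by Corollary~\ref{cor:genus_two} the Kummer surface $\mathcal{Y}_1=\operatorname{Kum}(\operatorname{Jac}\mathcal{C})$ degenerates to $\operatorname{Kum}(\mathcal{E}(k_1)\times\mathcal{E}(k_2))$, consistent with the decomposition above.

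Next I would treat $\operatorname{Jac}(\hat{\mathcal{C}\,}\!_{12})$. The decisive point is that its Rosenhain roots degenerate in exactly the same pattern. Feeding the primed moduli $\lambda_i'$ above into the isogeny relations~(\ref{relations_RosRoots}) and letting $\lambda_1'\to\infty$, one finds $\Lambda_1'\to\infty$ while $\Lambda_2'$ and $\Lambda_3'$ tend to finite limits; writing these as $\Lambda_2'=(K_1+K_2)/R$ and $\Lambda_3'=(1+K_1K_2)/R$ with $R^2=K_1K_2$ defines $K_1,K_2$, exactly as in Lemma~\ref{lem:degen_Y3}. In particular the roots of $\hat{\mathcal{C}\,}\!_{12}$ assume the degenerating shape $K_1,K_2\epsilon,\epsilon$, so the separating-degeneration argument of the previous paragraph applies verbatim and yields $\operatorname{Jac}(\hat{\mathcal{C}\,}\!_{12})\cong\mathcal{E}(K_1)\times\mathcal{E}(K_2)$.

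It remains to verify the relations~(\ref{2iso_ec}) and to interpret $\psi_{12}$ as a product. I would carry out the elimination algebraically: the limiting relations give $(1+K_1)(1+K_2)/R$ and $(1-K_1)(1-K_2)/R$ as explicit rational functions of $\lambda_2'+\lambda_3'$ and $\lambda_2'-\lambda_3'$, hence of $(1\pm k_1)(1\pm k_2)$ and $r$. Setting $k_i=m_i^2$, $K_i=M_i^2$ and substituting the candidate relations $m_1=\pm(1-M_1)/(1+M_1)$ and $m_2=\mp(1+M_2)/(1-M_2)$, a short computation shows that $(1+K_1)/(1-K_1)$ and $(1+K_2)/(1-K_2)$ equal $(1+m_1^2)/(2m_1)$ and $-(1+m_2^2)/(2m_2)$ respectively; multiplying these reproduces the ratio dictated by the limiting form of~(\ref{relations_RosRoots}), and checking both limiting identities verifies~(\ref{2iso_ec}) up to the indicated square-root sign ambiguity. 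Finally, by Theorem~\ref{cor:psi} the kernel $\mathsf{k}_{12}=\{\mathfrak{p}_0,\mathfrak{p}_{12},\mathfrak{p}_{56},\mathfrak{p}_{34}\}$ of $\psi_{12}$ splits in the product limit into the direct sum of an order-two subgroup of each elliptic factor, so $\psi_{12}$ respects the product and restricts on each factor to the two-isogeny of the form in Equation~(\ref{EllC1_dual}), whose modulus transformation is~(\ref{def_M}). The main obstacle is precisely this last matching: one must check that the limiting factorization of the genus-two $(2,2)$-isogeny kernel respects the two factors with the correct labeling, so that one factor receives the two-isogeny in its forward form $m=(1-M)/(1+M)$ and the other in the reciprocal form $m=(1+M)/(1-M)$---the asymmetry responsible for the opposite signs $\pm/\mp$ in~(\ref{2iso_ec}). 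This is the same bookkeeping encoded in the remark preceding the proposition, that the interchange $\theta_4\leftrightarrow\Theta_2$ is compensated by swapping the theta characteristics $\underline{a}^{(i)}\leftrightarrow\underline{b}^{(i)}$; tracking it correctly, rather than the classical degeneration of Jacobians, is where the care is needed.
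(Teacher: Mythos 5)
Your overall strategy is sound and, for the first two assertions, genuinely different from (and arguably more direct than) the paper's argument: you identify the product decompositions of $\operatorname{Jac}(\mathcal{C})$ and $\operatorname{Jac}(\hat{\mathcal{C}\,}\!_{12})$ via the stable (separating) degeneration of the Rosenhain sextic when the three branch points $0,k_2\epsilon,\epsilon$ coalesce, whereas the paper simply cites Lemmas~\ref{lem:degen_Y1} and~\ref{lem:degen_Y3} on the degenerations of the Kummer surfaces $\mathcal{Y}_1$ and $\mathcal{Y}_3$ and leaves the passage from the Kummer surface back to the polarized abelian surface implicit. For the moduli relation~(\ref{2iso_ec}) your ``elimination'' and the paper's verification are the same computation organized differently: the paper substitutes $\lambda_1=m_1^2$, $\lambda_2=m_2^2\epsilon$, $\lambda_3=\epsilon$ and the candidate relations into~(\ref{relations_RosRoots}) and checks that the asymptotics of $\Lambda_i'$ reproduce the product form; your intermediate identities $(1+K_i)/(1-K_i)=\pm(1+m_i^2)/(2m_i)$ are correct.

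The genuine gap is in the final step, which you yourself flag as ``where the care is needed'' but do not carry out. The set $\{\mathfrak{p}_0,\mathfrak{p}_{12},\mathfrak{p}_{56},\mathfrak{p}_{34}\}$ that you quote from Theorem~\ref{cor:psi}, read in the convention $\mathfrak{p}_{ij}=[\mathfrak{p}_i+\mathfrak{p}_j-2\mathfrak{p}_6]$ of Section~\ref{genus-two}, does \emph{not} split as a product in the limit: since each of the pairs $\{1,2\}$ and $\{3,4\}$ has one Weierstrass point on each elliptic component ($\mathfrak{p}_1,\mathfrak{p}_4,\mathfrak{p}_6$ limit to $\mathcal{E}_1$ and $\mathfrak{p}_2,\mathfrak{p}_3,\mathfrak{p}_5$ to $\mathcal{E}_2$), each nonzero element of that set has nontrivial image in both $\mathcal{E}_1[2]$ and $\mathcal{E}_2[2]$, so the set is the graph of an isomorphism $\mathcal{E}_1[2]\to\mathcal{E}_2[2]$ and the corresponding quotient is not a product of elliptic curves. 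The kernel consistent with Lemma~\ref{lem:R2isog} (Richelot pairing $(1,\lambda_1)$, $(\lambda_2,\lambda_3)$, $(0,\infty)$) and with Propositions~\ref{Lem:BT_Y1} and~\ref{Lem:VGSI_Y1} is $\{\mathfrak{p}_0,\mathfrak{p}_{14},\mathfrak{p}_{23},\mathfrak{p}_{56}\}$; there the pairs $\{1,4\}$ and $\{2,3\}$ each lie entirely on one component, so $\mathfrak{p}_{14}$ and $\mathfrak{p}_{23}$ limit to $\bigl((0,0),0\bigr)$ and $\bigl(0,(0,0)\bigr)$ respectively, the kernel becomes $\langle(0,0)\rangle\times\langle(0,0)\rangle$, and $\psi_{12}$ is indeed the product of the two-isogenies of Equation~(\ref{EllC1_dual}). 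You must therefore either correct the kernel you cite or carry out this limit of the two-torsion explicitly; as written, the literal citation would lead to the wrong conclusion. (To be fair, the paper's own proof does not address this last clause at all, so supplying this check is an improvement over the published argument once the labeling is fixed.)
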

\begin{proof}
We already proved in Lemma~\ref{lem:degen_Y1} that the surface $\mathcal{Y}_1 = \operatorname{Kum}(\operatorname{Jac} \mathcal{C})$ degenerates
to $\operatorname{Kum}(\mathcal{E}_1 \times \mathcal{E}_2)$ for $\lambda_1=k_1$, $\lambda_2=k_2 \epsilon$, $\lambda_3=\epsilon$ in the limit $\epsilon \to 0$.
Similarly, we proved in Lemma~\ref{lem:degen_Y3} that the surface $\mathcal{Y}_3 = \operatorname{Kum}(\operatorname{Jac} \hat{\mathcal{C}\,}\!)$ degenerates
to $\operatorname{Kum}(\hat{\mathcal{E}\,}\!_1 \times \hat{\mathcal{E}\,}\!_2)$ for $\Lambda_1=K_1$, $\Lambda_2=K_2 \epsilon'$, $\Lambda_3=\epsilon'$ in the limit $\epsilon' \to 0$
such that 
\begin{equation}
  \Lambda'_1= \frac{M_1}{M_2 \, \epsilon'} + \frac{M_2 \, \epsilon'}{M_1}, \quad    \Lambda'_2= \frac{M^2_1+M^2_2}{M_1 M_2}, \quad \Lambda'_3= \frac{1+M^2_1M^2_2}{M_1M_2} \;.
\end{equation}
We plug $\lambda_1=m^2_1$, $\lambda_2=m^2_2 \epsilon$, $\lambda_3=\epsilon$ into Equations~(\ref{relations_RosRoots}), use Equation~(\ref{2iso_ec}), and obtain
\begin{align*}
 \Lambda_1' & = \frac{(1\pm M_1)^2(1\pm M_2)^2}{4 \, M_1  M_2 \, \epsilon} + O(1),\\
 \Lambda_2' & = \frac{M_1^2+M_2^2}{M_1 \, M_2} + O(\epsilon),\\
 \Lambda_3' & = \frac{1+M_1^2 M_2^2}{M_1 \, M_2} + O(\epsilon).
\end{align*}
\end{proof}

\section{Summary of surfaces and morphisms}
\label{sec:summary}
Figure~\ref{Relations_Fibrations} depicts all constructed Jacobian elliptic surfaces and maps. 
Table~\ref{tab:surfaces} lists their singular fibers and Mordell-Weil groups. We highlight some of the key features:

\begin{figure}[ht]
\scalebox{0.9}{
\centerline{
\xymatrix{
 &	*+[F-,]{\mathcal{Y}_2 = \operatorname{Kum}(\operatorname{Jac} \hat{\mathcal{C}\,}\!)} 	\ar[ld] 	\ar@{-->}[d] 	
 \ar[rd]  \ar@{->}[rdd]^{\;\hat{\Psi}}|{\phantom{\text{\Huge{W}}_{\text{\Huge{W}}}}} |\hole |!{"2,3";"3,2"}\hole \\
	*+[F.]{\mathcal{Z}_2 = \operatorname{Rat}} 	\ar@{-->}[d]	
 &	*+[F--]{\mathcal{X}_2 =\operatorname{Kum}(\hat{\mathbf{B}})  \cong  \operatorname{Kum}(\mathbf{B})}\ar[ld]^{\phi^{\mathcal{X}_2}_{\mathcal{Z}_1}} 	\ar[rd]_{\phi^{\mathcal{X}_2}_{\mathcal{Y}_1}}	\ar@<1ex>[u]^{\phi^{\mathcal{X}_2}_{\mathcal{Y}_2}}
 &	*+[F--]{\mathcal{X}_1 = \operatorname{Kum}(\mathbf{B}')}	\ar[rd] \ar[dl]|!{"2,2";"3,3"}\hole  	\ar[d]^{\phi^{\mathcal{X}_1}_{\mathcal{Y}_1}} \\
	*+[F.]{\mathcal{Z}_1 = \operatorname{Rat}}  	\ar@{->}[u]<1ex>
& 	*+[F.]{\mathcal{Z}_4 = \operatorname{Rat}} 	\ar[d] 		
& 	*+[F-,]{\mathcal{Y}_1 = \operatorname{Kum}(\operatorname{Jac} \mathcal{C})}	\ar@{-->}[u]<1ex>	\ar[rd]_{\phi^{\mathcal{Y}_1}_{\mathcal{X}_3}}	\ar[ld]^{\phi^{\mathcal{Y}_1}_{\mathcal{Z}_3}}  	
\ar[r]^{\Psi}
&	*+[F-,]{\mathcal{Y}_3 = \operatorname{Kum}(\operatorname{Jac} \hat{\mathcal{C}\,}\!)} 	\ar@{-->}[d] \\
&	*+[F.]{\mathcal{Z}_3 = \operatorname{Rat}}		\ar@{-->}[u]<1ex>		
&										
&	*+[F--]{\mathcal{X}_3 = \operatorname{Kum}(\hat{\mathbf{B}})} \ar@<1ex>[u]^{\phi^{\mathcal{X}_3}_{\mathcal{Y}_3}}
}}}
\caption{\label{Relations_Fibrations}}
\end{figure}

\begin{table}
\scalebox{0.9}{
\begin{tabular}{|c|c|c|c|}
$\#$ & type & fibration & $\operatorname{MW}(\pi)$ \\
\hline
&&&\\[-0.9em]
$\mathcal{Y}_2$ & $\operatorname{Kum}(\operatorname{Jac} \hat{\mathcal{C}\,}\!)$ & $4 \, I_4 + 8 \, I_1$ & $\mathbb{Z}/2\mathbb{Z} \oplus \langle 1 \rangle^{\oplus3}$  \\[0.2em]
$\mathcal{X}_2$ & $\begin{array}{r}\operatorname{Kum}(\hat{\mathbf{B}}_{12}) \\\cong \operatorname{Kum}(\mathbf{B}_{12}) \end{array}$ & $12 \, I_2$ & $(\mathbb{Z}/2\mathbb{Z})^2 \oplus \langle \frac{1}{2} \rangle_{\mathsf{p}} \oplus   \langle \frac{1}{2} \rangle_{\mathsf{q}} \oplus   \langle \frac{1}{2} \rangle_{\mathsf{r}}$  \\[1em]
$\mathcal{X}_1$ & $\operatorname{Kum}(\mathbf{B}_{34})$ & $2 \, I_4 + 4 \, I_1 + 2\, I_0^*$ & $\mathbb{Z}/2\mathbb{Z} \oplus \langle \frac{1}{2} \rangle$ \\[0.2em]
$\mathcal{Y}_1$ & $\operatorname{Kum}(\operatorname{Jac} \mathcal{C})$ & $6 \, I_2 + 2 \, I_0^*$ & $(\mathbb{Z}/2\mathbb{Z})^2 \oplus \langle 1 \rangle_{\mathsf{P}}$  \\[0.2em]
$\mathcal{Y}_3$ & $\operatorname{Kum}(\operatorname{Jac} \hat{\mathcal{C}\,}\!)$ & $I_4 + 2 \, I_1 + 3 \, I_0^*$ & $\mathbb{Z}/2\mathbb{Z}$  \\[0.1em]
$\mathcal{X}_3$ & $\operatorname{Kum}(\hat{\mathbf{B}}_{12})$ & $3 \, I_2 + 3\, I_0^*$ & $(\mathbb{Z}/2\mathbb{Z})^2 $  \\[0.2em]
$\mathcal{Z}_1$ & rational & $6 \, I_2$ & $(\mathbb{Z}/2\mathbb{Z})^2  \oplus \langle \frac{1}{2} \rangle_{\mathsf{Q}} \oplus \langle \frac{1}{2} \rangle_{\mathsf{R}}$ \\[0.2em]
$\mathcal{Z}_2$ & rational & $2 \, I_4 + 4 \, I_1$ & $\mathbb{Z}/2\mathbb{Z}  \oplus \langle \frac{1}{2} \rangle^{\oplus 2}$ \\[0.2em]
$\mathcal{Z}_3$ & rational & $3 \, I_2 + I_0^*$ & $(\mathbb{Z}/2\mathbb{Z})^2  \oplus \langle \frac{1}{2} \rangle_{\mathsf{p}}$ \\[0.2em]
$\mathcal{Z}_4$ & rational & $I_4 + 2 \, I_1 + I_0^*$ & $\mathbb{Z}/2\mathbb{Z}  \oplus \langle \frac{1}{4} \rangle$ \\[0.2em]
\end{tabular}}
\medskip
\caption{\label{tab:surfaces}}
\end{table}

\begin{enumerate}
\item The vertical arrows indicate fiberwise two-isogenies, and the dual isogenies are the dashed vertical arrows. The corresponding involutions are the Van Geemen-Sarti 
involutions obtained as fiberwise translation by a two-torsion section in $\operatorname{MW}(\pi)$.

\item  For the Kummer surface $\operatorname{Kum}(\operatorname{Jac} \mathcal{C})$,
the Van Geemen-Sarti  involution $\imath^{\mathcal{Y}_1}_F$
covering $\phi_{\mathcal{Y}_1}^{\mathcal{X}_1}$ descended from the translation of $\mathbf{A}=\operatorname{Jac}(\mathcal{C})$ by the two-torsion 
point $\mathfrak{e}_{34} \in \mathbf{A}[2]$. Similarly, the involutions $\imath^{\mathcal{Y}_2}_F$ and $\imath^{\mathcal{Y}_3}_F$ covering the dual two-isogeny 
${\phi}^{\mathcal{Y}_2}_{\mathcal{X}_2}$ and ${\phi}^{\mathcal{Y}_3}_{\mathcal{X}_3}$, respectively, descend from a  translation of 
$\hat{\mathbf{A}}=\operatorname{Jac} \hat{\mathcal{C}\,}\!_{12}$ by the two-torsion point $\hat{\mathfrak{e}}_{12} \in \hat{\mathbf{A}}[2]$.

\item The diagonal arrows pointing from the right to the left are degree-two rational maps relating a K3 surface to a rational surface by doubling
the singular fibers.

\item The diagonal arrows pointing from $\mathcal{X}_i$ to $\mathcal{Y}_{i'}$ or vice versa are rational double covers such that the rational base curve is mapped into
another rational curve with two ramification points located at the base of two fibers of type $I_0^*$. The Jacobian elliptic fibration in the domain is the pull-back
of the fibration in the range along this base transformation. In particular, the maps $\phi^{\mathcal{X}_2}_{\mathcal{Y}_1}$ and $\phi^{\mathcal{Y}_1}_{\mathcal{X}_3}$
lift the base transformations $s \mapsto t=s^2$ and $t \mapsto u = t + 1/t$ whose sheets are interchanged by the Nikulin involutions $\jmath^{\mathcal{X}_2}: s \to -s$
and $\imath^{\mathcal{Y}_1}: t \to 1/t$, respectively.

\item The rational elliptic surfaces $\mathcal{Z}_2$ and $\mathcal{Z}_4$ were not discussed in detail. They play the same role for $\mathcal{Y}_2$ and $\mathcal{X}_1$ 
as the rational surfaces $\mathcal{Z}_1$ and $\mathcal{Z}_3$ do for $\mathcal{X}_2$ and $\mathcal{Y}_1$. Moreover, they are related by fiberwise two-isogeny
to $\mathcal{Z}_1$ and $\mathcal{Z}_3$.

\item The two sheets of the morphism $\phi^{\mathcal{X}_2}_{\mathcal{Y}_1}$ and $\phi^{\mathcal{X}_2}_{\mathcal{Z}_1}$ are interchanged by  the
Nikulin involutions $\jmath^{\mathcal{X}_2}$, i.e.,
$\phi^{\mathcal{X}_2}_{\mathcal{Y}_1} \circ \jmath^{\mathcal{X}_2}  = \phi^{\mathcal{X}_2}_{\mathcal{Y}_1}$ and 
$\phi^{\mathcal{X}_2}_{\mathcal{Z}_1} \circ \jmath^{\mathcal{X}_2}  = \phi^{\mathcal{X}_2}_{\mathcal{Z}_1}$.
The involution $\jmath^{\mathcal{X}_2}$ lifts to an involution
$\jmath^{\mathcal{Y}_2}$ such that $\jmath^{\mathcal{X}_2} \circ \phi^{\mathcal{Y}_2}_{\mathcal{X}_2}=\phi^{\mathcal{Y}_2}_{\mathcal{X}_2} \circ \jmath^{\mathcal{Y}_2}$.

\item The two sheets of the morphism $\phi^{\mathcal{Y}_1}_{\mathcal{X}_3}$ and $\phi^{\mathcal{Y}_1}_{\mathcal{Z}_3}$ are interchanged by  the 
Nikulin involutions $\imath^{\mathcal{Y}_1}$, i.e.,
$\phi^{\mathcal{Y}_1}_{\mathcal{X}_3} \circ \imath^{\mathcal{Y}_1}  =\phi^{\mathcal{Y}_1}_{\mathcal{X}_3}$ and 
$\phi^{\mathcal{Y}_1}_{\mathcal{Z}_3} \circ \imath^{\mathcal{Y}_1}  =\phi^{\mathcal{Y}_1}_{\mathcal{Z}_3}$.

\item The involution $\imath^{\mathcal{Y}_1}$ descends from a 
translation of $\mathbf{A}=\operatorname{Jac}(\mathcal{C})$ by the two-torsion point $\mathfrak{e}_{12} \in \mathbf{A}[2]$.
Similarly, the involution $\jmath^{\mathcal{Y}_2}$ descends from a 
translation of $\hat{\mathbf{A}}=\operatorname{Jac}(\hat{\mathcal{C}\,}\!_{12})$ by the two-torsion point $\hat{\mathfrak{e}}_{34} \in \hat{\mathbf{A}}[2]$.

\item The involutions $\imath^{\mathcal{X}_2}$ and $\imath^{\mathcal{Y}_1}$ are related by 
$\phi^{\mathcal{X}_2}_{\mathcal{Y}_1} \circ \imath^{\mathcal{X}_2} =\imath^{\mathcal{Y}_1} \circ \phi^{\mathcal{X}_2}_{\mathcal{Y}_1}$.

\item The non-torsion sections of $\mathcal{X}_2$ are related to the sections of $\mathcal{Y}_1$ and $\mathcal{Z}_1$ by $\phi^{\mathcal{X}_2}_{\mathcal{Y}_1} \circ  \mathsf{p} = \mathsf{P}$
and $\phi^{\mathcal{X}_2}_{\mathcal{Z}_1} \circ  \mathsf{q}, \mathsf{r} = \mathsf{Q}, \mathsf{R}$, respectively. Similar relations hold for the torsion sections.

\item The non-torsion section of $\mathcal{Y}_1$ is related to the section of $\mathcal{Z}_3$ by $\phi^{\mathcal{Y}_1}_{\mathcal{Z}_3} \circ  \mathsf{P}' = \mathsf{p}'$.  
Similar relations hold for the torsion sections, and the relation between torsion sections on $\mathcal{Y}_1$ and $\mathcal{X}_3$.

\item The maximal isotropic subgroup $\mathsf{k}_{12}$ of the two-torsion of $\operatorname{Jac}(\mathcal{C})$ defining the $(2,2)$-isogeny 
$\psi_{12}: \operatorname{Jac}(\mathcal{C}) \to \operatorname{Jac}(\hat{\mathcal{C}\,}\!_{12})$ is isomorphic to $( \mathbb{Z}/2\mathbb{Z})^2$.
The decomposition of the associated rational map $\Psi_{12}: \operatorname{Kum}(\operatorname{Jac} \mathcal{C}) 
\to \operatorname{Kum}(\operatorname{Jac} \hat{\mathcal{C}\,}\!_{12})$ of Kummer surfaces  into 
$\phi^{\mathcal{X}_3}_{\mathcal{Y}_3} \circ \phi^{\mathcal{Y}_1}_{\mathcal{X}_3}$
corresponds to a factorization of $\mathsf{k}_{12}$ generated by the involution $\imath^{\mathcal{Y}_1}$ and the 
Van Geemen-Sarti involution $\imath_F^{\mathcal{Y}_1}$.

\item Similarly, the composition $\phi^{\mathcal{X}_2}_{\mathcal{Y}_1} \circ \phi^{\mathcal{Y}_2}_{\mathcal{X}_2}$ gives the rational map $\hat{\Psi}_{12}$ 
between Kummer surfaces descending from the dual $(2,2)$-isogeny $\hat{\psi\,}\!_{12}: \hat{\mathcal{C}\,}\!_{12} \to \mathcal{C}$ associated
with the dual maximal isotropic subgroup $\mathsf{K}_{12}$ of $\operatorname{Jac}(\hat{\mathcal{C}\,}\!_{12})$.
The decomposition of $\hat{\Psi}_{12}$ into  $\phi^{\mathcal{X}_2}_{\mathcal{Y}_1} \circ \phi^{\mathcal{Y}_2}_{\mathcal{X}_2}$
corresponds to a factorization of $\mathsf{K}_{12}$ generated by the involution $\imath^{\mathcal{Y}_2}$ and the 
Van Geemen-Sarti involution $\imath_F^{\mathcal{Y}_2}$.

\item The Van Geemen-Sarti involutions $\imath_F^{\mathcal{Y}_1}$, $\imath_F^{\mathcal{Y}_2}$ and $\imath_F^{\mathcal{Y}_3}$ act equivariantly
with respect to $\Psi_{12}=\phi^{\mathcal{X}_3}_{\mathcal{Y}_3} \circ \phi^{\mathcal{Y}_1}_{\mathcal{X}_3}$ and 
$\hat{\Psi}_{12}=\phi^{\mathcal{X}_2}_{\mathcal{Y}_1} \circ \phi^{\mathcal{Y}_2}_{\mathcal{X}_2}$, i.e.,
$\Psi_{12} \circ \imath_F^{\mathcal{Y}_1} = \imath_F^{\mathcal{Y}_3}\circ \Psi_{12}$ and
$\hat{\Psi}_{12} \circ \imath_F^{\mathcal{Y}_2} = \imath_F^{\mathcal{Y}_1}\circ \hat{\Psi}_{12}$.

\item The Kummer surface $\operatorname{Kum}(\hat{\mathbf{B}}_{12})$ dominates and is dominated by 
$\operatorname{Kum}(\operatorname{Jac} \hat{\mathcal{C}\,}\!)$ by the rational maps of degree two which are the
fibrewise two-isogenies  $\phi^{\mathcal{X}_2}_{\mathcal{Y}_2}$ and $\phi^{\mathcal{Y}_2}_{\mathcal{X}_2}$
as well as  $\phi^{\mathcal{X}_3}_{\mathcal{Y}_3}$ and $\phi^{\mathcal{Y}_3}_{\mathcal{X}_3}$.
Similarly, $\operatorname{Kum}(\hat{\mathbf{B}}_{12})$ dominates and is dominated by 
$\operatorname{Kum}(\operatorname{Jac} \mathcal{C})$ by the rational maps of degree two which are the
rational base transformations $\phi^{\mathcal{X}_2}_{\mathcal{Y}_1}$ and $\phi^{\mathcal{Y}_1}_{\mathcal{X}_3}$.
This establishes three Kummer sandwich theorems for $(1,2)$-polarized Kummer surfaces similar
to what was done in \cite{MR2279280} for principally polarized Kummer surfaces.
\end{enumerate}

\bibliography{CM16} 
\bibliographystyle{amsplain}

\end{document}